\newtheorem{theorem}{Theorem}[section]
\newtheorem{definition}[theorem]{Definition}
\newtheorem{lemma}[theorem]{Lemma}
\newtheorem{corollary}[theorem]{Corollary}
\newtheorem{remark}[theorem]{Remark}
\newtheorem{proposition}[theorem]{Proposition}
\numberwithin{equation}{section}
\newcommand{\ii}{{\rm i}}
\newcommand{\Su}{{\rm Supp}}
\def\SF{\langle\FF\rangle}
\def\vv{\mathtt v}
\def\ww{\mathtt w}
\def\pp{\mathtt p}
\def\cc{\mathbf c}
\def\ii{\mathrm i}
\def\be{\mathbf e}
\def\ee{\mathbf e}
\def\Z{\mathbb Z}
\def\ZZ{\mathbb Z}
\def\CC{\mathbb C}
\def\R{\mathbb R}
\def\Q{\mathbb Q}
\def\KKK{\mathbb K}
\def\N{\mathbb N}
\def\T{\mathbb T}
\def\TT{\mathbb T}
\def\RR{\mathcal R}
\def\AA{\mathcal A}
\def\BB{\mathcal B}
\def\SS{\mathcal S}
\def\SSS{\mathcal S}
\def\KK{\mathcal K}
\def\MM{\mathcal M}
\def\WW{\mathcal W}
\def\FF{\mathcal F}
\def\PP{\mathcal P}
\def\HH{\mathcal H}
\def\DD{\mathbb D}
\def\DDD{\mathcal D}
\def\XX{\mathcal X}
\def\GG{\mathcal G}
\def\OO{\mathcal O}
\def\II{\mathcal I}
\def\ZZZ{\mathcal Z}
\def\CCC{\mathcal C}
\def\RRR{\mathcal R}
\def\NNN{\mathcal N}
\def\VV{\mathcal V}
\def\UU{\mathcal U}
\def\EE{\mathcal E}
\def\YY{\mathcal Y}
\def\Ht{h_\SS}
\newcommand{\de}{\delta}
\newcommand{\ga}{\gamma}
\newcommand{\al}{\alpha}
\newcommand{\pa}{\partial}
\newcommand{\la}{\lambda}
\newcommand{\kk}{\kappa}
\newcommand{\rrr}{\varrho}
\newcommand{\rr}{\rho}
\newcommand{\om}{\omega}
\renewcommand{\Re}{\mathrm{Re\, }}
\renewcommand{\Im}{\mathrm{Im\,}}
\newcommand{\wt}{\widetilde}
\newcommand{\wh}{\widehat}
\newcommand{\hyp}{\mathrm{hyp}}
\newcommand{\el}{\mathrm{ell}}
\newcommand{\mix}{\mathrm{mix}}
\newcommand{\adj}{\mathrm{adj}}
\newcommand{\loc}{\mathrm{loc}}
\newcommand{\glob}{\mathrm{glob}}
\newcommand{\inn}{\mathrm{in}}
\newcommand{\out}{\mathrm{out}}
\newcommand{\Id}{\mathrm{Id}}
\newcommand{\ol}{\overline}
\newcommand{\iii}{^{-1}}
\title{Growth of Sobolev norms for the 
analytic NLS on $\T^2$}
\author{M. Guardia\thanks{\tt Departament de Matem\`atiques, 
Universitat Polit\`ecnica de Catalunya, Diagonal 647, 08028
Barcelona, Spain,  marcel.guardia@upc.edu},  E. Haus\thanks{\tt Dipartimento di
Matematica e Applicazioni ``R. Caccioppoli'', Universit\`a degli Studi di Napoli
``Federico II'', Via Cintia, Monte S. Angelo, 80126 Napoli, Italy, 
emanuele.haus@unina.it}, M. Procesi\thanks{\tt Dipartimento di Matematica,
Sapienza Universit\`a di Roma, Piazzale Aldo Moro 5, 00185 Roma, Italy,
mprocesi@mat.uniroma1.it}}
\begin{document}
\maketitle
\abstract{We consider the  completely resonant 
non--linear
Schr\"odinger  equation on the two dimensional torus with any analytic gauge
invariant nonlinearity. Fix $s>1$. We show the existence of solutions of this
equation which achieve arbitrarily large growth of $H^s$ Sobolev norms. We also
give estimates for the time required to attain this growth.}

\tableofcontents
\section{Introduction}
Consider the   completely resonant defocusing non--linear Schr\"odinger 
equation on  the torus $\T^2=(\R/2\pi\Z)^2$ (NLS for brevity),
\begin{equation}\label{main}-\ii u_t+\Delta u= 2d|u|^{2(d-1)}u+2G'(|u|^2)u,\quad
 d\in\N,\; d\geq 2
\end{equation}
where  $G(y)$ is an analytic function (in the unit ball) with a zero of degree
at least $d+1$ (the coefficients $2d$ and 2 are just to have simpler formulas
later on).

It is well known \cite{Bourgain93, BGT} that equation  \eqref{main} is globally
well posed in time in $H^s$ for $s\geq 1$, and  defines  an infinite dimensional
Hamiltonian dynamical
system with respect to the energy functional
\[
H(u)=\int_{\T^2}\left(\frac12|\nabla
u|^2+|u|^{2d}+G(|u|^2)\right)\frac{dx}{(2\pi)^2}.
\]
It has also the following first integrals: the {\em momentum}  
\[
M(u)=\int_{\T^2} \bar u \nabla u\frac{d x}{(2\pi)^2}
\]    and  the {\em mass} 
\[
L(u)= \int_{\T^2} |u|^2\frac{d x}{(2\pi)^2},
\]
which is just the square of the $L^2$ norm.

The purpose of this paper is to study the problem of growth of Sobolev norms for
the equation  \eqref{main}. That is, to obtain orbits whose $s$-Sobolev norm, 
$s> 1$, defined as usual as
\[
 \|u\|_{H^s}=\sum_{k\in \ZZ^k}\langle k\rangle^{2s}|u_k|^2,\,\,\text{ where
}u(x)=\sum_{k\in\ZZ^2}u_k e^{\ii kx}\,\,\text{ and }\langle k\rangle
=\sqrt{1+|k|^2},
\]
grows by an arbitrarily large factor. Note that the $H^1$ norm is almost
constant due to energy conservation. 

The importance of growth of Sobolev norms stems from the fact that it implies
transfer of energy from low to high modes as time grows, a phenomenon related to
the so called {weak turbulence}.

 In \cite{Bourgain00b}, Bourgain posed the following question: are there
solutions of the cubic nonlinear Schr\"odinger equation 
\[
 -\ii u_t+\Delta u= |u|^{2}u
\]
in $\TT^2$ such that $\|u(t)\|_{H^s}\rightarrow+\infty$ as $t\rightarrow
+\infty$?

This question  has been recently positively answered for the cubic
NLS on  $\R\times \T^2$ in \cite{HaniPTV13}. It is believed to be also true in
the original setting $\TT^2$ but the question remains open on any compact
manifold. 

In the past years  there have been a set of results proving the existence of
solutions of the {\em cubic NLS} with arbitrarily large  
{\em finite} growth. The first result, proven in \cite{Kuksin97b}, was for large
data. Namely given a large constant $\KK>0$ there exists a solution whose 
initial  Sobolev norm is large with respect to $\KK$ which after certain time
$T$ attains a Sobolev norm  satisfying $\|u(T)\|_{H^s}\geq \KK\|u(0)\|_{H^s}$.
In the context of small initial data, the breakthrough result was proved in
\cite{CKSTT} for the cubic NLS. 
The authors prove that given two constants $\mu\ll1$ and $\CCC\gg1$, there are
orbits whose Sobolev norms grow from $\mu$ to $\CCC$ after certain time $T>0$.
Estimates for the time needed to attain such growth are given in \cite{GuaKal}.
Note that small initial Sobolev norm implies that the mass and the energy remain
small for all times. 

Growth of Sobolev norms has drawn considerable attention since the 90's not only
for NLS on the two torus but also in more general settings and for other
dispersive PDEs. Let us briefly review  the literature  on the subject. In
\cite{Bourgain96,Staffilani97, CollianderDKS01,Bourgain04,Zhong08,
CatoireW10,Sohinger10a,Sohinger10b, Sohinger11,CollianderKO12}, the authors
obtain polynomial upper bounds for the growth of Sobolev norms.

Arbitrarily large finite growth was first proven in  \cite{Bourgain96},  for the
wave equation with a cubic nonlinearity but with a spectrally
defined Laplacian. As we have already mentioned the same result has been
obtained for the cubic NLS in \cite{Kuksin97b, CKSTT, GuaKal}. The results in
\cite{CKSTT, GuaKal}  have been generalized to the cubic NLS with a convolution
potential in \cite{Guardia14} and the result in \cite{CKSTT} has been
generalized to the quintic NLS in \cite{HausProcesi}. Large finite growth of
Sobolev norms  has also been obtained in \cite{GerardG11,Pocovnicu12} for
certain nonlinear half-wave equations. In \cite{CarlesF12}, the authors obtain
orbits of the cubic NLS which undergo spreading of energy among the modes.
Nevertheless, this spreading does not lead to growth of Sobolev norms. Similar
phenomena were discussed in \cite{GPT,GT,HT}.

Finally, the unbounded growth of Sobolev norms has been recently obtained for the Szeg\H o equation by G\'erard and collaborators following the work initiated in \cite{GerardG10,Pocovnicu11,GerardG15}. Unbounded Sobolev growth, as it has been mentioned before,  has  been also  proven for the cubic NLS in $\R\times \TT^2$ in
\cite{HaniPTV13}. In \cite{Hani11, Hani12} unbounded growth  is shown in  a
pseudo partial differential equation which is a simplification of cubic NLS.

 A dual point of view to instability is to construct quasi-periodic orbits.
These are  solutions which are global in time and whose Sobolev norms
are approximately constant. Among the relevant literature we mention
\cite{Wayne90,Poschel96a,KuksinP96,Bourgain98,BB1,Eliasson10,GYX,BBi10,Wang2,PX,
BCP,PP}.
  Of particular interest  are the recent results obtained through KAM theory
which gives information on linear stability close to the quasi-periodic
solutions.
  In  particular the paper \cite{PP13} proves the existence of  both stable and
unstable tori (of arbitrary finite dimension) for the cubic NLS. In principle
such unstable tori could be used to construct orbits whose Sobolev norm grows,
indeed
 in finite dimensional systems diffusive orbits are usually constructed by
proving that the stable and unstable manifolds of a {\em chain of unstable tori}
intersect. Usually however the intersection of {\em stable/unstable} manifolds
is deduced by  dimensional arguments, by constructing chains of  co-dimension
one tori. 
 In the infinite dimensional case this would mean constructing  almost-periodic
orbits, which is an open problem except for very special cases such as
integrable equations or equations with infinitely many external parameters (see
for instance \cite{ChierchiaP95,Poschel02,Bou}). 
  
  In \cite{CKSTT}, \cite{GuaKal}, \cite{HausProcesi} (and the present paper) this problem is avoided
 by taking advantage of the specific form of the equation. First one reduces to
an approximate equation, i.e. the Hamitonian flow of the {\em first order
Birkhoff
normal form} $H_{\rm Res}$, see \eqref{def:HamAfterNormalForm}. 
  Then for this dynamical system one proves directly the existence of 
    chains of one dimensional unstable tori (periodic orbits) together with
their heteroclinic connections.   Next, one proves the existence of a {\em
slider
solution} which shadows the heteroclinic chain in a finite time. Finally, one
proves the persistence of the slider solution for the full NLS by scaling
arguments. 
   
    The fact that one may construct a heteroclinic chain for the Birkhoff normal
form Hamiltonian \eqref{def:HamAfterNormalForm} relies on the property  that
this Hamiltonian is {\em non-integrable} but has nonetheless many invariant
subspaces on which the dynamics simplifies significantly. More precisely given a
set $\mathcal S\subset \ZZ^2$ we define the subspace
    $$
    U_\mathcal S:= \{u\in L^2(\TT^2):  \quad u(x)= \sum_{j\in \mathcal S} u_j
e^{\ii j\cdot x}\}\,,
    $$
and consider the following definitions.
\begin{definition}[Completeness]\label{completeness}
We say that a set $\SS\subset\Z^2$ is {\em complete} if $U_\SS$ is invariant for
the dynamics of $H_{\rm Res}$.
\end{definition}
\begin{definition}[Action preserving]\label{integrable}
A complete set $\SS\subset\Z^2$ is said to be {\em action preserving} if all the
actions $|u_j|^2$ with $j\in \SS$ are constants of motion for the dynamics of
$H_{\rm Res}$ restricted to $U_\SS$.
\end{definition}
The conditions under which a given set $\SS$ is complete or action preserving
can be rephrased more explicitly by using the structure of $H_{\rm Res}$.
\begin{definition}[Resonance]\label{risuona} Given a $2d$-tuple
$(j_1,\dots,j_{2d})\in (\ZZ^2)^{2d}$ we say that it is a resonance of order $d$
if
$$
\sum_{i=1}^{2d} (-1)^i j_i =0,\qquad\qquad \sum_{i=1}^{2d} (-1)^i |j_i|^2 =0.
$$
\end{definition}
Now $\SS$ is complete if and only if for any $(2d-1)$-tuple
$(j_1,\dots,j_{2d-1})\in \SS^{2d-1}$ there does not  exist any  $k\in
\Z^2\setminus\SS$ such that $(j_1,\dots,j_{2d-1},k)$ is a resonance.
Similarly $\SS$ is action preserving if all resonances $(j_1,\dots,j_{2d})\in
\SS^{2d}$ are {\em trivial}, namely there exists a permutation such that
$(j_1,\dots,j_d)=(j_{d+1},\dots,j_{2d})$.


Now a good strategy is to look for  a finite dimensional set $\SS$ which is
complete but not action preserving,  where we can prove existence of diffusive
orbits. A difficulty stems from the fact that {\em generic} choices of $\SS$ are
action preserving (see \cite{PP}).
As a preliminary step one may study simple sets $\SS$  where the dynamics is
integrable and one can exhibit some growth of Sobolev norms.
In particular one would like to produce a set which has two periodic orbits
linked by a heteroclinic connection, since this is a natural building block for
a heteroclinic chain.
A natural choice is to fix a {\em simple} resonance $\SS=\{j_1,\dots,j_{2k}\}$
of order $k$, namely a resonance  which does not factorize as sum of two
resonances of lower order.
Clearly any set of this form produces non-trivial resonances (of order $d$) in
$\SS^{2d}$ for all $d\geq k$. Sets of this type have been studied for the
quintic NLS, see \cite{GT} and \cite{HausProcesi}, Appendix C. For $d>2$, the
Hamiltonian $H_{\rm Res}$ restricted
to a simple resonance can be explicitly written (with a relatively heavy combinatorics)
and one easily sees that there are in fact two periodic orbits, however we are
not able to give a general statement about the existence of a heteroclinic
connection. Direct  computations show that a single resonance of order
two (i.e. one rectangle)  produces a heteroclinic connection only  if $d\leq 5$,  while {\em no simple resonance of order
$k>2$ } produces a heteroclinic connection for $d\leq6$ (we expect this to be
true for any $d$, but we have not performed the computations in the case
$k>2,d>6$).

A crucial fact is the following: consider a {\em large} set $\SS$ which is the
union of $q\gg d$ rectangles  and such that $\SS$ does not contain any {\em
simple} resonance apart from these rectangles. Then this system {\em always} has
two periodic orbits linked by a heteroclinic connection. Indeed, after some
symplectic reductions, it turns out that $H_{\rm Res}$ is a {\em small}
perturbation of the one obtained for the cubic NLS restricted to a single
rectangle. Note that this procedure works only for rectangles: if $\SS$ is the
union of $q\gg d$ resonances of order $k>2$ and if these are the only simple
resonances in $\SS$, then after the same symplectic reductions one is left with
a small perturbation of an action preserving system, having again two periodic
orbits but no heteroclinic connection between them. While clearly this does not
in any way constitute a proof, it gives some interesting negative evidence about
the possibility of extending these results to the NLS on the circle.

\subsection{Main results}
The purpose of this paper is to generalize the results of \cite{CKSTT} and
\cite{GuaKal} to the nonlinear Schr\"odinger equation  \eqref{main} with any
$d\geq 2$. The case $d=3$ was treated in \cite{HausProcesi} where it is proven
a result analogous to the one in \cite{CKSTT}.

This is the main result of our paper.

\begin{theorem}\label{thm:MainTheorem}
Let $d\geq 2$ and $s>1$. There exists $c>0$ with the following property:
for any  large $\CCC\gg 1$ and small $\mu\ll 1$,  there exists a 
global solution $u(t)=u(t,\cdot)$
of \eqref{main} and a time $T$ satisfying
\[
 T \leq e^{\left(\frac{\CCC}{\mu}\right)^c}
\]
such that
\[
\|u(0)\|_{H^s}\leq\mu \,\,\text{ and }\,\,\|u(T)\|_{H^s}\geq \CCC.
\]
\end{theorem}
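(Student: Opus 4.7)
The plan is to adapt the four-step scheme of \cite{CKSTT}, \cite{GuaKal} and \cite{HausProcesi}, modified to accommodate the arbitrary degree $2d$ leading nonlinearity and the analytic tail $G'(|u|^2)u$. The overall architecture is: (i) put \eqref{main} in first-order Birkhoff normal form to obtain $H_{\rm Res}$ plus a higher-order remainder $R$; (ii) exhibit a finite resonant set $\SS \subset \Z^2$ on which $H_{\rm Res}|_{U_\SS}$ reduces, after symplectic reductions, to a small perturbation of the CKSTT toy model; (iii) build a ``slider'' orbit of the toy model that pumps $L^2$ mass from low to high generations of $\SS$; (iv) scale and bootstrap, comparing the true solution with the slider via a Gronwall estimate in $H^1$, and track how large the time has to be.

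For Step (i) I would write the Hamiltonian in Fourier modes $u_k$ and perform one symplectic change of variables eliminating all non-resonant monomials of degree up to some $N_0 = N_0(d)$. Analyticity of $G$ in a ball of radius $r$ implies convergence of the normal form transformation in $H^1$ on a ball of comparable radius, producing a resonant Hamiltonian whose quartic (and higher) part is supported on the resonances of Definition \ref{risuona} together with a remainder $R$ that is at least of degree $N_0 + 1$. For Step (ii), following the final remark of the introduction, I would take $\SS$ to be a union of $N$ ``generations'' $\SS_0,\dots,\SS_{N-1}$, each generation consisting of $q \gg d$ disjoint rectangles; $N$ and $q$ are the two free parameters of the construction. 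The set must be chosen so that: $\SS$ is complete in the sense of Definition \ref{completeness}; the only simple resonances inside $\SS$ are the chosen rectangles; consecutive generations are linked exactly as in \cite{CKSTT} by common rectangles. After rotating coordinates on each rectangle using mass and momentum conservation, the restriction of $H_{\rm Res}$ to $U_\SS$ then differs from (a time rescaling of) the cubic CKSTT toy model by a remainder of size $O(1/q)$ coming from the $d$-th order interactions between different rectangles.

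Given this, Step (iii) is obtained by hyperbolic persistence: the toy model has, in each generation, two hyperbolic periodic orbits joined by a heteroclinic connection, and the intergeneration heteroclinic chain is transverse. A $\lambda$-lemma/shadowing argument preserves the chain under the $O(1/q)$ perturbation and produces an orbit $a(t)$ of $H_{\rm Res}|_{U_\SS}$ whose mass is concentrated on $\SS_0$ at $t=0$ and on $\SS_{N-1}$ at the end time $T_\ast$; because each heteroclinic transition takes time proportional to $|\log(1/q)|$, one gets $T_\ast \lesssim N |\log q|$. For Step (iv), I rescale $u(t,x) = \lambda\, \tilde u(\lambda^{2(d-1)} t, x)$, which leaves the leading power $|u|^{2(d-1)}u$ invariant and multiplies $R$ and the $G'$-contribution by high positive powers of $\lambda$. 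Taking $\lambda$ small and extending $a(t)$ trivially outside $\SS$ to produce an approximate solution $v$ of \eqref{main}, a standard $H^1$ energy estimate on $w = u - v$ yields $\|w(t)\|_{H^1} \ll \|v(t)\|_{H^1}$ for $t \in [0,T]$ with $T = \lambda^{-2(d-1)} T_\ast$. Choosing the generation geometry so that $\|a(T_\ast)\|_{H^s}/\|a(0)\|_{H^s} \geq 2^{s N}$, then tuning $\lambda,\ N,\ q$ in terms of $\mu$ and $\CCC$, gives $\|u(0)\|_{H^s} \le \mu$ and $\|u(T)\|_{H^s}\ge \CCC$ with $T$ bounded as claimed; the double-exponential shape reflects the fact that $\lambda$ must be taken polynomially small in $\mu/\CCC$ while $N$ must grow logarithmically, so $T \sim \lambda^{-2(d-1)} N |\log q|$ combined with the $\lambda$-choice produces $\exp((\CCC/\mu)^c)$.

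The hard step is Step (ii): one must simultaneously arrange that the set $\SS$ is complete (so $U_\SS$ is $H_{\rm Res}$-invariant), free of simple resonances other than the designated rectangles, and has sufficiently many rectangles per generation that the perturbation of the cubic toy model is controlled. For $d\ge 3$ the resonance condition $\sum (-1)^i j_i = \sum (-1)^i |j_i|^2 = 0$ on $2d$-tuples in $\SS$ admits many more potential solutions than the quartic case, and ruling them out (or showing their dynamical effect is negligible) is the genuinely new combinatorial difficulty beyond \cite{CKSTT,HausProcesi}; the ``many rectangles'' trick outlined in the introduction is precisely what makes this possible, and its careful verification — together with the transversality of the resulting heteroclinic connections uniformly in $q$ — is where I expect the bulk of the work to lie.
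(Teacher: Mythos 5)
Your overall architecture (normal form, finite resonant set, toy-model slider, scaling plus Gronwall) is the paper's architecture, but two of your four steps contain genuine gaps. The most serious is Step (iii). The heteroclinic chain of the toy model is \emph{not} transverse, and no $\lambda$-lemma applies: each periodic orbit $\TT_j$ has only four hyperbolic directions and $2N-6$ elliptic (center) directions, the connection from $\TT_j$ to $\TT_{j+1}$ lives inside the two-generation invariant subspace where the unstable manifold of $\TT_j$ \emph{coincides} with the stable manifold of $\TT_{j+1}$, and the saddles are resonant (equal stable eigenvalues). Soft hyperbolic persistence of an ``$O(1/q)$-perturbed chain'' does not produce the slider, and certainly not with the quantitative time bound $T_0\lesssim N\ln(1/\de)$ that the final estimate needs; moreover the orbit must stay $\de^{\nu}$-close to the chain over a time $\sim N\ln(1/\de)$ with $\de=e^{-\ga N}$, so the $O(1/n)$ perturbation cannot be absorbed by classical perturbation theory and has to be carried through the whole local/global analysis. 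This is exactly where the bulk of the paper lies: the iterative theorem with almost-product-structure sets, the Shilnikov-type fixed-point analysis of the resonant local maps, the straightening of the (now $O(1/n)$-bent) heteroclinics, and the control of the elliptic modes mode-by-mode. Also note that your transition time $\propto|\log q|$ is not right; it is $\propto\log(1/\de)$, with $\de$ exponentially small in $N$, and the number of modes per generation is not a free parameter $q$ but $n=2^{N-1}$, forced by the CKSTT genealogical tree; in addition, for $d\ge 4$ completeness plus ``no extra simple resonances'' is not sufficient to make the diagonal subspace $\{r_l=r_j=b_i\}$ invariant — one needs the intra-generational symmetry (item 4 of the paper's Definition of acceptable sets, via the maps $g_{j_1j_2}$) and the non-degeneracy conditions ensuring every resonance inside $\SS$ lies in $\langle\FF\rangle$.

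Step (iv) also fails as written. An $H^1$ energy estimate does not close in dimension two: $H^1(\T^2)$ is neither an algebra nor embedded in $L^\infty$, so you cannot Gronwall the difference $w=u-v$ there; the paper (as in CKSTT and Guardia--Kaloshin) runs the approximation argument in $\ell^1$, which is a Banach algebra and controls each Fourier mode, and compares with the solution of the \emph{resonant truncation}, the normal-form remainder being the source term. Finally, your time accounting is inconsistent: if the scaling factor were only polynomially small in $\mu/\CCC$, the total time would be polynomial, which is precisely what the paper says it cannot achieve for $d\ge 3$ (absent a ``no spreading'' condition). In the actual proof the rescaling parameter must be taken $\rrr=e^{\kk 2^{8dN}N^2}$ — exponentially large in a quantity that is itself polynomial in $\CCC/\mu$ — both because the Gronwall exponent $\sim\rrr^{-(2d-2)}2^{N(2d-2)}T$ must be beaten and because the frequencies of the acceptable set are of size $e^{\eta 2^{8dN}N^2}$ (the quantitative combinatorial lemma), so that $\|u(0)\|_{H^s}\sim\mu$ forces $\rrr$ that large; this, with $N\sim\log(\CCC/\mu)$, is where $T\le e^{(\CCC/\mu)^c}$ really comes from.
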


\begin{remark}
Theorem \ref{thm:MainTheorem} is still valid or the focusing nonlinear Schr\"odinger equation
\[
-\ii u_t+\Delta u= -2d|u|^{2(d-1)}u+2G'(|u|^2)u
\]
where the lowest order of the nonlinearity has opposite sign, in the following
sense.

In the focusing case, the NLS equation \eqref{main}  is not globally well-posed
on $H^s(\T^2)$, so one can not infer a-priori existence of a solution on the
time interval $[0,T]$. However, one can recover existence (and uniqueness) of
the solution in low Sobolev norm by two independent arguments. First, the proof
of Theorem \ref{thm:approximation} is totally independent of the sign of the
nonlinearity, and it implies that, with our chioce of initial data, a solution
$u(t,x)$ with space Fourier coefficients in $\ell^1(\ZZ)$ exists for all
$t\in[0,T]$. Second, for small $L^2(\T^2)$ solutions (as is the case in the
present paper), the focusing NLS Hamiltonian controls the $H^1$-norm, thanks to
the Gagliardo-Nirenberg inequality: thus, the conservation of the Hamiltonian,
together with the local well-posedness of equation \eqref{main} proved in
\cite{Bourgain93}, gives global-in-time existence of a solution in $H^1(\T^2)$.
The solution might very well blow up in $H^s$ before the time $T$, but our proof
still 
implies that the (possibly infinite) quantity $\| u(T) \|_{H^s}$ satisfies
$\|u(T)\|_{H^s}\geq \CCC$.
\end{remark}


Theorem \ref{thm:MainTheorem} generalizes the results in \cite{GuaKal}
for the cubic NLS.
In \cite{GuaKal} the authors give two results. In the first
result (Theorem 1 in \cite{GuaKal}), they only measure growth of Sobolev
norms and do not assume that the initial Sobolev norm is small. Then,  they
obtain {\em polynomial} time estimates 
with respect to the growth. In the second result (Theorem 7 in \cite{GuaKal})
they impose small initial Sobolev norm and large final Sobolev norm and obtain
slower time estimates. 

In the present setting we cannot get improved estimates as in Theorem 1 of
\cite{GuaKal} by assuming that only the $L^2$ norm of the initial datum is small.
 The reason is that the higher the degree of the nonlinearity the
more interactions between modes exist. Certainly, more interactions should imply
more paths to obtain growth of Sobolev norm and therefore similar or faster time
estimates. Nevertheless, they also make the problem harder to handle. The proof
of Theorem \ref{thm:MainTheorem} follows the approach developed by \cite{CKSTT}
and analyzes very particular orbits which are essentially supported on a finite
number of modes (see Section \ref{sec:descriptionproof}). Thus, one needs to
keep track of the large number of interactions between modes so that the energy
is not spread to a larger and larger number of modes as time evolves. This is
more difficult for equation \eqref{main} with $d\geq 3$ than for the cubic NLS.
To avoid this spreading, we have to choose slower orbits.

It is reasonable to expect that polynomial time estimates are still true for equation
\eqref{main} with $d\geq 3$ but one needs some new ideas in the analysis of the finite
set of modes on which the orbit attaining growth of Sobolev norms is supported.
This is explained in Remark \ref{rmk:NoSpreading}.


Theorem \ref{thm:MainTheorem}
is proven in
Section \ref{sec:descriptionproof}. Then, Sections
\ref{sec:GenerationSetsCombinatorics}-\ref{sec:Approximation} contain the proofs
of the partial results needed in Section \ref{sec:descriptionproof}.

{\it Acknowledgements:}
The authors thank Zaher Hani and the anonymous referees for their helpful
suggestions.
The first  author is partially supported by the  Spanish
MINECO-FEDER Grants MTM2012-31714 and   MTM2015-65715 and the Catalan Grant 
2014SGR504; the second
author is supported by ERC under FP7-ERC Project 306414, HamPDEs and Programme
STAR, funded by UniNA and Compagnia di San Paolo; the third author is supported
by ERC under FP7-ERC Project 306414, HamPDEs.

\section{Structure of the proof}\label{sec:descriptionproof}
\subsection{Basic notations}
We write the differential equation for the Fourier modes
\begin{equation}\label{FR}
u(t,x):= \sum_{k\in \Z^2} u_k(t) e^{\ii (k, x)}, 
\end{equation} 
associated to \eqref{main}. It is of the form
\begin{equation}\label{def:NLS:ode}
 \dot u_k=2 \ii \pa_{\ \ol{u_k}}\ \HH(u, \ol u).
\end{equation}
Thus, it is Hamiltonian with respect to  the symplectic form $\Omega= \frac\ii
2\sum_{k\in\ZZ^2} d u_k\wedge d \bar u_k$ and the Hamiltonian
\begin{equation}\label{def:HamForFourier}
\HH (u, \ol u)=\DDD (u, \ol u)+\GG  (u, \ol u)
\end{equation}
with
\begin{align*}
 \DDD (u,\ol u)&=\frac12\sum_{k\in\ZZ^2}|k|^2|u_k|^2\\
\GG  (u,\ol u)&=\sum_{k_i\in \Z^2: \sum_{i=1}^{2d}(-1)^i k_i=0}\hskip-30pt
u_{k_1}\bar u_{k_2}u_{k_3}\bar u_{k_4}\ldots u_{k_{2d-1}}\bar u_{k_{2d}}\\
&+\int_{\T^2} G(|u|^2)\frac{dx}{(2\pi)^2}. \\
\end{align*}
 We may write, for any $r\in\N$,
\begin{equation}\label{albet}
\begin{split}
 [u]^{2r}:=&\frac{1}{(2\pi)^2}\int_{\T^2} |u|^{2r}dx\\
=&\sum_{k_i\in \Z^2,\ \sum_i(-1)^ik_i=0} u_{k_1}\bar u_{k_2}u_{k_3}\bar
u_{k_4}\ldots u_{k_{2r-1}}\bar u_{k_{2r}}\\
=& \sum_{\alpha,\beta\in (\N)^{\Z^2}: \atop
{|\alpha|=|\beta|=r},\sum(\alpha_k-\beta_k)k=0}
\binom{r}{\alpha}\binom{r}{\beta}u^\alpha\bar u^\beta,
\end{split}
\end{equation}
where $\alpha:k\mapsto \alpha_k\in\N$,
$|\al|=\sum_{k\in\N}|\al_k|$,  $u^\alpha=\prod_k
u_k^{\alpha_k}$ and 
\[
 \binom{r}{\alpha}=\frac{r!}{\Pi_{k\in\N}\alpha_k!},
\]
where, since $|\al|=r$, only a finite number of $\al_k$ are different from
zero.

With this notation one clearly has 
\[
\GG(u,\ol u)= \sum_{r\geq d} c_{r} [u]^{2r}\,,\quad c_d=1 \,,\quad \sum_{r\geq
d} |c_r|
<\infty.
\]
\begin{remark}\label{rmk:gauge}
Since by hypothesis the mass is preserved, we may perform the  trivial phase
shifts $u_j \to e^{-2\ii f(L) t} u_j$. In this way the Hamiltonian becomes
$\DDD+\GG_1$ with $\GG_1= \GG - F(L)$ where $F$ is a primitive of $f$.
\end{remark}
 
 In the course of the paper we will need the following definition 
 \begin{definition}\label{integra}
 Given a set of complex symplectic variables $(z_k,\bar z_k)$ with the
symplectic form $\frac\ii 2 d z \wedge d \bar z$, we say that a monomial is {\em
action preserving} if it depends only on the {\em actions } $|z_k|^2$. This
naturally defines a projection on the subspace of action preserving polynomials
which we denote by $\Pi_I$.
 \end{definition}
\subsection{Birkhoff Normal Form}
We perform one step of Birkhoff normal form to reduce the size of the
non-resonant terms. We perform it in the $\ell^1$ space, which is defined, as
usual, by
\[
 \ell^1=\left\{u:\ZZ^2\rightarrow \CC: \|u\|_{\ell^1}=
 \sum_{k\in\ZZ^2}|u_k|<\infty\right\}.
\]
Recall that $\ell^1$ is a Banach algebra with respect to the convolution
product.  We consider a small ball centered at the origin,
\[
 B(\eta)=\left\{u\in\ell^1:\|u\|_{\ell^1}\leq\eta\right\}.
\]

\begin{theorem}\label{thm:NormalForm}
There exists $\eta>0$ small enough such that  there exists a symplectic
change of coordinates $\Gamma: B(\eta)\rightarrow B(2\eta)\subset\ell^1$, 
$u=\Gamma(a)$, 
which takes the Hamiltonian $\HH$ in \eqref{def:HamForFourier} into its Birkhoff
normal form up to order $2d$,
that is,
\begin{equation}\label{def:HamAfterNormalForm}
 \HH\circ\Gamma=\DDD+H_{\rm Res}+\RRR,
\end{equation}
where $H_{\rm Res}$ only contains  resonant terms, namely
\begin{equation}\label{Ham2}
H_{\rm Res}= \hskip-5pt\sum_{k_i\in \Z^2,\atop {\sum_i(-1)^ik_i=0\atop \sum_i
(-1)^i |k_i|^2=0}}\!\!\!\! a_{k_1}\bar a_{k_2}a_{k_3}\bar a_{k_4}\ldots
a_{k_{2d-1}}\bar a_{k_{2d}}= \hskip-30pt\sum_{\alpha,\beta\in (\N)^{\Z^2}:
|\alpha|=|\beta|=d\atop {\sum_k (\alpha_k-\beta_k)k=0\,,\;\sum_k
(\alpha_k-\beta_k)|k|^2=0}}
\hskip-10pt\binom{d}{\alpha}\binom{d}{\beta}a^\alpha\bar a^\beta\,.
\end{equation}
The vector field $X_\RRR$, associated to
the Hamiltonian $\RRR$, satisfies
\[
 \|X_\RRR\|_{\ell^1}\leq \OO\left(\|a\|_{\ell^1}^{2d+1}\right).
\]
Moreover, the change of variables $\Gamma$ satisfies
\[
 \left\|\Gamma-\mathrm{Id}\right\|_{\ell^1}\leq
\OO\left(\|a\|_{\ell^1}^{2d-1}\right).
\]
\end{theorem}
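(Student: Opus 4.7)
The plan is to perform one step of Lie-transform Birkhoff normal form. Writing $\GG=[u]^{2d}+\GG_{>2d}$ with $\GG_{>2d}=\sum_{r>d}c_r[u]^{2r}$, only the leading piece $[u]^{2d}$ produces terms at the critical degree $2d$; the tail $\GG_{>2d}$ is automatically of degree $\geq 2d+2$ and will be absorbed into the remainder. I look for an auxiliary Hamiltonian $\chi$, homogeneous of degree $2d$ in $(a,\bar a)$, whose time-one flow $\Gamma=\Phi_\chi^1$ kills the non-resonant part of $[u]^{2d}$. The defining cohomological equation is
\[
\{\DDD,\chi\}+[u]^{2d}=H_{\rm Res}.
\]

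Expanding on the monomial basis $a^\alpha\bar a^\beta$ with $|\alpha|=|\beta|=d$ and the momentum constraint $\sum_k(\alpha_k-\beta_k)k=0$ built into $[u]^{2d}$, the adjoint action of $\DDD$ is diagonal,
\[
\{\DDD,a^\alpha\bar a^\beta\}=\ii\,\Omega_{\alpha\beta}\,a^\alpha\bar a^\beta,\qquad \Omega_{\alpha\beta}:=\sum_k|k|^2(\beta_k-\alpha_k).
\]
The key observation is that each $\Omega_{\alpha\beta}$ is an integer, because all frequencies $|k|^2$ are integers on $\T^2$; hence $|\Omega_{\alpha\beta}|\geq 1$ whenever it is nonzero. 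There is no small-divisor issue: the non-resonant set coincides exactly with $\{\Omega_{\alpha\beta}\neq 0\}$, and the solution is obtained by dividing each non-resonant coefficient of $[u]^{2d}$ by $\ii\Omega_{\alpha\beta}$, giving $\chi$ with the same coefficient bounds as $[u]^{2d}$ itself.

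The quantitative $\ell^1$ estimates then reduce to the fact that $\ell^1(\Z^2)$ is a Banach algebra under convolution: any integral of $|u|^{2r}$ is a bounded $2r$-linear form with operator norm $1$, so the associated Hamiltonian vector field satisfies $\|X_F\|_{\ell^1}\leq \OO(\|a\|_{\ell^1}^{2r-1})$ on $B(\eta)$. Applied to $\chi$ this yields $\|X_\chi\|_{\ell^1}\leq\OO(\|a\|_{\ell^1}^{2d-1})$. Choosing $\eta>0$ small enough, the flow $\Phi_\chi^t$ is well-defined on $B(\eta)$, maps it into $B(2\eta)$ for $t\in[0,1]$, and the identity $\Gamma-\Id=\int_0^1 X_\chi\circ\Phi_\chi^s\,ds$ furnishes the stated bound on $\Gamma-\Id$.

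To identify the transformed Hamiltonian I expand the Lie series with integral remainder,
\[
\HH\circ\Gamma=\HH+\{\HH,\chi\}+\int_0^1(1-s)\{\{\HH,\chi\},\chi\}\circ\Phi_\chi^s\,ds,
\]
and use the cohomological equation to absorb $[u]^{2d}+\{\DDD,\chi\}$ into $H_{\rm Res}$. The remainder is
\[
\RRR=\GG_{>2d}+\{[u]^{2d}+\GG_{>2d},\chi\}+\int_0^1(1-s)\{\{\HH,\chi\},\chi\}\circ\Phi_\chi^s\,ds.
\]
Each term is of Hamiltonian degree $\geq 2d+2$ in $a$ --- in particular the bracket $\{[u]^{2d},\chi\}$ has degree $4d-2\geq 2d+2$ thanks to the hypothesis $d\geq 2$ --- and one further application of the Banach-algebra estimate delivers $\|X_\RRR\|_{\ell^1}\leq\OO(\|a\|_{\ell^1}^{2d+1})$. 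The whole argument is essentially bookkeeping: there is no genuine obstacle, because the divisors $\Omega_{\alpha\beta}$ are nonzero integers, a feature of $\T^2$ which makes one Birkhoff step both elementary and sufficient.
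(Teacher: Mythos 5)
Your argument is correct and is essentially the proof the paper has in mind: the paper itself only remarks that Theorem \ref{thm:NormalForm} ``follows the same lines as the proof of Theorem 2 in \cite{GuaKal}'', and that proof is exactly one Lie-transform Birkhoff step, exploiting that the divisors $\sum_k(\alpha_k-\beta_k)|k|^2$ are nonzero integers on $\T^2$ and that $\ell^1(\Z^2)$ is a Banach algebra under convolution to get the vector-field and $\Gamma-\Id$ bounds. Your bookkeeping of the remainder (the tail $\GG_{>2d}$, the bracket terms of degree $\geq 4d-2\geq 2d+2$ for $d\geq2$, and the integral Lie-series remainder) matches that scheme.
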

The proof of this theorem follows the same lines as the proof of Theorem 2 in
\cite{GuaKal}.

To study the Hamiltonian $ \HH\circ\Gamma$, we change to rotating coordinates to
remove the quadratic part of the Hamiltonian. We take
\begin{equation}\label{def:rotatingcoord}
 a_k=r_k{e^{\ii |k|^2 t}}.
\end{equation}
Then, $r$ satisfies the equation associated to the Hamiltonian
\begin{equation}\label{def:HamRotating}
 \HH'=H_{\rm Res}+\RRR',
\end{equation}
where
\begin{equation}\label{def:RemainderInRotating}
 \RRR'\left(\{r_k\}_{k\in\ZZ^2},t\right)=\RRR\left(\{r_k{e^{\ii |k|^2
t}}\}_{k\in\ZZ^2}\right).
\end{equation}
As a first step we study the dynamics of the truncated Hamiltonian $H_{\rm
Res}$. The associated equation is given by 
\begin{equation}\label{def:VFTruncated}
 -\ii\dot r=\EE(r)
\end{equation}
where
\begin{equation}\label{eq:HRes}
\EE_k(r)= 2d\sum_{k_i\in \Z^2,\atop {\sum_{i=1}^{2d-1} (-1)^ik_i=k\atop
\sum_{i=1}^{2d-1} (-1)^i |k_i|^2=|k|^2}}\!\!\!\! r_{k_1}\bar r_{k_2}r_{k_3}\bar
r_{k_4}\ldots r_{k_{2d-1}}.
\end{equation}
The Hamiltonian $H_{\rm Res}$ and the associated equation are scaling invariant
with respect to
\begin{equation}\label{def:rescaling}
 r^\rrr(t)= \rrr^{-1}r( \rrr^{-(2d-2)}t),\qquad  \rrr\in\R\setminus\{0\}.
\end{equation}

\subsection{The reduction to the Toy Model}
Following \cite{CKSTT}, we look for a finite set of modes which interact in a
very particular and symmetric way. This set was constructed for the cubic case
in \cite{CKSTT}  and in the quintic case in
\cite{HausProcesi}. The higher the degree of the nonlinearity, the more
complicated the interaction between the modes is. Here we follow the approach
developed in \cite{HausProcesi}. 
We start by defining an {\em acceptable} frequency set as follows.

\begin{definition}\label{acceptable}
Fix $N\gg1$, $s>1$. Then  $\SS\equiv\SS(N)\subset\ZZ^2$ is acceptable if the
following holds:
\begin{enumerate}
\item $\SS$ is the disjoint union of $N$ generations $\SS=\cup_{i=1}^N\SS_i$,
each of them having cardinality $n:=2^{N-1}$
\item $\SS$ satisfies the {\em norm explosion} property:
\begin{equation}\label{Normexp}
\frac{\sum_{k\in \SS_{N-2}}|k|^{2s} }{\sum_{k\in \SS_{3}}|k|^{2s}}> 
2^{(N-6)(s-1)}.
\end{equation}
\item The N dimensional subspace
\begin{equation}\label{christmas}
U_\SS:=\{ r\in \CC^{\Z^2}:\; r_k=0 \;\forall k \notin \SS\,,\quad r_{l}=r_{j}:=
b_i\;\; \forall i=1,\dots,N\,,\;\forall l,j\in \SS_i\}.
\end{equation}
is invariant under the flow of the Hamiltonian  $H_{\rm Res}$ defined in
\eqref{Ham2}.
\item The  flow of $H_{\rm Res}$ restricted to $U_\SS$  is Hamiltonian with
respect to  the symplectic form
$\frac{i}{2}\sum_j db_j\wedge d\bar b_j$ with:
\begin{equation}\label{def:toymodel:0}
\begin{split}
&h_\SS(b)= d! n^{d-1}\left(\sum_{i=1}^N|b_i|^2\right)^{d}+\\
&
+n^{d-2}d!d(d-1)\left\{\left(\sum_{i=1}^N|b_i|^2\right)^{d-2}\left[-\frac14\sum_
{i=1}^N|b_i|^4+ \sum_{i=1}^{N-1}\Re(b_i^2\bar
b_{i+1}^2)\right]+\frac{1}{n}\mathcal P\left(b,\bar b, \frac1 n\right)\right\}.
\end{split}
\end{equation}
where $n=2^{N-1}$ and  $\PP$ satisfies the following
properties:
\begin{enumerate}
\item $\mathcal P$ is a real coefficients polynomial in all its variables and 
it is homogeneous of degree $2d$ in $(b,\bar b)$.
\item  $\mathcal P$ is real, namely $\mathcal P(b,\bar b, \frac1 n)=\mathcal 
P(\bar b, b, \frac1 n)$.
\item $\mathcal P$ is Gauge preserving, i.e. it Poisson commutes with 
$J=\sum_{i=1}^N|b_i|^2$.
\item All the monomials in $\PP$ are of even degree in each $(b_i,\bar b_i)$.
This implies that for all $i=1,\dots, N$ the subspace $\{b_i=0\}$ is invariant
for
the flow of $h_\SS$. 
\item For $j=1,\ldots,N-1$, the subspace
$$
U_\SS^{j}:=\{ b\in \CC^N :\; b_i=0\,,\quad i\neq j,j+1\}
$$ is invariant with respect to the flow of $H_{\rm Res}$. Moreover, the
pullback of this Hamiltonian into $U_\SS^{j}$ is $j$-independent (up to an index
translation) and, as a function of  $(b_j,\bar b_j),(b_{j+1},\bar b_{j+1})$, is
symmetric with respect to the exchange $j\longleftrightarrow j+1$.
\item  
Given $i\neq j$, consider the monomials in $\PP$ which depend only on $(b_i,\bar
b_i),(b_j,\bar b_j)$  and are exactly of degree two in $(b_i,\bar b_i)$. Then if
$|i-j|\neq 1$ such monomials are {\em action preserving} namely of the form
$\chi_{ij}|b_j|^{2d-2}|b_i|^2$ (for some suitable coefficient $\chi_{ij}$).
Otherwise, if $|i-j|=1$ then they are either of the form
$\chi_{ij}|b_j|^{2d-2}|b_i|^2$ or of the form
$\rho_{ij}|b_j|^{2d-4}\Re(b_i^2\bar b_j^2)$. Moreover, $\chi_{ij}\equiv\chi$ is
independent of $i$ and $j$ and $\rho_{i,i+1}\equiv\rho$ is independent of $i$.
\end{enumerate}

\end{enumerate}
\end{definition}
\begin{theorem}\label{thm.comb}
For each $N$ sufficiently large there exist infinitely many acceptable sets
$\SS(N)$. 
\end{theorem}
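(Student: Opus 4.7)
The plan is to adapt the combinatorial construction of \cite{CKSTT}, as refined for higher-order nonlinearities in \cite{HausProcesi}, to general $d\geq 2$. One builds $\SS=\bigcup_{i=1}^N \SS_i$ with $|\SS_i|=n:=2^{N-1}$, connected between consecutive generations by \emph{nuclear families}: quadruples $(F_1,F_2,F_3,F_4)\in (\Z^2)^4$ satisfying $F_1-F_2+F_3-F_4=0$ and $|F_1|^2-|F_2|^2+|F_3|^2-|F_4|^2=0$ (i.e.\ they form a rectangle), with parents $\{F_1,F_3\}\subset\SS_i$ and children $\{F_2,F_4\}\subset\SS_{i+1}$, arranged so that each parent has exactly two children and each child has exactly two parents. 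The candidate $\SS$ is then parametrized by finitely many complex parameters (the positions of $\SS_1$ together with the angle/ratio data of the rectangles), living in an auxiliary moduli space $\MM_N$; a Zariski-open subset of rational points in $\MM_N$ produces $\SS\subset\Z^2$.

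The heart of the argument is to impose, by genericity on $\MM_N$, two sets of algebraic nondegeneracy conditions. First, \emph{completeness} (Definition~\ref{completeness}): no tuple $(j_1,\ldots,j_{2d-1})\in\SS^{2d-1}$ should admit an external resonant partner $k\in\Z^2\setminus\SS$. Second, the \emph{internal resonance structure}: every nontrivial resonance $(j_1,\ldots,j_{2d})\in\SS^{2d}$ should factor through exactly one nuclear-family rectangle (producing the cross terms $\Re(b_i^2\bar b_{i+1}^2)$ in \eqref{def:toymodel:0}), while every trivial resonance contributes an action-preserving monomial. Each forbidden configuration corresponds to a proper algebraic subvariety of $\MM_N$, and a generic rational parameter choice in $\MM_N$ avoids all of them simultaneously.

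Once this resonance classification is in hand, properties (a)--(f) of Definition~\ref{acceptable} and the explicit form of $h_\SS$ follow by direct counting on $U_\SS$: the leading term $d!n^{d-1}(\sum_i|b_i|^2)^d$ is produced by the action-preserving resonances, weighted by the multinomial coefficients in \eqref{Ham2} and by the multiplicities $n$ of symmetric modes per generation; the subleading $n^{d-2}$ block comes from inserting exactly one nuclear-family rectangle between generations $i$ and $i+1$ and pairing the remaining $2d-4$ indices trivially, yielding both the $-\tfrac14|b_i|^4$ diagonal correction and the $\sum_i\Re(b_i^2\bar b_{i+1}^2)$ cross term; every other configuration is of order $1/n$ smaller and is collected into $\PP$. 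Gauge invariance (c) and parity (d) are inherited from the corresponding symmetries of $H_{\rm Res}$; the exchange symmetry (e) follows from the interchangeable roles of parents and children in each rectangle; property (f) is a direct enumeration of how two indices $i\neq j$ can co-occupy a resonant $2d$-tuple. The norm-explosion bound \eqref{Normexp} is arranged by scaling the initial generation so that $\min_{k\in\SS_i}|k|\sim 2^i$, a freedom compatible with the dilation/rotation invariance of the rectangle constraints. Infinitely many integer realizations then follow from the Zariski-openness of the good locus in $\MM_N$ and a density argument in $\Z[\ii]$.

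The principal obstacle is the resonance classification step. For $d=2$ only quartets must be understood, which essentially reduces to the \emph{rectangles-only} lemma of \cite{CKSTT}; for $d=3$ the analysis of $6$-tuples in $\SS^6$ is already delicate, as carried out in \cite{HausProcesi}. For arbitrary $d$ the number of potentially problematic $2d$-tuples in $\SS^{2d}$ grows combinatorially, yet one must prove that every surviving nontrivial resonance still factors through a single rectangle. Showing that the resulting bad loci form a \emph{finite} union of proper subvarieties of $\MM_N$, and that the list of forbidden configurations is short enough for generic parameters to avoid all of them simultaneously, is the central technical task of Theorem~\ref{thm.comb}.
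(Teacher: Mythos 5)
Your overall outline (CKSTT-type family structure, a moduli space of realizations, genericity plus density of rational points) matches the paper's strategy, but the step you single out as the ``central technical task'' is not just unproved in your proposal --- it is stated in a form that is false, and this is a genuine gap. You propose to impose, generically, that \emph{every} nontrivial resonance $(j_1,\dots,j_{2d})\in\SS^{2d}$ factors through exactly one nuclear-family rectangle with the remaining indices paired trivially. For $d\geq 3$ this cannot be arranged: every mode in an intermediate generation lies in exactly two rectangles (as a child and as a parent), and summing the two family relations produces an unavoidable resonance supported on six distinct modes --- precisely the sextuple example at the start of Section~\ref{sec:GenerationSetsCombinatorics}. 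More generally, the unavoidable resonances are all of $\langle\FF\rangle\cap\RR_d$, i.e.\ arbitrary integer combinations of family vectors, and the only genericity one can impose is condition (i) of Definition~\ref{defi:NonDegeneracy} (no resonance outside $\langle\FF\rangle$) together with the completeness condition (ii), which moreover admits an exceptional case $\mu-\be_j\in\langle\FF\rangle$ where $K_\SS(\mu)$ \emph{does} vanish identically on $\MM$. The multi-family resonances must then be kept and shown to contribute only at order $n^{d-3}$ (hence into $\PP/n$ in \eqref{def:toymodel:0}) and to respect properties 4(d)--4(f); this is exactly what the paper extracts from the algebraic structure of $\langle\FF\rangle$ (Lemma~\ref{gentree}, in particular items (iv) and (v)) and from the expansion \eqref{parto}, none of which appears in your plan. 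Relatedly, the invariance of $U_\SS$ (the intra-generational equality, condition 3) does not follow from completeness alone once $d$ is large: the paper needs the extra symmetry maps $g_{j_1j_2}$ of Definition~\ref{def.fam2}, item 4, and your argument offers no substitute.

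A second missing ingredient is the properness of the ``bad'' loci. Finiteness of the list of forbidden configurations is immediate (there are finitely many $\lambda\in\RR_{2d}$ and $\mu$); the real issue is to show that each condition $\pi_\SS(\lambda)=0$ with $\lambda\notin\langle\FF\rangle$, and each identity $K_\SS(\mu)\equiv 0$ outside the allowed exception, cuts out a \emph{proper} subvariety of $\MM$, i.e.\ that the rectangle relations defining $\MM$ do not force hidden linear relations among the $\vv_j$. This is the content of the paper's Lemma~\ref{fifi} and the inductive argument of Theorem~\ref{thm:FiniteResonantSet}, and without it the phrase ``a generic rational parameter choice avoids all of them'' has no support. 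Finally, a minor point: you cannot arrange the norm explosion \eqref{Normexp} by scaling each generation independently (consecutive generations are rigidly coupled by the rectangle relations); it is instead inherited from proximity to the CKSTT prototype embedding, as in Lemma~\ref{parapa} and the remark following it.
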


This theorem is combinatoric in nature and proved in Section
\ref{sec:GenerationSetsCombinatorics}.
The set of modes $\SSS\subset\ZZ^2$ is a generalization of the set of modes
constructed in \cite{CKSTT}. In \cite{CKSTT} there is only one possible resonant
interaction given by the conditions $|k_1|^2+|k_3|^2=|k_2|^2+|k_4|^2$ and
$k_1+k_3=k_2+k_4$. Geometrically corresponds to four modes forming a rectangle
in $\ZZ^2$. Now, since $d\geq 3$, there are more possibilities of resonant
interactions. Nevertheless, as it is explained in \cite{HausProcesi}, the
interactions more suitable to achieve growth of Sobolev norms are still the ones
which form rectangles. The interaction through more modes not built upon
rectangles seem to be more stable. Therefore we consider analogs of the resonant
interactions constructed in \cite{CKSTT}. Nevertheless, in the case $d\geq 3$,
the rectangles construction presents the following obvious difficulties. On the
one hand, linear combinations of rectangular resonance conditions generate new
unavoidable resonant relations which make the toy model more difficult to
analyze. On the 
other hand, one needs to construct the set $\SSS$ such that all the resonant
relations which are not constructed upon rectangles are avoided. The higher the
degree $d$ the larger amount of such new resonant relations. 

\begin{remark}
In order to obtain the time estimates we also need  a quantitative version of
Theorem \ref{thm.comb}, i.e. a bound on the size of the modes in $\SS$. This is
done in Lemma \ref{parapa} and Corollary \ref{size.and.explosion}.
\end{remark}

\begin{remark}\label{rmk:NoSpreading}
In \cite{GuaKal} an extra condition to the set $\SSS$ is added. This condition,
called by the authors \emph{no spreading condition} says the following. Take
$k\in\ZZ^2 \setminus\SSS$, then there exist at most four rectangles which have
$k$ as a vertex, two vertices in $\SSS$ and the fourth does not belong to
$\SSS$. This implies that in the Hamiltonian $H_\mathrm{Res}$, among the
monomials which depend on $a_k$ there are only four which depend also on two
modes in $\SSS$. This implies that when one considers the full Hamiltonian
\eqref{def:HamAfterNormalForm} one has slow spreading of energy from the modes
of $\SSS$ to the modes not belonging to $\SSS$ since essentially $a_k$ only
``receives energy'' through these four monomials.

This condition is not true in the present setting. Nevertheless we expect that
a slightly weaker  condition holds, replacing four rectangles by a fixed number of rectangles
which depends on the degree $d$ but not on the number of generations $N$.
Unfortunately,  such no spreading condition is considerably more
involved  since
resonant interactions occur for all choices of $\mathcal S$ and classifying them seems a complicated task requiring some new ideas. 
Note that if this {\em weak no spreading condition}   were proved to be true we
would obtain polynomial time estimates as in Theorem 1 of \cite{GuaKal}. 
\end{remark}

The toy model \eqref{def:toymodel:0} is gauge invariant by  condition
4(c) of Definition
\ref{acceptable}. Thus, as explained in Remark \ref{rmk:gauge},
the first term 
$d! n^{d-1}\left(\sum_{i=1}^N|b_i|^2\right)^{d}$, which is  a function of the
mass $J=\sum_{i=1}^N|b_i|^2$, can be eliminated by a change of coordinates which
does not modify the modulus of the components $b_i$'s. Thus, we can consider the
toy model with this term subtracted. Now, we rescale time to have a first order
independent of $n$ ($n$ has been introduced in Definition
\ref{acceptable}). We consider the new time $\tau$ defined by
\begin{equation}\label{def:timerescaling}
 t=\frac{\tau}{n^{d-2}d!d(d-1)}.
\end{equation}
We obtain then, the Hamiltonian
\begin{equation}\label{def:toymodel}
h(b)=\left(\sum_{i=1}^N|b_i|^2\right)^{d-2}\left[-\frac14\sum_{i=1}^N|b_i|^4+
\sum_{i=1}^{N-1}\Re(b_i^2\bar b_{i+1}^2)\right]+\frac{1}{n}\mathcal
P\left(b,\bar b,
\frac1 n\right).
\end{equation}
This toy model is a perturbation from the one obtained in \cite{CKSTT} and
studied also in \cite{GuaKal}. Nevertheless, note that it is a perturbation in
terms of $n^{-1}$. Since we want to study the dynamics of this model for rather
long time, classical perturbative methods do not apply. This implies that we
need to redo and adapt the study done in \cite{GuaKal} for the toy model for the
cubic NLS.

The key point is that  the properties of the toy model obtained in Theorem
\ref{thm.comb} (see Definition \eqref{acceptable}) imply that the
toy model \eqref{def:toymodel} presents the same dynamical features as the toy
model in \cite{CKSTT}. Even if \eqref{def:toymodel} may have very complicated
dynamics, it has certain invariant subspaces where the dynamics is easy to
analyze. Fix mass $J=\sum_{i=1}^N|b_i|^2=1$. Then, by property 4(d) of
Definition
\ref{acceptable}, the toy model  \eqref{def:toymodel} has the periodic orbits
$\TT_j=\{|b_j|=1\,\text{ and }\,b_i=0\,\text{ for all }i\neq j\}$. One can also
consider the invariant subspaces $U_\SSS^j$ where two modes are non zero (see
property 4(e) in Definition \ref{acceptable}). This subspace contains $\TT_j$
and $\TT_{j+1}$. Furthermore, as it is explained in \cite{GuaKal}, in this
subspace the Hamiltonian $h(b)$ becomes a two degrees of freedom Hamiltonian
which is integrable since $h$ itself and the mass $J$ are first integrals in
involution. Then, 
one can see that in $U_\SSS^j$ the unstable manifold of $\TT_{j}$ coincides with
the stable manifold of $\TT_{j+1}$. Thus, we have a sequence of periodic orbits
$\{\TT_j\}_{j=1}^N$ which are connected by heteroclinic orbits. Orbits shadowing
such structure  provide growth of Sobolev norms.

Next theorem shows the existence of orbits with such dynamics.
\begin{theorem}\label{thm:ToyModelOrbit}
Fix large $\ga>1$. Then, for any $N$ large enough and $\de=e^{-\ga N}$ and for
any acceptable set $\SS$ , there exists an orbit $b(\tau)$
of equations \eqref{def:toymodel}, constants $\KKK>0$ and $\nu>0$,
independent of $N$ and $\de$, and $T_0>0$ satisfying
\[
 T_0\leq \KKK N\ln(1/\de),
\]
such that
\[
\begin{split}
 |b_3(0)|&>1-\de^\nu\\
|b_j(0)|&< \de^\nu\qquad\text{ for }j\neq 3
\end{split}
\qquad \text{ and }\qquad
\begin{split}
 |b_{N-2}(T_0)|&>1-\de^\nu\\
|b_j(T_0)|&<\de^\nu \qquad\text{ for }j\neq N-2
\end{split}
\]
Moreover, there exist times $\tau_j\in [0,T_0]$, $j=3,\ldots, N-2$, satisfying
$\tau_{j+1}-\tau_j\leq \KKK\ln (1/\de)$,
such that for any $\tau\in[\tau_j, \tau_{j+1}]$ and  $k\neq j-1,j,j+1$,
\[
 |b_k(\tau)|\leq \de^{\nu}.
\]
\end{theorem}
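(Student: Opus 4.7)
The plan is to adapt the slider-orbit construction developed for the cubic NLS in \cite{CKSTT,GuaKal}. By the gauge invariance (property 4(c) of Definition \ref{acceptable}) the mass $J=\sum_i|b_i|^2$ is conserved, so we restrict to $J=1$, on which the leading part of \eqref{def:toymodel}, namely $h_0(b):=-\frac14\sum_i|b_i|^4+\sum_{i=1}^{N-1}\Re(b_i^2\bar b_{i+1}^2)$, coincides with the CKSTT toy model (since $J^{d-2}\equiv 1$), and the remainder $\frac{1}{n}\PP$ is an $n^{-1}=2^{-(N-1)}$-small perturbation. For the unperturbed model the required heteroclinic chain $\TT_3\to\TT_4\to\cdots\to\TT_{N-2}$ and the shadowing slider orbit are those built in \cite{CKSTT,GuaKal}; our task is to show that $\frac{1}{n}\PP$ does not destroy the dynamical ingredients and that the error it introduces over the shadowing time $T_0\sim N\ln(1/\de)$ remains negligible.

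The first ingredient is the family of two-mode invariant subspaces $U_\SS^j$, which remain invariant for the full perturbed flow by property 4(e). On each $U_\SS^j$ the Hamiltonian has two degrees of freedom and two commuting integrals $h$ and $J$, hence is integrable. The periodic orbits $\TT_j,\TT_{j+1}$ survive by 4(d) and remain hyperbolic in $U_\SS^j$ for $n$ large by continuity from the integrable unperturbed case. Integrability of the reduced one-degree-of-freedom system on the mass shell of $U_\SS^j$ then forces the unstable manifold of $\TT_j$ and the stable manifold of $\TT_{j+1}$ to coincide on the common level set of $h$ and $J$, so the heteroclinic connection persists; the $j$-independence clause of 4(e) ensures it has the same shape in every $U_\SS^j$.

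The second ingredient is the transverse linearization near $\TT_j$. Since $\{b_i=0\}$ is invariant for each $i$ (property 4(d)), the linearization along $\TT_j$ splits into $2\times 2$ blocks acting on each pair $(b_k,\bar b_k)$, $k\neq j$. By 4(f) and $|b_j|\equiv 1$, for $|k-j|\neq 1$ the block is driven by the action-preserving term $\chi|b_j|^{2d-2}|b_k|^2$ and is elliptic, while for $k=j\pm 1$ the additional term $\rho|b_j|^{2d-4}\Re(b_k^2\bar b_j^2)$ produces a hyperbolic $2\times 2$ block with order-one real Floquet exponents independent of $j$. This matches the linearized structure exploited in \cite{CKSTT,GuaKal}, and the $O(1/n)$ corrections from $\PP$ do not change the sign of the eigenvalues once $n$ is large.

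With these three ingredients the slider orbit is built by the shadowing argument of \cite{GuaKal}: take an initial datum $\de^\nu$-close to the heteroclinic from $\TT_3$ to $\TT_4$ inside $U_\SS^3$, use the order-one hyperbolic exponents to transit across a neighbourhood of $\TT_4$ in time $O(\ln(1/\de))$, adjust at each step so the next transit tracks the heteroclinic from $\TT_j$ to $\TT_{j+1}$, and concatenate $N-5$ such transits to reach a $\de^\nu$-neighbourhood of $\TT_{N-2}$; the total time is $T_0\le\KKK N\ln(1/\de)$. The main technical obstacle is the control of the transverse modes $b_k$ with $|k-j|\geq 2$: in the unperturbed case they are frozen by the elliptic linearization, whereas $\frac{1}{n}\PP$ can drive drift bounded by $\exp(CT_0/n)$. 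Since $1/n=2^{-(N-1)}$ decays exponentially in $N$ while $T_0=O(\ga N^2)$ grows only polynomially, $CT_0/n=o(1)$; the drift factor is therefore arbitrarily close to $1$ for $N$ large, the transverse modes obey the same bounds as in \cite{GuaKal}, and the shadowing and time estimates carry over verbatim with constants $\KKK,\nu$ independent of $N$ and $\de$.
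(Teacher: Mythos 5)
There is a genuine gap, and it sits exactly where the paper says the real work is: the treatment of the $\tfrac1n\PP$ terms over the full shadowing time. Your bookkeeping of the perturbation is incorrect. The drift it produces is not bounded by $\exp(CT_0/n)$: the factor $1/n$ is the \emph{size} of the perturbing vector field, not a Lipschitz constant of the flow. Near each $\TT_j$ the unperturbed flow itself expands at the order-one rate $\la_n\sim\sqrt3$, so a crude Gronwall estimate gives a deviation of order $n^{-1}e^{CT_0}$ with $T_0\sim \ga N^2$, which is astronomically large; even per saddle passage (time $\sim\ln(1/\de)$) an error of size $n^{-1}$ is amplified to $n^{-1}\de^{-O(1)}$. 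Worse, since $\ga\gg1$ one has $n^{-1}=2^{-(N-1)}=\de^{\ln 2/\ga}$, a small \emph{fractional power} of $\de$: the perturbation is much larger than the scale $O(C^{(j)}\de\ln(1/\de))$ at which the slider orbit must track the heteroclinic chain near each periodic orbit, and also much larger than $\de^\nu$. So ``the perturbation is exponentially small, hence everything in \cite{CKSTT,GuaKal} carries over verbatim'' is not a proof; this is precisely why the paper states that classical perturbative methods do not apply and redoes the analysis.

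Concretely, two consequences of the $O(1/n)$ terms must be handled structurally rather than by continuity. First, the heteroclinic connection inside $U_\SS^j$ no longer lies on a coordinate axis: it is only a graph $q=\xi(p)$ with $\sup|\xi|=O(n^{-1})$ (Lemma \ref{lemma:SeparatrixAsGraph}), and since $n^{-1}\gg\de$ this deformation is huge on the relevant scale; the paper must introduce the straightening change \eqref{def:Change:Heteroclinic} and re-center the product-like sets around $\xi(\sigma)$. Second, the local passage near the resonant saddle is re-derived: a $\CCC^2$ resonant normal form and a Shilnikov-type two-step fixed-point argument in weighted norms, in which every new $O(1/n)$ term (in the hyperbolic, elliptic, and mixed parts) is estimated explicitly; the contraction only closes under the parameter constraints $r<\ln 2/(2\ga)$ and $0<r'<\ln2/\ga-2r$, which are exactly what guarantees that contributions like $K_\sigma N n^{-1}\de^{-2r}$ are bounded by $\de^{r'}$ at each of the $N$ steps of the iterative scheme (Theorem \ref{theorem:iterative}, Lemmas \ref{lemma:iterative:saddle} and \ref{lemma:iterative:hetero}). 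Your proposal correctly identifies the soft ingredients (invariance of $U_\SS^j$, persistence and hyperbolicity of the $\TT_j$, the block structure of the linearization from 4(d)--4(f)), but without the quantitative per-passage control of the $1/n$ terms and the interplay between $n^{-1}$ and powers of $\de$, the shadowing and the stated bounds on $T_0$ and on the transverse modes do not follow.
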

This theorem is proved in Section \ref{sec:LocalAndGlobalMaps}. 

Note that this theorem can be also stated in terms of the original time $t$, then the time needed to have such evolution is given by
\begin{equation}\label{def:T0prime}
 T_0'\leq\KKK' n^{-(d-2)} N\ln(1/\de)\,,\qquad \KKK'= d!d(d-1)\KKK
\end{equation}

\subsection{Proof of Theorem \ref{thm:MainTheorem}}
Once we have analyzed certain orbits of the toy model, we show that they
are a  good first order for certain orbits of the original partial differential
equation. We use the invariance rescaling \eqref{def:rescaling}. Consider 
\[b^\rrr(t)= \rrr^{-1}b\left( \rrr^{-2(d-1)}n^{d-2}d!d(d-1)t\right)\] 
where
$b(\tau)=b(n^{d-2}d!d(d-1)t)$ is the trajectory given in
Theorem \ref{thm:ToyModelOrbit}. 
Then, the trajectory
\begin{equation}\label{def:SolutionTruncatedSystem}
 \begin{split}
 \mathtt r_k^ \rrr(t)&=b_j^ \rrr(t)  \,\,\,\text{for any }k\in \SSS_j\\
 \mathtt r_k^ \rrr(t)&=0\,\,\,\,\text{for }k\not\in \SSS
 \end{split}
\end{equation}
is a solution of the Hamiltonian $H_\mathrm{Res}$ given in \eqref{Ham2}. Due to
the rescaling, now we study such trajectory in the time range $[0,T]$ with 
\begin{equation}\label{def:Time:Rescaled}
T= \rrr^{2(d-1)}T_0',
\end{equation}
where $T_0'$ is the time introduced in \eqref{def:T0prime}.

We show that for large enough $ \rrr$, \eqref{def:SolutionTruncatedSystem}
is the first order of a true solution of the nonlinear Schr\"odinger equation
\eqref{main}.

\begin{theorem}\label{thm:approximation}
Fix $N\gg1$ and $\rrr_0=e^{C2^{dN}N^2}$ for some large $C$. Let  $\mathtt r_k^ \rrr$ be \eqref{def:SolutionTruncatedSystem}, $T$ be
\eqref{def:Time:Rescaled}. Then, for all $\rrr\geq\rrr_0$ and for any solution $r(t)$  of
\eqref{def:HamAfterNormalForm} with initial condition $r(0)\in\ell^1$ satisfying
$\|r(0)-\mathtt  r^\rrr(0)\|_{\ell^1}\leq \rrr^{-5/2}$, one has that
\[
 \|r(t)-\mathtt  r^\rrr(t)\|_{\ell^1}\leq \rrr^{-3/2}
\]
for $0\leq t\leq T$.
\end{theorem}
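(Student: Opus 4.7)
The plan is to set up the difference equation $\xi(t) := r(t) - \mathtt r^\rrr(t)$ and control it via a Gronwall bound exploiting the small size of $\mathtt r^\rrr$ produced by the rescaling. Since $\mathtt r^\rrr$ solves the truncated equation associated to $H_{\rm Res}$ while $r$ solves $\HH' = H_{\rm Res} + \RRR'$, the remainder $\xi$ satisfies
\[
\dot\xi = X_{H_{\rm Res}}(\mathtt r^\rrr+\xi) - X_{H_{\rm Res}}(\mathtt r^\rrr) + X_{\RRR'}(\mathtt r^\rrr+\xi,\,t).
\]
Because $\ell^1$ is a Banach algebra and $X_{H_{\rm Res}}$ is a homogeneous polynomial vector field of degree $2d-1$, one has the Lipschitz bound
\[
\|X_{H_{\rm Res}}(r_1)-X_{H_{\rm Res}}(r_2)\|_{\ell^1}\le C\bigl(\|r_1\|_{\ell^1}+\|r_2\|_{\ell^1}\bigr)^{2d-2}\|r_1-r_2\|_{\ell^1},
\]
while Theorem \ref{thm:NormalForm}, combined with the fact that the rotation in \eqref{def:RemainderInRotating} is an $\ell^1$-isometry, yields $\|X_{\RRR'}(r,t)\|_{\ell^1}\le C\|r\|_{\ell^1}^{2d+1}$ on $B(\eta)$.

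Next, I would quantify the size of $\mathtt r^\rrr$. By Theorem \ref{thm:ToyModelOrbit} we have $|b_j(\tau)|\le 1$, so the scaling $b^\rrr_j = \rrr^{-1}b_j$ and the fact that $\mathtt r^\rrr$ is supported on $\SS$ with $|\SS|=nN$ give $\|\mathtt r^\rrr(t)\|_{\ell^1}\le nN\rrr^{-1}$ uniformly in $t$. I would then close the argument by a standard bootstrap: assuming $\|\xi(t)\|_{\ell^1}\le \rrr^{-3/2}$ on a subinterval (which for $\rrr$ large is much smaller than $\|\mathtt r^\rrr\|_{\ell^1}$), the two estimates above give
\[
\tfrac{d}{dt}\|\xi(t)\|_{\ell^1}\le C(nN)^{2d-2}\rrr^{-(2d-2)}\|\xi(t)\|_{\ell^1} + C(nN)^{2d+1}\rrr^{-(2d+1)}.
\]
Gronwall's lemma applied on $[0,T]$ with $T\le C\rrr^{2(d-1)}N^2 n^{-(d-2)}\ln(1/\de)$ (from \eqref{def:T0prime}) produces a Gronwall exponent of order $N^{2d-1}n^d\ln(1/\de)$ \emph{independent of $\rrr$}, so that
\[
\|\xi(t)\|_{\ell^1}\le e^{A(N)}\bigl(\|\xi(0)\|_{\ell^1} + B(N)\,\rrr^{-3}\bigr),
\]
where $A(N),B(N)$ depend only on $N$. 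With $\|\xi(0)\|_{\ell^1}\le\rrr^{-5/2}$, choosing $\rrr$ large enough (depending on $N$) makes the right-hand side smaller than $\tfrac12\rrr^{-3/2}$, and a standard continuation argument extends the bound to all of $[0,T]$.

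The main difficulty is exactly this accumulation of linearized resonant effects over the long time window $T\sim\rrr^{2(d-1)}$: the Gronwall factor $e^{A(N)}$ is double-exponential in $N$ (through $n=2^{N-1}$ and $\ln(1/\de)=\gamma N$), which is why the theorem only asserts existence of a sufficiently large $\rrr$ and cannot provide $\rrr$-uniform quantitative estimates. A secondary point that requires care is verifying that $\mathtt r^\rrr+\xi$ stays inside the ball $B(\eta)$ on which $\Gamma$ and the remainder bound of Theorem \ref{thm:NormalForm} are valid; this is immediate from $\|\mathtt r^\rrr\|_{\ell^1}+\|\xi\|_{\ell^1}\le 2nN\rrr^{-1}\le\eta$ for $\rrr$ large, so no further subtleties arise beyond the bootstrap above.
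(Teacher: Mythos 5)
Your proposal is correct and follows essentially the same route as the paper: a Gronwall--bootstrap argument in $\ell^1$ for $\xi=r-\mathtt r^\rrr$, using $\|\mathtt r^\rrr\|_{\ell^1}\lesssim nN\rrr^{-1}$, the remainder bound $\OO(\|\cdot\|_{\ell^1}^{2d+1})$ from Theorem \ref{thm:NormalForm}, and the key cancellation $\rrr^{-(2d-2)}\cdot\rrr^{2(d-1)}=1$ that makes the Gronwall exponent $\rrr$-independent while the forcing contributes $\OO(\rrr^{-3})$ over the time $T$. The only difference is cosmetic (you use a direct Lipschitz bound for $X_{H_{\rm Res}}$ instead of the paper's splitting into $\ZZZ^0+\ZZZ^1\xi+\ZZZ^2$), and your choice of $\rrr$ large depending on $N$ matches the paper's $\rrr=e^{\kk 2^{dN}N^2}$, $\kk\gg1$.
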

This theorem is proven in Section \ref{sec:Approximation}. 

To prove Theorem \ref{thm:MainTheorem}, it only remains to show that a well
chosen trajectory $r(t)$ among those obtained in Theorem
\ref{thm:approximation} undergoes growth of Sobolev norms. The proof of this
fact is done analogously as in \cite{GuaKal}. We reproduce here the reasoning
for completeness.

\begin{proof}[Proof of Theorem \ref{thm:MainTheorem}]
We start by choosing the trajectory which undergoes the growth of Sobolev norms.
We
consider a solution $u(t)$ of \eqref{def:NLS:ode} satisfying $u(0)=\mathtt  r^\rrr(0)$,
where $\mathtt  r^\rrr(t)$ has been defined in \eqref{def:SolutionTruncatedSystem}. 

We define
\[
 \mathfrak S_j=\sum_{k\in\SSS_j}|k|^{2s}\,\,\,\text{ for }j=1,\ldots, N.
\]
We obtain a bound of the final Sobolev norm $\|u(T)\|_{H^s}$ in terms of
$\mathfrak S_{N-2}$ as
\[
\left \|u(T)\right\|^2_{H^s}\geq \sum_{k\in\SSS_{N-2}}|k|^{2s}
\left|u_k(T)\right|^2\geq \mathfrak
S_{N-2}\inf_{k\in\SSS_{N-2}}\left|u_k(T)\right|^2.
\]
Now we obtain a lower bound for $\left|u_k(T)\right|$, $k\in\SSS_{N-2}$. To this
end, we need to show that we can apply Theorem \ref{thm:approximation} to the
solution $u$. Using the change $\Gamma$ obtained in Theorem \ref{thm:NormalForm}
and the change of variables \eqref{def:rotatingcoord}, we can  write $u(t)$ as 
\[
u(t)=\Gamma \left(\left\{r_k(t) e^{\ii|k|^2t}\right\}\right),
\]
where $r(t)$ is a solution of system \eqref{def:VFTruncated}. Note that, since
$u(0)=\mathtt  r^\rrr(0)$, by Theorem \ref{thm:NormalForm},
\[
\begin{split}
 \left\|r(0)-\mathtt  r^\rrr(0)\right\|_{\ell^1}&=\left\|r(0)-u(0)\right\|_{\ell^1}\\
&=\left\|r(0)-\Gamma\left(r\right)(0)\right\|_{\ell^1}\\
&\lesssim \left\|r(0)\right\|^3_{\ell^1}.
\end{split}
\]
We compute the $\ell^1$ norm of $u(0)=\mathtt  r^\rrr(0)$. From the definition of $\mathtt r^
\rrr(0)$ in \eqref{def:SolutionTruncatedSystem} and Theorem
\ref{thm:ToyModelOrbit}, we know that $\|\mathtt  r^\rrr(0)\|_{\ell^\infty}\leq
\rrr^{-1}$. Moreover, $|\mathrm{supp}\ \mathtt  r^\rrr(0)|=|\SSS|=N2^{N-1}$. Thus,
\[
\|u(0)\|_{\ell^1}= \|\mathtt  r^\rrr(0)\|_{\ell^1}\leq  \rrr^{-1} N2^{N-1}.
\]
Theorem \ref{thm:NormalForm} implies that $\Gamma$ is invertible and that
$\Gamma^{-1}$ satisfies $\left\|\Gamma^{-1} (u)-u\right\|_{\ell^1}\leq
\OO\left(\|u\|_{\ell^1}^3\right)$. Therefore, 
\[
\|r(0)\|_{\ell^1}\leq \left\|\Gamma^{-1}(u(0))\right\|_{\ell^1}\lesssim
\|u(0)\|_{\ell^1}\lesssim  \rrr^{-1} N2^{N-1},
\]
which implies, using the definition of $ \rrr_0$ and
taking $N$ large enough,
\[ 
  \left\|r(0)-\mathtt  r^\rrr(0)\right\|_{\ell^1}\lesssim  \rrr^{-3} N^32^{3(N-1)}\leq 
\rrr^{-5/2}.
\]
This estimate implies that $r(0)$ satisfies the hypothesis of Theorem
\ref{thm:approximation}.  We use this fact to estimate the Sobolev norm of
$r(T)$.  Using also  Theorem \ref{thm:NormalForm}, we split $
\left|u_k(T)\right|$ as
\begin{equation}\label{eq:ComputationFinalSobolev}
\begin{split}
 \left|u_k(T)\right|\geq & \left|r_k(T)\right|-\left|\Gamma_k
\left(\left\{r_k(T) e^{\ii|k|^2T}\right\}\right)(T)-r_k(T)
e^{\ii|k|^2T}\right|\\
\geq &\left|\mathtt  r^\rrr_k(T) \right|- \left|r_k(T)-\mathtt  r^\rrr_k(T) \right|\\
&-\left|\Gamma_k \left(\left\{r_k(T) e^{\ii|k|^2T}\right\}\right)(T)-r_k(T)
e^{\ii|k|^2T}\right|.
\end{split}
\end{equation}
We need  a lower bound for the first term of the right hand side and upper
bounds for the second and third ones. Using the definition of $\mathtt  r^\rrr$ in 
\eqref{def:SolutionTruncatedSystem}, the relation between $T$ and $T_0$
established in \eqref{def:Time:Rescaled} and the results in Theorem
\ref{thm:ToyModelOrbit}, we have that for $k\in\SSS_{N-2}$,
\[
\left|\mathtt r_k^ \rrr(T) \right|=    \rrr^{-1}\left| b_{N-1}(T_0)\right| \geq
\frac{3}{4} \rrr^{-1}.
\]
For the second term in the right hand side of
\eqref{eq:ComputationFinalSobolev}, it is enough to use Theorem
\ref{thm:approximation} to obtain,
\[
\left|r_k(T)-\mathtt  r^\rrr_k(T) \right|\leq \left(\sum_{k\in\ZZ^2}\left|r_k(T)-\mathtt r^
\rrr_k(T) \right|\right) \leq \frac{ \rrr^{-1}}{8}.
\]
For the  lower bound of the third term, we use the bound for
$\Gamma-\mathrm{Id}$ given in Theorem \ref{thm:NormalForm}. Then,
\[
\begin{split}
\Big|\Gamma_k &\left(\left\{r_k(T) e^{\ii|k|^2T}\right\}\right)(T)-r_k(T)
e^{\ii|k|^2T}\Big|\\ 
&\leq  \left\|\Gamma_k \left(\left\{r_k(T)
e^{\ii|k|^2T}\right\}\right)(T)-r_k(T) e^{\ii|k|^2T}\right\|_{\ell^1}
\leq  \frac{ \rrr^{-1}}{8}.
\end{split}
\]
Thus, we can conclude that
\begin{equation}\label{eq:GrowthSobolev:Final}
 \left \|u(T)\right\|^2_{H^s}\geq \frac{ \rrr^{-2}}{4}\mathfrak S_{N-2}.
\end{equation}
Now we prove that
\begin{equation}\label{eq:GrowthSobolev:Initial}
 \left \|u(0)\right\|^2_{H^s}\lesssim  \rrr^{-2}\mathfrak S_3.
\end{equation}
Let us recall that $u(0)= \mathtt r^\rrr (0)$ and then  $\mathrm{supp }\ u(0)=\SSS$.
Therefore,
\[
\left\|u(0)\right\|^2_{H^s}=\sum_{k\in\SSS}|k|^{2s}
\left|u_k(0)\right|^2=\sum_{k\in\SSS}|k|^{2s} \left|\mathtt  r^\rrr_k(0)\right|^2.
\]
Then, recalling the definition of $\mathtt  r^\rrr$ in
\eqref{def:SolutionTruncatedSystem} and the results in Theorem
\ref{thm:ToyModelOrbit},
\[
\begin{split}
\sum_{k\in\SSS}|k|^{2s}\left|\mathtt  r^\rrr_k(0)\right|^2&\leq  \rrr^{-2} \mathfrak
S_3+ \rrr^{-2}\de^{2\nu}\sum_{j\neq3}\mathfrak S_j\\
&\leq  \rrr^{-2}\mathfrak S_3\left(1+\de^{2\nu}\sum_{j\neq3}\frac{\mathfrak
S_j}{\mathfrak S_3}\right).
\end{split}
\]
From Theorem \ref{thm.comb} (see Lemma \ref{parapa}) we know that for $j\neq
3$, $\mathfrak S_j/\mathfrak S_3\lesssim e^{sN}$.
Therefore, to bound these terms we use the definition of  $\de$ from Theorem
\ref{thm:ToyModelOrbit} taking $\ga=\wt \ga (s-1)$.
Since $s-1>0$ is fixed, we can choose such $\wt \ga \gg 1$.
Then, we have that
\[
\|u(0)\|_{H^s}=\sum_{k\in\SSS}|k|^{2s}\left|\mathtt  r^\rrr_k(0)\right|^2\sim
\rrr^{-2}\mathfrak S_3.
\]
Using inequalities  \eqref{eq:GrowthSobolev:Final} and
\eqref{eq:GrowthSobolev:Initial},
we have that
\[
\frac{\left\|u(T)\right\|^2_{H^s}}{\left\|u(0)\right\|^2_{H^s}}\gtrsim
\frac{\mathfrak S_{N-2}}{\mathfrak S_3},
\]
and then, applying Theorem \ref{thm.comb}, we obtain
\[
\frac{\|u(T)\|_{H^s}^2}{\|u(0)\|_{H^s}^2}\gtrsim 2^{(s-1)(N-6)}
\ge \left(\frac{\CCC}{\mu}\right)^2.
\]
The last bound is obtained by taking $N$ appropriately large.

Now we have to ensure that $\|u(0)\|_{H^s}\sim \rrr^{-2}\mathfrak S_3\sim\mu$ so
that the final
norm satisfies $\|u(T)\|_{H^s}\gtrsim \CCC$. 
By Corollary \ref{size.and.explosion} we know that the modes $k\in\mathcal S_3$
satisfy $|k|\sim e^{\eta
2^{8dN}N^{8d+1}}$ for some $\eta>0$. Let us also note that if
Definition \ref{acceptable} is satisfied by the set $\SSS\subset\ZZ^2$, it is
also satisfied by the set 
\[
 \SSS'=\{qk:\,k\in\SSS\}
\]
for any given $q\in\N$. Call $u^\SSS$ and $u^{\SSS'}$ the orbits obtained by
reducing into the toy model in the sets $\SSS$ and $\SSS'$ respectively. Then,
$\|u^{\SSS'}(0)\|_{H^s}\sim q^s\|u^{\SSS}(0)\|_{H^s}$. 
Taking
\begin{equation}\label{def:lambda}
 \rrr=e^{\kk 2^{8dN}N^{8d+1}},\,\,\,\kk\gg 1
\end{equation} and adjusting the
parameters $q$ and $\kk$,
one can impose that $
\mu/2\leq\|u^{\SSS'}(0)\|_{H^s}\leq \mu$. 

Finally, it only remains to estimate the diffusion time $T$. We have chosen
$N$ such that $2^{(s-1)(N-6)}
\sim (\CCC/\mu)^2$. Then, using the
definition of $T$ in \eqref{def:Time:Rescaled} and $ \rrr$ in \eqref{def:lambda}
and choosing properly $c$, we obtain
\[
|T|\lesssim   \rrr^{2(d-1)} N^2 \leq e^{(\CCC/\mu)^c}
\]
for some $c>0$.
This completes the proof of Theorem \ref{thm:MainTheorem}.
\end{proof}

\section{Generation sets and
combinatorics}\label{sec:GenerationSetsCombinatorics}

We now discuss the combinatorial part of the paper, namely we prove Theorem
\ref{thm.comb}. As explained in the introduction, we need to choose some
frequency set $\SS\subset\ZZ^2$ which is complete (see Definition
\ref{completeness}) under the flow of \eqref{Ham2} and not action preserving
(namely
a certain number of resonances occur). Moreover, we need enough resonances to be
able to attain the desired energy transfer. Remember that a resonance is a
relation of the form
\begin{equation}\label{reson}
\sum_{\ell=1}^{2d} (-1)^{\ell} k_\ell=0 \qquad \sum_{\ell=1}^{2d} (-1)^{\ell}
|k_\ell|^2=0\ .
\end{equation}
In the case of the cubic NLS (i.e. $d=2$) the only non-trivial resonances are
given by non-degenerate rectangles. For $d>2$ we have many more options in the
choice of the resonant sets. However, as discussed in the introduction, we
are still going to use rectangles as building blocks for the construction of the
set $\SS$ with the same generation structure as in the cubic case, so that every
point of $\SS$ (except for the first and the last generation) belongs to exactly
two rectangles. In the cubic case this means that each mode contributes only to
two resonant monomials. Clearly this is false already in the quintic case, as
one can see as follows. Assume that
$$
k_1-k_2+k_3-k_4=0 \qquad |k_1|^2-|k_2|^2+|k_3|^2-|k_4|^2=0
$$
$$
k_4-k_5+k_6-k_7=0 \qquad |k_4|^2-|k_5|^2+|k_6|^2-|k_7|^2=0
$$
are two rectangles with a common vertex. Then, these two relations give the
resonant sextuple
$$
k_1-k_2+k_3-k_5+k_6-k_7=0 \qquad
|k_1|^2-|k_2|^2+|k_3|^2-|k_5|^2+|k_6|^2-|k_7|^2=0\ .
$$
As the degree of the NLS increases, the combinatorics of the resonances that
appear as a consequence of the rectangle relations becomes more and more
complicated, so we need some formal bookkeeping in order to handle this complex
structure.

It will be convenient to work in the space $\Z^m$ with $m= N 2^{N-1}= |\mathcal
S|$. We denote by $\{\be_j\}_{j=1}^m$ the canonical basis of  $\Z^m$ and divide
the basis elements  in $N$  disjoint {\em abstract generations}  $\mathcal A_i$
(each containing $2^{N-1}$ elements) using the convention that $\be_j\in
\mathcal A_i$ if and only if $(i-1)2^{N-1}+1\leq j \leq i 2^{N-1}$ . Following
\cite{PP}  given $\SS=\{\vv_1,\dots,\vv_m\}\in (\R^{2})^m$ we define the linear
maps
\begin{equation}\label{def:LinearMapsPlacement}
\pi_\SS: \; \Z^m \to \R^2\,,\quad \pi_\SS(\be_i )= \mathtt v_i\,,\quad
\pi^{(2)}_\SS: \; \Z^m \to \R,\quad \pi^{(2)}_\SS(\be_i)= |\vv_i|^2
\end{equation}
so that $\pi(\AA_i)=\SS_i$. By convention we denote $\cup_i \AA_i=\AA$.

\begin{definition}[Abstract Family]\label{def.fam}
An abstract family (of generation number $i\in\{1,\ldots,N-1\}$) is a vector
$$f= \be_{j_1}+\be_{j_2}-\be_{j_3}-\be_{j_4}\,, \quad \text{with}\,\quad
\be_{j_1},\be_{j_2}\in \AA_i\,,\quad\be_{j_3},\be_{j_4}\in \AA_{i+1},$$
and $\; j_1\neq j_2$, $ j_3\neq j_4$

We say that $\be_{j_1},\be_{j_2}$ are the parents of $\be_{j_3},\be_{j_4}$ and
that $\be_{j_3},\be_{j_4}$ are the children of $\be_{j_1},\be_{j_2}$. Moreover,
we say that $\be_{j_1}$ is the spouse of $\be_{j_2}$ (and vice versa) and that
$\be_{j_3}$ is the sibling of $\be_{j_4}$ (and vice versa). 
\end{definition}
\begin{definition}[Genealogical tree]\label{def.fam2}
A set $\mathcal F$ of abstract families is called a {\em genealogical tree}
provided that:
\begin{enumerate}
\item For all $i\in\{1,\ldots,N-1\}$, every $\be_j\in\AA_i$ is a member of one
and only one abstract  family of generation number $i$.
\item For all $i\in\{2,\ldots,N\}$, every $\be_j\in\AA_i$ is a member of one and
only one abstract  family of generation number $i-1$. 
\item For all $i\in\{2,\ldots,N-1\}$ and for all  $\be_j\in\AA_i$ we have that
the {\em sibling} of $\be_j$ and the {\em spouse} of $\be_j$ do not coincide.
\item For all $i\in\{1,\ldots,N\}$ and for all $\be_{j_1},\be_{j_2}\in\AA_i$,
there exists a linear isomorphism
$$
g_{j_1j_2}=g:\ZZ^m\to\ZZ^m
$$
with the following properties:
\begin{enumerate}
\item basis elements are mapped to basis elements, namely for all
$\be_{k_1}\in\AA$ there exists $\be_{k_2}\in\AA$ s.t. $g(\be_{k_1})=\be_{k_2}$;
\item for all $\ell\in\{1,\ldots,N\}$, one has $g(\AA_\ell)=\AA_\ell$;
\item $g(\be_{j_1})=\be_{j_2}$;
\item $g(\FF)=\FF$.
\end{enumerate}
\end{enumerate}
\end{definition}

\begin{definition}
Given $\la=\sum_{j}\la_j \be_j\in \R^m$  we denote by $\Su(\la):=\{j=1,\dots,m:
\la_j\neq 0\}$ its support.
\end{definition}
\begin{remark}\label{no.int}
If $f_1,f_2$ are abstract families of the same generation number, then their
supports have empty intersection.
\end{remark}

\begin{remark}
Item 4 in Definition \ref{def.fam2} is a symmetry property of the genealogical
tree that will be used in order to prove that the {\em intra-generational
equality}
$$
\quad r_{l}=r_{j}:= b_i\;\; \forall i=1,\dots,N\,,\;\forall l,j\in \SS_i
$$
(see the definition of $U_\SS$ in \eqref{christmas}) is preserved by the flow of
the truncated resonant Hamiltonian \eqref{Ham2}. In the case of cubic and
quintic NLS, items 1, 2, 3 of Definition \ref{def.fam2} are enough to ensure
this; however, starting from degree 7, the structure of resonances gets more
complicated and some additional symmetry property is needed.
\end{remark}

\begin{definition}[Generation set] Consider a set $\SS=\{\vv_1,\dots,\vv_m\}\in
(\R^{2})^m$ and the linear maps $\pi_\SS$ and  $\pi^{(2)}_\SS$ defined in
\eqref{def:LinearMapsPlacement}. We say that the set  $\SS $ is an $N$ generation set
if
\begin{equation}\label{famiglie}
\pi_\SS(f)=0\,,\quad \pi^{(2)}_\SS(f)=0\,,\quad \forall f\in \FF\,.
\end{equation}
\end{definition}

We want to use the same genealogical tree as in \cite{CKSTT}. Namely, we
identify
our abstract generations $\AA_i$  with the $\Sigma_i$'s defined in Section 4 of
\cite{CKSTT} and  consider the genealogical tree $\FF$ corresponding to the set
of {\em combinatorial nuclear families connecting generations}
$\Sigma_i,\Sigma_{i+1}$ defined in \cite{CKSTT}.

Let us give a brief overview of the notations of \cite{CKSTT}. Let
$$
S_1=\{1,\ii \}\,,\qquad S_2=\{0,\ii+1 \}\,,
$$
then the $2^{N-1}$ elements of the $k$-th generation are identified with
\begin{equation}
\label{lafame1}
(z_1,\ldots, z_{k-1},z_k,\ldots,z_{N-1})\in S_2^{k-1}\times S_1^{N-k}:=\Sigma_k
\end{equation}
The union of the $\Sigma_k$ is denoted by $\Sigma$.
We order $\Sigma$ by identifying  it  with the ordered set $\AA$ in such a way
that each $\Sigma_i $ is identified with $\AA_i$. 
Now, for all $1\leq k\leq N-1$, a combinatorial nuclear family of generation
number $k$ is a quadruple
\begin{equation}
\label{lafame2}
(z_1,\ldots, z_{k-1},w,z_{k+1},\ldots,z_{N-1})\,\quad w\in S_1\cup S_2
\end{equation}
where all the $z_j $ with $j\neq k$ are fixed with $z_j\in S_2$ if $1\leq
j\leq k-1$ and $z_j\in S_1$ if $k+1\leq j\leq N-1$. The parents correspond to
$w\in S_1$ and the children to $w\in S_2$.

Each combinatorial nuclear family identifies a quadruple in $\AA$ formed by
$\be_{j_1},\be_{j_2}\in\AA_k$ and $\be_{j_3},\be_{j_4}\in\AA_{k+1}$ and hence
an
abstract family according to Definition \ref{def.fam}. This fixes a set $\FF$.
\begin{lemma}\label{gigi no}
The set $\FF$ defined by the combinatorial nuclear families is a genealogical
tree according to Definition \ref{def.fam2}.
\end{lemma}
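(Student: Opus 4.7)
The plan is to verify the four items of Definition \ref{def.fam2} by exploiting the explicit product structure $\Sigma_k = S_2^{k-1}\times S_1^{N-k}$ of the generations together with the observation that a combinatorial nuclear family of generation number $i$ is the four-element set obtained by fixing all coordinates of a sequence in $S_2^{i-1}\times(S_1\cup S_2)\times S_1^{N-1-i}$ except one, and letting the remaining one range over $S_1\cup S_2$. Throughout I would identify a basis vector $\be_j\in\AA_i$ with its corresponding sequence $(z_1,\ldots,z_{N-1})$ satisfying $z_1,\ldots,z_{i-1}\in S_2$ and $z_i,\ldots,z_{N-1}\in S_1$; which coordinate is ``free'' in a family determines whether a given element sits there as a parent ($w\in S_1$) or as a child ($w\in S_2$).

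Items (1) and (2) then follow by direct inspection. For $1\le i\le N-1$, the vector $\be_j\in\AA_i$ lies in exactly one family of generation number $i$, namely the one obtained by freeing position $i$, in which it is a parent since $z_i\in S_1$; this gives (1). Dually, for $i\ge 2$, $\be_j$ is the child in the unique family of generation number $i-1$ obtained by freeing position $i-1$, giving (2). For item (3), when $2\le i\le N-1$ the spouse of $\be_j$ differs from $\be_j$ only in position $i$ (with a new value still in $S_1$) while the sibling differs only in position $i-1$ (with a new value still in $S_2$); since $S_1\cap S_2=\emptyset$ these modifications affect different coordinates and so produce distinct sequences.

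The substantive part is item (4), the symmetry property. Given $\be_{j_1},\be_{j_2}\in\AA_i$ corresponding to sequences $z^{(1)},z^{(2)}$, I would build $g=g_{j_1 j_2}$ coordinatewise: for each $j\in\{1,\ldots,N-1\}$ choose a permutation $\sigma_j$ of $S_1\cup S_2$ that preserves both $S_1$ and $S_2$ as sets (so $\sigma_j$ is independently the identity or the unique transposition on each) and which satisfies $\sigma_j(z^{(1)}_j)=z^{(2)}_j$. Such a $\sigma_j$ always exists because $z^{(1)}_j$ and $z^{(2)}_j$ lie in the same component of $S_1\sqcup S_2$ ($S_2$ if $j<i$, $S_1$ if $j\ge i$). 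Define $\sigma:\Sigma\to\Sigma$ by coordinatewise application, $\sigma(z_1,\ldots,z_{N-1})=(\sigma_1(z_1),\ldots,\sigma_{N-1}(z_{N-1}))$, and extend by $\ZZ$-linearity via the identification $\AA\leftrightarrow\Sigma$ to obtain $g:\ZZ^m\to\ZZ^m$.

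The main (though still routine) obstacle is checking the four sub-properties of (4). Properties (a) and (c) are immediate from the construction, and (b) holds because each $\sigma_j$ respects the partition $S_1\sqcup S_2$, so $\sigma$ preserves the number of $S_2$-coordinates in any sequence, and this number is precisely what labels the generation. The crucial point is (d), $g(\FF)=\FF$: the image under $\sigma$ of a family of generation number $k$ is obtained by applying $\sigma_j$ to the fixed values at positions $j\neq k$ and replacing the free coordinate $w$ by $\sigma_k(w)$; since $\sigma_k$ bijects $S_1\cup S_2$ onto itself while preserving types, as $w$ ranges over $S_1\cup S_2$ so does $\sigma_k(w)$, and the new fixed values remain in the correct sets ($S_2$ for $j<k$, $S_1$ for $j>k$), so the image is again a family of generation number $k$. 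The independence of the choices $\sigma_j|_{S_1}$ and $\sigma_j|_{S_2}$ is precisely the structural feature of the CKSTT genealogical tree that makes this symmetry available and that is exploited in the proof.
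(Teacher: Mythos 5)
Your proposal is correct and follows essentially the same route as the paper: items 1--3 by direct inspection of the product structure $\Sigma_k=S_2^{k-1}\times S_1^{N-k}$, and item 4 by building $g_{j_1j_2}$ as a coordinatewise, type-preserving permutation of $S_1\sqcup S_2$ extended linearly to $\ZZ^m$. The only (immaterial) difference is that the paper fixes one permutation $\sigma$ swapping the two elements of $S_1$ and of $S_2$ and composes the single-coordinate flips $\mathfrak f_\ell$ at the coordinates where the two sequences differ, which is a special case of your per-coordinate choice of $\sigma_j$.
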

\begin{proof}
Properties $1,2,3$  follow directly from the definition (see also \cite{CKSTT}).
As for property $4$ we proceed as follows.
Let $\sigma$ be the permutation of the elements of $S_1,S_2$ defined by
$$
\sigma(0)=\ii +1\,,\quad  \sigma(\ii
+1)=0\,,\quad\sigma(1)=\ii\,,\quad\sigma(\ii)=1\,.
$$
For all $\ell=1,\dots, N-1$  we define the map $\mathfrak f_\ell: \Sigma
\to\Sigma $ as
$$
(z_1,\ldots,z_{\ell-1}, z_{\ell},z_{\ell+1},\ldots,z_{N-1})\mapsto (z_1,\ldots,
z_{\ell-1}, \sigma(z_{\ell}),z_{\ell+1},\ldots,z_{N-1}).
$$
All the $\mathfrak f_\ell$ preserve the sets $\Sigma_i$ and the combinatorial
nuclear
families. Moreover they  commute with each other. 
Given any two elements $\be_{j_1},\be_{j_2}\in \AA_i$ we consider the
corresponding two elements $ \mathtt e_{j_1},\mathtt e_{j_2}$ in $\Sigma_i$.
Then there exists a map $\mathfrak g_{j_1,j_2}$, composition of a finite number
of $\mathfrak{f}_\ell$, which maps $ \mathtt e_{j_1}$ to $\mathtt e_{j_2}$. By
construction these maps preserves the $\Sigma_i$ and  the combinatorial nuclear
families. We pull back $\mathfrak g_{j_1j_2}$ to $\AA$ and then extend it to
$\ZZ^m$ by linearity. This is the required map $g_{j_1j_2}$.
\end{proof}

\subsection{Some geometry}\label{sec:SomeGeometry}
Now we want to prove the existence of sets $\SS=\{\vv_1, \ldots \vv_m\}$ which satisfy
all the properties of Definition \ref{acceptable}. We take advantage of the abstract combinatorial setting which we have
defined in the previous section and we use the maps $\pi_\SSS$ and $\pi_\SSS^{(2)}$ given in \eqref{def:LinearMapsPlacement}. It is helpful to think of $\SS$ as a vector in $\R^{2m}$.

Fix any genealogical tree $\mathcal F$ according to Definition \ref{def.fam2}.
For all practical purposes, we can assume that $\mathcal F$ is the one in Lemma
\ref{gigi no}.  We
claim that the resonance relations \eqref{famiglie} define a
real algebraic  manifold $\mathcal M$ as
\begin{equation}\label{mmmm}
\mathcal M:=\left\{\SS\in\R^{2m}\ :\ \forall f\in\mathcal F \quad \pi_\SS(f)=0,\
\pi^{(2)}_\SS(f)=0\right\}\ .
\end{equation}
Indeed  by imposing the linear equations we reduce to a $(N+1)2^{N-1}$ subspace
which we denote by
 \begin{equation}\label{lll}
 \mathcal L:=\left\{\SS\in\R^{2m}\ :\ \forall f\in\mathcal F \quad
\pi_\SS(f)=0\,\right\}\ .
 \end{equation}
  Then by imposing the quadratic constraints we further reduce the dimension. 
We can proceed by induction. Let us suppose that we have enforced all the linear
and quadratic constraints for the first $i$ generations (i.e for all abstract
families $f$ of generation number $\leq i-1$)  and for the first  $h<2^{N-2}$
families of generation number $i$. Then given a {\em parental couple}, (which
for simplicity of notation we denote) $\vv_1,\vv_2$ in the $i$-th generation  we
 have to fix the corresponding children which we denote by $\ww_1,\ww_2$ in the
generation $i+1$.  We have the two equations
$$\ww_2=-\ww_1+\vv_1+\vv_2\,,\quad (\vv_1-\ww_1,\vv_2-\ww_1)=0.$$
so that $\ww_2$ is fixed in terms of $\ww_1$ which in turn lies on the circle
with diameter the segment joining $\vv_1,\vv_2$. Hence provided that $\vv_1\neq
\vv_2$ both children
$\ww_2\neq \ww_1$ are fixed by one angle. Finally (by excluding at most a finite
number of points) we can ensure that $\ww_1,\ww_2$ do not coincide with any of
the previously fixed tangential sites.

  In conclusion  we have  $2\cdot 2^{N-1}$ degrees of freedom from the first
generation and then $2^{N-2}$ angles for each subsequent generation, hence a
manifold  of dimension $(N+3)2^{N-2}$ with singularities all contained  in 
 the proper submanifold  $  \mathcal B:=  \cup_{i\neq j}\{\vv_i-\vv_j=0\}\cap
\mathcal M$.
 Moreover  $\Q^{2m}\cap \mathcal M$ is dense on $\mathcal M$.
Now a resonance as in formula \eqref{reson} defines a codimension 3 algebraic
variety in $\R^{2m}$ as follows.
\begin{definition}
Given $k\in\mathbb N$, we denote by $\mathcal R_k$ the set of vectors
$\lambda\in \Z^m$ with
$$
\sum_i \lambda_i=0\,,\quad \sum_i |\lambda_i|\leq 2k\,.
$$
We say that $\lambda\in\mathcal R_d$ is resonant within $\SS$ if  
$$
\pi_\SS(\lambda)=0\,,\quad \pi^{(2)}_\SS(\lambda)=0\, .
$$
\end{definition}
Note that any resonance within $\SS$ given by equation \eqref{reson} can be
written in this form.  Some resonances cannot be avoided: they are the ones
whose associated algebraic variety contains $\mathcal M$.
 
%

\begin{remark}\label{pipi2}
Since both $\pi_\SS$ and $\pi^{(2)}_\SS$ are linear maps then 
$$
\pi_\SS(\lambda)=0\,,\quad \pi^{(2)}_\SS(\lambda)=0\,,\quad \forall \lambda\in
{\rm Span}(f\in \FF; \Q)\cap\ZZ^{m} .
$$
\end{remark}

All the elements of ${\rm Span}(f\in \FF; \Q)\cap\ZZ^{m}\cap\mathcal R_d$ correspond to
resonances that cannot be avoided, since they are obtained as linear combination
of the relations defining family rectangles.

\begin{definition}
We denote by 
$$
\langle \FF \rangle= {\rm Span}(f\in \FF; \Q)\cap\ZZ^{m}\ .
$$
\end{definition}

\begin{remark}
Note that in general,  given a set $\mathcal G\subset\ZZ^m$, one has ${\rm
Span}(g\in \mathcal G; \ZZ) \subseteq {\rm Span}(g\in \mathcal G; \Q)\cap\ZZ^m$,
but the two need not coincide. However, because of the special structure of
$\FF$, it turns out that $\langle \FF \rangle = {\rm Span}(f\in \FF; \ZZ)$, see
Lemma \ref{gentree} (iii).
\end{remark}

The next lemma gives properties of the unavoidable resonances.
\begin{lemma}\label{gentree}
The following statements hold:
\begin{itemize}
\item[(i)] a genealogical tree $\FF$ is a set of linearly independent abstract
families;
\item[(ii)] all nonzero vectors $\lambda\in \langle \FF \rangle$ have support
$|\Su(\lambda)|\geq 4$ and $|\Su(\lambda)|=4$ if and only if $\lambda$ is a
multiple of an abstract family.
\item[(iii)] we have that $\SF= {\rm Span}(f\in \FF; \ZZ)$;
\item[(iv)] assume that $\lambda\in\SF$ is such that all the elements of
$\Su(\lambda)$ except at most two belong to the same generation: then $\lambda$
is a multiple of an abstract family;
\item[(v)] let $\lambda\in\SF$ and let $v=\sum_{j\in\AA} \la_j\be_j$ be its
decomposition on the basis $\{\be_j\}_j$. Then for all $1\leq i\leq N$ one has 
$$
\sum_{j\in\AA_i} |\lambda_j| \in 2 \N\,,
$$
namely the $\ell^1$-norm of the projection of $\lambda$ on the $i$-th generation
is an even number.
\end{itemize}
\end{lemma}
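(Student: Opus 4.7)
The plan is to tackle the five items in order, leveraging the disjointness of same-generation family supports (Remark \ref{no.int}) together with the sibling-spouse asymmetry (property 3 of Definition \ref{def.fam2}).

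For (i), I argue by top-down induction. Suppose $\sum_f c_f f = 0$ and let $i$ be the largest generation with some $c_f\neq 0$; the contribution to $\AA_{i+1}$ comes only from gen-$i$ families (higher-generation families vanish by assumption, lower ones do not touch $\AA_{i+1}$), and by Remark \ref{no.int} those contributions lie in pairwise disjoint subsets of $\AA_{i+1}$, forcing each such $c_f$ to vanish, a contradiction. For (iii), I exploit that each $\be\in\AA_1$ appears in exactly one family (the unique gen-$1$ family having it as a parent), so $\lambda\in\ZZ^m$ forces the coefficient of that family to be integer; propagating upward, each $\be\in\AA_{k+1}$ with $1\le k\le N-2$ lies in exactly one gen-$k$ family (as child) and one gen-$(k+1)$ family (as parent), so $\lambda_\be\in\ZZ$ together with integrality of the gen-$k$ coefficient forces integrality of the gen-$(k+1)$ coefficient. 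For (v), I use that the \emph{signed} sum $\sum_{j\in\AA_i}\lambda_j$ picks up $+2c_f$ for each gen-$i$ family and $-2c_f$ for each gen-$(i-1)$ family, and nothing from other generations, so it is even; since $|\lambda_j|\equiv\lambda_j\pmod 2$, the $\ell^1$-norm on $\AA_i$ is even too.

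The substantial work is (ii) and (iv). For (ii), let $j_{\min}\le j_{\max}$ be the smallest and largest generations with at least one nonzero coefficient. The parents of gen-$j_{\min}$ families in $\AA_{j_{\min}}$ and the children of gen-$j_{\max}$ families in $\AA_{j_{\max}+1}$ cannot be canceled by anything else, giving at least $2$ elements in each; when $j_{\min}<j_{\max}$ these two generations differ, so $|\Su(\lambda)|\ge 4$, while when $j_{\min}=j_{\max}$ Remark \ref{no.int} gives $|\Su(\lambda)|=4k\ge 4$ where $k$ is the number of families with nonzero coefficient. For the equality case I use a \emph{sprouting} argument: if $|\Su(\lambda)|=4$ with $j_{\min}<j_{\max}$, there must be a single gen-$j_{\min}$ family $f_0$ whose two children $c_1,c_2$ are perfectly canceled in $\AA_{j_{\min}+1}$; property 3 of Definition \ref{def.fam2} forces $c_1,c_2$ to lie in two distinct gen-$(j_{\min}+1)$ families $g_1,g_2$, and the other parents of $g_1,g_2$ (the spouses $s_1,s_2$, each uncancelable by further gen-$(j_{\min}+1)$ families since $s_i$ is a parent in a unique family) can only be eliminated by additional gen-$j_{\min}$ families whose parents, being disjoint from those of $f_0$ by Remark \ref{no.int}, push $|\Su(\lambda)\cap\AA_{j_{\min}}|$ beyond $2$, contradicting the assumed total support $4$. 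Hence $j_{\min}=j_{\max}$ and the only configuration with $|\Su(\lambda)|=4$ is a single family.

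For (iv), set the ``main'' generation to be $\AA_i$ and assume $|\Su(\lambda)\setminus\AA_i|\le 2$. If $j_{\min}=j_{\max}=k$, Remark \ref{no.int} yields outside support $2s$ when $i\in\{k,k+1\}$ and $4s$ when $i\notin\{k,k+1\}$, where $s$ is the number of gen-$k$ families with nonzero coefficient; the bound $\le 2$ forces $s=1$ and $i\in\{k,k+1\}$, so $\lambda$ is a multiple of a single family. If $j_{\min}<j_{\max}$, the uncancelable contributions in $\AA_{j_{\min}}$ and $\AA_{j_{\max}+1}$ lie in two distinct generations, at most one of which coincides with $\AA_i$; the other already provides $\ge 2$ outside elements, saturating the bound, which forces the middle-generation support to vanish and there to be exactly one family at the extreme generation opposite to $\AA_i$. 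Running the sprouting cascade of (ii) from that lone extreme family downward then produces, at each cancellation step, additional gen-$(j_{\max}-1)$ (or gen-$(j_{\min}+1)$) families whose ``sibling'' children ultimately require new families at the extreme generation, each of which adds $2$ fresh outside elements to $\AA_{j_{\max}+1}$ (or $\AA_{j_{\min}}$), driving the outside support past $2$ and giving a contradiction. The main obstacle is organising this sprouting cascade cleanly, in particular verifying that every cancellation triggered in an intermediate generation eventually produces new uncancelable elements in an extreme generation outside $\AA_i$; property 3 of Definition \ref{def.fam2}, which forbids the two children of a family from collapsing into a single child-generation family, is precisely what powers this cascade.
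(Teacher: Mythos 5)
Your proposal is correct and follows essentially the same route as the paper: everything rests on Remark \ref{no.int} together with the sibling$\neq$spouse property, with (i) and (iii) handled by extremal-generation and generation-wise integrality arguments (your recursion runs bottom-up rather than the paper's top-down, which is immaterial), (v) by parity, and (ii), (iv) by the same cancellation mechanism in which canceled children (or parents) force two \emph{distinct} families of the adjacent generation, whose spouses (or siblings) in turn force a fresh family at an extreme generation with uncancelable support. The ``cascade'' you flag as the main obstacle in (iv) in fact terminates after at most one further step: when the lone family sits at generation $j_{\max}$, the forced new gen-$j_{\max}$ family already contributes two uncancelable children to $\AA_{j_{\max}+1}$ (gen-$(j_{\max}+1)$ families carry zero coefficients by maximality), and alternatively one can invoke the parent--child reflection symmetry to reduce this case to the one you treated fully in (ii), which is exactly how the paper disposes of it.
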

The proof of this lemma is delayed to Section \ref{sec:ProofLemmaGentree}.

Now we  prove that all the other resonances can be avoided.

\begin{definition}[Non-degeneracy]\label{defi:NonDegeneracy} We say that a
generation set $\SS $ is {\em non-degenerate} if
\begin{itemize}
\item[(i)] For all $\lambda\in  \mathcal R_{2d}\setminus \langle\FF\rangle$ one
has $\pi_\SS(\lambda)\neq 0$.
\item[(ii)] For all $\mu\in \Z^m$ such that $\sum_i \mu_i=1$ and $\sum_i
|\mu_i|\leq 2d-1$ one has that either
$$
K_\SS(\mu):=|\pi_\SS(\mu)|^2-\pi^{(2)}_\SS(\mu)\neq 0
$$
or there exists $1\leq j\leq m$ such that $\mu-\be_j \in \langle \FF\rangle$.
\end{itemize}
\end{definition}
\begin{remark}
Note that the non-degeneracy condition $(ii)$ implies the completeness of $\SS$;
if $d=2$ (i.e. the cubic NLS) actually this is all that is needed (and indeed
only this condition is imposed in \cite{CKSTT}). Condition $(i)$ is a
faithfulness condition (namely it ensures that all $\lambda\in\mathcal
R_{2d}\setminus\langle \FF\rangle$ are not resonant within $\SS$) and could
probably be weakened. 
\end{remark}

The fact that there exist generation sets $\SS$ has been proved in \cite{CKSTT}
together with a weaker non-degeneracy condition in the case $d=2$. In this
section we prove that one can construct non-degenerate generation sets for the
NLS of any degree.

\begin{theorem}\label{thm:FiniteResonantSet}
Consider the manifold $\mathcal M$ introduced in \eqref{mmmm}. Then, there
exists a proper algebraic manifold $\mathcal D\subset \mathcal M$ (of
codimension one in $\mathcal M$) such that  all $\SS\in \mathcal M\setminus
\mathcal D$ are  non-degenerate generation sets.
\end{theorem}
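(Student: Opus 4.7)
The plan is to express $\DDD$ as a finite union of proper real algebraic subvarieties of $\MM$, one for each potentially failing instance of condition (i) or (ii) of Definition \ref{defi:NonDegeneracy}, together with the diagonal $\BB$. First, the set of relevant $\lambda$'s is finite: the membership $\lambda\in\RRR_{2d}$ imposes $\sum_i|\lambda_i|\leq 2d$, so only finitely many integer vectors arise, and analogously the set of $\mu\in\ZZ^m$ with $\sum_i\mu_i=1$, $\sum_i|\mu_i|\leq 2d-1$ and $\mu-\be_j\notin\SF$ for every $j$ is finite. For each such $\lambda$ I set $D_\lambda:=\{\SS\in\MM:\pi_\SS(\lambda)=0\}$, a linear subvariety cut out by two real scalar equations; for each such $\mu$ I set $D_\mu:=\{\SS\in\MM:K_\SS(\mu)=0\}$, a quadratic subvariety. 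If each $D_\lambda$ and each $D_\mu$ is a proper subvariety of $\MM$, then $\DDD:=\BB\cup\bigcup_\lambda D_\lambda\cup\bigcup_\mu D_\mu$ is itself a proper algebraic subvariety of $\MM$, and every $\SS\in\MM\setminus\DDD$ is a non-degenerate generation set.

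The task therefore reduces to establishing properness, and I use the inductive parametrization of $\MM$ described in Section \ref{sec:SomeGeometry}: the first generation $\{\vv_j\}_{\be_j\in\AA_1}$ ranges freely in $(\R^2)^{2^{N-1}}$, while each subsequent generation is obtained by choosing, for each family $f\in\FF$ of the preceding generation number, an angle $\theta_f\in S^1$ parametrizing the children on the Thales circle of the parental segment. This realizes $\MM\setminus\BB$ as the real analytic image of a connected open subset of $(\R^2)^{2^{N-1}}\times(S^1)^{|\FF|}$, so a polynomial in $\SS$ vanishes identically on $\MM$ if and only if its pullback to the parameter space is identically zero. For condition (i), $\pi_\SS(\lambda)$ is linear in $\SS$ and differentiating its pullback with respect to an angle $\theta_f$ yields the constraint $\lambda_{c_1}=\lambda_{c_2}$ on the two children $c_1,c_2$ of $f$; running this over all families together with the unrestricted first-generation coordinates, one concludes that the vanishing of $\pi_\SS(\lambda)$ on $\MM$ forces $\lambda$ into the $\R$-span of $\FF$, which by Lemma \ref{gentree}(iii) combined with $\lambda\in\ZZ^m$ yields $\lambda\in\SF$ and contradicts the hypothesis.

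The main obstacle is the corresponding argument for condition (ii). Here $K_\SS(\mu)=|\pi_\SS(\mu)|^2-\pi^{(2)}_\SS(\mu)$ is quadratic in $\SS$, and its identical vanishing on $\MM$ may arise from quadratic combinations of the defining equations and not only from linear ones. The trivial vanishing case $\mu-\be_j\in\SF$ is immediate from Remark \ref{pipi2}, which gives $\pi_\SS(\mu)=\vv_j$ and $\pi^{(2)}_\SS(\mu)=|\vv_j|^2$ hence $K_\SS(\mu)\equiv 0$ on all of $\MM$; these $\mu$'s are precisely those excluded by hypothesis. The delicate point is to show that no other $\mu$ in our finite list produces identical vanishing of $K_\SS(\mu)$. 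The natural strategy is an induction on the highest generation index $i_*$ appearing in $\Su(\mu)$: by exploiting the independent variation of the angles $\theta_f$ of families whose children lie in generation $i_*$, one extracts trigonometric harmonics of the pullback of $K_\SS(\mu)$ which must vanish separately, and Lemma \ref{gentree}(v) (the parity of the $\ell^1$-projection of any element of $\SF$ onto each generation) rules out accidental cancellations between distinct families. Once all angles above the first generation have been eliminated, the residual identity is linear in $\SS_1$ and is settled by the argument used for condition (i), closing the induction and contradicting the hypothesis.
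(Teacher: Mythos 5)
Your reduction of $\mathcal D$ to a finite union of vanishing loci (one $D_\lambda$ per $\lambda\in\RR_{2d}\setminus\SF$, one $D_\mu$ per admissible $\mu$, plus the diagonal) is the right frame, and your treatment of condition (i) has the same content as the paper's Lemma \ref{fifi}/Remark \ref{acquacalda}, although your key claim there is not literally correct: differentiating the pullback of $\pi_\SS(\lambda)$ in an angle $\theta_f$ yields $\lambda_{c_1}=\lambda_{c_2}$ only for families of the last generation, since for any earlier family the variation of $\theta_f$ also moves all descendants (and indeed a generic element of $\SF$ has \emph{unequal} coefficients on two siblings, so the asserted constraint cannot hold family by family). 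This is repairable by peeling off the generations from the top, subtracting the appropriate multiples of the family vectors at each step, so I regard it as an imprecision rather than a fatal flaw.

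The genuine gap is condition (ii), which is the heart of the theorem and which you leave as a ``natural strategy''. The missing step is to show that if $K_\SS(\mu)$ vanishes identically on $\MM$ then necessarily $\mu-\be_j\in\SF$ for some $j$, and this is not a purely combinatorial cancellation statement that Lemma \ref{gentree}(v) can settle. The paper proves it by working on the Thales circle of the newly added pair of children: writing $\mu=\sum_{j\in A}\xi_j\be_j+a\cc_1+b\cc_2$, $\alpha=a-b$, $\lambda=\sum_{j\in A}\xi_j\ee_j+b(\ee_{j_1}+\ee_{j_2})$, the circle relation $|\ww_1|^2=(\vv_{j_1}+\vv_{j_2},\ww_1)-(\vv_{j_1},\vv_{j_2})$ makes $K_\SS(\mu)$ \emph{affine} in $\ww_1$, with linear coefficient $\alpha\bigl(2\pi_\SS(\lambda)+(\alpha-1)(\vv_{j_1}+\vv_{j_2})\bigr)$. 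If this coefficient is not identically zero one removes at most two points of the circle; if it vanishes identically, condition (i) — applied to $\eta:=2\lambda+(\alpha-1)(\ee_{j_1}+\ee_{j_2})$, and this is exactly why Definition \ref{defi:NonDegeneracy}(i) is formulated with $\RR_{2d}$ rather than $\RR_d$ — forces $\eta\in\SF$, whence by Remark \ref{pipi2} also $\pi^{(2)}_\SS(\eta)=0$; this companion quadratic identity is what allows one to evaluate the residual constant term and find $K_\SS(\mu)=\tfrac{(\alpha-1)(\alpha+1)}{4}|\vv_{j_1}-\vv_{j_2}|^2$, so identical vanishing forces $\alpha=\pm1$, which is then checked to be precisely the excluded case $\mu-\cc_1\in\SF$ or $\mu-\cc_2\in\SF$. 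Your proposed harmonic extraction, if carried out, would have to reproduce exactly this two-step mechanism (vanishing of the first harmonic only gives a linear identity on $\MM$, i.e.\ membership of $\eta$ in $\SF$, not an integer constraint; the zeroth harmonic must then be evaluated using the induced quadratic relation), and the parity property of Lemma \ref{gentree}(v) plays no role in it. As written, the induction step for condition (ii) is therefore not established; note also that the induction should follow the order in which the individual families are placed (the set $A$ of already fixed sites), not the generation index, since $\mu$ may charge children of several families of the same generation.
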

The proof of this Theorem is delayed to Section
\ref{sec:ProofThmFiniteResonantSet}. Now we are ready to prove Theorem
\ref{thm.comb}.

\subsection{Proof of Theorem \ref{thm.comb}}
We need to prove the existence of a set $\SS\subset\ZZ^2$, $|\SS|=m=N2^{N-1}$,
satisfying all the
properties in Definition \ref{acceptable}. It is convenient to consider $\SS$ as
a point in $\ZZ^{2m}$. 

\begin{lemma}\label{lemma:Condition1S}
Consider $\SS$ belonging to $(\MM\setminus \mathcal D)\cap \ZZ^{2m}$, where $\MM$ is the
variety defined in \eqref{mmmm} and $\mathcal D$ is the subvariety given by
Theorem \ref{thm:FiniteResonantSet}. Then,  $\SS$ satisfies  condition 1 in
Definition \ref{acceptable}.
\end{lemma}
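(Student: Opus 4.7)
The plan is to reduce condition 1 of Definition \ref{acceptable} to the simple statement that the $m$ vectors $\vv_1,\ldots,\vv_m$ making up $\SS$ are pairwise distinct in $\R^2$. Indeed, by construction $\SS_i:=\pi_\SS(\AA_i)$, and the abstract generations $\AA_i$ already form a disjoint partition of $\AA$ with $|\AA_i|=2^{N-1}$; so once we know that $\pi_\SS$ is injective on $\AA$, both $|\SS_i|=2^{N-1}$ and the disjointness $\SS_i\cap \SS_j=\emptyset$ for $i\neq j$ follow automatically.

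To prove injectivity of $\pi_\SS$ on $\AA$, I would argue by contradiction: suppose there exist $j_1\neq j_2$ in $\{1,\ldots,m\}$ with $\vv_{j_1}=\vv_{j_2}$, and set
\[
\lambda:=\be_{j_1}-\be_{j_2}\in\Z^m.
\]
Then $\sum_i \lambda_i=0$ and $\sum_i|\lambda_i|=2\le 4d$, so $\lambda\in \mathcal R_{2d}$, and by assumption $\pi_\SS(\lambda)=\vv_{j_1}-\vv_{j_2}=0$.

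The key step is to exclude the possibility $\lambda\in\langle\FF\rangle$, for otherwise the non-degeneracy condition (i) of Definition \ref{defi:NonDegeneracy} would not apply. This is precisely where Lemma \ref{gentree}(ii) does the work: every nonzero element of $\langle\FF\rangle$ has support of cardinality at least $4$, whereas $|\Su(\lambda)|=2$. Hence $\lambda\in \mathcal R_{2d}\setminus \langle\FF\rangle$, and since $\SS\in \mathcal M\setminus\mathcal D$ is a non-degenerate generation set (by Theorem \ref{thm:FiniteResonantSet}), condition (i) of Definition \ref{defi:NonDegeneracy} yields $\pi_\SS(\lambda)\neq 0$, contradicting $\vv_{j_1}=\vv_{j_2}$.

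This argument is essentially immediate given the machinery already developed; there is no real obstacle, the only point to be careful about is the bookkeeping that identifies the failure of condition 1 (a coincidence between two elements of $\SS$, possibly in the same generation or in different generations) with a two-term relation $\be_{j_1}-\be_{j_2}$ lying outside $\langle\FF\rangle$, which is guaranteed by the support bound of Lemma \ref{gentree}(ii).
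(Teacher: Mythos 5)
Your proof is correct and follows essentially the same route as the paper: both reduce condition 1 to the pairwise distinctness $\vv_{j_1}\neq\vv_{j_2}$, note that $\be_{j_1}-\be_{j_2}\in\mathcal R_{2d}$ has support of cardinality $2$ so it cannot lie in $\langle\FF\rangle$ by Lemma \ref{gentree}(ii), and then conclude via the non-degeneracy condition (i) of Definition \ref{defi:NonDegeneracy}. No issues.
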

\begin{proof}
Condition 1 is equivalent to saying $\vv_i-\vv_j \neq0$ for all $i\neq j$. This
can
be also written as $\pi_\SS(\ee_i-\ee_j)\neq 0$. Item 2 in Lemma \ref{gentree}
implies that $\ee_i-\ee_j\not\in\langle\FF\rangle$. Moreover,
$\ee_i-\ee_j\in\mathcal R_{2d}$. Then, condition 1 of Definition
\ref{acceptable}  follows from item (i) in Definition \ref{defi:NonDegeneracy}.
\end{proof}

\begin{lemma}\label{lemma:Condition3S}
Consider $\SS$ belonging to $(\MM\setminus \mathcal D)\cap \ZZ^{2m}$, where $\MM$ is the
variety defined in \eqref{mmmm} and $\mathcal D$ is the subvariety given by
Theorem \ref{thm:FiniteResonantSet}. Then,  $\SS$ satisfies  condition 3 in
Definition \ref{acceptable}.
\end{lemma}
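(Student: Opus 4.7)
The plan is to reduce condition~3 of Definition~\ref{acceptable} to two infinitesimal statements about the vector field $\EE_k$ of \eqref{eq:HRes}: (a) for every $k\in\ZZ^2\setminus\SS$, $\EE_k(r)=0$ whenever $r\in U_\SS$; (b) for every $i\in\{1,\dots,N\}$ and every $l,j\in\SS_i$, $\EE_l(r)=\EE_j(r)$ whenever $r\in U_\SS$. Statement~(a) keeps the components $r_k$ with $k\notin\SS$ equal to zero along the flow of $H_{\rm Res}$, while (b) preserves the intra-generational equality $r_l=r_j=b_i$ entering the definition \eqref{christmas} of $U_\SS$.

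For (a), I would encode each potentially nonzero summand of $\EE_k(r)|_{U_\SS}$, which necessarily comes from a tuple $(k_1,\dots,k_{2d-1})\in\SS^{2d-1}$ satisfying the resonance relations in \eqref{eq:HRes}, as a signed vector $\mu\in\ZZ^m$. Choosing abstract preimages $\be_{j_i}\in\AA$ of each $k_i$ and signing them by the parity of $i$ (weight $+1$ for odd $i$, $-1$ for even $i$), one obtains $\mu$ with $\sum_l\mu_l=d-(d-1)=1$, $\sum_l|\mu_l|\leq 2d-1$, $\pi_\SS(\mu)=k$, and $\pi_\SS^{(2)}(\mu)=|k|^2$, so that $K_\SS(\mu)=0$. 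The non-degeneracy condition (ii) of Definition~\ref{defi:NonDegeneracy}, which holds because $\SS\in\MM\setminus\mathcal D$ and $\mathcal D$ is the exceptional set of Theorem~\ref{thm:FiniteResonantSet}, then produces an index $j$ with $\mu-\be_j\in\SF$; applying $\pi_\SS$ and invoking Remark~\ref{pipi2} yields $k=\vv_j\in\SS$, contradicting $k\notin\SS$.

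For (b), the decisive input is item~4 of Definition~\ref{def.fam2}. Given $l,j\in\SS_i$ with abstract preimages $\be_{l'},\be_{j'}\in\AA_i$, let $g=g_{l'j'}$ be the associated linear isomorphism of $\ZZ^m$: it sends basis vectors to basis vectors, preserves each generation $\AA_\ell$, preserves the genealogical tree $\FF$ (and hence also $\SF$), and satisfies $g(\be_{l'})=\be_{j'}$. Through the $\mu$-encoding of (a), $g$ induces a bijection between $\EE_l$-summands with all modes in $\SS$ and the corresponding $\EE_j$-summands: if $\mu-\be_{l'}\in\SF$ then $g(\mu)-\be_{j'}=g(\mu-\be_{l'})\in\SF$, so that $\pi_\SS(g(\mu))=\vv_{j'}=j$. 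Since $g$ preserves the generation decomposition and, on $U_\SS$, the monomial $r_{k_1}\bar r_{k_2}\cdots r_{k_{2d-1}}$ depends only on the generations of the $k_i$'s through the common value $b_i$, paired summands take identical values on $U_\SS$, and summing gives $\EE_l(r)=\EE_j(r)$.

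The hard part is (b): one has to check that $g$ genuinely acts as a value-preserving bijection on the resonant $(2d-1)$-tuples, which simultaneously requires $g$ to permute basis vectors, to preserve $\FF$ so that the resonance constraints are transported correctly, and to respect the partition of $\AA$ into generations so that monomial values on $U_\SS$ are unchanged. Item~4 of Definition~\ref{def.fam2} is engineered precisely to encapsulate all three of these properties; as pointed out in the remark following that definition, such an additional symmetry is unnecessary in the cubic and quintic settings of \cite{CKSTT,HausProcesi} but becomes essential starting from the degree-$7$ NLS, because the richer combinatorial structure of higher-order resonances can otherwise break intra-generational symmetry.
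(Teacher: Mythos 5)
Your proposal is correct and takes essentially the same route as the paper: your step (a) is the paper's proof that the larger subspace $V_\SS=\{r_k=0,\ k\notin\SS\}$ is invariant via condition (ii) of Definition \ref{defi:NonDegeneracy}, and your step (b) is the paper's use of the symmetry $g_{j_1j_2}$ from Definition \ref{def.fam2}, item 4 — your term-by-term pairing of resonant tuples in $\EE_l$ is just the vector-field counterpart of the paper's identities $g(H_\SS)=H_\SS$ and \eqref{monomig}. The only point to make explicit is that every summand of $\EE_l\big\vert_{U_\SS}$ supported in $\SS$ really does satisfy $\mu-\be_{l'}\in\SF$ (this follows from condition (i) of Definition \ref{defi:NonDegeneracy}, or from (ii) together with the distinctness of the $\vv_i$'s), which is what makes your pairing a genuine bijection of summands.
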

\begin{proof}
We prove the fulfillment of condition 3 in two steps. First,  we show that the
larger subspace
\[
 V_\SS=\left\{r\in\CC^{\ZZ^2}: r_k=0,\,\forall k\not\in\SS\right\}
\]
is invariant. This fact follows from item (ii) of Definition
\ref{defi:NonDegeneracy}. Indeed, consider a resonance as in \eqref{reson} where
$k=k_1\notin \SS$ and $k_2,\ldots,k_{2d}\in\SS$. Then, by construction there
exists $\mu\in\ZZ^m$, $|\mu|\leq 2d-1$ and $\sum \mu_i=1$ such that 
\[
 k=\sum \mu_i\vv_i,\qquad |k|^2=\sum \mu_i |\vv_i|^2.
\]
Substituting the linear equation in the quadratic one, we obtain $K_\SS(\mu)=0$.
This contradicts item (ii) of Definition \ref{defi:NonDegeneracy}.
Thanks to item (i) of Definition \ref{defi:NonDegeneracy},   the Hamiltonian
$H_{\rm Res}$  defined in \eqref{Ham2}
restricted to $V_\SS$ is
\begin{equation}\label{ristretta}
H_\SS:= \sum_{ \alpha-\beta\in \langle\FF\rangle\cap\RR_d\atop
\alpha_j,\beta_j\geq 0\,,\;|\alpha|_1=|\beta|_1=d
}\binom{d}{\alpha}\binom{d}{\beta}r^\alpha\bar r^\beta\,,
\end{equation} 
where $r^\al=\Pi r_{\vv_i}^{\al_i}$. Indeed  the relations
\[
\pi_\SS(\alpha-\beta)=\sum_{j=1}^m(\alpha_j-\beta_j)\vv_j=0\,,\quad 
\pi^{(2)}_\SS(\alpha-\beta)=\sum_{j=1}^m(\alpha_j-\beta_j)|\vv_j|^2=0
\]
hold if and only if $\alpha-\beta\in \langle\FF\rangle$.

It still remains to show that $U_\SS\subset V_\SS$ defined in \eqref{christmas}
is also invariant.
Fix any $j_1,j_2 \in \AA_i$, we need to prove that 
\begin{equation}\label{nunsescappa}
\partial_{\ol{r}_{j_1}}H_\SS\Big\vert_{U_\SS}=\partial_{\ol{r}_{j_2}}
H_\SS\Big\vert_{U_\SS}.
\end{equation}
Now consider the map $g:=g_{j_1j_2}$ of Definition \ref{def.fam2} item 4. We can
extend this map to monomials (and, by linearity, to polynomials) by setting
$$
\forall \al,\beta\in \N^{m}\,,\quad   g (r^\alpha\bar r^\beta) = r^{g(\al)} \bar
r^{g(\beta)}.
$$
where we recall that $$g(\alpha):=\sum_j \alpha_{j}g(\be_j)=\sum_j
\alpha_{g^{-1}(j)}\be_{j}$$
(same for $\beta$). In particular, we have $(g(\alpha))_{j_2}=\alpha_{j_1}$ and
$(g(\beta))_{j_2}=\beta_{j_1}$. It is also easy to see that for all $\ell$,
setting
$$
\mathfrak a_\ell:= \sum_{k\in \AA_\ell}\al_k\,,\qquad \mathfrak b_\ell:=
\sum_{k\in \AA_\ell}\beta_k\,,
$$
one has $g(\mathfrak a_\ell)= \mathfrak a_\ell$, same for $\mathfrak b_\ell$.

For each monomial $\mathfrak m:=\mathfrak m_{\al,\beta}= r^\alpha\bar r^\beta $
one has  
\begin{equation}\label{monomig}
\partial_{\ol{r}_{j_1}}\mathfrak m \big\vert_{U_\SS}=\frac{\beta_{j_1}}{\bar
b_i}\prod_{\ell=1}^N b_\ell^{\mathfrak a_\ell}\bar b_\ell^{\mathfrak b_\ell}=
\frac{(g(\beta))_{j_2}}{\bar b_i}\prod_{\ell=1}^N b_\ell^{g(\mathfrak
a_\ell)}\bar b_\ell^{g(\mathfrak b_\ell)}=
\partial_{\ol{r}_{j_2}}g(\mathfrak m)\big\vert_{U_\SS}\ .
\end{equation}
Note that $\beta_{j_1}\neq0$ implies $\mathfrak b_\ell>0$ so that all the
expressions in \eqref{monomig} are monomials.

Moreover, $g$ preserves the Hamiltonian $H_\SS$ i.e.
\begin{equation}\label{hgh}
g(H_\SS)=\sum_{ \alpha-\beta\in \langle\FF\rangle\cap\RR_d\atop
\alpha_j,\beta_j\geq 0\,,\;|\alpha|_1=|\beta|_1=d
}\binom{d}{\alpha}\binom{d}{\beta}r^{g(\alpha)}\bar r^{g(\beta)}=\sum_{
 \alpha-\beta\in \langle\FF\rangle\cap\RR_d\atop \alpha_j,\beta_j\geq
0\,,\;|\alpha|_1=|\beta|_1=d
}\binom{d}{\alpha}\binom{d}{\beta}r^\alpha\bar r^\beta=H_\SS
\end{equation}
since
$$
g(\alpha)-g(\beta)\in \langle\FF\rangle\cap\RR_d \Longleftrightarrow
\alpha-\beta\in \langle\FF\rangle\cap\RR_d\ ,
$$
$$
\binom{d}{\alpha}=\binom{d}{g(\alpha)}\ , \qquad
\binom{d}{\beta}=\binom{d}{g(\beta)}\ .
$$
Finally, we use \eqref{monomig} and \eqref{hgh} in order to prove
\eqref{nunsescappa}:
\begin{equation*}
\partial_{\ol{r}_{j_1}}H_\SS\Big\vert_{U_\SS}=\partial_{\ol{r}_{j_2}}
g(H_\SS)\Big\vert_{U_\SS}=\partial_{\ol{r}_{j_2}}H_\SS\Big\vert_{U_\SS}\ .
\end{equation*}
\end{proof}

 In order to  prove condition 4 we first analyze the Hamiltonian $H_{\rm Res}$.
To this end, we define
\begin{equation}\label{def:Lj}
L^{(2j)}=\|r\|_{\ell^{2j}}^{2j}= \sum_{k\in
\Z^2}|r_k|^{2j}.
\end{equation} Note that $L^{(2)}$ is the conserved quantity
$\|r\|_{L^2}^2$.
We have the following lemma.

 \begin{lemma}
 The Hamiltonian $H_{\rm Res}$ has the following form
 \begin{equation}\label{parto}
 H_\mathrm{Res}- d!(L^{(2)})^d= d!d(d-1)(L^{(2)})^{d-2} \Big[
-\frac{1}{4}\sum_{k\in \Z^2}|r_k|^4 + \sum_{k_1,k_2,k_3,k_4\in \Z^2 \atop
{k_1\neq k_3,k_4\,,\; k_1+k_2=k_3+k_4 \atop{|k_1|^2+|k_2|^2=|k_3|^2+|k_4|^2} }
}r_{k_1}r_{k_2}\bar r_{k_r}\bar r_{k_4}\Big] +R
 \end{equation}
 where the term in square brackets is the {\em cubic} NLS while $R$ contains
only terms of the following types:
 \begin{itemize}
 \item Action preserving terms (see Definition \ref{integra}) of the form  
$|r|^{2\alpha}$  with $\alpha!:=\prod_{k\in \Z^2}\alpha_k!>2$.
  \item  Non-action preserving  terms whose degree in the actions is less than
$d-2$.
 \item Non-action preserving terms $r^\alpha\bar r^\beta$ of degree $d-2$ in the
actions and such that $\alpha!\beta!>1$.
 \end{itemize} 
 \end{lemma}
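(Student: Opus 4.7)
The plan is to expand $H_\mathrm{Res}$ in the multinomial form of \eqref{Ham2} and classify each monomial $r^\alpha \bar r^\beta$ according to its ``action content''. Setting $\gamma_k := \min(\alpha_k,\beta_k)$ and $\alpha' := \alpha - \gamma$, $\beta':= \beta-\gamma$, one writes $r^\alpha\bar r^\beta = |r|^{2\gamma}\cdot r^{\alpha'}\bar r^{\beta'}$, where $\alpha'$ and $\beta'$ have disjoint supports and $|\alpha'|=|\beta'|=:e\in\{0,1,\dots,d\}$. Since the resonance condition depends only on $\alpha'-\beta'$, the sum defining $H_\mathrm{Res}$ naturally splits by the non-action degree $e$, each $e$-piece carrying an action-preserving factor $|r|^{2\gamma}$ of total degree $d-e$.

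The cases $e\in\{0,1\}$ and $e\geq 3$ are the easy ones. For $e=0$ (that is, $\alpha=\beta$) the contribution is purely action preserving: I would match the coefficients $\binom{d}{\alpha}^2$ of $|r|^{2\alpha}$ in $H_\mathrm{Res}$ against those of $d!(L^{(2)})^d$, together with the action-preserving contribution $-\frac{d!d(d-1)}{4}(L^{(2)})^{d-2}\sum_k|r_k|^4$ coming from the ``cubic NLS'' factor, and check directly that every leftover monomial has $\alpha!>2$. The case $e=1$ is ruled out, since $\alpha'-\beta' = \be_p-\be_{p'}$ with $p\neq p'$ is incompatible with the resonance relations. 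Finally, any monomial with $e\geq 3$ has action degree $d-e<d-2$ and is therefore directly placed into $R$ under the second allowed type.

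The substantive case is $e=2$. By the parallelogram identity, a configuration $\alpha'=2\be_p$, $\beta' = \be_{p'}+\be_{q'}$ with $p'\neq q'$ is incompatible with the resonance, since $2p=p'+q'$ and $2|p|^2=|p'|^2+|q'|^2$ together force $|p'-q'|^2=0$. Thus only genuinely rectangular configurations $\alpha'=\be_p+\be_q$, $\beta' = \be_{p'}+\be_{q'}$ with $\{p,q\}\cap\{p',q'\}=\emptyset$ survive, producing the rectangle relations $p+q=p'+q'$ and $|p|^2+|q|^2=|p'|^2+|q'|^2$. I would then compute the coefficient of a generic rectangle monomial $|r|^{2\gamma}r_p r_q\bar r_{p'}\bar r_{q'}$ inside $H_\mathrm{Res}$ and compare it against the one produced by expanding $d!d(d-1)(L^{(2)})^{d-2}[\mathrm{cubic\ NLS}]$; the two agree exactly when $\gamma!=1$ and $\{p,q,p',q'\}\cap\Su(\gamma)=\emptyset$, and otherwise differ by a monomial $r^\alpha\bar r^\beta$ with $\alpha!\beta!>1$, which is admissible for $R$ under the third allowed type. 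The hard part will be precisely this combinatorial bookkeeping: the same symbolic monomial may arise from several distinct placements of the rectangle inside the action block $\gamma$, so one must carefully track the multiplicities (controlled by $\gamma_p,\gamma_q,\gamma_{p'},\gamma_{q'}$) and confirm that every residual non-action term of action degree $d-2$ indeed satisfies $\alpha!\beta!>1$.
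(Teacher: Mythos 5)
Your proposal is correct and follows essentially the same route as the paper: split $H_{\rm Res}$ according to its action-preserving content, match the top terms (degree $d$ and $d-2$ in $L^{(2)}$, i.e.\ your $e=0$ and $e=2$ classes) by multinomial-coefficient bookkeeping, and check that every leftover monomial satisfies one of the three factorial/degree criteria defining $R$. Your organization by the non-action degree $e=|\alpha-\min(\alpha,\beta)|$, with the explicit exclusion of $e=1$ and of degenerate $e=2$ configurations via the parallelogram identity, is just a slightly more systematic packaging of the paper's split into the integrable part $H_{\rm Int}$ (handled through the symmetric functions $L^{(2j)}$) and the non-integrable part whose action degree is at most $d-2$ with equality exactly for the rectangle terms.
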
 
 \begin{proof}

First we note that a resonance is action preserving if (up to permutations of
the even and the odd indexes between themselves) one has
$$
\{k_1,k_1,k_2,k_2,\dots,k_d,k_d\}.
$$
We can evidence an {\em integrable part} of the Hamiltonian $H_\mathrm{Res}$ as
$$H_{\rm{Int}}:=H_{\rm{Int}}(|r_k|^2)=\hskip-10pt\sum_{\alpha\in (\N)^{\Z^2}:
\atop |\alpha|=d} {\binom{d}{\alpha}}^2 |r|^{2\alpha}\,, $$ which contains all
the terms in \eqref{Ham2}  with $\alpha=\beta$. Note that $H_{\rm{Int}}$ is a
symmetric function of the actions
$\{|r_k|^2\}_{k\in \Z^2}$.
It is well known that the functions $L^{(2j)}$ defined in \eqref{def:Lj}
generate the symmetric
polynomials in the actions. Hence we can express the integrable Hamiltonian as a
polynomial in the $L^{(2j)}$. Since $L^{(2)}$ is a constant of motion (and we
will perform a symmetry reduction with respect to it) it  will be convenient to
evidence the  two terms of highest degree in $L^{(2)}$.
$$
(L^{(2)})^m= \sum_{ |\alpha|=m}\binom{m}{\alpha}|r|^{2\alpha} = m! \sum_{
|\alpha|=m\atop \alpha!=1}|r|^{2\alpha} + \sum_{ |\alpha|=m\atop
\alpha!>1}\binom{m}{\alpha}|r|^{2\alpha}.
$$
By direct computation,  
\[
\begin{split}
  H_{\text {Int}}-  d! (L^{(2)})^d &= \sum_{ \alpha!> 1} {\binom{d}{\alpha}}^2
|r|^{2\alpha}(1-\alpha!)\\
&=  -\frac{(d!)^2}4 \sum_{|\alpha|= d,\atop \alpha!=2}  |r|^{2\alpha} + \sum_{
\alpha!> 2} {\binom{d}{\alpha}}^2 |r|^{2\alpha}(1-\alpha!).
\end{split}
\]
Note that
\[
 \sum_{  |\alpha|= d,\atop \alpha!=2}  |r|^{2\alpha} = \sum_ {k\in \Z^2} |r_
k|^4\sum_{|\beta|= d-2\atop
 (\beta+2\ee_ k)!=2} |r|^{2\beta}\,,
\]
where $\ee_k\in (\N)^{\Z^2}$ is the $k$'th basis vector. We compare the above
expression with
\[
\begin{split}
(L^{(2)})^{d-2}L^{(4)}&= \sum_ k\sum_{ |\beta|=d-2}\binom{d-2}{\beta}|r|^{2\beta
+4 \ee_ k}\\
&= (d-2)! \sum_{ |\beta|=d-2\atop(\beta+2\ee_ k)!=2}|r|^{2\beta+4 \ee_ k} +
\sum_{ |\beta|=d-2\atop (\beta+2\ee_ k)!>2}\binom{d-2}{\beta}|r|
^{2\beta+4 \ee_ k}.
\end{split}
\]
Thus,
\begin{equation}\label{porro}
H_{\text {Int}}-  d! (L^{(2)})^d= -\frac{d! d
(d-1)}{4}L^{(4)}\left(L^{(2)}\right)^{d-2} + \sum_{ \alpha!> 2} c_\alpha
|r|^{2\alpha}
\end{equation}
We could continue our computation. However we only need that $\sum_{ \alpha!> 2}
c_\alpha |r|^{2\alpha}$, as polynomial in the $L^{(i)}$, is 
 of degree at most $d-3$ in $L^{(2)}$.

We can perform a similar procedure for the non-integrable part of the
Hamiltonian 
$$H_\mathrm{Res}-H_{\rm Int}= \sum_{\alpha\neq \beta\in (\N)^{\Z^2}:
|\alpha|=|\beta|=d\atop {\sum_k (\alpha_k-\beta_k)k=0\,,\;\sum_k
(\alpha_k-\beta_k)|k|^2=0}}
\hskip-10pt\binom{d}{\alpha}\binom{d}{\beta}r^\alpha\bar r^\beta. $$
We first evidence the terms of higher degree in the action variables which are
clearly:
\[
\sum_{k_1,k_2,k_3,k_4\in \Z^2 \atop {k_1\neq k_3,k_4\,,\; k_1+k_2=k_3+k_4
\atop{|k_1|^2+|k_2|^2=|k_3|^2+|k_4|^2} } }
\sum_{ |\alpha|=
d-2}\binom{d}{\alpha+\ee_{k_1}+\ee_{k_2}}\binom{d}{\alpha+\ee_{k_3}+\ee_{k_4}}
|r|^{2\alpha} r_{k_1}r_{k_2}\bar r_{k_3}\bar r_{k_4}=
\]
$$
=\sum_{k_1,k_2,k_3,k_4\in \Z^2 \atop {k_1\neq k_3,k_4\,,\; k_1+k_2=k_3+k_4
\atop{|k_1|^2+|k_2|^2=|k_3|^2+|k_4|^2} } }
\Bigg[(d!)^2\sum_{ |\alpha|= d-2\atop
{(\alpha+\ee_{k_1}+\ee_{k_2})!=1\atop{
(\alpha+\ee_{k_3}+\ee_{k_4})!=1}}}|r|^{2\alpha} r_{k_1}r_{k_2}\bar r_{k_3}\bar
r_{k_4}+
$$
$$
 +\sum_{ |\alpha|= d-2\atop
(\alpha+\ee_{k_1}+\ee_{k_2})!(\alpha+\ee_{k_3}+\ee_{k_4})!>1}\binom{d}{
\alpha+\ee_{k_1}+\ee_{k_2}}\binom{d}{\alpha+\ee_{k_3}+\ee_{k_4}}|r|^{2\alpha}
r_{k_1}r_{k_2}\bar r_{k_3}\bar r_{k_4}\Bigg] 
$$
We proceed as for the integrable terms evidencing the highest order term in
$L^{(2)}$, we have 
$$
(L^{(2)})^{d-2}\!\!\!\!\!\!\!\sum_{k_1,k_2,k_3,k_4\in \Z^2 \atop {k_1\neq
k_3,k_4\,,\; k_1+k_2=k_3+k_4 \atop{|k_1|^2+|k_2|^2=|k_3|^2+|k_4|^2} }
}\!\!\!\!\!r_{k_1}r_{k_2}\bar r_{k_3}\bar r_{k_4}=
$$
$$
\sum_{k_1,k_2,k_3,k_4\in \Z^2 \atop {k_1\neq k_3,k_4\,,\; k_1+k_2=k_3+k_4
\atop{|k_1|^2+|k_2|^2=|k_3|^2+|k_4|^2} } }\Bigg[ (d-2)!\sum_{ |\alpha|= d-2\atop
\alpha!=1}  |r|^{2\alpha}r_{k_1}r_{k_2}\bar r_{k_3}\bar r_{k_4}
+\sum_{
|\alpha|= d-2\atop \alpha!>1} \binom{d-2}{\alpha}
|r|^{2\alpha}r_{k_1}r_{k_2}\bar r_{k_3}\bar r_{k_4}\Bigg]
$$
We have proved our thesis, in formul{\ae}  the remainder $R$ is given by
$$
R= \!\!\!\!\!\!\sum_{|\alpha|=|\beta|=d\,,\;
|\alpha-\beta|>4\atop{\sum_k (\alpha_k-\beta_k)k=0\,,\;\sum_k
(\alpha_k-\beta_k)|k|^2=0} }\!\!\!\!\!\!R_{\alpha,\beta}r^\alpha\bar r^\beta+
\!\!\!\!\!\!\sum_{|\alpha|=|\beta|=d\,,\;
|\alpha-\beta|=4\,,\alpha!\beta!>1\atop{\sum_k (\alpha_k-\beta_k)k=0\,,\;\sum_k
(\alpha_k-\beta_k)|k|^2=0} }\!\!\!\!\!\!\!\!\!R_{\alpha,\beta}r^\alpha\bar
r^\beta+
\sum_{|\alpha|=d\,,\;\alpha!>2}R_{\alpha}|r|^{2\alpha}
$$
\end{proof}
\begin{lemma}\label{lemma:Condition4S}
Consider $\SS$ belonging to $(\MM\setminus \mathcal D)\cap \ZZ^{2m}$, where $\MM$ is the
variety defined in \eqref{mmmm} and $\mathcal D$ is the subvariety given by
Theorem \ref{thm:FiniteResonantSet}. Then,  $\SS$ satisfies  condition 4 in
Definition \ref{acceptable}.
\end{lemma}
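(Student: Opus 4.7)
The plan is to start from the decomposition \eqref{parto} of $H_{\rm Res}$ given in the preceding lemma and compute its restriction to $U_\SS$. On $U_\SS$ one has $r_k=b_i$ for every $k\in\SS_i$, hence $L^{(2)}|_{U_\SS}=nJ$ with $J=\sum_{i=1}^N|b_i|^2$ and $\sum_k|r_k|^4|_{U_\SS}=n\sum_i|b_i|^4$. Moreover, the pullback of $\frac{\ii}{2}\sum_k dr_k\wedge d\bar r_k$ to $U_\SS$ equals $n$ times $\frac{\ii}{2}\sum_i db_i\wedge d\bar b_i$ (each $b_i$ collects $n$ copies of $r_k$), so the Hamiltonian $h_\SS$ with respect to the standard symplectic form on the $b_i$'s is $H_{\rm Res}|_{U_\SS}/n$. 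Applying this recipe to \eqref{parto}, the integrable leading term $d!(L^{(2)})^d$ produces $d!n^{d-1}J^d$; the self-interaction yields the $-\frac14\sum|b_i|^4$ term with coefficient $n^{d-2}d!d(d-1)J^{d-2}$. For the ``rectangle'' sum in the bracket of \eqref{parto}, by Lemma \ref{gentree}(ii) and the non-degeneracy of $\SS$ granted by Theorem \ref{thm:FiniteResonantSet}, the only length-four resonances inside $\SS$ are the abstract families; the tree structure guarantees that each transition $\SS_i\to\SS_{i+1}$ carries exactly $n/2$ families, and summing their contributions produces exactly $n^{d-2}d!d(d-1)J^{d-2}\sum_{i=1}^{N-1}\Re(b_i^2\bar b_{i+1}^2)$. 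The residual term $R$ from \eqref{parto}, once restricted and divided by $n$, defines $\frac{1}{n}\PP(b,\bar b,\frac1n)$; the extra $\frac1n$ appears because $R$ collects contributions that are subleading in $L^{(2)}\sim n$.

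Properties (a)--(c) are essentially automatic. $\PP$ is a polynomial in $(b,\bar b,1/n)$ with rational (in particular real) coefficients because the same is true of $H_{\rm Res}$; it is real because every monomial $r^\alpha\bar r^\beta$ in $H_{\rm Res}$ is paired with its conjugate $r^\beta\bar r^\alpha$ carrying the same coefficient; and it is gauge invariant because the resonance condition forces $|\alpha|=|\beta|=d$, so that $H_{\rm Res}$ Poisson-commutes with $\sum_k|r_k|^2$, which descends to $\{\PP,J\}=0$ on $U_\SS$. For property (d), a monomial coming from $r^\alpha\bar r^\beta$ has total degree $\sum_{k\in \SS_i}(\alpha_k+\beta_k)$ in $(b_i,\bar b_i)$ on $U_\SS$; since $\alpha-\beta\in\SF$, Lemma \ref{gentree}(v) asserts that the $\ell^1$-norm of the projection of $\alpha-\beta$ on the $i$-th abstract generation is even, and this has the same parity as the total degree above. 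Hence each monomial has even degree in every $(b_i,\bar b_i)$, so in particular no monomial has $\alpha_i+\beta_i=1$, and this is precisely the obstruction to the invariance of $\{b_i=0\}$.

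For property (e), invariance of $U_\SS^j$ follows at once by intersecting the invariant subspaces $\{b_i=0\}$ from (d) for $i\notin\{j,j+1\}$. The restriction of $h_\SS$ to $U_\SS^j$ involves only terms $r^\alpha\bar r^\beta$ whose indices lie in $\SS_j\cup\SS_{j+1}$, and by Lemma \ref{gentree}(iv) the only non-trivial resonant $\alpha-\beta$ supported in $\AA_j\cup\AA_{j+1}$ are integer multiples of the abstract families of generation $j$. Because the two generations have the same cardinality $n$ and the number of families between them is $n/2$ regardless of $j$, together with the symmetry maps $g_{j_1j_2}$ provided by Definition \ref{def.fam2}(4) one obtains the $j$-independence of the restricted Hamiltonian up to the obvious index shift; the $j\leftrightarrow j+1$ symmetry is the parent--child involution within each family.

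Property (f) is the most delicate point, and I would treat it as follows. A monomial in $\PP$ depending only on $(b_i,\bar b_i,b_j,\bar b_j)$ comes from some $r^\alpha\bar r^\beta$ with $\alpha,\beta$ supported in $\SS_i\cup\SS_j$. If $\alpha=\beta$, the monomial is action preserving, and restricting to degree two in $(b_i,\bar b_i)$ selects the shape $|b_i|^2|b_j|^{2d-2}$. Otherwise $\alpha-\beta\in\SF\setminus\{0\}$ is supported in $\AA_i\cup\AA_j$, so by Lemma \ref{gentree}(iv) it must be a multiple of an abstract family; this forces $|i-j|=1$, and the degree-two constraint in $(b_i,\bar b_i)$ picks out the shape $|b_j|^{2d-4}\Re(b_i^2\bar b_j^2)$. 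The $i,j$-independence of the coefficients $\chi$ and $\rho$ follows once more from the combinatorial uniformity of the tree and from the intragenerational action of the symmetry maps $g_{j_1j_2}$. This last step, namely organising the set of pairs $(\alpha,\beta)$ contributing to each type of monomial in $\PP$ and verifying that the associated numerical coefficient is invariant under the symmetry group of the genealogical tree, is the main obstacle of the proof.
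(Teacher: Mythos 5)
Your overall route is the paper's: restrict the decomposition \eqref{parto} to $U_\SS$, divide by $n$ to normalize the symplectic form, track powers of $n$ to isolate the $n^{d-1}$ and $n^{d-2}$ terms, and derive 4(a)--(d) from reality, gauge invariance and Lemma \ref{gentree}(v), with Lemma \ref{gentree}(iv) doing the work in 4(f) (there its use is legitimate, since the degree-two constraint in $(b_i,\bar b_i)$ guarantees at most two support elements outside $\AA_j$). However, in your proof of 4(e) you invoke Lemma \ref{gentree}(iv) to conclude that the only nonzero $\alpha-\beta\in\SF$ supported on $\AA_j\cup\AA_{j+1}$ are integer multiples of single abstract families. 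That lemma requires all but at most two elements of the support to lie in one generation, which fails here: for $d\geq 4$ the restricted Hamiltonian \eqref{ristretta} genuinely contains monomials with $\alpha-\beta$ equal to a sum of two or more distinct families of generation $j$, whose support has four or more elements in each of the two generations. The correct statement, which the paper uses, is that such $\alpha-\beta$ are arbitrary integer combinations of the generation-$j$ families (which have pairwise disjoint supports); the $j$-independence and the $j\leftrightarrow j+1$ symmetry then follow from the uniformity of this combinatorial structure and the parent--child exchange, so your conclusion stands, but not by the lemma you cite.

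More seriously, the step you yourself flag as ``the main obstacle'' --- the independence of $\chi_{ij}$ of $(i,j)$ and of $\rho_{i,i+1}$ of $i$ --- is left unproven, and the tool you propose cannot close it: the maps $g_{j_1j_2}$ of Definition \ref{def.fam2} satisfy $g(\AA_\ell)=\AA_\ell$, so they only show that coefficients do not depend on the choice of modes \emph{within} a generation, not that they agree across different pairs of generations. The paper closes this with two short observations. First, the monomials $\chi_{ij}|b_j|^{2d-2}|b_i|^2$ come only from the action-preserving part $H_{\rm Int}$, which is a symmetric polynomial in the actions $|r_k|^2$; since every generation has the same cardinality $n$, the coefficient obtained after restricting to $U_\SS$ depends only on $n$ and $d$ and not on $(i,j)$. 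Second, the coefficient $\rho_{i,i+1}$ of $|b_{i+1}|^{2d-4}\Re\bigl(b_i^2\bar b_{i+1}^2\bigr)$ is read off from the restriction to the two consecutive generations $i,i+1$, and its independence of $i$ is exactly the $j$-independence asserted in property 4(e), which at that point is already established. Adding these two remarks (and, if you want full rigour, the bookkeeping showing that each of the $n/2$ families contributes the same multiplicity to the rectangle sum, so that the $n^{d-2}$ term has precisely the form in \eqref{def:toymodel:0}) completes your argument.
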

\begin{proof}
We consider the Hamiltonian $H_{\rm Res}$ of formula \eqref{parto} restricted to
the subspace $U_\SS$. The new Hamiltonian $h_\SS$ is defined as $h_\SS=n^{-1}
H_{\rm Res}\vert_{U_\SS}$  where  $n=2^{N-1}$.  Note that the factor $n^{-1}$ is
not a time rescaling. It needs to be added in order to obtain the symplectic
form $\frac{\ii}{2}\sum_j db_j\wedge d\bar b_j$. Note that $h_\SS$ is
homogeneous of degree $2d$ in $(b,\bar b)$.

One can analyze explicitly the toy-model Hamiltonian $\Ht$: 
\begin{equation}\label{toy}
\Ht(b)= \frac{1}{n}\sum_{\mathfrak a,\mathfrak b\in \N_0^N \atop \sum_i
\mathfrak a_i=\sum_i\mathfrak b_i=d} C_{\mathfrak a,\mathfrak b} b^\mathfrak a
\bar b^\mathfrak b\,,\quad C_{\mathfrak a,\mathfrak b}:=\sum_{\alpha,\beta\in
\N_0^m:  \; \alpha-\beta\in \langle\FF\rangle\atop {\sum_{j\in \mathcal A_i
}\alpha_j= \mathfrak a_i\,\atop
\sum_{j\in \mathcal A_i}
\beta_j=\mathfrak b_i }}\binom{d}{\alpha}\binom{d}{\beta}\,,
\end{equation}
where  by an abuse of notation with $j\in \mathcal A_i$ we mean  $\ee_j\in
\mathcal A_i$ and hence $(i-1)n+1\leq j \leq i n$.
The important fact is that $\Ht$ is a polynomial in $n=2^{N-1}$ (the number of
elements in each generation) and, since $n$ is very large, we only need to
compute the leading orders.
The degree of $\Ht$ in $n$ is at most $d-1$ (the coefficients $C_{\mathfrak
a,\mathfrak b}$'s have degree at most $d$). The terms that we will need to
compute explicitly are the coefficients of $n^{d-1}$ and of $n^{d-2}$ in $\Ht$
(which amounts to computing the coefficients of $n^d$ and of $n^{d-1}$ in
$C_{\mathfrak a,\mathfrak b}$). The crucial remark, informally stated, is that
the degree in $n$ is lower for terms whose combinatorics imposes more
constraints; this happens by two mechanisms:
\begin{itemize}
\item by decreasing $\Su(\alpha)$ i.e. the cardinality (which is at most $d$) of
the $\{\alpha_i\neq0\}$ (or, symmetrically, $\Su(\beta)$);
\item by increasing $\alpha-\beta\in\langle\FF\rangle$, indeed if we know that 
the indexes $k_i$ satisfy some  family relations then by fixing the family we
fix four of the indexes.
\end{itemize}
We know that $H_{\mathrm{Res}}$ Hamiltonian has the expression \eqref{parto} and
moreover it is easy to see that all the terms in $R$ contribute at most
$n^{d-3}$. Then we have
\begin{eqnarray*}
h_\SS(b)= & & d! n^{d-1}\left(\sum_{i=1}^N|b_i|^2\right)^{d}+\\
&+&
n^{d-2}d!d(d-1)\left(\sum_{i=1}^N|b_i|^2\right)^{d-2}\left[-\frac14\sum_{i=1}
^N|b_i|^4+ \sum_{i=1}^{N-1}\Re(b_i^2\bar b_{i+1}^2)\right]+\OO(n^{d-3}).
\end{eqnarray*}
We still have to analyze the terms contained in $\OO(n^{d-3})$ in order to check
that the polynomial $\mathcal P$ of Definition \ref{acceptable} satisfies the
properties 4(a)-4(f). Properties 4(a), 4(b), 4{(}c) are completely
straightforward, while 4(d) follows directly from Lemma \ref{gentree}, item (v).

As for property 4(e), the fact that $U_\SS^{j}$ is invariant follows from 4(d).
Note that the only elements of $\SF$ entirely supported on the generations
$\AA_j,\AA_{j+1}$ are of the form $\sum_k\la_k f_k$ with $\la_k\in\ZZ$, where
the $f_k$'s are the vectors representing the $2^{N-2}$ families of generation
number $j$ (see Definition \ref{def.fam}). Note that the $f_k$'s have disjoint
support. Then, using \eqref{ristretta}, it is immediate to see that the
expression of the Hamiltonian as a function of $(b_j,\bar b_j)$ and
$(b_{j+1},\bar b_{j+1})$ relies on a purely combinatorial computation,
independent of $j$ (up to an index translation). This combinatorial structure is
left invariant if one exchanges parents with children in all the families $f_k$:
this gives the symmetry with respect to the exchange $j\longleftrightarrow j+1$.

To conclude, we prove 4(f). Given $i\neq j$, we consider monomials in $h_\SS$
which depend only on $(b_i,\bar b_i),(b_j,\bar b_j)$ and are exactly of degree
two in $(b_i,\bar b_i)$. Monomials of this form can only come from monomials in
\eqref{ristretta} such that $\alpha-\beta\in\SF$ is supported entirely on the
$j$-th generation except for at most two elements. Therefore we can apply Lemma
\ref{gentree}, (iv) and deduce that $\alpha-\beta$ is either zero or (up to the
sign) an abstract family. The case $\alpha-\beta=0$ corresponds to action
preserving monomials of \eqref{ristretta} and produces monomials of the form
$\chi_{ij}|b_j|^{2d-2}|b_i|^2$ (for some suitable coefficient $\chi_{ij}$),
while the case $\alpha-\beta=\pm f$ with $f\in\FF$ is possible only if $|i-j|=1$
and produces terms of the form $\rho_{ij}|b_j|^{2d-4}\Re(b_i^2\bar b_j^2)$ (for
some suitable coefficient $\rho_{ij}$). The fact that $H_{\rm Int}$ is a
symmetric polynomial in the variables $|r_k|^2$ implies that
$\chi_{ij}\equiv\chi$ is 
independent of $i$ and $j$. Finally, the fact that $\rho_{i,i+1}\equiv\rho$ is
independent of $i$ follows from 4(e).
\end{proof}

In conclusion, any $\SS\in(\MM\setminus \mathcal D)\cap \ZZ^{2m}$ satisfies conditions
1,3,4 of Definition \ref{acceptable}. The fact that $(\MM\setminus \mathcal
D)\cap \ZZ^{2m}$ is non-empty follows from the density of $\MM\cap\Q^{2m}$ on
$\MM$ and from the fact that $\MM$ and $\mathcal D$ are homogeneous (if $v$
belongs to the manifold, $tv$ also belongs to the manifold for all $t\in\RR$).
 The existence of sets $\SS$ satisfying
also item 2 follows the same reasoning as in \cite{CKSTT}. In order to give
quantitative estimates for the norm of the points in $\SS$, we denote by 
$\{\mathtt j_1,\dots,\mathtt j_m\}$ the prototype embedding obtained by mapping 
 each $\mathtt e_i=(z_1,\ldots, z_{k-1},z_k,\ldots,z_{N-1})\in\Sigma$ (notation
as in \eqref{lafame1}) to $\mathtt j_i\in\ZZ^2$ via
$$\mathtt e_i\mapsto \mathtt j_i= ({\rm Re} \prod_i z_i, {\rm Im} \prod_i
z_i)\in \ZZ^2\qquad\qquad i=1,\ldots,m$$  (note that this in this list the
vectors $\mathtt j_i$ are NOT distinct but have high multiplicity). 
\begin{lemma}\label{parapa}
There exists $R<(N 2^N)^{16dN(N 2^N)^{8d}}$, such that one may choose  a
non-degenerate generation set $$\SS=\{\mathtt v_1,\ldots,\mathtt v_m\}\in
(\MM\setminus \mathcal D)\cap \ZZ^{2m}$$ satisfying
\begin{equation}\label{proximity}
|\mathtt v_i- R \mathtt j_i| \leq 3^{-N} R \qquad\qquad \forall i=1,\ldots,m\ .
\end{equation}
\end{lemma}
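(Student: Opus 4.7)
The plan is to build $\SS$ as a small integer perturbation of the rescaled prototype $\{R\mathtt{j}_i\}_{i=1}^m$. The prototype itself lies on $\MM$, because the combinatorial nuclear families of Section \ref{sec:SomeGeometry} are designed precisely so that the associated rectangles close up in $\ZZ^2$; however, it is highly degenerate (many $\mathtt{j}_i$'s coincide) so it sits deep inside the forbidden subvariety $\mathcal D$. The aim is to move off $\mathcal D$ while staying inside $\MM$, in a controlled way.

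First I would use the explicit parametrization of $\MM$ established in Section \ref{sec:SomeGeometry}: once the first generation $\{\vv_k\}_{k\in\AA_1}$ is chosen, each family of generation $i\to i+1$ adds one free angle (the position on the Thales circle of the parental segment) before the remaining three vertices are forced by the family relations. This realizes a neighbourhood of the prototype in $\MM$ as the image of a smooth map defined on $\R^{4\cdot 2^{N-1}}\times\R^{(N-1)2^{N-2}}$, with coordinates $(\{\vv_k\}_{k\in\AA_1},\{\theta_f\}_{f\in\FF})$.

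Next I would enumerate the polynomial conditions cutting out $\mathcal D$ inside this chart: the vanishings $\pi_\SS(\la)=0$ for $\la\in\RR_{2d}\setminus\SF$, the vanishings $K_\SS(\mu)=0$ for $\mu$ as in Definition \ref{defi:NonDegeneracy}(ii) with no representative modulo $\SF$, and the coincidence equations $\vv_i-\vv_j=0$. Each equation has degree at most $2d$ in the local coordinates, and their total number is bounded crudely by $(N 2^N)^{8d}$. Since Theorem \ref{thm:FiniteResonantSet} guarantees that $\mathcal D$ is a proper subvariety of $\MM$, none of these polynomials vanishes identically on the chart, so a Schwartz--Zippel / grid-avoidance argument shows that on a rational grid of mesh $1/q$ with $q$ exceeding the product of the degree and the number of equations one can choose the angles $\theta_f$ so that all conditions are strictly nonzero. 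This determines a $q$ polynomial in $N 2^N$ and $d$.

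The remaining step is to clear denominators. Starting from $R\mathtt{j}_i$ in the first generation, one perturbs each vector by an integer vector of size $\leq R/(3^N\cdot \text{denominator})$, then propagates through the generations by solving at each stage the quadratic family relation with the chosen rational angle. Each such solution introduces at most a polynomial-in-$(N2^N,d)$ denominator; iterating through the $N$ generations gives a tower bound of the claimed form $R<(N 2^N)^{16dN(N 2^N)^{8d}}$, while the perturbation budget remains within the $3^{-N}R$ window by construction. The main obstacle will be controlling the denominator blow-up through the quadratic family constraints: one must choose the angles not only to avoid $\mathcal D$ but also so that the square roots appearing when one of the children is eliminated in favour of the other stay rational with a controlled denominator; a Bezout-type estimate on solutions of bounded-degree polynomial systems over $\QQ$ together with the inductive structure of $\FF$ is what ultimately yields the stated quantitative bound.
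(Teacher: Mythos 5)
Your overall strategy is the same as the paper's: work in the chart on $\MM$ given by the first generation plus one degree of freedom per family on the Thales circle of its parents, perturb the rescaled prototype $R\mathtt j_i$, avoid the finitely many (at most $(N2^N)^{O(d)}$) polynomial non-degeneracy conditions, and clear denominators generation by generation. However, there is a genuine gap exactly where you yourself flag ``the main obstacle'': producing, on each family circle, a \emph{rational} child with an explicitly bounded denominator while avoiding the excluded points. Your appeal to a ``Bezout-type estimate on solutions of bounded-degree polynomial systems over $\Q$'' does not do this job: Bezout-type bounds count or bound solutions of polynomial systems, they do not furnish existence of rational points with controlled height on a conic, and parametrizing the circle by a rational \emph{angle} does not give rational points at all (this is why square roots appear in your scheme). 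Likewise the Schwartz--Zippel/grid step is set up over angle variables in which the non-degeneracy conditions are not polynomial, so as written it does not apply; what is actually needed (and what the paper uses) is the elementary remark that each condition restricted to a fixed circle is a polynomial of degree at most two in the circle parameter, hence excludes at most two points.

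The missing idea is that, after rescaling, the parents $p_1,p_2$ of each family are integer vectors, so the circle $(v-p_1,v-p_2)=0$ has an explicit rational parametrization through the rational point $p_1$, namely $P_t=p_1-\frac{(p_1-p_2,t)\,t}{|t|^2}$ with $t\in\ZZ^2$; no square roots ever occur. Restricting to $t=\tau_k=(k\,\mathbb I+O)\tau$ with $\tau=(O-\mathbb I)(p_1-p_2)$ and $D\le |k|\le 2D$, $D=(N2^N)^{8d}>20^N$, one gets $\sim 2D$ candidate children, all inside the $3^{-N}R$ proximity window (since ${\rm dist}(P_{\tau_k},P_\tau)=|p_1-p_2|/\sqrt{k^2+1}$), each with denominator dividing $2(k^2+1)$; since each of the at most $(N2^N)^{7d}$ conditions kills at most two candidates, a good candidate survives, and a single uniform factor $K\ge (D!)^2\gtrsim {\rm lcm}\{k^2+1:\;D\le k\le 2D\}$ clears the denominators of the whole generation at once. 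Iterating gives $R=R_1K^{N-1}<K^N=(N2^N)^{16dN(N2^N)^{8d}}$. Without this (or an equivalent effective construction of bounded-denominator rational points on the family circles), your argument does not reach the stated quantitative bound on $R$, which is the actual content of the lemma.
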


\begin{remark}\label{quellola}
For $N\gg1$, the condition \eqref{proximity} implies that the norm explosion property \eqref{Normexp} is satisfied.
\end{remark}

\begin{proof}
We preliminarily notice that the resonance relations are a set of at most
quadratic  equations in the variables $\mathtt v_i$'s. Thus, if we assume to
have fixed (with the inductive procedure described in Section
\ref{sec:SomeGeometry}) the first $k$ variables so that the non-degeneracy
conditions given by Definition \ref{defi:NonDegeneracy}, then in adding the
$k+1$-th variable, in order to enforce the non-degeneracy conditions, we must
verify that it does not satisfy $K\leq k^{7d}$ at most quadratic relations.
Moreover by definition $\mathcal M$ is a homogeneous manifold, namely if $v\in
\MM$ then $t v\in \MM$ for all $t\in\mathbb R$. Finally we notice that also the
resonance relations are homogeneous, hence if $v\in\MM\setminus \mathcal D$ then
also $t v\in\MM\setminus \mathcal D$.

We start by considering a neighborhood of radius $10^{-N}$ of each $\mathtt j_{i}$ with $i\in \mathcal A_1$.  
Then rescaling by $R_1=10^{2dN}$ we can ensure that in each neighborhood there are more than $2^{8d N}$ integer points so that we can surely choose in these neighborhoods
integer points $\mathtt w^{(1)}_i$  such that
 $$ |\mathtt w^{(1)}_i- R_1 \mathtt j_i| \leq 10^{-N} R_1\quad \forall i\in \mathcal A_1$$
and  the $\mathtt w^{(1)}_i$ satisfy the non-degeneracy conditions.

We proceed by induction.  At each generation $j\geq 2$ we have  $\mathtt w^{(j-1)}_i\in\ZZ^2$ with $i\in \cup_{h=1}^{j-1}\mathcal A_h$ so that
 \begin{enumerate}
\item[$1_{j-1}$]  $ |\mathtt w^{(j-1)}_i- R_{j-1} \mathtt j_i| \leq 3^{j-2} \cdot 10^{-N} R_{j-1}$,
\item[$2_{j-1}$] $\{\mathtt w^{(j-1)}_i\}_{i\in \cup_{h=1}^{j-1}\mathcal A_h}$ is a non-degenerate generation set with $j-1$ generations.
\end{enumerate} 
Then we claim that we can choose $\mathtt w^{(j-1)}_i\in\mathbb{Q}^2$ for $i\in
\mathcal A_j$ so that
 \begin{enumerate}
\item[(i)]  $ |\mathtt w^{(j-1)}_i- R_{j-1} \mathtt j_i| \leq 3^{j-1} \cdot 10^{-N} R_{j-1}$,
\item [(ii)] setting $K=(N 2^N)^{16d(N 2^N)^{8d}}$, we have that  $ K \mathtt
w^{(j-1)}_i\in \ZZ^2$. 
\item[(iii)] $\{\mathtt w^{(j-1)}_i\}_{i\in \cup_{h=1}^{j}\mathcal A_h}$ is a non-degenerate generation set with $j$ generations.
\end{enumerate} 
If our claim holds true, we set  $R_j=KR_{j-1}$ and  $\mathtt w^{(j)}_i= K \mathtt w^{(j-1)}_i$ for $i\in \cup_{h=1}^{j}\mathcal A_h$. By construction  items $1_j$ and $2_j$ hold .
We conclude our proof by fixing $R= R_N$ and $\mathtt v_i= \mathtt w^{(N)}_i$ for all $i\in \mathcal A$.

It remains to prove our claim.  To pass from a $j-1$ generation set to one with
$j$  generations we have to use the family relations and for each couple of
parents produce the corresponding two children. Let us fix two parents $\mathtt
w^{(j-1)}_{i_1}\rightsquigarrow p_1$ and $\mathtt
w^{(j-1)}_{i_2}\rightsquigarrow p_2$. 
This means fixing two opposite points on the circle $(v- p_1,v-p_2)=0$. By
construction, if we choose as children $c_1,c_2$  the two opposite points such
that $c_1-c_2$ is  orthogonal to $p_1-p_2$ then  
$$  |c_k- R_{j-1} \mathtt j_{\ell_k}| \leq 2 \cdot 3^{j-1} \cdot 10^{-N} R_{j-1}\qquad k=1,2$$
(where $\mathtt j_{\ell_1},\mathtt j_{\ell_2}$ are the two corresponding children in the prototype embedding), however we cannot guarantee that these two points satisfy item (iii). We can write the rational points on the circle as
$$ P_t:= p_1 - \frac{(p_1-p_2,t) t}{|t|^2}\,,\quad t=(m_1,m_2)\in\ZZ^2\ .$$ 
Noting that
$$
c_k= \frac{p_1+p_2}{2}\pm O \frac{p_1-p_2}{2} \,,\quad  O= \begin{pmatrix}0&1\\-1 &0 \end{pmatrix}
$$
we can compute the $\tau$ corresponding to one of the two children, say
$$
P_\tau:=c_1= \frac{p_1+p_2}{2}+ O \frac{p_1-p_2}{2}
$$
and we get
$$
{(p_1-p_2,\tau) }= {( p_1-p_2, O \tau)}  \longrightarrow  \tau:=  (O- \mathbb I)(p_1-p_2)
$$
(note that in this way the $x,y$ coordinates of $\tau$ are NOT coprime!).
We now consider the points $P_{\tau_k}$ for $\pm k=D,\dots, 2D$ defined by
$$\tau_k= (k\mathbb I+O)\, \tau$$
so that as $k$ varies, $\tau_k$ identifies different points on the circle.
By direct computation one has that
$$
P_{\tau_k}=p_1+\frac{k+1}{2(k^2+1)}\tau_k
$$
and
$$
{\rm dist}(P_{\tau_k},P_\tau) =\frac{1}{\sqrt{k^2+1}}|p_1-p_2|
$$
If we fix
$ D > 20^N $ we are sure that each point $P_{\tau_k}$ satisfies item (i). Now
each  non-degeneracy condition removes at most two points on the circle and we
have at most $(N 2^N)^{7d}$ conditions, hence we can ensure the existence of
non-degenerate $P_{\tau_k}$ by fixing $D:= (N 2^N)^{8d}>20^N$. Finally since
$K \geq (D!)^2 \gtrsim {\rm lcm}(k^2+1)_{k=D}^{2D}$, where $\rm
lcm$ denotes the least common multiple, item (ii) is also satsified. Therefore,
the thesis follows by noting that $R_N=R_1K^{N-1}<K^N$ and that
$3^{N-1}\cdot10^{-N}<3^{-N}$.
\smallskip

\begin{figure}[ht]\centering
\begin{minipage}[t]{12cm}
{\centering
{\psfrag{c}{$ \mathtt j_3$}
\psfrag{b}{$\mathtt j_1$}
\psfrag{d}{$\mathtt j_2$}
\psfrag{a}{$\mathtt j_4$}
\psfrag{e}{$p_1$}
\psfrag{f}{$p_2$}
\psfrag{g}{$c_1$}
\psfrag{h}[c]{$c_2$}
\psfrag{i}{$\tau$}
\psfrag{l}{$\tau_{k_1}$}
\psfrag{m}{$\tau_{k_2}$}
\psfrag{n}{$\tau_{k_3}$}
\includegraphics[width=8cm]{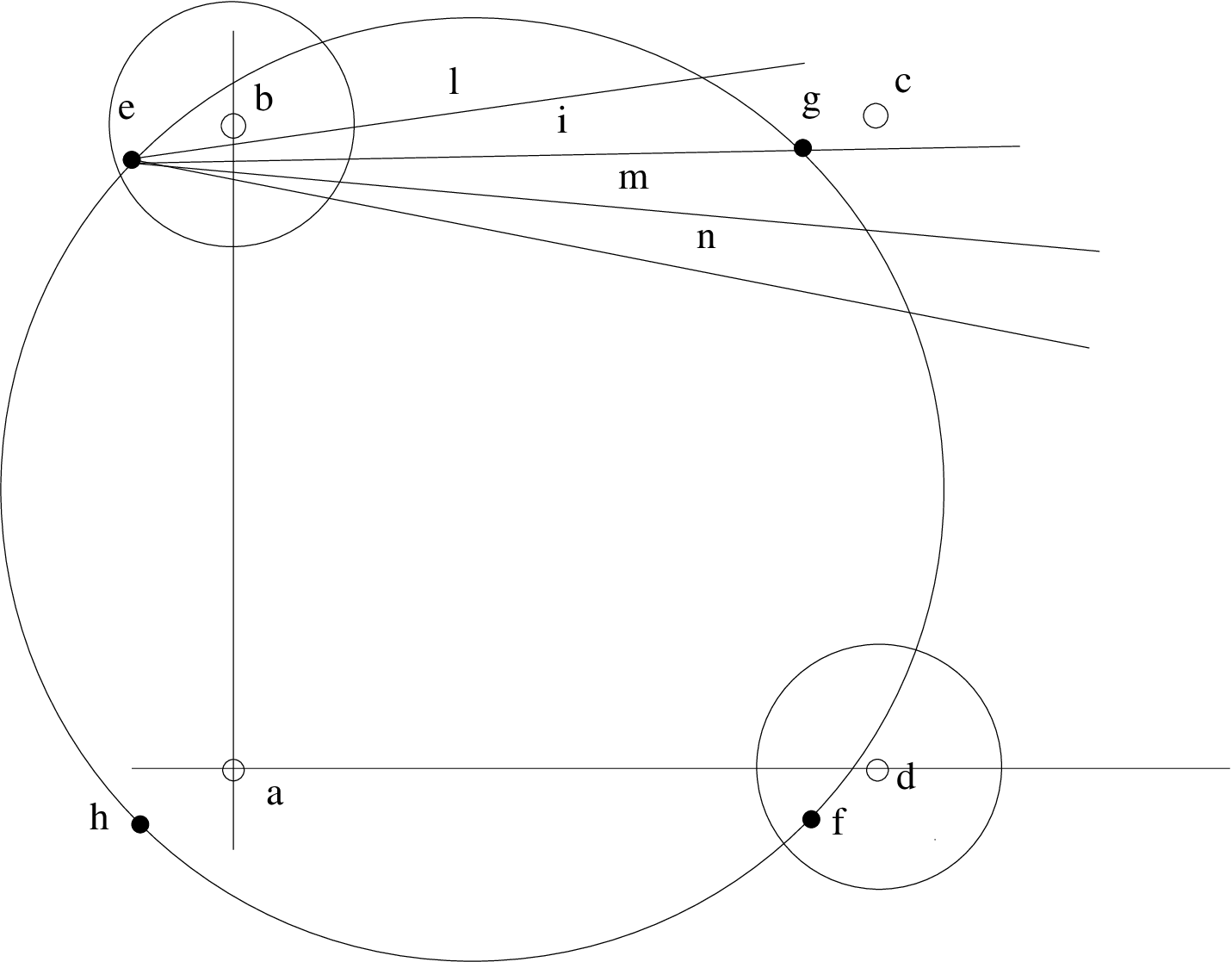}
}}
\end{minipage}
\caption{Our procedure for finding rational points with bounded denominators, here we wish to place a point in the second generation {\em close} to $\mathtt j_3$.}
\end{figure}
%


\end{proof}

\begin{corollary}\label{size.and.explosion}
For $N\gg1$ there exists an acceptable (see Definition \ref{acceptable}) generation set $\SS=\SS(N)$ such that
\begin{equation}\label{size.estimate}
|\mathtt v|<(N 2^N)^{16dN(N 2^N)^{8d}+1} \qquad\qquad \forall \mathtt v\in\SS\ .
\end{equation}
\end{corollary}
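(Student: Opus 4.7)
The plan is to combine Lemma \ref{parapa} with Lemmas \ref{lemma:Condition1S}, \ref{lemma:Condition3S}, \ref{lemma:Condition4S} together with the remark immediately following Lemma \ref{parapa}. Lemma \ref{parapa} already yields a non-degenerate generation set $\SS=\{\mathtt v_1,\ldots,\mathtt v_m\}\in(\MM\setminus\mathcal D)\cap\ZZ^{2m}$ enjoying the proximity property $|\mathtt v_i-R\mathtt j_i|\leq 3^{-N}R$ for some $R<(N 2^N)^{16dN(N 2^N)^{8d}}$. By the three lemmas cited above, any such $\SS$ automatically satisfies conditions $1,3,4$ of Definition \ref{acceptable}. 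Consequently, the only tasks left are to establish the norm explosion property (condition 2) and the size estimate \eqref{size.estimate}.

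For the size bound I would apply the triangle inequality together with the trivial a priori estimate $|\mathtt j_i|\leq(\sqrt 2)^{N-1}=2^{(N-1)/2}$, which follows from $|z_j|\leq\sqrt 2$ for every $z_j\in S_1\cup S_2$. This gives
$$
|\mathtt v_i|\leq R|\mathtt j_i|+3^{-N}R\leq R\bigl(2^{(N-1)/2}+3^{-N}\bigr)\leq R\cdot N 2^N
$$
for $N$ large, which combined with the bound on $R$ immediately yields \eqref{size.estimate}.

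For the norm explosion the key combinatorial fact is that, if $\mathtt e_i\in\AA_k$ is identified with $(z_1,\ldots,z_{N-1})\in S_2^{k-1}\times S_1^{N-k}$, then $|\mathtt j_i|^2=\prod_j|z_j|^2$ is either $0$ (whenever some $z_j$ with $j<k$ equals $0$) or exactly $2^{k-1}$, and the latter case occurs for exactly $2^{N-k}$ elements of $\AA_k$. Using the proximity estimate $|\mathtt v_i|=R|\mathtt j_i|\bigl(1+\OO(3^{-N})\bigr)$ for the nonzero-prototype points and $|\mathtt v_i|\leq 3^{-N}R$ for the zero-prototype ones, the sum $\sum_{k\in\SS_\ell}|k|^{2s}$ is dominated by the nonzero-prototype contribution, which is of order $R^{2s}\cdot 2^{N-\ell}\cdot 2^{(\ell-1)s}$; the zero-prototype contribution is bounded by $R^{2s}\cdot 2^{N-1}\cdot 3^{-2sN}$ and hence negligible for $N$ large. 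Taking the ratio between $\ell=N-2$ and $\ell=3$ one obtains
$$
\frac{\sum_{k\in\SS_{N-2}}|k|^{2s}}{\sum_{k\in\SS_{3}}|k|^{2s}}\sim 4^{1-s}\cdot 2^{(s-1)(N-3)}=2^{(s-1)(N-5)},
$$
which exceeds $2^{(s-1)(N-6)}$ as soon as $s>1$.

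The genuine difficulty is already absorbed into Lemma \ref{parapa}, where the arithmetic construction is carried out; the present corollary is essentially a bookkeeping combination of that quantitative output with the structural Lemmas \ref{lemma:Condition1S}--\ref{lemma:Condition4S}. The only mildly delicate point I anticipate is verifying that the prototype-degenerate points (those corresponding to $\mathtt j_i=0$, and hence $|\mathtt v_i|\leq 3^{-N}R$) cannot corrupt the norm-explosion ratio, but this is immediate from the $3^{-2sN}$ suppression displayed above.
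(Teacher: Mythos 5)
Your proposal is correct and follows the route the paper intends: Corollary \ref{size.and.explosion} is stated without proof precisely because it is the combination of Lemma \ref{parapa} (proximity to the rescaled prototype embedding and the bound on $R$) with Lemmas \ref{lemma:Condition1S}--\ref{lemma:Condition4S} and the remark that \eqref{proximity} yields the norm explosion \eqref{Normexp}. Your explicit verification of that remark — nonzero prototypes in $\Sigma_k$ have modulus $2^{(k-1)/2}$ and number $2^{N-k}$, zero-prototype points are suppressed by $3^{-2sN}$, giving a ratio $\sim 2^{(s-1)(N-5)}>2^{(s-1)(N-6)}$ — and the size bound via $|\mathtt j_i|\leq 2^{(N-1)/2}$ are exactly the bookkeeping the paper leaves to the reader, and both are carried out correctly.
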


\begin{proof}
It follows directly from Lemma \ref{parapa} and Remark \ref{quellola}.
\end{proof}

\subsection{Proof of Lemma \ref{gentree}}\label{sec:ProofLemmaGentree}
We first prove (i), (ii). Assume by contradiction that the abstract families of
$\FF$ are linearly dependent. This means that there exist some
$\alpha_k\in\Q,f_k\in\FF$ such that
$$\mathscr L:=\sum_k\alpha_k f_k=0\ ,\quad \exists  k_0 \,: \alpha_{ k_0}\neq
0\,. $$
Now, let $i_<$ and $i_>$ be respectively the minimal and the maximal generation
numbers of the families appearing in the linear combination $\mathscr L$ (with
nonzero coefficient). It follows from Remark \ref{no.int} that the support of
$\mathscr L$ contains at least two elements of the generation $i_<$ and two
elements of the generation $i_>+1$, which implies $\mathscr L\neq0$, which is
absurd.
Now, since the abstract families of $\FF$ are linearly independent, they form a
basis of $\langle\FF\rangle$. Therefore each $\lambda\in\langle\FF\rangle$ can
be written in a unique way as a linear combination
\begin{equation}\label{lin.comb}
\lambda=\sum_k\alpha_k f_k\ .
\end{equation}
Then, $|\Su(\lambda)|\geq4$ since as above it contains at least two elements in
$\AA_{i_<}$ and two elements in $\AA_{i_>+1}$.

Now, suppose $|\Su(\lambda)|=4$. This means that we have exactly two elements in
the generation $i_<$ and two in the generation $i_>+1$, and no elements in the
possible intermediate generations. We claim that $i_<=i_>$. In order to prove
our claim, we first notice that, if for some $i$ the linear combination
\eqref{lin.comb} contains $h$ different families (namely, there are $h$
different families with nonzero coefficient $\alpha_k$) of generation number $i$
and $k\neq h$ different families of generation number $i+1$, then $\Su(\lambda)$
contains at least 2 elements of the generation $i+1$. Then we notice that, if
for some $i$ the expression \eqref{lin.comb} contains exactly one family of
generation number $i$ and exactly one family of generation number $i+1$, then
$\Su(\lambda)$ contains at least 2 elements of the generation $i+1$ (since
sibling and spouse cannot coincide). It follows that, since $\Su(\lambda)$ does
not contain elements from the intermediate generations, there cannot be
intermediate 
generations, i.e. $i_<=i_>=\bar \imath$. 
Finally, thanks to Remark \ref{no.int}, in order to have 
only two elements of generation number $\bar \imath$ and two elements of
generation number $\bar \imath+1$, there must be exactly one family.

We now prove (iii). Consider $\lambda\in \SF$. Setting $\mathfrak{g}(f_k)$ to be
the generation number of the family $f_k$, we can  write in a unique way
$$ 
\lambda=\sum_k\alpha_k f_k = \sum_{\mathfrak{g}(f_k)=
i_>}\alpha_kf_k+\sum_{\mathfrak{g}(f_k)\neq i_>}\alpha_kf_k.
$$
Now, by definition of $\SF$, we have $\lambda= \sum_j \lambda_j \ee_j$ with
$\lambda_j\in \ZZ$. One easily sees, by Remark \ref{no.int}, that for all $j\in
\AA_{i_>+1}$ one has
$\lambda_j=-\alpha_k$ for one (and only one) $f_k$ of generation number $i_>$.
Then
$$
\lambda-\sum_{\mathfrak{g}(f_k)= i_>}\alpha_kf_k \in \SF
$$ and the claim follows by recursion on the maximal age.

Then, we prove (iv). Assume $\lambda\neq0$, otherwise the thesis is obvious. We
have observed that $\Su(\lambda)$ must contain at least two elements of the
generation $i_<$ and at least two elements of the generation $i_>+1$. The
assumption in (iv) implies that one of these two generations contains exactly
two elements of $\Su(\lambda)$ and that moreover, for all $j\neq i_<,i_>+1$,
$\Su(\lambda)$ contains no elements of the generation $j$. We assume that the
generation $i_<$ contains exactly two elements of $\Su(\lambda)$ which means
that the linear combination defining $\lambda$ contains one and only one family
$f_{k_0}$ of generation number $i_<$ (the case with generation $i_>+1$ is
symmetric), appearing with the coefficient $\alpha_{k_0}\neq0$. Then we
distinguish two cases: either $i_<=i_>$ or $i_<\neq i_>$. If $i_<=i_>$, then the
thesis follows easily by Remark \ref{no.int}. If $i_<\neq i_>$, then the
generation $i_<+1$ contains no element of $\Su(\lambda)$. But this means that
the two children in $f_{k_
0}$ (call them $\cc_1,\cc_2$) must be 
canceled out, which implies that the two families $f_{k_1},f_{k_2}$ in which
$\cc_1,\cc_2$ appear as parents ($k_1\neq k_2$ since siblings do not marry each
other) have the same (non-zero) coefficient
$\alpha_{k_1}=\alpha_{k_2}=\alpha_{k_0}$. But then also the two spouses of
$\cc_1,\cc_2$ must cancel out: consider for instance the spouse of $\cc_1$ and
call it $\mathbf{s}_1$. We have that $\mathbf{s}_1$ appears as a child in a
family of generation number $i_<$: we call this family $f_{k_3}$. Note that
$k_3\neq k_0$ since $\mathbf{s}_1\notin\{\cc_1,\cc_2\}$. The fact that
$\mathbf{s}_1$ is canceled out implies that $\alpha_{k_3}=\alpha_{k_1}\neq0$,
but this is absurd since $f_{k_0}$ is the only family of generation number $i_<$
to appear in the linear combination. This completes the proof of (iv).

Finally, the property (v) is a simple remark when $\lambda$ is a single family
vector and trivially generalizes to the case $\lambda\in\SF$. This completes the
proof of the lemma.

\subsection{Proof of Theorem
\ref{thm:FiniteResonantSet}}\label{sec:ProofThmFiniteResonantSet}
Now we prove the existence of non degenerate generation sets, according to
Definition \ref{defi:NonDegeneracy}. A preliminary but very important step is to show
that the linear and quadratic relations defining $\mathcal M$, see \eqref{mmmm},
do not imply any linear relation except those given by $\pi_\SS(\lambda)=0$ for
all $\la\in \SF$. It is clear that if this were not true one could not impose
condition (i) of Definition \ref{defi:NonDegeneracy}.
Specifically we prove
\begin{lemma}\label{fifi}
Consider a codimension one subspace $\Sigma\subset \R^{2m}$ and the sets
$\mathcal M$  and $\mathcal L$ defined respectively in \eqref{mmmm} and  \eqref{lll}.
Then,  $\mathcal M\subset \Sigma$ implies $\mathcal L \subset \Sigma$.
\end{lemma}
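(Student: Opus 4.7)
My plan is to exploit the explicit parametrization of $\MM$ built in Section \ref{sec:SomeGeometry}. Since the defining equations $\pi_\SS(f)=0$ and $\pi^{(2)}_\SS(f)=0$ are homogeneous, $\MM$ is a cone, so any codimension-one subspace $\Sigma$ containing $\MM$ passes through the origin and is the kernel of a single linear functional $\ell:\R^{2m}\to\R$. Writing $\SS=(\vv_1,\ldots,\vv_m)\in(\R^2)^m$ and $\ell(\SS)=\sum_{j=1}^m c_j\cdot\vv_j$ with $c_j\in\R^2$, the goal becomes to prove that $\ell$ is an $\R^2$-linear combination of the functionals $\SS\mapsto c\cdot\pi_\SS(f)$ (for $f\in\FF$, $c\in\R^2$), since these span precisely the annihilator of $\mathcal L$ in $(\R^{2m})^*$.

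Recall that the inductive construction of Section \ref{sec:SomeGeometry} parametrizes a Zariski-open dense subset of $\MM$ by free parameters $\{\vv_j\}_{j\in\AA_1}\in(\R^2)^{|\AA_1|}$ and angles $\{\theta_f\}_{f\in\FF}$, where for each family $f\in\FF$ (with parents $\vv_{j_1},\vv_{j_2}$ already placed) the children are
\[
\vv_{j_3}=m_f+r_f\,n(\theta_f),\qquad \vv_{j_4}=m_f-r_f\,n(\theta_f),
\]
with $m_f=(\vv_{j_1}+\vv_{j_2})/2$, $r_f=\tfrac{1}{2}|\vv_{j_1}-\vv_{j_2}|$ and $n(\theta)=(\cos\theta,\sin\theta)$. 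Under this parametrization $\ell\vert_\MM\equiv0$ becomes a polynomial identity in the free parameters.

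I then run a descending induction on the generation index $k$ from $N$ down to $2$. Assume we have modified $\ell$ by adding $\R^2$-combinations of the $\pi_\SS(f)$'s (each vanishing on $\mathcal L$) so that $c_j=0$ for every $j\in\AA_{k+1}\cup\cdots\cup\AA_N$. Fix a generic $\SS\in\MM$ with $\vv_{j_1}\neq\vv_{j_2}$ for every family of generation $k-1$—a Zariski-open condition—and freeze every free parameter except the angle $\theta_f$ of a chosen family $f$ of generation $k-1$. Under this variation only $\vv_{j_3},\vv_{j_4}$ and the descendants of $f$ move, and the descendants all lie in $\AA_k\cup\AA_{k+1}\cup\cdots\cup\AA_N$ and carry zero coefficient except at $j_3,j_4\in\AA_k$, so the $\theta_f$-dependent part of $\ell(\SS)$ reduces to
\[
c_{j_3}\cdot\vv_{j_3}+c_{j_4}\cdot\vv_{j_4}=(c_{j_3}+c_{j_4})\cdot m_f+(c_{j_3}-c_{j_4})\cdot r_f\,n(\theta_f),
\]
and the fact that $n(\theta_f)$ sweeps a genuine circle in $\R^2$ (with $r_f>0$) forces $c_{j_3}=c_{j_4}$. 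Adding $c_{j_3}\cdot\pi_\SS(f)$ to $\ell$ then zeroes the coefficients at $j_3,j_4$ while only perturbing the coefficients at the parents $j_1,j_2\in\AA_{k-1}$, leaving the already-cleared higher generations untouched. Since distinct families of generation $k-1$ have disjoint parent sets in $\AA_{k-1}$ (by Definition \ref{def.fam2}), the analogous correction can be carried out simultaneously for every such family, advancing the induction from $k$ to $k-1$.

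When the induction reaches $k=2$, the modified $\ell$ is supported on $\AA_1$; since $\MM$ contains configurations with arbitrarily prescribed first generation, $\sum_{j\in\AA_1}c_j\cdot\vv_j\equiv0$ as $\{\vv_j\}_{j\in\AA_1}$ ranges over $(\R^2)^{|\AA_1|}$, forcing $c_j=0$ for $j\in\AA_1$ as well. Hence the original $\ell$ is an $\R^2$-linear combination of the $\pi_\SS(f)$'s, so $\ell$ vanishes on $\mathcal L$ and $\mathcal L\subset\Sigma$. The only technical point to watch is the genericity used at each step (namely $\vv_{j_1}\neq\vv_{j_2}$ for the relevant families so that $r_f>0$), which is automatic because the bad locus $\MM\cap\bigcup_{i\neq j}\{\vv_i=\vv_j\}$ is a proper subvariety of $\MM$, as already noted at the beginning of Section \ref{sec:SomeGeometry}.
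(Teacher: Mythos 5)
Your argument is correct; all the facts it leans on are available in the paper (every element of $\AA_k$, $k\ge 2$, is a child of exactly one family of generation $k-1$ by Definition \ref{def.fam2}, families of equal generation number have disjoint supports by Remark \ref{no.int}, and the locus where two parents coincide is a proper subvariety of $\MM$, so the radius $r_f$ can be taken positive). The route differs from the paper's in organization rather than in substance. You argue directly: by descending induction on the generation, you vary a single family angle to force the two children's coefficients to agree, then subtract the corresponding family functional $\SS\mapsto c\cdot\pi_\SS(f)$, concluding that the functional defining $\Sigma$ lies in the span of the family relations — which in particular reproves Remark \ref{acquacalda} explicitly. The paper instead proves the contrapositive: it puts the family matrix in lower row echelon form, solves for the pivot children, rewrites the equation of $\Sigma$ in the remaining free variables $\ww_\ell$, and makes a single choice at the largest index with nonzero coefficient, using that the quadratic constraint confines $\ww_{\bar\ell}$ to a circle of positive radius which cannot be contained in the line cut out by $\Sigma$. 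Both proofs hinge on the same geometric fact (a nondegenerate circle is not contained in a line, equivalently a nontrivial linear function is nonconstant on it); yours trades the one-shot echelon-form reduction for a generation-by-generation elimination, at the cost of repeating the circle argument once per family but with the benefit of exhibiting the annihilator of the linear span of $\MM$ as exactly $\langle\FF\rangle$, and of never needing to reorder variables. The only points to keep explicit are the ones you already flag: every choice of first-generation vectors and angles yields a point of $\MM$ (so the identity may be varied in a single angle with everything else frozen), and the genericity required at each step is merely that the two parents of the family being treated are distinct.
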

The strategy of this proof relies on the choice of a good set of variables for
$\mathcal L$ (and consequently $\mathcal M$) as explained in Section \ref{sec:SomeGeometry}.
Namely, up to a reordering of the vectors $\be_j$, the matrix whose rows are the 
abstract families (see Definition \ref{def.fam})  is in {\em row echelon form}, see \eqref{mammete}. This gives
a recursive rule to fix the dependent variables (the {\em pivots}) as well as
the circles for the remainig variables. Then we write the relation defining
$\Sigma$ in the independent variables of $\mathcal L$ and the condition
$\mathcal L \not\subset \Sigma$ means that the coefficients are not all zero.  
Then  with respect to the {\em youngest} variable $\Sigma$ defines a line, while the
quadratic relations a {\em non-degenerate} circle, which obviously cannot be
contained in a line. 
\begin{proof}[Proof of Lemma \ref{fifi}]
We will prove that if $\mathcal L\not\subset\Sigma$, then $\mathcal M\not\subset
\Sigma$. Namely, we will prove that, for any given codimension one subspace
$\Sigma$ which does not contain $\mathcal L$, we can choose $\SS\in\mathcal
M\setminus\Sigma$.

If we denote by $\vv_j^{(1)},\vv_j^{(2)}$ the two components of $\vv_j\in\R^2$,
a codimension one subspace $\Sigma\subset\R^{2m}$ is defined by an equation of
the form
\begin{equation}\label{sigma}
\sum_{j=1}^m\sum_{k=1}^2\lambda_{jk}\vv_j^{(k)}=0\ .
\end{equation}
Now, for simplicity of notation and without loss of generality, we reorder the
basis $\{\ee_j\}_{j=1}^m$ of $\Z^m$  so that two siblings belonging to the same
abstract family always have consecutive subindices. In matrix notation, the
condition of $\SS$ being a generation set can be denoted
$$
\pi_\SS(F^T)=0\,,\quad \pi^{(2)}_\SS(F^T)=0\ ,
$$
where $F$ is a matrix whose rows are given by the abstract families and $F^T$ denotes its transpose. We choose
to order the rows of $F$ so that the matrix is in {\em lower row echelon} form
(see figure).
\begin{equation}\label{mammete}
\begin{array}{c  c c c c c c c c c c c } \ww_1&\ww_2&\ww_3&\ww_4&\ww_5&\pp_5&
\ww_6 &\pp_6 &\ww_7 &\pp_7 &\ww_8&\pp_8\\ \\ 1 &1&0&0&-1&-1&\!\! \!\!\!\vline
\quad0&0&0&0  &0&0 \\ \cline{7-8}
0&0&1&1&0&0&-1&-1 &\!\! \!\!\!\vline \quad 0&0  &0&0\\ \cline{9-10}
0&0&0&0&1&0&1&0&-1&-1& \!\! \!\!\!\vline \quad 0&0 \\
\cline{11-12}0&0&0&0&0&1&0&1&0&0  &-1&-1 \end{array}
\end{equation}
Each row of a matrix in lower row echelon form has a {\em pivot}, i.e. the first
nonzero coefficient of the row starting from the right. Being in lower row
echelon form means that the pivot of a row is always strictly to the right of
the pivot of the row above it. In the matrix $F$ defined by the abstract
families, the pivots are all equal to $-1$ and they correspond to one and only
one child from each family. In order to exploit this fact, we accordingly rename
the elements of the generation set by writing
$\SS=(\pp,\ww)\in\R^{2a}\times\R^{2b}$, with $a=(N-1)2^{N-2}$,
$b=m-a=(N+1)2^{N-2}$, where the $\pp_j\in\R^2$ are the elements of the
generation set corresponding to the pivots and the $\ww_{\ell}\in\R^2$ are all
the others, i.e. all the elements of the first generation and one and only one
child (the non-pivot) from each family. Here, the index $\ell$ ranges from $1$
to $b$, while the index $j$ ranges from $2^{N-1}+1$ to $b$ (note that
$a+2^{N-1}=b$), so that a couple $(\pp_j,\ww_{\ell})$ corresponds to 
a couple of siblings if and only if $j=\ell$. Then, the linear relations
$\pi_\SS(F^T)=0$ can be used to write each $\pp_j$ as a linear combination
containing only  the $\ww_{\ell}$'s with $\ell\leq j$:
\begin{equation}\label{pivot}
\pp_j=\sum_{\ell\leq j}\mu_{\ell}\ww_{\ell}, \quad \mu_{\ell}\in\Q\ .
\end{equation}
Finally, the quadratic relations $\pi_\SS^{(2)}(F^T)=0$ constrain each
$\ww_{\ell}$ with $\ell>2^{N-1}$ (i.e. not in the first generation) to a circle
depending on the $\ww_j$ with $j<\ell$; note that this circle has positive
radius provided that the parents of $\ww_{\ell}$ are distinct.
Then, equation \eqref{pivot} together with Lemma \ref{gentree} (i) implies that the
left hand side of equation~\eqref{sigma} can be rewritten in a unique way as a linear
combination of the $\ww_{\ell}$'s only. Thus, we have
\begin{equation}\label{sigma2}
\sum_{\ell=1}^b\sum_{k=1}^2\eta_{\ell, k}\ww_{\ell}^{(k)}=0\ .
\end{equation}
Hence, the assumption that $\mathcal L\not\subset\Sigma$ is equivalent to the
fact that $\eta\in\R^{2b}$ does not vanish. Let $$\bar\ell:=\max\left\{\ell\ \middle|\
(\eta_{\ell,1},\eta_{\ell,2})\neq(0,0)\right\}\ .$$
If $\bar\ell\leq 2^{N-1}$, then $\ww_{\bar\ell}$ is in the first generation.
Since there are no restrictions (either linear or quadratic) on the first
generation, the statement is trivial. Hence assume $\bar\ell>2^{N-1}$. As we
have discussed previously, we can assume (by removing from $\mathcal M$ a proper
submanifold of codimension one) that $\vv_h\neq\vv_k$ for all $h\neq k$. Then
the quadratic constraint on $\ww_{\bar\ell}\in\R^2$ gives a circle of positive
radius. Since \eqref{sigma2} defines a line in the variable $\ww_{\bar \ell}$ we
can ensure that the relation \eqref{sigma2} is not fulfilled by excluding at
most two points on this circle.  Thus we are able to construct $\SS\in\mathcal
M\setminus\Sigma$.

\end{proof}
\begin{remark}\label{acquacalda}
By Lemma \ref{fifi}, if a linear equation $\sum_j \lambda_j \vv_j\equiv 0$
identically on $\mathcal M$ then it must be $\lambda\in \langle \FF\rangle$.
\end{remark} 

Now we are ready to prove Theorem \ref{thm:FiniteResonantSet}. 

\begin{proof}[Proof of Theorem \ref{thm:FiniteResonantSet}] Set
$$\mathcal D_0= \cup_{\lambda \in \mathcal R_{2d}\setminus \langle\FF\rangle}
\{\SS \in \mathcal M:\;\; \pi_\SS(\lambda)=0\}. $$
By Remark \ref{acquacalda},  this is an algebraic manifold of codimension one in
$\mathcal M$. Moreover, by definition, in $\MM\setminus \mathcal D_0$ the condition (i) of Definition \ref{defi:NonDegeneracy} is satisfied.

To impose the second condition we proceed by induction. As in our
geometric construction of $\mathcal M$ (see Section \ref{sec:SomeGeometry}), we  suppose to have  fixed $i$
generations and $0\leq h<2^{N-2}$ families with children in the $i+1$-th
generation. This means that we have fixed $\pi_\SS(\be_j)$ for all $j\leq
2^{N-1} i$ and for some subset of cardinality $2h$ of $\be_j$ in the  $i+1$-th
generation. Let us denote by $A$ the set of indexes $j$ such that 
$\pi_\SS(\be_j)$ has been fixed.
 Our inductive hypothesis is that  all the non-degeneracy conditions with
support  contained in $A$ are satisfied. In particular, this implies that all the
$\vv_j$ with $j\in A$ are distinct.
  Let us denote by $\cc_1,\cc_2\in \AA_{i+1}$ the next children we wish to
generate and by    $\be_{j_1},\be_{j_2}\in \AA_i$ their  {\em parental couple}.
We wish to fix
  $\ww_1=\pi_\SS(\cc_1)$ and  $\ww_2=\pi_\SS(\cc_2)$ so that the non-degeneracy
conditions hold. Due to the linear relations
$\ww_2=-\ww_1+\vv_{j_1}+\vv_{j_2}$ while the quadratic relations read
$(\vv_{j_1}-\ww_1,\vv_{j_2}-\ww_1)=0$.
Let us consider $\mu\in \Z^m$ of the form
\begin{equation}\label{inaxs}
\mu= \sum_{j\in A}\xi_j \be_j + a \cc_1+b \cc_2\,, \quad \sum_{j\in A}\xi_j 
+a+b=1\,, \quad \sum_{j\in A}|\xi_j|  +|a|+|b|\leq 2d-1
\end{equation}
and study $K_\SS(\mu)$.

Recall that $\ww_2=-\ww_1+\vv_{j_1}+\vv_{j_2}$  and
$|\ww_2|^2=-|\ww_1|^2+|\vv_{j_1}|^2+|\vv_{j_2}|^2$. 
We have
$$
\left\{\begin{aligned}
 \pi_\SS(\mu) &=  \sum_{j\in A}\xi_j \vv_j + (a-b) \ww_1+
b(\vv_{j_1}+\vv_{j_2})\\
\pi^{(2)}_\SS(\mu)&=  \sum_{j\in A}\xi_j |\vv_j|^2 + (a-b) |\ww_1|^2+
b(|\vv_{j_1}|^2+|\vv_{j_2}|^2). 
\end{aligned}\right.
$$
We set $\alpha:= a-b$ and $$\lambda:= \sum_{j\in A}\lambda_j \ee_j= \sum_{j\in
A}\xi_j \ee_j+b(\ee_{j_1}+\ee_{j_2})$$
so that 
$$
K_\SS(\mu)= 
|\pi_\SS(\lambda) +\alpha \ww_1|^2-\pi^{(2)}_\SS(\lambda) - \alpha |\ww_1|^2.
$$

If $\alpha=0$, then $a=b$ and therefore
$\lambda-\mu=a(\ee_{j_1}+\ee_{j_2}-\cc_1-\cc_2)\in\langle\FF\rangle$ and
$K_\SS(\mu)=K_\SS(\la)$. Moreover, we have that $\sum_j \lambda_j=1$ and
$\sum_j |\lambda_j|\leq 2d-1$. Since the support of $\lambda$ is contained
in $A$, the non-degeneracy condition (ii) of Definition \ref{defi:NonDegeneracy} for the vector $\mu$ follows from the inductive hypothesis.

Otherwise, assume  $\al\neq 0$. Then, since $ |\ww_1|^2=(\vv_{j_1}+\vv_{j_2},\ww_1)-(\vv_{j_1},\vv_{j_2})$, we  get
\[
\begin{split}
K_\SS(\mu)=&\, |\pi_\SS(\lambda)|^2 +\alpha^2| \ww_1|^2 +2\alpha
(\pi_\SS(\lambda),\ww_1)-\pi^{(2)}_\SS(\lambda) - \alpha |\ww_1|^2\\
=&\,K_\SS(\lambda) +2\alpha (\pi_\SS(\lambda),\ww_1)+ \alpha(\alpha-1)
[(\vv_{j_1}+\vv_{j_2},\ww_1)-(\vv_{j_1},\vv_{j_2})]\\
=&\,K_\SS(\lambda) +\alpha
\Big(2\pi_\SS(\lambda)+(\alpha-1)(\vv_{j_1}+\vv_{j_2}),\ww_1\Big)-
\alpha(\alpha-1) (\vv_{j_1},\vv_{j_2}).
\end{split}
\]

If $2\pi_\SS(\lambda)+(\alpha-1)(\vv_{j_1}+\vv_{j_2})\neq 0$  then $K_\SS(\mu)=
0$ defines a line in the plane $\ww_1\in \R^2$. Then the non-degeneracy
condition amounts to fixing $\ww_1$ so that 
$K_\SS(\mu)\neq 0$ i.e. by excluding at most two points on the circle
$(\ww_1-\vv_{j_1}, \ww_1-\vv_{j_2})=0$.

Suppose now that $2\pi_\SS(\lambda)+(\alpha-1)(\vv_{j_1}+\vv_{j_2})= 0$, then
$K_\SS(\mu)$ does not depend on the choice of $\ww_1$. We have to show that
either $K_\SS(\mu)\neq 0$ for all $\SS\in \mathcal M\setminus\mathcal D_0$ or we
get  the special case allowed in Definition \ref{defi:NonDegeneracy} (ii).    We
claim that\footnote{This motivates our choice of $\RR_{2d}$ in Definition
\ref{defi:NonDegeneracy}.}
$$
\eta:= 2\lambda+(\alpha-1)(\ee_{j_1}+\ee_{j_2})\in \mathcal R_{2d}.
$$
Indeed, since $\sum_j\lambda_j+\alpha=1$ then $\sum_j \eta_j=0$. Moreover $\eta=
2\sum_{j\in A}\xi_j\be_j +(a+b-1)(\be_{j_1}+\be_{j_2})$ which,  by \eqref{inaxs},
implies
$$
\sum_j |\eta_j| \le 2 (\sum_j|\xi_j|+|a|+|b|+1) \le 4d.
$$ 
Now by definition for all $\SS\in \mathcal M\setminus\mathcal D_0$, we have
$\pi_\SS(\eta)=0$ if and only if $\eta\in \langle \FF\rangle$. This in turn
implies that not only
$$\pi_\SS(\eta)=2\pi_\SS(\lambda)+(\alpha-1)(\vv_{j_1}+\vv_{j_2})= 0$$ but also
(see Remark \ref{pipi2})
$$\pi^{(2)}_\SS(\eta)=2\pi^{(2)}_\SS(\lambda)+(\alpha-1)(|\vv_{j_1}|^2+|\vv_{j_2
}|^2)= 0.$$ Hence
$$ K_\SS(\lambda)= \frac{(\alpha-1)^2}{4}|\vv_{j_1}+\vv_{j_2}|^2+
\frac{\alpha-1}{2}(|\vv_{j_1}|^2+|\vv_{j_2}|^2)
$$ and in conclusion
$$K_\SS(\mu)= \frac{(\alpha-1)(\alpha+1)}{4}|\vv_{j_1}-\vv_{j_2}|^2. $$
We have that   $\be_{j_1}-\be_{j_2}\in \RR_{2d}$ and Lemma \ref{gentree} (ii) implies 
$\be_{j_1}-\be_{j_2}\not\in \langle \mathcal F\rangle$. 
Therefore, for $\SS\in \mathcal M\setminus
\mathcal D_0$, we have $\vv_{j_1}\neq \vv_{j_2}$ (see Remark \ref{acquacalda}). Then $K_\SS(\mu)$   vanishes on
$\mathcal M\setminus\mathcal D_0$ only if $\alpha=\pm 1$.

If $\alpha=1$ then $\la\in \RR_{2d}$ and $\pi_{\SS}(\lambda)=0$, which holds
true in $\mathcal M\setminus \mathcal D_0$ if only if $\lambda\in \langle
\FF\rangle$; then $$\mu-\cc_1=\lambda-b(\ee_{j_1}+\ee_{j_2}-\cc_1-\cc_2)\in
\langle \FF\rangle$$ and the non-degeneracy condition (ii) in Definition
\ref{defi:NonDegeneracy} holds.

If $\alpha=-1$, then one symmetrically defines $$\tilde\lambda:= \sum_{j\in
A}\tilde\lambda_j \ee_j= \sum_{j\in A}\xi_j \ee_j+a(\ee_{j_1}+\ee_{j_2})$$ and
proceeding as above one obtains $\tilde\la\in \RR_{2d}$ and
$\pi_{\SS}(\tilde\lambda)=0$, which implies $\tilde\lambda\in \langle
\FF\rangle$; finally
$$\mu-\cc_2=\tilde\lambda-a(\ee_{j_1}+\ee_{j_2}-\cc_1-\cc_2)\in \langle
\FF\rangle$$ which again ensures that the non-degeneracy condition (ii) holds.
\end{proof}

\section{Dynamics of the toy model}
\subsection{Invariant subspaces for one and two generations}
We now study the invariant subspaces of $H_\SS$ where $\SS$ is an acceptable
set, see Definition \ref{acceptable}. The simplest non trivial orbits are those
where we fix $j= 1,\dots,N$ and set  $b_i=0$ for all $i\neq j$. This is
an invariant subspace by Definition \ref{acceptable} item 4(d).
By gauge invariance and reality (resp. items 4(c) and 4(b) of Definition
\ref{acceptable}) we have that the Hamiltonian restricted to this subspace is a
single monomial $|b_j|^{2d}$ with a real non-zero coefficient. Moreover, this
coefficient does not depend on $j$ (it follows, for instance, by Definition
\ref{acceptable} item 4(f)). We have proved, for any fixed surface level of the
mass, the existence of $N$ periodic orbits all with the same frequency. We
denote by $\TT_j$ the corresponding periodic orbit with $|b_j|=1$ and $b_i=0$
for $i\neq j$.

We can now suppose that all the $b_j$'s are zero except two consecutive ones. 
By Definition \ref{acceptable} item 4(e) we can restrict ourselves to the case
when these two generations are the first and the second.
Thus, we get the Hamiltonian
$$
h(b_1,\bar b_1,b_2,\bar b_2)=
(|b_1|^2+|b_2|^2)^{d-2}\Big(-\frac14(|b_1|^4+|b_2|^4)+ \Re(b_1^2\bar
b_2^2)\Big)+\frac1n\PP(b_1,\bar b_1,b_2,\bar b_2,\frac1n).
$$
with the constant of motion $J= |b_1|^2+|b_2|^2$.
We know by Definition \ref{acceptable} item 4 that $h$ is symmetric with respect
to the  exchange of $b_1$ and $b_2$, real and gauge invariant. Moreover it has
even degree in its variables. Thus we  can have only a finite number of possible
fundamental {\em building blocks} which appear through sums and products:
\begin{enumerate}
\item Integrable terms  $L^{(j)}:= |b_1|^{2j}+ |b_2|^{2j}$;
\item Non-integrable terms which are multiples of a family relation $\Re
[(b_1\bar b_2)^{2 k}]$.
\end{enumerate}
We remark that all the integrable terms $L^{(j)}$ can be written in terms of
$|b_1|^2|b_2|^2$ and $J$ in the same way the non integrable terms are
polynomials in $|b_1|^2|b_2|^2$  and $\Re [(b_1\bar b_2)^{2}]$. We now reduce
the degrees of freedom passing to one complex variable $c$, one (cyclic) angle
$\vartheta$ and the conserved quantity $J$.
Explicitly we have
\begin{equation}\label{symp1}
J= |b_1|^2+|b_2|^2, \; b_1= \sqrt{J-|c|^2} e^{\ii \vartheta}\,,\; b_2= c e^{\ii
\vartheta}.
\end{equation}
This change of variables is symplectic and the new symplectic form is $\frac12
(dJ\wedge d\vartheta +\ii  dc \wedge d \bar c )$. Note that, writing both
$|b_1|^2|b_2|^2$  and $\Re [(b_1\bar b_2)^{2}]$ in terms of the new variables,
the term $J-|c|^2$ always factors out. This means that, subtracting the constant
terms depending only on $J$, we get the Hamiltonian
$$
h(J,c,\bar c)=(J-|c|^2)\Big( J^{d-2} \big(\frac12|c|^2 + \Re(c^2)\big)+
\frac{1}{n}Q(J,|c|^2,\Re(c^2))\Big),
$$
where  the dependence of $Q$ on its arguments is polynomial and homogeneous of
degree $d-1$ and we have $Q(J,0,0)=0$. Now we may extract the linear terms in
$|c|^2,\Re(c^2)$ from $Q$ (note that for the quintic NLS $d=3$ these are the
only possible terms) and restricting to the surface level $J=1$ we get an
expression of the form
\begin{equation}\label{hccbar}
h(c,\bar c)= \kappa_n (1-|c|^2)\Big( a_n|c|^2 + \Re(c^2)+ \frac{1}{n}\mathcal
Q(|c|^2,\Re(c^2))\Big)\ .
\end{equation}
where
\[
 \mathcal Q(|c|^2,\Re(c^2))=
Q(1,|c|^2,\Re(c^2))-\pa_{2}Q(1,0,0)|c|^2-\pa_{3}Q(1,0,0)\Re(c^2).
\]
Note that $\mathcal Q$ has a zero of order two in its variables $\kappa_n= 1
+\OO(1/n)$ and $a_n= (\frac12+\OO(1/n))$. It is natural to pass the quadratic
part of the Hamiltonian in hyperbolic normal form by defining
\begin{equation}\label{def:anANDomega}
\Re(\omega^2):= -a_n\,,\quad\text{namely}\quad \omega= e^{\ii \theta} \;\text{
with} \quad  \theta = \frac12 \arccos(-a_n)=\frac{\pi}{3}+\OO(n^{-1})
\end{equation}
and setting
\begin{eqnarray}\label{ctopq}
c&=&\frac{1}{\sqrt{\Im(\omega^2)}}(\omega q+\bar\omega p)\\
\nonumber \bar{c}&=&\frac{1}{\sqrt{\Im(\omega^2)}}(\bar\omega q+\omega p).
\end{eqnarray}

\begin{lemma}
The change of variables given by \eqref{ctopq} is symplectic i.e.
$\frac{\ii}{2}dc\wedge d\bar c=dp\wedge dq$ and the Hamiltonian in the new
variables is given by
\begin{equation}\label{2genpq}
h(p,q)=\kappa_n\Big(1-\frac{1}{{\Im(\omega^2)}} (p^2+ q^2+ 2 \Re(\omega^2) p  q
)\Big) \Big(2\Im(\omega^2) pq+ \frac{1}{n}P(pq,p^2+q^2)\Big)
\end{equation}
 with $P$ having a zero of degree at least two in its arguments.
\end{lemma}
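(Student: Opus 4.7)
The proof reduces to two independent and largely algebraic verifications. For the symplectic claim, I would differentiate \eqref{ctopq} and compute directly
\[
dc\wedge d\bar c = \frac{1}{\Im(\omega^2)}(\omega\, dq+\bar\omega\, dp)\wedge(\bar\omega\, dq+\omega\, dp) = \frac{\omega^2-\bar\omega^2}{\Im(\omega^2)}\, dq\wedge dp = -2\ii\, dp\wedge dq,
\]
using $|\omega|^2=1$ and $\omega^2-\bar\omega^2=2\ii\,\Im(\omega^2)$; multiplying by $\ii/2$ yields exactly $dp\wedge dq$. Note that the normalization $1/\sqrt{\Im(\omega^2)}$ in \eqref{ctopq} is chosen precisely to cancel this factor.

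For the Hamiltonian, I would substitute into \eqref{hccbar} the two identities
\[
|c|^2 = \frac{1}{\Im(\omega^2)}\bigl(p^2+q^2+2\Re(\omega^2)\,pq\bigr),\qquad \Re(c^2)=\frac{1}{\Im(\omega^2)}\bigl(\Re(\omega^2)(p^2+q^2)+2pq\bigr),
\]
both of which follow from a single expansion of $(\omega q+\bar\omega p)^2$ and $|\omega q+\bar\omega p|^2$ using $|\omega|^2=1$. The factor $(1-|c|^2)$ then directly matches the first parenthesis in the target expression. For the second, the crucial computation is
\[
a_n|c|^2+\Re(c^2)=\frac{2pq}{\Im(\omega^2)}\bigl(1-(\Re(\omega^2))^2\bigr)=2\,\Im(\omega^2)\,pq,
\]
which uses the defining relation $\Re(\omega^2)=-a_n$ from \eqref{def:anANDomega} to cancel the $(p^2+q^2)$ contributions and then the unit-modulus identity $(\Re(\omega^2))^2+(\Im(\omega^2))^2=1$. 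This is precisely the hyperbolic-normal-form reduction for which $\omega$ was introduced.

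Finally, for the perturbative piece, observe that both $|c|^2$ and $\Re(c^2)$ are linear combinations without constant term of $u:=pq$ and $v:=p^2+q^2$. Since $\mathcal Q(x,y)$ is a polynomial vanishing to order two at the origin, each monomial $x^a y^b$ with $a+b\geq 2$ pulls back to a polynomial in $(u,v)$ of total degree $a+b\geq 2$; consequently $\mathcal Q(|c|^2,\Re(c^2))$ is itself a polynomial $P(u,v)$ vanishing to order two, proving the claim. No real obstacle is present: the statement is an algebraic identity rigged by the definition \eqref{def:anANDomega}, and the only analytic point is that $\Im(\omega^2)>0$ (true for $n$ large since $\theta=\pi/3+O(n^{-1})$) so that the square root in \eqref{ctopq} is well defined and all formulas above make sense.
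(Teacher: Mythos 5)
Your proposal is correct and follows essentially the same route as the paper: compute $|c|^2$ and $\Re(c^2)$ in the $(p,q)$ variables, use $\Re(\omega^2)=-a_n$ together with $|\omega|=1$ to reduce $a_n|c|^2+\Re(c^2)$ to $2\Im(\omega^2)pq$, and substitute into \eqref{hccbar}. The only difference is that you spell out the symplecticity check and the order-two structure of $P$, which the paper leaves implicit in its one-line conclusion.
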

\begin{proof}
We have:
\[
\begin{split}
|c|^2&= \frac{1}{{\Im(\omega^2)}} (p^2+ q^2+ 2 \Re(\omega^2) p  q )\\
\Re(c^2)&=  \frac{1}{{\Im(\omega^2)}}(\Re( \omega^2) (p ^2 +   q ^2) + 2p  q )
\end{split}
\]
which imply
$$
-\Re(\omega^2) |c|^2+ \Re(c^2)= \frac{2}{{\Im(\omega^2)}}  (1-\Re(\omega^2)^2 )
p  q = 2\Im(\omega^2) p  q 
$$
and hence substituting into \eqref{hccbar} we get the thesis.
\end{proof}

The flow generated by the Hamiltonian \eqref{2genpq} leaves invariant the
ellipse $\mathcal E$ with equation $p^2+ q^2+ 2 \Re(\omega^2) p 
q=\Im(\omega^2)$, which corresponds to the periodic orbit $\TT_2$, while the
hyperbolic fixed point $(0,0)$ corresponds to the periodic orbit $\TT_1$. We are
going to prove the existence of heteroclinic connections linking $(0,0)$ to a
point in $\mathcal E$, i.e. sliding from the periodic orbit $\TT_1$ to the
periodic orbit $\TT_2$.


Neglecting the term $(1/n)P(pq,p^2+q^2)$ in \eqref{2genpq}, one easily sees that
there is a heteroclinic connection lying on $q=0$, flowing from the point
$(0,0)$ as $t\to -\infty$ to the hyperbolic critical point
$(p,q)=(\Im(\omega^2),0)$ on $\mathcal E$ as $t\to +\infty$.
%
Now, we can deal with the full system using perturbative methods.

\begin{lemma}\label{lemma:SeparatrixAsGraph}
 The Hamiltonian system given by \eqref{2genpq} has a hyperbolic critical point
$(p^*,q^*)=(\sqrt{3}/2,0)+\OO(n^{-1})$, which belongs to $\mathcal E$, and a
heteroclinic connection which tends to this point in forward time and to the
point $(p,q)=(0,0)$ in backward time. Moreover, this connection can be written
as a graph
\[
 q=\xi (p),\,\,p\in [0,p^*]
\]
and it satisfies $\sup_{p\in [0,p^*]}|\xi(p)|=\OO(n^{-1})$.
\end{lemma}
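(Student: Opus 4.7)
The plan is to perturb from the explicit situation at $1/n = 0$ using the implicit function theorem and invariant manifold theory. I first rewrite the Hamiltonian \eqref{2genpq} as $h = \kappa_n A(p,q)\,B(p,q)$, where $A(p,q) = 1 - \tfrac{1}{\Im(\omega^2)}(p^2+q^2+2\Re(\omega^2)pq)$ is the defining function of the ellipse $\mathcal{E}$ and $B(p,q) = 2\Im(\omega^2)\,pq + \tfrac{1}{n} P(pq,p^2+q^2)$. Hence $\{h=0\} = \mathcal{E} \cup \{B=0\}$. At $1/n=0$, the factor $\{B=0\}$ reduces to the pair of coordinate axes, the line $\{q=0\}$ is invariant, and the reduced flow $\dot p = 2\kappa_n p(\Im(\omega^2) - p^2)$ produces an explicit heteroclinic joining $(0,0)$ to $(\sqrt{\Im(\omega^2)},0)$. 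A direct Hessian computation shows that both endpoints are hyperbolic saddles of the Hamiltonian vector field, with eigenvalues of order one uniformly in $n$, and that the unstable eigenvector at $(0,0)$ and the stable eigenvector at the other saddle are both horizontal.

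For the full system, $(0,0)$ remains a hyperbolic fixed point because $h$ vanishes to order two there. The implicit function theorem applied to $\nabla h = 0$, using the uniform non-degeneracy of the Hessian, produces a unique hyperbolic saddle $(p^*,q^*) = (\sqrt{\Im(\omega^2)}, 0) + O(1/n)$ nearby. Both saddles lie in the common level set $\{h=0\}$: trivially at the origin, and at $(p^*,q^*)$ because the critical point equation on $\{A\neq 0\}$ would force $\nabla B$ parallel to $\nabla A$, which does not occur near the persistence of the unperturbed saddle; hence $(p^*,q^*)\in\mathcal{E}$. Moreover, since the quadratic part of $B$ at the origin is the non-degenerate indefinite form $2\Im(\omega^2)pq$, the Morse lemma (or two applications of the IFT, treating $\partial_q B = 2\Im(\omega^2) p + O(1/n)$ as a function of $q$ once a branch is isolated) decomposes $\{B=0\}$ near the origin into two smooth branches tangent respectively to the two coordinate axes; let $\gamma$ denote the branch tangent to the $p$-axis.

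The curve $\gamma$ is contained in $\{h=0\}$ and tangent at $(0,0)$ to the unstable eigendirection, so it coincides locally with the unstable manifold of $(0,0)$. Near $(p^*,q^*)$ the curve $\{B=0\}$ is smooth (since $\partial_q B\neq 0$ there) and horizontal, hence agrees with the local stable manifold of $(p^*,q^*)$. Continuity from $1/n=0$, where the unstable manifold of $(0,0)$ flows all the way to $(\sqrt{\Im(\omega^2)},0)$, then identifies $\gamma$ globally with both invariant manifolds, producing the heteroclinic connection. Writing $\gamma$ as a graph $q=\xi(p)$ for $p\in[0,p^*]$, the equation $B(p,\xi(p))=0$ yields
\[
\xi(p) = -\frac{1}{2n\,\Im(\omega^2)\,p}\, P\bigl(p\,\xi(p),\, p^2+\xi(p)^2\bigr),
\]
and the fact that $P$ has a zero of order at least two in its arguments gives, by a bootstrap on any $[\varepsilon,p^*]$, the bound $|\xi(p)|=O(1/n)$; on $[0,\varepsilon]$ the bound follows from the smooth unstable-manifold description together with $\xi(0)=\xi'(0)=0$. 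The main technical point is the degeneracy of the equation $B=0$ at the origin (where $\partial_q B=0$), which is bypassed precisely by switching from the implicit function argument to the invariant-manifold description on a small neighborhood of $(0,0)$ and then patching the two descriptions on their overlap.
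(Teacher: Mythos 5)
The paper gives no written proof of this lemma (it is declared straightforward), and your proposal is a correct realization of the intended perturbative strategy: factor $h=\kappa_n A\,B$ with $\mathcal E=\{A=0\}$, locate the perturbed saddle on $\mathcal E$ via the implicit function theorem and the independence of $\nabla A,\nabla B$, identify the branch of $\{B=0\}$ tangent to the $p$-axis with the local unstable manifold of $(0,0)$ and the local stable manifold of $(p^*,q^*)$, and extract the $\OO(n^{-1})$ graph bound from $B(p,\xi(p))=0$. Two remarks. First, the appeal to ``continuity from $1/n=0$'' is the one step you should not lean on: continuity of invariant manifolds in a parameter can never by itself exclude separatrix splitting. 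What actually closes the argument is material you already have in hand: on the interior of the branch $\gamma$ one has $h=0$ and $A\neq 0$, so $\gamma$ is locally the full zero level set there and hence invariant, and it carries no equilibria (a critical point with $B=0$, $A\neq 0$ would force $\nabla B=0$, impossible along $\gamma$ away from the origin); thus the unstable manifold of $(0,0)$, which locally coincides with $\gamma$, traverses the arc monotonically and must limit onto $(p^*,q^*)$, with closeness to the unperturbed flow needed only to fix the orientation. Second, your location of the saddle, $p^*=\sqrt{\Im(\omega^2)}+\OO(n^{-1})=(\sqrt3/2)^{1/2}+\OO(n^{-1})$, is the correct one: the intersection of $\mathcal E$ with the $p$-axis is at $p=\sqrt{\Im(\omega^2)}$, not $\Im(\omega^2)$, so the constant $\sqrt3/2$ in the statement (and the point $(\Im(\omega^2),0)$ mentioned just before the lemma) is a harmless slip of the paper rather than an error of yours; this is corroborated by the appearance of $\sqrt{\Im(\om^2)-\sigma^2}$ in Corollary \ref{coro:SectionOldVariables}, which is consistent with $p^*=\sqrt{\Im(\omega^2)}$.
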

The proof of this lemma is straightforward. 


\begin{remark}
Since the Hamiltonian \eqref{2genpq} is symmetric in $(p,q)$, then there is also
the hyperbolic critical point $(p,q)=(q^*,p^*)=(0,\sqrt{3}/2)+\OO(n^{-1})$,
which belongs to $\mathcal E$, and a heteroclinic connection which tends to this
point in backward time and to the point $(p,q)=(0,0)$ in forward time. Such
heteroclinic connection can be written as a graph
\[
 p=\xi (q),\,\,q\in [0,p^*].
\]
\end{remark}

\subsection{Adapted coordinates for the $j$-th periodic orbit}
In this section we study the dynamics of the toy models
\[
h(b)=\left(\sum_{i=1}^N|b_i|^2\right)^{d-2}\left[-\frac14\sum_{i=1}^N|b_i|^4+
\sum_{i=1}^{N-1}\Re(b_i^2\bar b_{i+1}^2)\right]+\frac{1}{n}\PP\left(b,\ol
b,\frac1n\right).
\]
Following \cite{CKSTT}, we will take advantatge of the mass conservation to make
a symplectic reduction. This will allow us to obtain certain good systems of
coordinates. 

To make the symplectic reduction we fix the mass $\MM(b)=1$. Note that the toy
model is invariant by certain rescaling and time reparameterization. So, from
orbits in $\MM(b)=1$ we can obtain orbits for any mass. Now, we perform the
change of coordinates close to the $j$ periodic orbit
\[
\left(b_1,\ol b_1,\ldots, b_N, \ol b_N\right)\mapsto \left(c_1^{(j)},\ol
c_1^{(j)},\ldots, J, \theta^{(j)} ,\ldots, c_N^{(j)},\ol c_N^{(j)}\right)
\]
defined by 
\begin{equation}\label{def:SaddleAdaptedCoordinates}
 b_j=\sqrt{J-\sum_{k\neq j}\left|c_k^{(j)}\right|^2}e^{\ii\theta^{(j)}},\,\,
b_k=c_k^{(j)}e^{\ii\theta^{(j)}} \ \text{ for all }\ k\ne j,
\end{equation}
where $\theta_j^{(j)}$ is the angular variable over the periodic orbit and
$J=\sum_{k=1}^N|b_k|^2$ is the mass. It can be checked that this change of
coordinates is symplectic. From now we omit the superscript $(j)$ when it is
clear in the neighborhood of which saddle we are dealing with. The new
Hamiltonian is independent of $\theta$ since the mass $J$ is a first integral.
Fixing $J=1$, the system for the variables $c=(c_1,\ldots,
c_{j-1},c_{j+1},\ldots c_N)$  is Hamiltonian with respect to
\begin{equation}\label{def:HamAfterSymplecticReduction}
\begin{split}
 H^{(j)}(c)=&-\frac{1}{4}\sum_{k\neq j}|c_k|^4-\frac{1}{4}\left(1-\sum_{k\neq
j}|c_k|^2\right)^2+\sum_{k\neq j,j+1}\Re(c_k^2\ol
c_{k-1}^2)\\
&+\left(1-\sum_{k\neq
j}|c_k|^2\right)\Re\left(c_{j-1}^2+c_{j+1}^2\right)+\frac{1}{n}\wt\PP\left(c,\ol
c, \frac 1n\right)
\end{split}
\end{equation}
and the symplectic form $\Omega=\sum_{k\neq j}\frac{\ii}{2}dc_k\wedge d\ol c_k$,
where $\wt\PP$ is the polynomial $\PP$ introduced in Theorem
\ref{thm:FiniteResonantSet} expressed in the new variables $c$. The Hamiltonian
system can be split as 
\[
 H^{(j)}(c)=H_2^{(j)}(c)+H_4^{(j)}(c)
\]
where $H_2^{(j)}(c)$ contains the quadratic monomials and $H_4^{(j)}(c)$
contains the higher order terms, that is monomials of even degree from 4 to
$2d$. Statements 4(e) and 4(f) of Definition \ref{acceptable} imply the
following lemma. 
\begin{lemma}
 The Hamiltonian $H_2^{(j)}(c)$ is of the form 
\[
 H_2^{(j)}(c)=a_n\sum_{k\neq j}|c_k|^2+a_n\kk_n
\Re\left(c_{j-1}^2+c_{j+1}^2\right)
\]
where $a_n=1/2+\OO(n^{-1})$, $\kk_n=1+\OO(n^{-1})$.
\end{lemma}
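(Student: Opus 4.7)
The plan is to expand the Hamiltonian \eqref{def:HamAfterSymplecticReduction} in homogeneous powers of $c$ and read off the degree-two component, using property 4(f) of Definition \ref{acceptable} to control the contribution coming from $\wt\PP$. I begin with the explicit part: the two quartic sums $-\tfrac{1}{4}\sum_{k\neq j}|c_k|^4$ and $\sum_{k\neq j,j+1}\Re(c_k^2\bar c_{k-1}^2)$ are already homogeneous of degree $4$ and contribute nothing at degree $2$, while expanding $-\tfrac{1}{4}(1-\sum_{k\neq j}|c_k|^2)^2$ produces the quadratic piece $\tfrac{1}{2}\sum_{k\neq j}|c_k|^2$ and expanding $(1-\sum_{k\neq j}|c_k|^2)\Re(c_{j-1}^2+c_{j+1}^2)$ produces $\Re(c_{j-1}^2+c_{j+1}^2)$. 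These are the $O(1)$ pieces matching $a_n\simeq 1/2$ and the leading coefficient of $\Re(c_{j-1}^2+c_{j+1}^2)$.

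Next I extract the quadratic part of $\tfrac{1}{n}\wt\PP(c,\bar c,\tfrac{1}{n})$. Under the substitution \eqref{def:SaddleAdaptedCoordinates} at $J=1$, we have $b_j=e^{\ii\theta}\sqrt{1-\sum_{k\neq j}|c_k|^2}=e^{\ii\theta}(1+O(|c|^2))$ and $b_k=e^{\ii\theta}c_k$ for $k\neq j$; the phases $e^{\ii\theta}$ cancel by gauge invariance (property 4(c)). A monomial of $\PP$ of total degree $2d$ can therefore contribute at degree $2$ in $c$ only if it has degree $2d-2$ in $(b_j,\bar b_j)$ and degree exactly $2$ in a single other pair $(b_i,\bar b_i)$ (other configurations produce degree $\geq 4$ in $c$, since property 4(d) forces even degree in every $(b_k,\bar b_k)$). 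Property 4(f) then completely classifies these monomials: they are either $\chi|b_j|^{2d-2}|b_i|^2$ for every $i\neq j$, with a $(i,j)$-independent constant $\chi$, or additionally $\rho|b_j|^{2d-4}\Re(b_i^2\bar b_j^2)$ when $|i-j|=1$, with an $i$-independent constant $\rho$.

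Substituting and keeping only degree-two terms,
\[
|b_j|^{2d-2}|b_i|^2\longmapsto|c_i|^2+O(|c|^4),\qquad |b_j|^{2d-4}\Re(b_i^2\bar b_j^2)\longmapsto\Re(c_i^2)+O(|c|^4),
\]
so the quadratic contribution from $\tfrac{1}{n}\wt\PP$ equals
\[
\frac{\chi}{n}\sum_{k\neq j}|c_k|^2+\frac{\rho}{n}\Re(c_{j-1}^2+c_{j+1}^2)+O(n^{-2}).
\]
Combining with the leading terms, $H_2^{(j)}(c)$ is the sum of a $k$-independent multiple of $|c_k|^2$ and a multiple of $\Re(c_{j-1}^2+c_{j+1}^2)$, with the first coefficient $\tfrac{1}{2}+O(n^{-1})=:a_n$ and the second of the form $a_n\kk_n$ with $\kk_n=1+O(n^{-1})$, exactly as in the two-generation reduction leading to \eqref{hccbar}.

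The only non-routine step is the identification of the relevant monomials of $\wt\PP$; once property 4(f) is invoked, the rest is bookkeeping. The structural input on which everything hinges is the \emph{universality} of the constants $\chi$ and $\rho$ (their independence of $i$ and $j$): this is what forces the coefficient of $|c_k|^2$ to be the same for every $k\neq j$ and the coefficient of $\Re(c_{j-1}^2)$ to equal that of $\Re(c_{j+1}^2)$, giving the clean announced form. I do not anticipate any essential obstacle.
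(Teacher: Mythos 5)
Your overall strategy is the one the paper intends (the paper gives no written proof beyond asserting that the lemma follows from items 4(e)--4(f) of Definition \ref{acceptable}), and your treatment of the explicit quartic part and of the monomials of $\PP$ covered by 4(f) is correct. However, your classification of the monomials of $\frac{1}{n}\PP$ that contribute at degree two in $c$ has a gap: you assert that a degree-$2d$ monomial can contribute quadratically only if it has degree $2d-2$ in $(b_j,\bar b_j)$ and degree exactly two in one other pair, "other configurations producing degree $\geq 4$ in $c$". This misses the monomials supported \emph{entirely} on $(b_j,\bar b_j)$. By reality, gauge invariance and evenness (items 4(b), 4(c), 4(d)) the only such monomial is a multiple of $|b_j|^{2d}$, and nothing in Definition \ref{acceptable} excludes it from $\PP$ (indeed, from \eqref{toy} one expects it to appear with a nonzero coefficient). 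Under the substitution \eqref{def:SaddleAdaptedCoordinates} at $J=1$ it becomes $\bigl(1-\sum_{k\neq j}|c_k|^2\bigr)^{d}$, which is \emph{not} purely of degree $0$ or $\geq 4$: it produces the quadratic term $-d\sum_{k\neq j}|c_k|^2$ times its coefficient. So your inventory of quadratic contributions is incomplete as stated.

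The lemma itself survives, because this extra contribution is a $k$-independent multiple of $\sum_{k\neq j}|c_k|^2$ of size $\OO(n^{-1})$ (and a constant, which is irrelevant), so it merely shifts $a_n$ within the allowed $\tfrac12+\OO(n^{-1})$; moreover its coefficient is $j$-independent, since the restriction of $h_\SS$ to $\{b_i=0,\ i\neq j\}$ is a $j$-independent multiple of $|b_j|^{2d}$ (see the beginning of Section 4, which uses 4(f) and 4(e)), so the constants $a_n,\kk_n$ remain uniform in $j$ as is needed later. To repair the argument you should enlarge your case analysis to: (i) monomials depending only on $(b_j,\bar b_j)$, i.e.\ $|b_j|^{2d}$, contributing $-d\,\chi_{jj}\,n^{-1}\sum_{k\neq j}|c_k|^2$ plus a constant; (ii) monomials of degree $2d-2$ in $(b_j,\bar b_j)$ and degree two in one other pair, which is exactly the situation classified by 4(f) and is what you treated; (iii) all remaining monomials, which have total degree at least four in the variables $\{b_i\}_{i\neq j}$ and hence contribute only at order $|c|^4$. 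With that correction the bookkeeping you carried out gives the stated form of $H_2^{(j)}$.
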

Being close to $\TT_j$ corresponds to $c\sim 0$. To analyze this local behavior,
we diagonalize the linear part at the critical point. Note that for $c_k$,
$k\neq j-1,j+1$ it is already diagonalized so we apply the change of variables
\eqref{ctopq} to the adjacent modes $c_{j\pm 1}$. We  obtain the new quadratic
part of the Hamiltonian
\begin{equation}\label{quad.cpq}
 H^{(j)}_2(p,q,c)=a_n\sum_{k\in
\PP_j}|c_k|^2+\la_n\left(p_1q_1+p_2q_2\right),\quad \la_n= 2\text{
Im}(\omega^2)=2\sqrt{1-a_n^2}= \sqrt{3}+\OO(n^{-1}).
\end{equation}
Note that this change of coordinates transform $\Omega$ into the symplectic form
\[
 \Omega=\sum_{k\neq j-1,j,j+1}\frac{\ii}{2}dc_k\wedge d\ol c_k+dp_1\wedge
dq_1+dp_2\wedge dq_2.
\]
To study the Hamiltonian expressed in the new variables we introduce
\[
\PP_j=\{1\leq k\leq N; k \neq j-1,j,j+1\},
\]
which is the set of subindexes of the elliptic modes. From now on we will denote
by $q$ and $p$ all the stable and unstable coordinates $q=(q_1,q_2)$ and
$p=(p_1,p_2)$ respectively and by  $c$ all the elliptic modes, namely $c_k$ with
$k\in \PP_j$.

\begin{lemma}\label{lemma:Diagonalization}
 The change \eqref{ctopq} transforms the Hamiltonian
\eqref{def:HamAfterSymplecticReduction}  into the Hamiltonian
\begin{equation}\label{def:Ham:Diagonal}
  H^{(j)}(p,q,c)= H^{(j)}_2(p,q,c)+ H^j_4(p,q,c)
\end{equation}
with homogeneous polynomials $H^{(j)}_2(p,q,c)$ given by \eqref{quad.cpq} and
\[
  H^{(j)}_4(p,q,c)=  H^{(j)}_{\hyp}\left(p,q\right)+ H^{(j)}_{\el}(c)+
H^{(j)}_{\mix}\left(p,q,c\right)
\]
where
\begin{align}
H^{(j)}_\hyp(p,q)=&-2p_1q_1 (p_1^2+q_1^2-p_1q_1)-2p_2q_2
(p_2^2+q_2^2-p_2q_2)\notag\\
&+\sum_{k,\ell=0}^2\nu_{k\ell}
p_1^kq_1^{2-k}p_2^\ell
q_2^{2-\ell}+\OO\left(\frac{1}{n}(p_1+q_1+p_2+q_2)^4\right)\notag\\
H^{(j)}_{\el}\left(c\right)=&-\frac{1}{4}\sum_{k\in\PP_j}|c_k|^4-\frac{1}{4}
\left(\sum_{k\in\PP_j}|c_k|^2\right)^2 \notag\\
&+\sum_{k\in\PP_j\setminus\{j+2\}}\Re(c_k^2\ol{c_{k-1}}^2)+\OO\left(\frac{1}{n}
\sum_{k,k'\in\PP_j}|c_k|^2|c_{k'}|^2\right)\notag\\
H^{(j)}_{\mix}(p,q,c)=&-\sqrt{3}\sum_{k\in
\PP_j}|c_k|^2\left(q_1p_1+q_2p_2\right)\notag\\
&+\frac{2\sqrt{3}}{3}\Re\Big(\left(\omega_0 p_1+\ol\omega_0 q_1\right)^2 c_{j-
2}^2\Big)+\frac{2\sqrt{3}}{3}\Re\Big(\left(\omega_0 p_2+\ol\omega_0 q_2\right)^2
c_{j+
2}^2\Big)\notag\\
&+\OO\left(\frac{1}{n}\sum_{k\in \PP_j}|c_k|^2(p_1+q_1+p_2+q_2)^2\right)\notag
\end{align}
for some constants $\nu_{k\ell}\in\R$ and $\om_0=e^{\ii\frac{\pi}{3}}$. It can
be easily checked that all $\nu_{k\ell}$ satisfy $\nu_{k\ell}\neq 0$.
\end{lemma}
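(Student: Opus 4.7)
The plan is a direct computation. First, I would apply the linear symplectic change \eqref{ctopq} only to the two hyperbolic modes, setting $c_{j-1}=\frac{1}{\sqrt{\Im(\omega^2)}}(\omega q_1+\bar\omega p_1)$ and $c_{j+1}=\frac{1}{\sqrt{\Im(\omega^2)}}(\omega q_2+\bar\omega p_2)$, and leave $c_k$ unchanged for $k\in\PP_j$; symplecticity follows exactly as in the two-generation case already verified. For the quadratic part, the elliptic modes $c_k$ with $k\in\PP_j$ contribute $a_n\sum_{k\in\PP_j}|c_k|^2$ with no change, while for each of $c_{j-1},c_{j+1}$ the very definition of $\omega$ in \eqref{def:anANDomega} together with $|\omega|^2=1$ yields the identity $a_n|c|^2+a_n\kk_n\Re(c^2)=\la_n\,pq$ (up to $\OO(n^{-1})$ renormalizations absorbed into $\la_n$), producing \eqref{quad.cpq} with $\la_n=2\Im(\omega^2)=2\sqrt{1-a_n^2}$.

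For the quartic part I would expand each term of \eqref{def:HamAfterSymplecticReduction} in the new variables and classify each resulting monomial as living in $H_\hyp$ (only $c_{j\pm1}$), $H_\el$ (only $c_k$ with $k\in\PP_j$), or $H_\mix$ (both). The elliptic piece collects the evident contributions: $-\tfrac14\sum_{k\in\PP_j}|c_k|^4$ from the first sum in \eqref{def:HamAfterSymplecticReduction}, $-\tfrac14(\sum_{k\in\PP_j}|c_k|^2)^2$ from the expansion of $(1-\sum|c_k|^2)^2$, and those nearest-neighbor terms $\Re(c_k^2\bar c_{k-1}^2)$ with both $k,k-1\in\PP_j$; since $\PP_j=\{1,\dots,N\}\setminus\{j-1,j,j+1\}$, this last condition is exactly $k\in\PP_j\setminus\{j+2\}$. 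The hyperbolic piece is obtained by substituting \eqref{ctopq} into the restriction of the Hamiltonian to $c_{j\pm1}$: the diagonal-in-index contributions yield the self-interaction $-2p_1q_1(p_1^2+q_1^2-p_1q_1)$ (and its analog for index $2$) upon reuse of the identity $\tfrac12|c|^2+\Re(c^2)=\sqrt3\,pq$, while the cross terms between index $1$ and index $2$ (coming from $(|c_{j-1}|^2+|c_{j+1}|^2)^2$ and from $(|c_{j-1}|^2+|c_{j+1}|^2)\Re(c_{j-1}^2+c_{j+1}^2)$) expand into the claimed $\sum_{k,\ell=0}^2\nu_{k\ell}\,p_1^kq_1^{2-k}p_2^\ell q_2^{2-\ell}$ sum; the non-vanishing of each $\nu_{k\ell}$ is checked by direct evaluation of the relevant products of $\omega,\bar\omega$.

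The mixed piece receives two distinct contributions. The cross terms of type (elliptic)$\cdot$(hyperbolic-quadratic), namely $-\tfrac12(\sum_{k\in\PP_j}|c_k|^2)(|c_{j-1}|^2+|c_{j+1}|^2)$ and $-(\sum_{k\in\PP_j}|c_k|^2)\Re(c_{j-1}^2+c_{j+1}^2)$, combine into $-(\sum_{k\in\PP_j}|c_k|^2)\bigl[\tfrac12(|c_{j-1}|^2+|c_{j+1}|^2)+\Re(c_{j-1}^2+c_{j+1}^2)\bigr]$, and the same diagonalization identity turns the bracket into $\sqrt3(p_1q_1+p_2q_2)$, yielding the leading $-\sqrt3\sum_{k\in\PP_j}|c_k|^2(q_1p_1+q_2p_2)$. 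The two summands of $\sum_{k\neq j,j+1}\Re(c_k^2\bar c_{k-1}^2)$ that are not purely elliptic, corresponding to $k=j-1$ and $k=j+2$, are $\Re(c_{j-1}^2\bar c_{j-2}^2)$ and $\Re(c_{j+2}^2\bar c_{j+1}^2)$; using \eqref{ctopq} together with $\Re(z\bar w)=\Re(\bar z w)$ and $\omega_0=e^{\ii\pi/3}=\omega+\OO(n^{-1})$ turns them into the stated $\tfrac{2\sqrt3}{3}\Re(\cdot)$ expressions, the prefactor being $1/\Im(\omega^2)$. All remaining contributions coming from $\tfrac1n\wt\PP$ are even in each $(c_k,\bar c_k)$, polynomial of the appropriate total degree in $(p,q,c)$, and split across the three classes without altering the leading terms, ending up in the stated $\OO(n^{-1})$ remainders. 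The only serious bookkeeping issue — and thus the main obstacle — is tracking how the $\OO(n^{-1})$ pieces of $\kk_n$ and of $\wt\PP$ renormalize $\la_n$ and the coefficients of $H_\hyp,H_\el,H_\mix$ so that the stated leading form is preserved, together with the tedious but elementary verification that each $\nu_{k\ell}$ is non-zero.
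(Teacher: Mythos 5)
Your computation is exactly the intended proof: the paper gives no separate argument for this lemma, relying precisely on the direct substitution of \eqref{ctopq} into \eqref{def:HamAfterSymplecticReduction}, the identities $|c|^2=\frac{1}{\Im(\omega^2)}(p^2+q^2+2\Re(\omega^2)pq)$ and $\Re(c^2)=\frac{1}{\Im(\omega^2)}(\Re(\omega^2)(p^2+q^2)+2pq)$ from the two-generation analysis, and the classification of quartic monomials into hyperbolic, elliptic and mixed blocks, with the parity property 4(d) of Definition \ref{acceptable} guaranteeing that the $\frac1n\wt\PP$ contributions fall into the stated $\OO(n^{-1})$ remainders. Your identification of the sources of each leading term (the self-interaction $-2p_iq_i(p_i^2+q_i^2-p_iq_i)$, the $\nu_{k\ell}$ cross terms, the restriction $k\in\PP_j\setminus\{j+2\}$, the $-\sqrt3\sum|c_k|^2(p_1q_1+p_2q_2)$ term, and the two neighbor couplings with prefactor $1/\Im(\omega^2)=\tfrac{2\sqrt3}{3}$ and $\omega_0=e^{\ii\pi/3}$) is correct and matches the statement.
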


Now for the Hamiltonian \eqref{def:Ham:Diagonal}, the periodic $\TT_j$ has
become the critical point $(p,q,c)=(0,0,0)$ which is of mixed type (four
hyperbolic eigenvalues and $2N-6$ elliptic eigenvalues). Thanks to Lemma
\ref{lemma:SeparatrixAsGraph} and the particular form of the Hamiltonian
\eqref{def:Ham:Diagonal} the hyperbolic directions give connections to the
neighboring periodic orbits $\TT_{j\pm 1}$. In the full phase space the
heteroclinic connection between $(0,0,0)$ and $\TT_{j+1}$ can be parameterized
as a graph by
\[
 (p_1,q_1,p_2,q_2,c)=(0,0,p_2,\xi(p_2),0).
\]
Recall that in the cubic case, this connection is just given by
$c_k=q_1=p_1=q_2=0$ (see \cite{CKSTT}).

Following \cite{CKSTT, GuaKal} we look for orbits which shadow this
concatenation of heteroclinic orbits.

\subsection{The iterative argument: almost product
structure}\label{sec:IterativeTheorem}
To prove Theorem \ref{thm:ToyModelOrbit} and shadow the concatenation of
heteroclinic connections, we follow the approach in \cite{GuaKal}. That is,  we 
consider several
co-dimension one sections $\{\Sigma^{\inn}_j\}_{j=1}^N$ and transition maps
$\BB^j$ from one section $\Sigma^{\inn}_j$ to the next one
$\Sigma^{\inn}_{j+1}$. The maps are given by the flow associated to the
Hamiltonian \eqref{def:Ham:Diagonal}.
We consider  sets $\{\VV_j\}_j,\ \VV_j \subset \Sigma_j^{\inn},\ j=1,\dots,N-1$,
of a very particular form, which in \cite{GuaKal} were called sets with  {\it
almost product structure} (see Definition \ref{def:ProductLikeSet} below).
Moreover, we impose that these sets satisfy that
$\VV_{j+1}\subset\BB^{j}\left(\VV_j\right)$ and that none of them is empty.
Each set $\VV_j$ is located close to the stable manifold of the periodic orbit
$\TT_j$. Composing all these maps we will be able to find orbits claimed to
exist
in Theorem \ref{thm:ToyModelOrbit}. Note that these sets will be slightly
different from the ones in \cite{GuaKal} due to the deviation of the
heteroclinic connections given in Lemma \ref{lemma:SeparatrixAsGraph}. 

In this section we keep the superindexes $(j)$ in the variables since it
involves two consecutive adapted system of coordinates.
We start by defining  transversal
sections to the flow. We use  the coordinates adapted to the saddle $j$,
$(p^{(j)},q^{(j)},c^{(j)})$ to define these
sections. In Lemma \ref{lemma:SeparatrixAsGraph} we have seen that the
heteroclinic connections  which connect
$(p^{(j)},q^{(j)},c^{(j)})=(0,0,0)$ with the previous and next saddles are
$\OO(n^{-1})$ close to $(p_1^{(j)},p_2^{(j)},q_2^{(j)},c^{(j)})=(0,0,0,0)$ and
$(p_1^{(j)},q_1^{(j)},q_2^{(j)},c^{(j)})=(0,0,0,0)$ respectively. Thus,
we define the map $\BB^j$ from the section
\begin{equation}\label{def:Section1Saddle}
\Sigma_j^{\inn}=\left\{q^{(j)}_1=\sigma\right\}
\end{equation}
to the  section
\[
\Sigma_{j+1}^{\inn}=\left\{q^{(j+1)}_1=\sigma\right\}.
\]
Here $\sigma>0$ is a small parameter that will be determined later on. We do not
define the map $\BB^j$ in the whole section but in a set
$\VV_j\subset\Sigma_j^{\inn}$, which lies close to the heteroclinic that
connects the
saddle $j-1$ to the saddle $j$.  Then, 
\[
 \BB^{j}:\VV_j\subset\Sigma_j^{\inn}\rightarrow\Sigma_{j+1}^{\inn}
\]
and we  choose the sets $\VV_j$ recursively in such a way that
\begin{equation}\label{cond:ComposeMaps}
 \VV_{j+1}\subset\BB^{j}\left(\VV_j\right).
\end{equation}
This condition  allows us to compose  all the maps $\BB^j$.

The sets $\VV_j$ will have a product-like structure as is stated in the next
definition, introduced in \cite{GuaKal}.
Before stating it, we fix $j$ and introduce notations adapted to $j$.
\begin{definition}
 We call $b_j$ the {\em   primary mode}, $b_{j\pm 1}$ {\em secondary modes},
$b_{j\pm 2}$ {\em adjacent modes} and all the others 
 {\em peripheral modes}. If $k<j$ we say that $b_k$ is a {\em trailing mode}
while if $k>j$ we say that $b_k$ is a {\em leading mode}.
 Finally we set
\[
\PP^-_j=\{k=1,\ldots, j-3\}\qquad
\PP^+_j=\{k=j+3,\ldots, N\}\,,\quad  \PP_j= \PP^-_j\cup \{j\pm 2\}\cup \PP^+_j.
\]
\end{definition} 

For a point $(p^{(j)},q^{(j)},c^{(j)})\in\Sigma_j^\inn$, we define
$c_-^{(j)}=(c_1^{(j)},\ldots,c_{j-2}^{(j)})$ and
$c_+^{(j)}=(c_{j+2}^{(j)},\ldots,c_N^{(j)})$. We define also the projections
$\pi_\pm(p^{(j)},q^{(j)},c^{(j)})=c_\pm^{(j)}$ and
$\pi_{\hyp,+}=(p^{(j)},q^{(j)},c_+^{(j)})$.

\begin{definition}\label{def:ProductLikeSet}
Fix
positive constants $r\in (0,1)$, $\de$ and $\sigma$ and
consider a multi-parameter set of positive constants
\begin{equation}\label{def:ProductLikeIndex}
 \II_j=\left\{C^{(j)},m_{\el}^{(j)},  M_{\el,\pm}^{(j)}, m_{\adj}^{(j)},
M_{\adj,\pm}^{(j)}, m_{\hyp}^{(j)}, M_{\hyp}^{(j)}\right\}.
\end{equation}
We associate to the set $\II_j$ a smooth function
$g_{j}(p_2,q_2)=g_{\II_j}(p_2,q_2)$, which is 
defined in \eqref{def:Function_g}.

Then, we say that a (non-empty) set $\UU\subset\Sigma_j^\inn$ has an
$\II_j$-product-like structure
if it satisfies the following two conditions:
\begin{description}
 \item[\textbf{C1}]
\[
 \UU\subset \DD_j^1\times\ldots\times\DD_j^{j-2}\times\NNN_{j}^+\times
\DD_j^{j+2}\times\ldots\times\DD_j^{N},
\]
where
\begin{align*}
\DD_j^k&=\left\{\left|c_k^{(j)}\right|\leq  M^{(j)}_{\el,\pm}
\de^{(1-r)/2}\right\}
\,\,\text{ for }k\in \PP^\pm_j\\
\DD_j^{j\pm 2}&\subset\left\{\left|c_{j\pm2}^{(j)}\right|\leq
M^{(j)}_{\adj,\pm} \left(C^{(j)}\de\right)^{1/2}\right\}
\end{align*}
and
\begin{equation}\label{def:Domain:Hyperbolic:sup}
\begin{split}
\NNN_{j}^+= \Big\{& \left(p_1^{(j)},q_1^{(j)},p_2^{(j)},q_2^{(j)}\right)\in
\R^4: \\ \xi(\sigma)-C^{(j)}&\de\left(\ln(1/\de)+M^{(j)}_{\hyp}\right)\leq
p_1^{(j)}\leq \xi(\sigma)
-C^{(j)}\de\left(\ln(1/\de)-M^{(j)}_{\hyp}\right),\\ &q_1^{(j)}=\sigma,\
g_j(p^{(j)}_2,q^{(j)}_2)=0, \ |p_2^{(j)}|,|q_2^{(j)}|\leq
M^{(j)}_{\hyp}\left(C^{(j)}\de\right)^{1/2} \Big\}.
\end{split}
\end{equation}
\item[\textbf{C2}]
\[
\NNN^-_{j}\times \DD_{j,-}^{j+2}\times\ldots\times\DD_{j,-}^{N}
\subset\pi_{\hyp,+}\UU,
\]
where
\begin{align*}
\DD_{j,-}^k&=\left\{\left|c_k^{(j)}\right|\leq  m^{(j)}_{\el}
\de^{(1-r)/2}\right\} \,\,\text{ for }k\in \PP_j^+\\
\DD_{j,-}^{j+ 2}&=\left\{\left|c_{j+2}^{(j)}\right|\leq  m^{(j)}_{\adj}
\left(C^{(j)}\de\right)^{1/2}\right\}
\end{align*}
and
\begin{equation}\label{def:Domain:Hyperbolic:inf}
\begin{split}
\NNN_j^-= \Big\{& \left(p_1^{(j)},q_1^{(j)},p_2^{(j)},q_2^{(j)}\right)\in \R^4:
\\\xi(\sigma) -C^{(j)}&\de\left(\ln(1/\de)+m^{(j)}_{\hyp}\right)\leq
p_1^{(j)}\leq \xi(\sigma)
-C^{(j)}\de\left(\ln(1/\de)-m^{(j)}_{\hyp}\right),\\ &q_1^{(j)}=\sigma,\
g_j(p^{(j)}_2,q^{(j)}_2)=0, \ |p_2^{(j)}|,|q_2^{(j)}|\leq
m^{(j)}_{\hyp}\left(C^{(j)}\de\right)^{1/2}\Big\}.
\end{split}
\end{equation}
\end{description}
where $\xi$ is the function introduced in Lemma \ref{lemma:SeparatrixAsGraph}.
\end{definition}
If one compares this definition to the one in \cite{GuaKal} the only difference
appears in the $p^{(j)}_1$ variable. The reason is the deviation of the
separatrix connection. 
The domains $\VV_j$ of the maps $\BB^j$ will have $\II_j$-product-like structure
as defined in Definition \ref{def:ProductLikeSet}. Thus, we need to obtain
the multi-parameter sets $\II_j$. They will be defined recursively.
In 
Theorem \ref{thm:ToyModelOrbit} we are looking for an  orbit which starts close
to
the periodic orbit $\TT_3$, thus the recursively defined multi-parameter sets
$\II_j$
will start with a set $\II_3$.

\begin{definition}\label{def:RecursivelyDefined}
Fix $\ga>0$, any constants $r,r'\in (0,1)$ satisfying $r<\ln 2/(2\ga)$ and 
$0<r'<\ln 2/\ga-2r$, $K>0$ and small $\de,
\sigma>0$.
We say that  a collection of multi-parameter sets $\{\II_j\}_{j=3,\ldots,N-2}$
defined
in \eqref{def:ProductLikeIndex} is $(\sigma,\de, K)$-recursive if for
$j=3,\ldots, N-2$ the
constants
$C^{(j)}$ satisfy
\[
\begin{split}
C^{(j)}/K\leq C^{(j+1)}\leq K C^{(j)}\\
0<m_{\hyp}^{(j+1)}\leq m_{\hyp}^{(j)}
\end{split}
\]
and all the other parameters should be strictly positive and are defined
recursively as
\[
 \begin{split}
  M_{\el,\pm}^{(j+1)}&=M_{\el,\pm}^{(j)}+ K\de^{r'}\\
  m_{\el}^{(j+1)}&=m_{\el}^{(j)}- K\de^{r'}\\
  M_{\adj,+}^{(j+1)}&= 2M_{\el,+}^{(j)}+ K\de^{r'}\\
  M_{\adj,-}^{(j+1)}&= KM_{\hyp}^{(j)}\\
  m_{\adj}^{(j+1)}&=\frac{1}{2}m_{\el}^{(j)}- K\de^{r'}\\
  M_{\hyp}^{(j+1)}&=K M_{\adj,+}^{(j)}. \\
 \end{split}
\]
\end{definition}
This definition coincides with the definition of \cite{GuaKal}. The only
difference appears in the conditions on the parameters $r$ an $r'$. The reason
is that one needs to take into account the $\OO(n^{-1})$ terms in the
Hamiltonian \eqref{def:Ham:Diagonal}.

The next theorem defines recursively the product-like sets  $\VV_j$, so that
condition \eqref{cond:ComposeMaps} is satisfied.

\begin{theorem}[Iterative Theorem]\label{theorem:iterative}
Fix a large $\ga>0$, a  small $\sigma>0$, two constants $r,r'\in (0,1)$
satisfying $r<\ln 2/(2\ga)$, $0<r'<\ln 2/\ga-2r$ and 
set $\de= e^{-\gamma N}$.  There exist strictly positive constants
$K$ and $C^{(3)}$ independent of $N$ satisfying
\begin{equation}\label{cond:GrowthOfCs}
  \de^{r}\,K^{N-3}\leq C^{(3)}\leq \de^{-r}\,K^{-(N-3)},
\end{equation}
and a multi-parameter set $\II_3$ (as defined in \eqref{def:ProductLikeIndex})
with the following property:
there exists a
$(\sigma,\de, K)$-recursive collection of multi-parameter sets
$\{\II_j\}_{j=3,\ldots,N-2}$ and $\II_j$-product-like sets
$\VV_j\subset\Sigma_{j}^\inn$ such that for each $j=3,\ldots,N-3$ we have
\[
\VV_{j+1}\subset \BB^j(\VV_j).
\]
Moreover, the time spent to reach the section $\Sigma^{\inn}_{j+1}$ can be
bounded
by
\[
 \left|T_{\BB^j}\right|\leq K\ln(1/\de)
\]
for any $(p,q,c)\in \VV_j$ and any $j=3,\ldots,N-3$.
\end{theorem}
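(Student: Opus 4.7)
The plan is to construct the product-like sets $\VV_j$ inductively, so that the inclusion $\VV_{j+1}\subset \BB^j(\VV_j)$ is enforced at each step while the associated multi-parameter sets $\II_j$ evolve according to the $(\sigma,\de,K)$-recursive rules of Definition \ref{def:RecursivelyDefined}. Following the strategy of \cite{CKSTT,GuaKal}, I would decompose each transition map $\BB^j$ as $\BB^j=\GG^j\circ \LL^j$, where $\LL^j$ is a \emph{local map} that describes the passage of the orbit through a neighborhood of the saddle $\TT_j$ (from the entrance section $\{q_1^{(j)}=\sigma\}$ to an exit section, say $\{p_1^{(j)}=\sigma\}$), and $\GG^j$ is a \emph{global map} that follows the perturbed heteroclinic $(p_2^{(j)},\xi(p_2^{(j)}))$ provided by Lemma~\ref{lemma:SeparatrixAsGraph} from the exit section of $\TT_j$ into the entrance section $\Sigma^{\inn}_{j+1}$ adapted to $\TT_{j+1}$.

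The analysis of the local map $\LL^j$ relies on the splitting of the Hamiltonian \eqref{def:Ham:Diagonal} into its quadratic hyperbolic part $\la_n(p_1q_1+p_2q_2)$, the elliptic part $a_n\sum_{k\in\PP_j}|c_k|^2$, and the higher-order terms $H^{(j)}_4$ plus the $\OO(n^{-1})$ correction. In the linear approximation the hyperbolic variables evolve as $(p_1,q_1)\mapsto(p_1e^{\la_n t},q_1e^{-\la_n t})$, so the transit time through the saddle is $t\sim \la_n^{-1}\ln(\sigma/p_1)\sim \la_n^{-1}\ln(1/\de)$. During this time the elliptic modes undergo nearly-linear rotation, while the adjacent modes $c^{(j)}_{j\pm 2}$ are parametrically forced through the terms $\tfrac{2\sqrt 3}3\Re((\omega_0 p_{1,2}+\bar\omega_0 q_{1,2})^2 c^2_{j\pm 2})$ in $H^{(j)}_{\mix}$; this parametric forcing produces growth of $c^{(j)}_{j+2}$ that precisely feeds the hyperbolic direction at step $j+1$, motivating the recursion $M_{\hyp}^{(j+1)}=KM_{\adj,+}^{(j)}$ and $M_{\adj,-}^{(j+1)}=KM_{\hyp}^{(j)}$. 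The remaining modes are treated as slow variables whose increments are bounded by $K\de^{r'}$, giving the arithmetic rules for $M_{\el,\pm}$ and $m_{\el}$.

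The global map $\GG^j$ is a diffeomorphism defined over a bounded time interval $\OO(1)$, so it behaves as a smooth change of coordinates between the local systems adapted to $\TT_j$ and $\TT_{j+1}$. In particular it exchanges the roles of the primary/secondary/adjacent variables: the leading adjacent mode $c^{(j)}_{j+2}$ relative to $\TT_j$ is converted, through the symplectic reduction \eqref{def:SaddleAdaptedCoordinates} applied at $j+1$ and the diagonalization \eqref{ctopq}, into the variables $(p_2^{(j+1)},q_2^{(j+1)})$ around the next saddle; this generates the rule $M_{\adj,+}^{(j+1)}=2M_{\el,+}^{(j)}+K\de^{r'}$. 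Because $\GG^j$ is $C^1$-close to the straight-line heteroclinic of the unperturbed ($n=\infty$) model, and because the graph $q=\xi(p)$ is $\OO(n^{-1})$-small uniformly on $[0,p^*]$, the image $\BB^j(\VV_j)$ retains the product-like structure of Definition~\ref{def:ProductLikeSet}, up to modifications of the constants controlled by $K$.

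The main obstacle is the accumulation of errors: the total number of iterations is $N-5$, each local passage takes time $\la_n^{-1}\ln(1/\de)=\OO(\ga N)$, so the orbit spends a total time $\OO(\ga N^2)$ inside the toy-model dynamics, while the perturbative terms in \eqref{def:Ham:Diagonal} are of size $\OO(n^{-1})=\OO(2^{-(N-1)})$. The conditions $r<\ln 2/(2\ga)$ and $0<r'<\ln 2/\ga-2r$ are precisely what is required so that the elliptic and adjacent-mode thresholds $\de^{(1-r)/2}$ and $(C^{(j)}\de)^{1/2}$ (with $C^{(j)}$ growing at most like $K^{j-3}$) dominate the accumulated perturbative corrections of order $n^{-1}\ga N\ln(1/\de)$. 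Checking this compatibility is the technical heart of the argument and essentially reproduces, with a careful accounting of the $\OO(n^{-1})$ terms, the iterative analysis of \cite{GuaKal}; the bound \eqref{cond:GrowthOfCs} on $C^{(3)}$ is then dictated by the requirement $\de^r\le C^{(j)}\le \de^{-r}$ for all $j$, which is needed to keep both the hyperbolic transit time and the adjacent-mode amplitudes within the ranges prescribed by Definition~\ref{def:ProductLikeSet}.
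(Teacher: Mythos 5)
Your overall architecture is the same as the paper's: split $\BB^j=\BB^j_\glob\circ\BB^j_\loc$ through an intermediate section, prove one iterative lemma for each factor, and propagate the multi-parameter sets by the recursion of Definition~\ref{def:RecursivelyDefined} (incidentally, the paper's exit section is $\Sigma_j^{\out}=\{p_2^{(j)}=\sigma\}$, the outgoing direction toward $\TT_{j+1}$, not $\{p_1^{(j)}=\sigma\}$). The gap is in the local map, which is where essentially all the work lies. Your sketch treats the saddle passage as linear hyperbolic dynamics plus perturbative corrections of size $\OO(n^{-1})$ accumulated over time $\la_n^{-1}\ln(1/\de)$, and defers the rest to ``reproducing \cite{GuaKal}''. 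This linear-plus-Gronwall picture fails as stated: the saddle is \emph{resonant} (both expanding eigenvalues equal $\la_n$, both contracting equal $-\la_n$), so resonant monomials such as $\nu_{02}y_1x_2^2$ produce, over the transit time $\sim\ln(1/\de)$, contributions to $x_1$ (and symmetrically to $y_2$) that are comparable to or larger than the linearized evolution; this is precisely why the paper passes to a $C^2$ resonant normal form (Bronstein--Kopanskii, Lemma~\ref{lemma:HypToyModel:NormalForm}), chooses the shifted center $x_2^\ast$ by the cancellation condition \eqref{def:ChoiceEtaHat}, builds the tilted constraint $g_j(p_2,q_2)=0$ into the definition of $\VV_j$, and runs a Shilnikov-type two-step fixed point in the weighted norms \eqref{def:Hyp:Norms} — the naive operator $\FF_\hyp$ (and $\GG$) is \emph{not} contractive because the dominant terms in the $u_1,v_2$ components come from the integrals, not from the initial data.

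A second missing ingredient is the elliptic--hyperbolic coupling: the mixed terms $R_{\mix,x_i},R_{\mix,y_i}$ displace the stable/unstable manifolds of the normally hyperbolic manifold $\{x=y=0\}$, and without straightening them (Lemma~\ref{lemma:StraightenInvManifolds}, via Fenichel theory) the $\OO(|c|^2)$ drift over time $\ln(1/\de)$ destroys the $\de$-scale windows in the hyperbolic variables, so the product structure would not survive one passage, let alone $N$ of them. Relatedly, your mechanism for the recursion is slightly off: the adjacent mode $c^{(j)}_{j+2}$ does not grow by parametric forcing during the local passage (in Lemma~\ref{lemma:FullModel:SaddleMap} it changes only by a factor $1+\OO(\sigma)$); it becomes the hyperbolic pair $(p^{(j+1)},q^{(j+1)})$ only through the coordinate change $\Theta^j$ implemented by the global map, which is what produces $M_{\hyp}^{(j+1)}=KM_{\adj,+}^{(j)}$. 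Finally, the new features relative to \cite{GuaKal} — the extra $\OO(n^{-1})$ resonant monomials in $H^{(j)}_\hyp$ and the $\OO(n^{-1})$ bending $q=\xi(p)$ of the heteroclinic, which forces the straightening change \eqref{def:Change:Heteroclinic} and the modified definition of the product-like sets — are exactly the points where ``careful accounting'' must be made explicit, and they are the source of the conditions $r<\ln 2/(2\ga)$, $r'<\ln 2/\ga-2r$; asserting that the scheme of \cite{GuaKal} goes through is the conclusion of the proof, not an ingredient of it.
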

The condition
\[
 C^{(j)}/K<C^{(j+1)}<K C^{(j)}
\]
implies
\[
K^{-(j-2)}C^{(3)}\leq  C^{(j+1)}\leq K^{j-2}C^{(3)}.
\]
Namely, at each saddle, the orbits we are studying may lie  further
from the heteroclinic orbit. Nevertheless, since $\de=e^{-\ga N}$ and
\eqref{cond:GrowthOfCs},
these constant does not grow too much. Indeed,
\begin{equation}\label{def:UpperBoundCs}
\de^r\leq C^{(j)} \leq \de^{-r},
\end{equation}
where $r>0$  is taken small.  We  use the bound
\eqref{def:UpperBoundCs} throughout the proof of Theorem
\ref{theorem:iterative}.

Theorem \ref{thm:ToyModelOrbit} is a straightforward consequence of Theorem
\ref{theorem:iterative}.

\begin{proof}[Proof of \ref{thm:ToyModelOrbit}]
It is enough to take as a initial condition $b^0$ a point in the set
$\VV_3\subset\Sigma_3^\inn$ obtained in Theorem \ref{theorem:iterative}.
Then, thanks to this theorem we know that there exists a time $T_0$
satisfying
\[
 T_0\sim N\ln(1/\de),
\]
such that the corresponding orbit satisfies that
$b(T_0)\in\VV_{N-2}\subset\Sigma_{N-2}^\inn$. Note that in this section
there are two components of $b$ with size independent of $\de$.
Nevertheless, from the proof of Theorem \ref{theorem:iterative} in
Section \ref{sec:LocalDynamicsFullToyModel} it can be easily seen that if we 
shift
the time interval $[0,T_0]$ to
$[\rr\ln(1/\de),\rr\ln(1/\de)+T_0]$, for any $\rr<\la$ independent of $n$, there
exists $\nu>0$ such that the orbit $b(t)$ satisfies the statements given in
Theorem \ref{thm:ToyModelOrbit}
\end{proof}

\section{Proof of Theorem \ref{theorem:iterative}: local and global
maps}\label{sec:LocalAndGlobalMaps}

To prove Theorem \ref{theorem:iterative} we proceed as in \cite{GuaKal} and 
split it into two inductive lemmas. The
first part analyzes the evolution of the trajectories close to the saddle $j$
and the second one the travel along the heteroclinic orbit. Thus, we study
$\BB^j$ as a composition of two maps, which we call local and global map.

We consider an intermediate section transversal to the flow
\begin{equation}\label{def:Section2Saddle}
\Sigma_j^{\out}=\left\{p_2^{(j)}=\sigma\right\}.
\end{equation}
Then, we consider the  local map 
\begin{equation}\label{def:SaddleMap}
\BB_\loc^j:\VV_j\subset\Sigma_j^{\inn}\longrightarrow \Sigma_j^{\out},
\end{equation}
and the global map
\begin{equation}\label{def:HeteroMap}
\BB_\glob^j: \UU^j\subset\Sigma_j^{\out}\longrightarrow \Sigma_{j+1}^{\inn}.
\end{equation}
Then, the map $\BB^j$ considered in
Theorem \ref{theorem:iterative} is just $\BB^j=\BB_\glob^j\circ\BB_\loc^j$. To
compose the two maps we need that  the
set $\UU^j$, introduced in \eqref{def:HeteroMap}, has
a modified product-like structure. To define its properties, we
consider the projection
\[
 \wt \pi
\left(c_-^{(j)},p_1^{(j)},q_1^{(j)},p_2^{(j)},q_2^{(j)},c_+^{(j)}\right)=
 \left(p_2^{(j)},q_2^{(j)},c_+^{(j)}\right).
\]
\begin{definition}\label{definition:ModifiedProductLike}
Fix constants $r\in (0,1)$, $\de>0$ and $\sigma>0$ and consider 
a multi-parameter set of positive constants
\[
 \wt\II_j=\left\{\wt C^{(j)},\wt m_{\el}^{(j)},\wt M_{\el,\pm}^{(j)},
\ \wt m_{\adj}^{(j)},\ \wt M_{\adj,\pm}^{(j)},\ \wt m_{\hyp}^{(j)},\
\wt M_{\hyp}^{(j)}\right\}.
\]
Then, we say that a (non-empty) set $\UU\subset\Sigma_j^\out$ has
a $\wt \II_j$-product-like structure provided it satisfies the following
two conditions:
\begin{description}
 \item[\textbf{C1}]
\[
 \UU\subset \wt\DD_j^1\times\ldots\times\wt\DD_j^{j-2}\times\wt\NNN_{j}^+\times
\wt\DD_j^{j+2}\times\ldots\times\wt\DD_j^{N}
\]
where
\begin{align*}
\wt\DD_j^k&=\left\{\left|c_k^{(j)}\right|\leq  \wt M^{(j)}_{\el,\pm}
\de^{(1-r)/2}\right\} \,\,\text{ for }k\in \PP_j^\pm\\
\wt\DD_j^{j\pm 2}&\subset\left\{\left|c_{j\pm2}^{(j)}\right|\leq
\wt M^{(j)}_{\adj,\pm} \left(\wt C^{(j)}\de\right)^{1/2}\right\},
\end{align*}
and
\[
\begin{split}
\wt\NNN_{j}^+= \Big\{& (p_1^{(j)},q_1^{(j)},p_2^{(j)},q_2^{(j)})\in \R^4:
\left|p_1^{(j)}\right|,\left|q_1^{(j)}\right|\leq \wt M_\hyp^{(j)}\left(\wt
C^{(j)}\de\right)^{1/2},\\
&p_2^{(j)}=\sigma, \xi(\sigma) -\wt C^{(j)}\,\de\,\left(\ln(1/\de)+\wt
M_{\hyp}^{(j)}\right)\leq
q_2^{(j)}\leq  \xi(\sigma)-\wt C^{(j)}\,\de\,\left(\ln(1/\de)-\wt
M_{\hyp}^{(j)}\right) \Big\},
\end{split}
\]
\item[\textbf{C2}]
\[
 \{\sigma\}\times\left[ \xi(\sigma)-\wt C^{(j)}\,\de\,\left(\ln(1/\de)+\wt
m_{\hyp}^{(j)}\right), \xi(\sigma) -\wt
C^{(j)}\,\de\,\left(\ln(1/\de)-\wt m_{\hyp}^{(j)}\right)\right]\times
\wt\DD_{j,-}^{j+2}\times\ldots\times\wt\DD_{j,-}^{N} \subset\wt\pi(\UU)
\]
where
\begin{align*}
\wt\DD_{j,-}^k&=\left\{\left|c_k^{(j)}\right|\leq  \wt m^{(j)}_{\el}
\de^{(1-r)/2}\right\} \,\,\text{ for }k\in \PP_j^+\\
\wt\DD_{j,-}^{j+ 2}&=\left\{\left|c_{j+2}^{(j)}\right|\leq
\wt m^{(j)}_{\adj} \left(\wt C^{(j)}\de\right)^{1/2}\right\}.
\end{align*}
\end{description}
\end{definition}
With this definition, we can state the following two lemmas.
Combining these two lemmas we deduce Theorem \ref{theorem:iterative}.

\begin{lemma}\label{lemma:iterative:saddle}
Let $\gamma,\sigma,r,r',\delta$ be as in Theorem \ref{theorem:iterative}. Fix
any natural $j$ with $3\leq j\leq N-3$
and consider any parameter set $\II_j$ with $M_\hyp^{(j)}\geq 1$ and
a $\II_j$-product-like set $\VV_j\subset\Sigma_j^\inn$. Then, for $N$ big
enough, there exists:
\begin{itemize}
\item A constant $K>0$ independent of $N$ and $j$ but which might depend on
$\sigma$.
\item A parameter set $\wt\II_j$ whose constants satisfy
\[
 \begin{split}
  C^{(j)}/2\leq \wt C^{(j)}\leq 2 C^{(j)}\\
0<\wt m_{\hyp}^{(j)}\leq m_{\hyp}^{(j)}
 \end{split}
\]
and
\[
 \begin{split}
\wt M_\hyp^{(j)}&=K\\
\wt M_{\el,\pm}^{(j)}&=M_{\el,\pm}^{(j)}+K\de^{r'}\\
\wt m_{\el}^{(j)}& = m_{\el}^{(j)}-K\de^{r'}\\
\wt M_{\adj,\pm}^{(j)}&=M_{\adj,\pm}^{(j)}(1+4\sigma)\\
\wt m_{\adj}^{(j)}&=m_{\adj}^{(j)}\ (1-4\sigma),
 \end{split}
\]
\item A  $\wt\II_j$-product-like set $\UU_j$ for which  the map $\BB_\loc^j$
satisfies
\begin{equation}\label{cond:ComposeMaps:Localmap}
\UU_j\subset\BB_\loc^j\left(\VV_j\right).
\end{equation}
\end{itemize}
Moreover, the time to reach the section $\Sigma^{\out}_{j}$ can be bounded
as
\[
 \left|T_{\BB_\loc^j}\right|\leq K\ln(1/\de).
\]
\end{lemma}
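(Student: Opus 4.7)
The plan is to adapt the proof of the analogous lemma for the cubic case in~\cite{GuaKal}, with the main new ingredients being the $\OO(1/n)$ perturbation terms in the Hamiltonian~\eqref{def:Ham:Diagonal} and the deflected separatrix connection obtained in Lemma~\ref{lemma:SeparatrixAsGraph}. I will integrate the flow of $H^{(j)}$ from the input section $\Sigma_j^\inn=\{q_1^{(j)}=\sigma\}$ until the trajectory hits $\Sigma_j^\out=\{p_2^{(j)}=\sigma\}$, and then verify that the image of $\VV_j$ contains a set with the claimed $\wt\II_j$-product-like structure. Writing Hamilton's equations for~\eqref{def:Ham:Diagonal}, the quadratic part produces two hyperbolic pairs $(p_1,q_1),(p_2,q_2)$ with eigenvalues $\pm\la_n\sim\pm\sqrt 3$, while the elliptic modes $c_k^{(j)}$, $k\in\PP_j$, rotate with frequency $2a_n$; within a small neighborhood of the saddle the dynamics is governed by this linearization, whereas the cubic and higher-order terms, as well as the $\OO(1/n)$ perturbations, are handled by a fixed-point/Shilnikov-type argument.

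The first step is to determine the escape time. Since $|p_2^{(j)}(0)|\sim\sqrt{C^{(j)}\de}$ by Condition \textbf{C1} of Definition~\ref{def:ProductLikeSet}, and the expansion rate along the unstable direction is $\la_n$, the orbit reaches $p_2^{(j)}=\sigma$ at time
\[
T_{\BB_\loc^j}=\frac{1}{\la_n}\ln\!\left(\frac{\sigma}{\sqrt{C^{(j)}\de}}\right)+\OO(1)=\OO(\ln(1/\de)),
\]
uniformly in the input data, thanks to the bound~\eqref{def:UpperBoundCs}. During this time window the three contracting coordinates $p_1^{(j)},q_1^{(j)},q_2^{(j)}$ shrink by a factor of order $\sqrt{C^{(j)}\de}/\sigma$, so that $|p_1^{(j)}(T_{\BB_\loc^j})|,|q_1^{(j)}(T_{\BB_\loc^j})|\lesssim\sqrt{C^{(j)}\de}$ and $q_2^{(j)}(T_{\BB_\loc^j})\sim\xi(\sigma)-\wt C^{(j)}\de\ln(1/\de)$. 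The shift by $\xi(\sigma)$ in the centers of the $p_1^{(j)}$- and $q_2^{(j)}$-intervals in Definitions~\ref{def:ProductLikeSet} and~\ref{definition:ModifiedProductLike} is exactly what is needed to absorb the deflection of the stable/unstable manifolds of the hyperbolic fixed point $(0,0,0)$ with respect to the coordinate axes, as given by Lemma~\ref{lemma:SeparatrixAsGraph}.

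Next, the elliptic and adjacent modes $c_k^{(j)}$ only undergo a phase rotation at the linear level, so their moduli are conserved in the absence of coupling. Using the explicit form of $H^{(j)}_\mix$ and $H^{(j)}_\el$ from Lemma~\ref{lemma:Diagonalization}, the time derivative of $|c_k^{(j)}|^2$ is of size $\OO((|p|+|q|)^2|c_k|^2+|c|^4)$; integrated over the escape time and combined with~\eqref{def:UpperBoundCs}, this yields a modulus variation bounded by $K\de^{r'}$ for an appropriate $r'>0$, producing the updated parameters $\wt M_{\el,\pm}^{(j)},\wt m_{\el}^{(j)},\wt M_{\adj,\pm}^{(j)},\wt m_{\adj}^{(j)}$. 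Mass conservation $\sum_k|c_k^{(j)}|^2+(p_1^{(j)})^2+\cdots+(q_2^{(j)})^2\leq 1$ gives the upper bounds, and a direct lower-bound estimate gives the lower ones. Applying a $\lambda$-lemma-style argument to the hyperbolic block, essentially as in~\cite{GuaKal}, shows that the image $\BB_\loc^j(\VV_j)$ contains a $\wt\II_j$-product-like set with the stated parameters: the new $(p_1^{(j)},q_1^{(j)})$-window on $\Sigma_j^\out$ is the image under the hyperbolic flow of the fiber $\{q_1^{(j)}=\sigma\}$, while the $q_2^{(j)}$-range at the output is the image of the $p_1^{(j)}$-range at the input, whose width is preserved up to a multiplicative constant.

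The main obstacle is controlling the extra $\OO(1/n)$ terms in $H^{(j)}$ over the long time window $T_{\BB_\loc^j}\sim\ln(1/\de)$. A naive Gronwall estimate gives accumulated errors of order $n^{-1}\de^{-\alpha}\ln(1/\de)$ for some $\alpha>0$. Since $n=2^{N-1}$ and $\de=e^{-\ga N}$, one has $n^{-1}\sim\de^{(\ln 2)/\ga}$, so the assumption $r<(\ln 2)/(2\ga)$ together with $0<r'<(\ln 2)/\ga-2r$ in Definition~\ref{def:RecursivelyDefined} guarantees that these corrections are $\OO(\de^{r'})$, hence absorbable into the perturbative update of the parameters. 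Property~4(f) of Definition~\ref{acceptable} plays a key role here, because it forces the non-action-preserving $\OO(1/n)$ couplings between elliptic and hyperbolic modes to have the restricted form $\rho|b_j|^{2d-4}\Re(b_i^2\bar b_j^2)$, which preserves the product decomposition; without this constraint, the cross-mode mixing could not be controlled. Once all error estimates are combined, verifying the explicit parameter update table reduces to routine bookkeeping.
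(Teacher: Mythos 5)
There is a genuine gap, and it sits exactly where the real work of this lemma lies: the hyperbolic block. You assert that near the saddle ``the dynamics is governed by the linearization'' and that the passage can be closed by a $\lambda$-lemma-style argument plus a routine fixed point, but the saddle here is resonant (the eigenvalues are $\la_n,\la_n,-\la_n,-\la_n$), so the normal form necessarily retains resonant monomials such as $2\nu_{02}y_1x_2^2$ and $\nu_{11}x_1x_2y_2$ in Lemma \ref{lemma:HypToyModel:NormalForm}, and these produce drifts in $x_1$ (and $y_2$) of the same order as the quantities being tracked over the time $T_j\sim\ln(1/\de)$. This is precisely why the paper's proof does not rely on the linearization: it performs the Bronstein--Kopanskii $\CCC^2$ resonant normal form, sets up the Shilnikov two-step operator $\wt\FF_\hyp$ (the naive operator $\FF_\hyp$ is \emph{not} contractive), uses the weighted norms \eqref{def:Hyp:Norms} whose $u_1$-weight contains the term $2\nu_{02}f_1(\sigma)(x_2^\ast)^2\tau$, chooses $x_2^\ast$ through the cancellation condition \eqref{def:ChoiceEtaHat}, and builds the codimension-one constraint $g_j(p_2,q_2)=0$ of \eqref{def:Function_g} into the product-like sets. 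None of these ingredients appears in your proposal, yet without them you cannot produce the fine structure that the parameter update demands: entry windows for $p_1^{(j)}$ of width $\OO(C^{(j)}\de)$ centered at $\xi(\sigma)-C^{(j)}\de\ln(1/\de)$, and exit windows for $q_2^{(j)}$ with the analogous $\ln(1/\de)$ shift, which encode exactly the integrated resonant drift. (Note also that $p_1^{(j)}$ is an expanding, not a contracting, coordinate; your description of ``the three contracting coordinates $p_1,q_1,q_2$'' reflects the same oversimplified picture.)

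A second, related gap concerns the elliptic--hyperbolic coupling. The dangerous mixed terms are not the $\OO(1/n)$ corrections but the leading-order terms $\frac{2\sqrt3}{3}\Re\bigl((\omega_0 p_{1}+\ol\omega_0 q_{1})^2c_{j-2}^2\bigr)$ and its $(p_2,q_2,c_{j+2})$ analogue in $H^{(j)}_{\mix}$: they deflect the local stable/unstable manifolds of the normally hyperbolic manifold $\{x=y=0\}$, and the paper must straighten these manifolds (Lemma \ref{lemma:StraightenInvManifolds}, via Fenichel theory) before running the fixed point. They are also the reason the adjacent parameters update multiplicatively, $\wt M_{\adj,\pm}^{(j)}=M_{\adj,\pm}^{(j)}(1+4\sigma)$, i.e.\ an $\OO(\sigma)$ relative change, whereas your estimate lumps all elliptic modes together and claims a $K\de^{r'}$ modulus variation ``producing'' all four updated constants; since the adjacent modes have size $(C^{(j)}\de)^{1/2}\ll\de^{r'}$, that bound would not even preserve their scale. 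Finally, attributing the control of these couplings to property 4(f) of Definition \ref{acceptable} is a misattribution: 4(f) is what fixes the form and $j$-independence of the quadratic part $H_2^{(j)}$ (hence the diagonalization \eqref{quad.cpq}), while the mixed quartic terms are handled by smallness in $n$ only when they actually carry the $1/n$ factor, and by the straightening change of variables otherwise. Your treatment of the genuinely $\OO(1/n)$ terms via $n^{-1}\sim\de^{\ln 2/\ga}$ and the constraints on $r,r'$ is correct and matches the paper, but by itself it does not close the proof.
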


The proof of this lemma follows the same approach than the proof of Lemma 4.7 in
\cite{GuaKal}. First, in Section \ref{sec:HyperbolicToyModel}, we set the
elliptic modes $c$ to zero,
and we study the saddle map  associated to the corresponding
system. We call  this system \emph{Hyperbolic Toy Model}.  It  has two degrees
of
freedom. As happens in \cite{GuaKal},  the saddle is resonant since both stable
eigenvalues coincide. We used the ideas developed in \cite{GuaKal} to overcome
this problem. They are based on techniques developed by Shilnikov
\cite{Shilnikov67}. Then, in Section \ref{sec:LocalDynamicsFullToyModel} we use
the results obtained for
the Hyperbolic Toy Model to deal with the full system and prove Lemma
\ref{lemma:iterative:saddle}. 

Now we state the iterative lemma for the global maps $\BB^j_\glob$.
\begin{lemma}\label{lemma:iterative:hetero}
Let $\gamma,\sigma,r,r',\delta$ be as in Theorem \ref{theorem:iterative}. Fix
any natural $j$ with $3\leq j\leq N-3$
and consider any parameter set $\wt \II_j$ and a $\wt \II_j$-product-like
set $\UU_j\subset\Sigma_j^\out$. Then, for $N$ large enough, there exists:
\begin{itemize}
\item A constant $\wt K$
depending on $\sigma$, but independent of $N$ and $j$.
\item A parameter set $\II_{j+1}$ whose constants
satisfy
\[
 \begin{split}
  \wt C^{(j)}/\wt K\leq C^{(j+1)}\leq \wt K\wt C^{(j)}\\
0<m_{\hyp}^{(j+1)}\leq \wt m_\hyp^{(j)}
 \end{split}
\]
and
\[ \begin{split}
M_{\el,-}^{(j+1)}&=\max\left\{\wt M_{\el,-}^{(j)}+\wt K\de^{r'},\wt
K\wt M_{\adj,-}^{(j)}\right\}\\
M_{\el,+}^{(j+1)}&=\wt M_{\el,+}^{(j)}+\wt K\de^{r'}\\
m_{\el}^{(j+1)}&=\wt m_{\el}^{(j)}-\wt K\de^{r'}\\
M_{\adj,+}^{(j+1)}&=\wt M_{\el,+}^{(j)}+\wt K\de^{r'}\\
M_{\adj,-}^{(j+1)}&=\wt K\wt M_{\hyp}^{(j)}\\
m_{\adj}^{(j+1)}&=\wt m_{\el}^{(j)}+\wt K\de^{r'}\\
M_{\hyp}^{(j+1)}&=\max\left\{\wt K \wt M_{\adj,+}^{(j)},\wt K\right\}
 \end{split}
\]
\item A  $\II_{j+1}$-product-like set
$\VV_{j+1}\subset\Sigma_{j+1}^\inn$  for which the map $\BB_\glob^j$ satisfies
\begin{equation}\label{cond:ComposeMaps:Heteromap}
\VV_{j+1}\subset\BB_\glob^j\left(\UU_j\right).
\end{equation}
\end{itemize}
Moreover, the time spent to reach the section $\Sigma^{\inn}_{j+1}$
can be bounded as
\[
 \left|T_{\BB_\glob^j}\right|\leq \wt K.
\]
\end{lemma}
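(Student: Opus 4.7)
The plan is to treat $\BB_\glob^j$ as a small smooth perturbation of the integrable two-generation flow of Section 4.1, since the transit time along the heteroclinic is $\OO(1)$ in $N$ and $\de$. I would proceed in three conceptual steps: parametrize the unperturbed heteroclinic, use Gronwall estimates to control deviations of the true flow, and finally transfer coordinates from those adapted to saddle $j$ to those adapted to saddle $j+1$.

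First, restricting to the two-generation subspace $U_\SS^j$ and using \eqref{def:SaddleAdaptedCoordinates}, \eqref{ctopq} one obtains the planar Hamiltonian \eqref{2genpq}, whose separatrix $q_2^{(j)} = \xi(p_2^{(j)})$, given by Lemma \ref{lemma:SeparatrixAsGraph}, is precisely the heteroclinic from $\TT_j$ to $\TT_{j+1}$. This orbit crosses both sections $\Sigma_j^\out$ and $\Sigma_{j+1}^\inn$ transversally with a transit time $T^* = \OO(|\log \sigma|)$, independent of $\de$ and $N$. The full vector field of \eqref{def:Ham:Diagonal} on $\R^{2N}$ is a smooth perturbation of the decoupled planar dynamics, of size $\OO(\de^{(1-r)/2})$ in the peripheral modes plus $\OO(1/n)$ in the explicit polynomial corrections. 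Since the unperturbed transit time is bounded, Gronwall's inequality on $[0, T^*]$ gives a constant $\wt K = \wt K(\sigma)$ controlling all deviations: peripheral elliptic modes stay of size $\OO(\de^{(1-r)/2})$, the adjacent modes stay $\OO(\de^{1/2})$, the hyperbolic pair $(p_1^{(j)}, q_1^{(j)})$ remains $\OO(\de^{1/2})$, and the heteroclinic variables $(p_2^{(j)}, q_2^{(j)})$ reach $\Sigma_{j+1}^\inn$ with deviations from the unperturbed trajectory bounded by $\wt K$ times the initial deviations.

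Second, I would implement the diffeomorphism relating the $(j)$-adapted and $(j+1)$-adapted coordinates near $\TT_{j+1}$. This is obtained by composing \eqref{def:SaddleAdaptedCoordinates} (with $j$ replaced by $j+1$) with the diagonalization \eqref{ctopq}, and amounts to a permutation of the roles of the modes: the old hyperbolic pair $(p_2^{(j)}, q_2^{(j)})$ becomes (after subtracting the mass coordinate) the $(j+1)$-mode attached to the new primary $b_{j+1}$; the old adjacent leading mode $c^{(j)}_{j+2}$ becomes the new secondary mode (which explains $M_{\adj,+}^{(j+1)} = \wt M_{\el,+}^{(j)} + \wt K \de^{r'}$); and the old hyperbolic pair $(p_1^{(j)}, q_1^{(j)})$ transforms into the new adjacent trailing mode (which explains $M_{\adj,-}^{(j+1)} = \wt K \wt M_\hyp^{(j)}$). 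All the other bounds in the recursion are then the Gronwall-amplified versions of the old bounds, possibly combined through $\max$ when two different sources feed the same $(j+1)$-slot.

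Third, I would verify the two conditions of Definition \ref{def:ProductLikeSet} for the constructed $\VV_{j+1}$. Condition \textbf{C1} is the direct upper bound from Gronwall composed with the coordinate change. For condition \textbf{C2} (the image contains the inner product box), the standard argument from \cite{GuaKal} applies: in rescaled coordinates the Poincar\'e map restricted to the center and adjacent directions is an $\OO(\de^{r'})$-perturbation of the identity on a box of size $\OO(1)$, so the implicit function theorem (or a direct topological degree argument) yields surjectivity onto a slightly shrunken product set. The main obstacle I expect is the tracking of \textbf{C2} for the trailing peripheral modes $c_k^{(j+1)}$ with $k < j$: their preimages involve a mix of old adjacent and old hyperbolic data through the coordinate change, and one must check that the resulting constraints are simultaneously solvable. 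The $\OO(1/n)$ correction from $\PP$ contributes at most $\OO(\de^{r'})$ over time $T^*$, and the conditions $r < \ln 2/(2\gamma)$ and $r' < \ln 2/\gamma - 2r$ together with $\de = e^{-\gamma N}$ guarantee that these errors are absorbed into the stated recursion without spoiling the inductive scheme over $N$ steps.
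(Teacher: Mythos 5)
Your overall strategy is the one the paper actually follows: a finite-time (Gronwall-type) control of the flow along the heteroclinic of Lemma \ref{lemma:SeparatrixAsGraph} (this is what Proposition \ref{prop:HeteroMap}, imported from \cite{GuaKal}, encapsulates), followed by the explicit change between the $j$-adapted and $(j+1)$-adapted frames (Lemma \ref{lemma:ChangeOfSaddle}), whose permutation of roles generates the recursion. However, two concrete points are off or missing. First, your bookkeeping of the permutation is partly wrong: the old leading adjacent mode $c^{(j)}_{j+2}$ does become a secondary mode of the $(j+1)$-frame, but through $z=\tfrac{c_{j+2}}{\wt r}(\ol\om q_2+\om p_2)$ it feeds the new hyperbolic pair $(p_2^{(j+1)},q_2^{(j+1)})$, which is what produces $M_{\hyp}^{(j+1)}=\max\{\wt K\,\wt M_{\adj,+}^{(j)},\wt K\}$, not $M_{\adj,+}^{(j+1)}$. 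The line $M_{\adj,+}^{(j+1)}=\wt M_{\el,+}^{(j)}+\wt K\de^{r'}$ instead comes from the old \emph{peripheral} leading mode $c^{(j)}_{j+3}$ becoming the new leading adjacent mode; since the image only carries the peripheral-size bound $\OO(\de^{(1-r)/2})$ rather than the adjacent-size bound $\OO((C^{(j+1)}\de)^{1/2})$, the paper enforces the latter by \emph{cutting}, i.e.\ defining $\VV_{j+1}$ as an intersection with the ball $\{|c_{j+3}^{(j+1)}|\leq M_{\adj,+}^{(j+1)}(C^{(j+1)}\de)^{1/2}\}$ (Lemma \ref{lemma:HeteroMapOriginalVars}); this is legitimate because condition \textbf{C1} for adjacent modes is only an inclusion and the inner radius required by \textbf{C2} is smaller than the cut. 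Without this cut the inductive scheme does not close. (Incidentally, your worry about \textbf{C2} for the trailing modes is vacuous: the projection $\pi_{\hyp,+}$ discards $c_-$, so \textbf{C2} never constrains them.)

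Second, and more substantively, your sketch never addresses the one feature that distinguishes this lemma from its cubic analogue in \cite{GuaKal}: the heteroclinic is not straight but the graph $q=\xi(p)$ with $\xi=\OO(n^{-1})$, and the target product structure at $\Sigma_{j+1}^{\inn}$ is centered at $\xi(\sigma)$ and tilted via $g_{j+1}$. To conclude that the image lands in $\NNN_{j+1}^{\pm}$ you must know how the graph transforms under the change of saddle frames; the paper proves (Lemma \ref{lemma:Global:PropertyXi}) that the connection expressed in the $(j+1)$-variables is again the graph of the \emph{same} function $\xi$, using the $j$-independence and symmetry of the two-generation Hamiltonian (Definition \ref{acceptable}, item 4(e)) together with uniqueness of the graph parameterization, and it also needs $\Sigma_{j+1}^{\inn}$ written as a graph $P_2=w(P_1,Q_1,Q_2,c)$ in the old straightened variables (Corollary \ref{coro:SectionOldVariables}) to define the map and bound its transit time. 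Your Gronwall step only yields closeness to the image of the unperturbed connection; without identifying that image as the graph of $\xi$ in the new frame (and working in the straightened variables $(P,Q)$ of \eqref{def:Change:Heteroclinic}), the verification of \textbf{C1}--\textbf{C2} of Definition \ref{def:ProductLikeSet} for $\VV_{j+1}$, i.e.\ the centering of $p_1^{(j+1)}$ at $\xi(\sigma)-C^{(j+1)}\de\ln(1/\de)$, is not established. With these two repairs your argument becomes the paper's proof.
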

The proofs of this lemma is postponed to Section \ref{sec:ProofHeteroMap}.

Now it only remains to deduce from Lemmas \ref{lemma:iterative:saddle}
and \ref{lemma:iterative:hetero}  the Iterative Theorem \ref{theorem:iterative}.
\begin{proof}[Proof of Theorem \ref{theorem:iterative}]
We choose the multi-index $\II_3$ so that we can apply iteratively the
Lemmas \ref{lemma:iterative:saddle} and \ref{lemma:iterative:hetero}. Indeed,
from the recursive formulas in Lemma \ref{lemma:iterative:saddle} and
\ref{lemma:iterative:hetero} it is clear that it is enough to choose a parameter
set $\II_3$ satisfying
\[
 1<M_{\el,+}^{(3)}\ll M_{\adj,+}^{(3)}\ll
M_{\hyp}^{(3)}\ll M_{\adj,-}^{(3)}\ll M_{\el,-}^{(3)}
\]
and
\[
 0<m_\el^{(3)}<3 m_\adj^{(3)}.
\]
From the choice of the constants in $\II_3$ and
the recursion formulas in Lemmas \ref{lemma:iterative:saddle}
and \ref{lemma:iterative:hetero}, we have that $M_\hyp^{(j)}\geq 1$ for any
$j=3,\ldots N-2$.
This fact
along with conditions \eqref{cond:ComposeMaps:Localmap} and
\eqref{cond:ComposeMaps:Heteromap}, allow us to apply
Lemmas \ref{lemma:iterative:saddle} and \ref{lemma:iterative:hetero}
iteratively so that we obtain the
$(\de,\sigma,K)$-recursive collection
of multi-parameter sets $\{\II_j\}_{j=3,\ldots,N-2}$ and
the $\II_j$-product-like sets $\VV_j\subset\Sigma_j^\inn$.
In particular, note that the recursion formulas stated in
Theorem \ref{theorem:iterative} can be easily deduced from
the recursion formulas given in Lemmas \ref{lemma:iterative:saddle}
and \ref{lemma:iterative:hetero} and the choice of $\II_3$.

Finally, we bound the time
\[
  \left|T_{\BB^j}\right|\leq  \left|T_{\BB_\loc^j}\right|+
  \left|T_{\BB_\glob^j}\right|\leq K\ln(1/\de)+\wt K.
\]
This completes the proof of Theorem \ref{theorem:iterative}.
\end{proof}

\subsection{Straightening the heteroclinic connections}
To prove Lemmas \ref{lemma:iterative:saddle} and \ref{lemma:iterative:hetero}
the first step is to straighten the heteroclinic connections which connect with
the future and past saddles. They have been analyzed in Lemma
\ref{lemma:SeparatrixAsGraph}.

We perform the change of coordinates $(P_1,Q_1,P_2,Q_2)=\Xi (p_1,q_1,p_2,q_2)$
defined as 
\begin{equation}\label{def:Change:Heteroclinic}
\begin{split}
P_1&=p_1-\xi(q_1)\\
Q_1&=q_1\\
P_2&=p_2\\
Q_2&=q_2-\xi(p_2),
\end{split} 
\end{equation}
 which straightens the heteroclinic connections. This change is symplectic. 

\begin{lemma}
If one performs the change of coordinates \eqref{def:Change:Heteroclinic}, one
obtains a new Hamiltonian system of the form
\begin{equation}\label{def:Ham:AfterSeparatrix}
   H^{(j)}(P,Q,c) =   H^{(j)}_2(P,Q,c)+   H^{(j)}_4(P,Q,c)
\end{equation}
with
\[
   H^{(j)}_2(P,Q,c)=a_n\sum_{k\in
\PP_j}|c_k|^2+\la_n\left(P_1Q_1+P_2Q_2\right)
\]
and
\[
   H^{(j)}_4(P,Q,c)=   H^{(j)}_{\hyp}\left(P,Q\right)+  H^{(j)}_{\el}(c)+ 
H^{(j)}_{\mix}\left(P,Q,c\right)
\]
where
\begin{align}
 H^{(j)}_\hyp(P,Q)=&-2P_1Q_1 (P_1^2+Q_1^2-P_1Q_1)-2P_2Q_2
(P_2^2+Q_2^2-P_2Q_2)\notag\\
&+\sum_{k,\ell=0}^2\nu_{k\ell}
P_1^kQ_1^{2-k}P_2^\ell
Q_2^{2-\ell}+\OO\left(\frac{1}{n}(P_1+Q_1)^2(P_2+Q_2)^2\right)\notag\\
&+\OO\left(\frac{1}{n}(P_1Q_1(P_1+Q_1)^2+P_2Q_2(P_2+Q_2)^2)\right)\notag\\
  H^{(j)}_{\el}\left(c\right)=&-\frac{1}{4}\sum_{k\in\PP_j}|c_k|^4-\frac{1}{4}
\left(\sum_{k\in\PP_j}|c_k|^2\right)^2 \notag\\
&+\sum_{k\in
\PP_j\setminus\{j+2\}}\Re\big(c_k^2\ol{c_{k-1}}^2\big)+\OO\left(\frac{1}{n}\sum_
{k,k'\in \PP_j}|c_k|^2|c_{k'}|^2\right)\notag\\
  H^{(j)}_{\mix}(p,q,c)=&-{\sqrt{3}}\sum_{k\in
\PP_j}|c_k|^2\left(Q_1P_1+Q_2P_2\right)\notag\\
&+\frac{2\sqrt 3}{3}\Re\left(\left(\omega_0 P_1+\ol\omega_0 Q_1\right)^2 c_{j-
2}^2\right)+\frac{2\sqrt 3}{3}\Re\left(\left(\omega_0 P_2+\ol\omega_0
Q_2\right)^2 c_{j+ 2}^2\right)\notag\\
&+\OO\left(\frac{1}{n}\sum_{k\in \PP_j}|c_k|^2(P_1+Q_1+P_2+Q_2)^2\right)\notag
\end{align}
\end{lemma}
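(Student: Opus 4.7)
The lemma is proved by direct substitution and bookkeeping. I first verify that the change $\Xi$ is symplectic. Using the chain rule, $dP_1 \wedge dQ_1 = (dp_1 - \xi'(q_1)\,dq_1) \wedge dq_1 = dp_1 \wedge dq_1$ and symmetrically $dP_2 \wedge dQ_2 = dp_2 \wedge dq_2$; since the elliptic modes are untouched, the symplectic form is preserved, and the new Hamiltonian is simply \eqref{def:Ham:Diagonal} expressed via $p_1 = P_1 + \xi(Q_1)$ and $q_2 = Q_2 + \xi(P_2)$, with $Q_1 = q_1$ and $P_2 = p_2$.

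Next I substitute and regroup. The elliptic piece $H^{(j)}_\el(c)$ is unchanged since the change of variables does not act on $c$. For the quadratic hyperbolic part $\lambda_n(p_1 q_1 + p_2 q_2)$ the substitution yields $\lambda_n(P_1 Q_1 + P_2 Q_2) + \lambda_n Q_1 \xi(Q_1) + \lambda_n P_2 \xi(P_2)$; since $\xi(0)=0$ and $|\xi| = \OO(n^{-1})$ uniformly on $[0,p^*]$ by Lemma \ref{lemma:SeparatrixAsGraph}, the extra pieces are $\OO(n^{-1}Q_1^2)$ and $\OO(n^{-1}P_2^2)$, and they get absorbed into the $\OO(n^{-1})$ remainders of the new $H^{(j)}_\hyp$. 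For the polynomial part of $H^{(j)}_\hyp(p,q)$, each monomial is polynomial in the $p_i$'s and $q_i$'s, so the substitution reproduces the same leading monomial plus corrections each carrying at least one factor of $\xi$, hence an extra factor $n^{-1}$. The stated remainder shapes $\OO(n^{-1}(P_1 + Q_1)^2(P_2 + Q_2)^2)$ and $\OO(n^{-1}(P_1 Q_1(P_1 + Q_1)^2 + P_2 Q_2(P_2 + Q_2)^2))$ reflect exactly the combinatorics of perturbing the degree-$4$ monomials $\nu_{k\ell} P_1^k Q_1^{2-k} P_2^\ell Q_2^{2-\ell}$ and $-2P_1 Q_1(P_1^2+Q_1^2-P_1 Q_1)$, $-2P_2 Q_2(P_2^2+Q_2^2-P_2 Q_2)$, whose prefactors $P_1 Q_1$ and $P_2 Q_2$ are preserved under the substitution.

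The mixed piece $H^{(j)}_\mix$ is treated analogously. The term $-\sqrt{3}\,|c|^2(p_1 q_1 + p_2 q_2)$ produces $-\sqrt{3}\,|c|^2(P_1 Q_1 + P_2 Q_2)$ plus corrections of order $\OO(n^{-1}|c|^2(P_1+Q_1+P_2+Q_2)^2)$, while the adjacent-mode couplings $\Re((\omega_0 p_i + \bar\omega_0 q_i)^2 c_{j\pm 2}^2)$ expand to their declared form in $(P,Q)$ plus $\xi$-corrections that fit in the same remainder.

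The main technical point, rather than an obstacle, is verifying that no spurious linear-in-$(P_1,Q_2)$ monomials appear in the new quadratic part: this follows from $\xi(0) = 0$, which ensures that $Q_1 \xi(Q_1)$ and $P_2 \xi(P_2)$ vanish at least at second order in the hyperbolic variables. Beyond this, the argument requires only careful bookkeeping of the $\xi$-corrections and their orders in $n^{-1}$ and in the phase-space variables.
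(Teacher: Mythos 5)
Your symplecticity check and the observation that $H^{(j)}_{\el}$ is untouched are fine, but the core of your argument --- ``each correction carries a factor of $\xi$, hence $n^{-1}$, and is absorbed into the $\OO(n^{-1})$ remainders'' --- does not prove the lemma as stated, because the remainders in the statement are not generic $\OO(n^{-1})$ terms: they carry a precise vanishing structure, namely the factors $(P_1+Q_1)^2(P_2+Q_2)^2$, $P_iQ_i(P_i+Q_i)^2$ and $|c_k|^2(P_1+Q_1+P_2+Q_2)^2$, so in particular every term of the claimed transformed Hamiltonian vanishes identically on the two straightened heteroclinic axes $\{P_1=Q_1=Q_2=0,\,c=0\}$ and $\{P_1=P_2=Q_2=0,\,c=0\}$. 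Naive substitution of $p_1=P_1+\xi(Q_1)$, $q_2=Q_2+\xi(P_2)$ into \eqref{def:Ham:Diagonal} produces terms that do \emph{not} vanish there, e.g.\ $\la_n P_2\,\xi(P_2)$ and $\la_n Q_1\,\xi(Q_1)$ from the quadratic part, and $-2P_2\xi(P_2)\bigl(P_2^2+\xi(P_2)^2-P_2\xi(P_2)\bigr)$ from the quartic part; being merely $\OO(n^{-1})$ does not allow them to be ``absorbed'' into remainders that vanish on those axes (and note also that your claim that the prefactors $P_1Q_1$, $P_2Q_2$ are preserved is false: $p_1q_1=P_1Q_1+Q_1\xi(Q_1)$). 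What actually makes the lemma true is a cancellation you never invoke: $\xi$ is not an arbitrary small function, it is the graph of the heteroclinic connection of the reduced two--degree--of--freedom system \eqref{2genpq}, which lies in the \emph{zero} energy level (both endpoints, the origin and the hyperbolic point on $\mathcal E$, have zero energy). Hence $\la_n\,p\,\xi(p)+H^{(j)}_{\hyp}(0,0,p,\xi(p))\equiv 0$ (and the analogous identity in the $(P_1,Q_1)$ plane), and it is precisely this identity --- equivalently, the invariance equation satisfied by $\xi$ --- that cancels all the $Q_2$--independent (resp.\ $P_1$--independent) leftovers of the substitution; only after this cancellation can the surviving terms be estimated by the stated remainders.

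A secondary but real issue: even for the surviving terms, $\sup|\xi|=\OO(n^{-1})$ together with $\xi(0)=0$ is not enough. For instance the mixed correction $-\sqrt3\sum_{k\in\PP_j}|c_k|^2\,Q_1\xi(Q_1)$ must be bounded by $\OO\bigl(\tfrac1n\sum_k|c_k|^2(P_1+Q_1+P_2+Q_2)^2\bigr)$, which requires a bound of the type $\xi(x)=\OO(x^2/n)$ (or at least $\OO(x/n)$) near the origin; this comes from the tangency of the unstable manifold to the unstable eigendirection at the saddle of \eqref{2genpq} (the perturbation $\tfrac1nP$ in \eqref{2genpq} does not affect the linearization), not from the sup--norm statement of Lemma \ref{lemma:SeparatrixAsGraph} alone --- your inference ``$\xi(0)=0$ and $|\xi|=\OO(n^{-1})$, hence $Q_1\xi(Q_1)=\OO(n^{-1}Q_1^2)$'' is not valid as written. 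So the proof needs two additional ingredients: the zero--energy/invariance identity for $\xi$ to produce the cancellations, and the quantitative vanishing of $\xi$ at the fixed point to fit the remaining corrections into the specific remainder shapes; without them the argument only yields a transformed Hamiltonian with unstructured $\OO(n^{-1})$ corrections, which is strictly weaker than the lemma and insufficient for the weighted fixed--point analysis near the saddle that relies on that structure.
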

To fix notation, we define the vector field associated to Hamiltonian
\eqref{def:Ham:AfterSeparatrix},
\begin{eqnarray}\label{def:VF:AfterSeparatrix}
 \begin{aligned}
\dot P_1
& = & \la_n P_1+\ZZZ_{\hyp, P_1}+\ZZZ_{\mix, P_1}
& = & \la_n P_1+\partial_{Q_1}  H^{(j)}_{\hyp}+\partial_{Q_1}  H^{(j)}_{\mix}\\
\dot Q_1
& = & -\la_n Q_1+\ZZZ_{\hyp, Q_1}+\ZZZ_{\mix, Q_1}
& = & -\la_n Q_1-\partial_{P_1}  H^{(j)}_{\hyp}-\partial_{P_1}  H^{(j)}_{\mix}\\
\dot P_2
& = & \la_n P_2+\ZZZ_{\hyp, P_2}+\ZZZ_{\mix, P_2}
& = & \la_n P_2+\partial_{Q_2}  H^{(j)}_{\hyp}+\partial_{Q_2}  H^{(j)}_{\mix}\\
\dot Q_2
& = & -\la_n Q_2+\ZZZ_{\hyp, Q_2}+\ZZZ_{\mix, Q_2}
& = & -\la_n Q_2-\partial_{P_2}  H^{(j)}_{\hyp}-\partial_{P_2}  H^{(j)}_{\mix}\\
\dot c_k
& = & 2\ii a_nc_k  + \ZZZ_{\el, c_k} + \ZZZ_{\mix, c_k}
& = & 2\ii a_nc_k - 2\ii \partial_{c_k}  H^{(j)}_{\el}-2\ii \partial_{c_k} 
H^{(j)}_{\mix}.
\end{aligned}
\end{eqnarray}

\section{The local dynamics of the hyperbolic toy
model}\label{sec:HyperbolicToyModel}
If we set to zero the elliptic modes in the Hamiltonian obtained in Lemma
\ref{lemma:SeparatrixAsGraph}, we obtain the Hamiltonian
\begin{equation}\label{def:HypToyModel}
 H(P,Q)= \la_n (P_1Q_1+P_2Q_2)+ H^{(j)}_\hyp(P,Q)
\end{equation}
Therefore, the associated vector field is
\begin{equation}\label{def:HypToyModelVectorField}
\begin{split}
\dot P_1&=\la_n P_1+\ZZZ_{\hyp, P_1}\\
\dot Q_1&=-\la_n Q_1+\ZZZ_{\hyp, Q_1}\\
\dot P_2&=\la_n P_2+\ZZZ_{\hyp, P_2}\\
\dot Q_2&=-\la_n Q_2+\ZZZ_{\hyp, Q_2},
\end{split}
\end{equation}
where $\ZZZ_{\hyp, P_i}=\pa_{Q_i} H^{(j)}_\hyp(P,Q)$ and  $\ZZZ_{\hyp,
Q_i}=-\pa_{P_i}H^{(j)}_\hyp(P,Q)$.

As we have explained, $(P,Q)=(0,0)$ is a hyperbolic critical point for the
hyperbolic toy model. We want to study the local dynamics. The first step is to
perform a $\CCC^k$ resonant normal form to remove the nonresonant terms, as is
done in \cite{GuaKal}. Note that here we encounter the same type of resonace. We
use a result by Bronstein and Kopanskii  \cite{BronsteinK92}, see Theorem 6 of
\cite{GuaKal},  which implies the following.

\begin{lemma}\label{lemma:HypToyModel:NormalForm}
There exists a $\CCC^2$ change of coordinates
\[
 (P_1,Q_1,P_2,Q_2)=\Psi_\hyp(x_1,y_1,x_2,y_2)=(x_1,y_1,x_2,y_2)+
 \wt\Psi_\hyp(x_1
,y_1,x_2,y_2)
\]
which transforms the vector field \eqref{def:HypToyModelVectorField} into the
vector
field
\begin{equation}\label{def:HypHam:Transformed}
\XX_\hyp(z)=D z+R_\hyp,
\end{equation}
where $z$ denotes $z=(x_1,y_1,x_2,y_2)$, $D$ is the diagonal matrix
$D=\mathrm{diag}(\la_n,-\la_n,\la_n,-\la_n)$ and $R_\hyp$ is a
polynomial, which only contains resonant monomials. It can be split as
\begin{equation}\label{def:HypHam:Hyp}
R_\hyp=R_\hyp^0+R_\hyp^1,
\end{equation}
where $R_\hyp^0$ is the first order, which is given by
\[
R_\hyp^0(z)=\left(\begin{array}{c}R_{\hyp,x_1}^0(z)\\R_{\hyp,y_1}^0(z)\\R_{\hyp,
x_2}^0(z)\\R_{\hyp,y_2}^0(z)\end{array}\right)=\left(\begin{array}{c}
4x_1^2y_1+2\nu_{02}y_1x_2^2+\nu_{11}x_1x_2y_2 \\
-4x_1y_1^2-2\nu_{20}x_1y_2^2-\nu_{11}y_1x_2y_2\\4y_2x_2^2+2\nu_{20}
x_1^2y_2
+\nu_{11}x_1y_1x_2\\-4x_2y_2^2-\nu_{02}y_1^2x_2-\nu_{11}
x_1y_1y_2\end{array}\right),
\]
and $R_\hyp^1$ is the remainder and satisfies
 $R_{\hyp,x_i}^1=\OO\left(x^3y^2\right)$  and
 $R_{\hyp,y_i}^1=\OO\left(x^2y^3\right)$.

Moreover, the function $\wt\Psi_\hyp=( \wt\Psi_{\hyp,x_1}, \wt\Psi_{\hyp,y_1},
\wt\Psi_{\hyp,x_2}, \wt\Psi_{\hyp,y_2})$ satisfies
\[
\begin{split}
 \wt\Psi_{\hyp,x_1}(z)&=\OO\left(x_1^3,x_1y_1,x_1(x_2^2+y_2^2),
y_1y_2(x_2+y_2)\right)\\
\wt\Psi_{\hyp,y_1}(z)&=\OO\left(y_1^3,x_1y_1,y_1(x_2^2+y_2^2),
x_1x_2(x_2+y_2)\right)\\
 \wt\Psi_{\hyp,x_2}(z)&=\OO\left(x_2^3,x_2y_2,x_2(x_1^2+y_1^2),
y_1y_2(x_1+y_1)\right)\\
\wt\Psi_{\hyp,y_2}(z)&=\OO\left(y_2^3,x_2y_2,y_2(x_1^2+y_1^2),
x_1x_2(x_1+y_1)\right).
\end{split}
\]
\end{lemma}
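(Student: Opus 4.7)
The strategy is identical to the one used in Lemma~4.1 of \cite{GuaKal}. The linearization has spectrum $\{\lambda_n,-\lambda_n,\lambda_n,-\lambda_n\}$, which is highly degenerate: a monomial $x_1^a y_1^b x_2^c y_2^d$ is resonant in the $x_i$-equation if and only if $(a+c)-(b+d)=1$, and resonant in the $y_i$-equation if and only if $(a+c)-(b+d)=-1$. As a consequence the classical Sternberg condition fails and analytic linearization is unavailable. However, away from these resonance planes the small divisors $m\cdot\lambda-\lambda_i$ are uniformly bounded below by a positive multiple of $\lambda_n$, so a polynomial change of variables solving the associated homological equation kills every non-resonant monomial up to any prescribed degree.

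First I would invoke the Bronstein--Kopanskii theorem in the smooth class, as formulated in Theorem~6 of \cite{GuaKal}: for any $k\in\N$ there exists a $C^k$ change of coordinates in a neighbourhood of the origin conjugating the vector field \eqref{def:HypToyModelVectorField} to $Dz+R(z)$, where $R$ is a polynomial in $z$ containing only resonant monomials, of degree bounded in terms of $k$. I take $k=2$ and an order high enough to include the cubic resonant part, which is all that is needed for the subsequent dynamical analysis.

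The computation of $R_\hyp^0$ is then purely algebraic. At leading order the change of variables is the identity, so the resonant cubic part of $\dot x_1$ is the projection of $\partial_{Q_1}H^{(j)}_\hyp$ onto the span of cubic monomials satisfying $(a+c)-(b+d)=1$. From the term $-2P_1Q_1(P_1^2+Q_1^2-P_1Q_1)$ only $4P_1^2Q_1$ survives this projection, and from $\sum_{k,\ell}\nu_{k\ell}P_1^kQ_1^{2-k}P_2^\ell Q_2^{2-\ell}$ only the pairs $(k,\ell)\in\{(0,2),(1,1)\}$ yield resonant cubic monomials, producing the coefficients $2\nu_{02}$ and $\nu_{11}$ as announced. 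The three remaining components of $R_\hyp^0$ are obtained identically by the discrete symmetries $P_i\leftrightarrow Q_i$ and $1\leftrightarrow 2$. The $O(n^{-1})$ corrections in $H^{(j)}_\hyp$ contribute only to higher-order terms and do not modify the coefficients above.

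The remainder estimate $R_{\hyp,x_i}^1=O(x^3y^2)$ follows directly from the resonance condition, since any resonant monomial in the $x_i$-equation of total degree $\ge 5$ must satisfy $(a+c)-(b+d)=1$, which forces $a+c\ge 3$ and $b+d\ge 2$. The estimates on $\wt\Psi_\hyp$ are obtained by solving the homological equation one non-resonant cubic monomial at a time: each such monomial in $\dot z$ contributes one monomial of the same multi-degree to $\wt\Psi_\hyp$, divided by the corresponding small divisor (which is a non-zero integer multiple of $\lambda_n$), and the list of allowed multi-degrees matches the one displayed in the lemma. The main obstacle is conceptual rather than computational: the doubly repeated stable and unstable eigenvalues generate an infinite family of resonances, so analytic conjugacy must be replaced by the $C^k$ Bronstein--Kopanskii framework; once this is accepted, everything reduces to finite-dimensional linear algebra on the spaces of cubic and quintic monomials.
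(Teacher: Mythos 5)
Your proposal follows essentially the paper's own route: the paper proves this lemma simply by invoking the Bronstein--Kopanskii $\CCC^k$ resonant normal form (Theorem 6 of \cite{GuaKal}), observing that the doubly repeated eigenvalues $\pm\la_n$ give the same resonance structure as in the cubic case, and then reading off the resonant cubic part, the remainder bound $\OO(x^3y^2)$, $\OO(x^2y^3)$ and the estimates on $\wt\Psi_\hyp$ exactly as you do. The only caveat is your claim that the $\OO(n^{-1})$ quartic terms of $H^{(j)}_{\hyp}$ do not affect the cubic resonant coefficients: strictly they shift them by $\OO(n^{-1})$ (e.g.\ $n^{-1}P_1^2Q_1^2$ produces the resonant term $2n^{-1}P_1^2Q_1$), but this imprecision is present in the paper's own statement as well and is harmless for the subsequent fixed point estimates.
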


\begin{remark}
 All functions involved in this lemma, and also all functions involved in the
forthcoming sections depend on the parameter $n$. We omit this dependence to
simplify the notation. Note that when we use the notation $f=\OO(g)$ we mean
that there exists a constant $C>0$ independent of $n$, $\de$ and $\sigma$ such
that $|f|\leq C|g|$.
\end{remark}

We analyze the local dynamics for the vector field
\eqref{def:HypHam:Transformed} and then we will deduce the dynamics in the
original variables.  Note that after normal form, the vector field
\eqref{def:HypHam:Transformed} is of the same form as the corresponding vector
field in \cite{GuaKal}. Therefore, we can use the results from that paper. As we
have said in Section \ref{sec:IterativeTheorem}, we follow the notation of
multiparameter sets from that paper.

In Section \ref{sec:IterativeTheorem}, we have considered the sets
$\NNN_j^-\subset \NNN_j^+$ to define the almost product structure. Since in this
section we have set the elliptic modes to zero, that is, $c=0$, we consider a
set $\NNN_j'$ satisfying
\[
 \NNN_j^-\cap\{c=0\}\subset\NNN_j'\subset \NNN_j^+\cap\{c=0\}
\]
Now, we need to express it in the new coordinates $(x,y)$. We denote the inverse
of the change $\Psi_\hyp$, obtained in Lemma \ref{lemma:HypToyModel:NormalForm},
by
$\Upsilon=\Id+\wt\Upsilon=\Id+\left(\wt\Upsilon_{x_1},\wt\Upsilon_{y_1},
\wt\Upsilon_{x_2},\wt\Upsilon_{y_2}\right)$ 
and we define
\[
\wh C^{(j)} =\wt C^{(j)}\left(1+\wt\Upsilon_{x_1}(0,\sigma,0,0)\right).
\]
Note that $\wh C^{(j)} =\wt C^{(j)}(1+\OO(\sigma))$. We also define 
$f_1(\sigma)=\Upsilon_{y_1}(0,\sigma,0,0)$. 
This correspond to the first order of shift in the Poincar\'e section due to the
normal form. That is, the section $y_1=f_1(\sigma)$ approximates the section
$\Upsilon(\Sigma_j^{\inn})$ (recall that the change
\eqref{def:Change:Heteroclinic} has not moved the transversal section). We
define the set of points in the normal form variables $(x,y)$ whose dynamics we
want to analyze by
\begin{equation}\label{def:Hyp:ModifiedDomain}
\begin{split}
\wh\NNN_j=\Big\{& |x_1+\wh
C^{(j)}\,\de\,(\ln(1/\de)|\le \wh C^{(j)}\,\de\,K_\sigma, \quad
\left|x_2-x_2^\ast\right|\leq 2\, M_{\hyp}^{(j)}\frac{\left(\wh
C^{(j)}\de\right)^{1/2}}{\ln(1/\de)},\\
 & |y_1- f_1(\sigma)|\le
K_\sigma\wh C^{(j)}\de\ln(1/\de),\quad \qquad |y_2|\leq
2\,M_{\hyp}^{(j)}\left(\wh
C^{(j)}\de\right)^{1/2}\Big\},
\end{split}
\end{equation}
The constant $x_2^*$ will be choosen later analogously to \cite{GuaKal}. The
choice will allow us to obtain a cancellation which avoids deviation from the
invariant manifolds.

 The outcoming section gets also slightly modified by the normal form. To this
end, we need to define the function $f_2(\sigma)$ as
$f_2(\sigma)= \Upsilon_{x_2}(0,0,\sigma,0)$.
As will be seen, the coordinate $x_2$ behaves almost linearly as $x_2\sim x_2^0
e^{\la \tau}$ (recall that we have rescaled time by \eqref{def:timerescaling} so
now the time variable is $\tau$). Therefore, the time needed to reach the
section $x_2=f_2(\sigma)$ is given approximately by
\begin{equation}\label{def:FixedTimeSaddle}
 T_j(x_2^0)=\frac{1}{\la}\ln \left(\frac{f_2(\sigma)}{x_2^0}\right).
\end{equation}

In order to analyze the action of $\Phi_\tau^\hyp $, i.e. the flow associated to
\eqref{def:HypHam:Transformed}, on points $\widehat \NNN_j$ we proceed as in
\cite{GuaKal}. We choose $x_2^\ast$ as the unique
positive solution of
\begin{equation}\label{def:ChoiceEtaHat}
\left(x_2^\ast\right)^2 T_j(x_2^\ast)=\frac{\wh
C^{(j)}\,\de\,\ln(1/\de)}{2\,\nu_{02}\,f_1(\sigma)}.
\end{equation}
 We perform the change of coordinates
$x_i=e^{\la \tau}u_i,\,\,y_i=e^{-\la \tau}v_i$
and thus we obtain the integral equations
\begin{equation}\label{def:Hyp:FixedPoint}
\begin{split}
u_i&=x_i^0+\int_0^T e^{-\la \tau}R_{\hyp,x_i}\left(u e^{\la \tau},v
e^{-\la \tau}\right)d\tau\\
v_i&=y_i^0+\int_0^T e^{\la \tau}R_{\hyp,y_i}\left(u e^{\la \tau},v
e^{-\la \tau}\right)d\tau.
\end{split}
\end{equation}

In the linear case $u_i$'s and $v_i$'s are constant. We use these variables to
find a fixed point argument. We define the contractive operator in
two steps. This approach is inspired by Shilnikov \cite{Shilnikov67}. First we
define an  auxiliary (non-contractive) operator 
\[
\FF_\hyp=(\FF_{\hyp,u_1},\FF_{\hyp,v_1},\FF_{\hyp,u_2},\FF_{\hyp,v_2})
\]
 as
\begin{equation}\label{def:Hyp:Operator1}
\begin{split}
 \FF_{\hyp,u_i}(u,v)&=x_i^0+\int_0^T e^{-\la \tau}R_{\hyp,x_i}\left(u
e^{\la \tau},v e^{-\la \tau}\right)d\tau\\
 \FF_{\hyp,v_i}(u,v)&= y_i^0+\int_0^T e^{\la \tau}R_{\hyp,y_i}\left(u
e^{\la \tau},v e^{-\la \tau}\right)d\tau.
\end{split}
\end{equation}
As happens in \cite{GuaKal}, for the $u_1$ and $v_2$ components the main terms
are not given by the initial condition but by the integral terms. In other
words,  the dynamics near the saddle is not well approximated by the linearized 
dynamics and the operator is not contractive.
Following ideas from Shilnikov \cite{Shilnikov67}, to have a contractive
operator, we modify slightly two of the components of $\FF_\hyp$ by considering
\[
\wt\FF_\hyp=(\wt\FF_{\hyp,u_1},\wt\FF_{\hyp,v_1},\wt\FF_{\hyp,u_2},
\wt\FF_{\hyp, v_2})
\]
as
\[
\begin{split}
\wt
\FF_{\hyp,u_1}(u,v)&=\FF_{\hyp,u_1}(u_1,\FF_{\hyp,v_1}(u,
v),\FF_{\hyp,u_2}(u,v),v_2)\\
\wt \FF_{\hyp,v_1}(u,v)&=\FF_{\hyp,v_1}(u,v)\\
\wt \FF_{\hyp,u_2}(u,v)&=\FF_{\hyp,u_2}(u,v)\\
\wt
\FF_{\hyp,v_2}(u,v)&=\FF_{\hyp,v_2}(u_1,\FF_{\hyp,v_1}(u,v),\FF_{\hyp,u_2}(u,v),
v_2)
\end{split}
\]
The fixed points of these operators are exactly the same as the fixed
points of  $\FF_\hyp$ and, then, are  solutions of equation
\eqref{def:Hyp:FixedPoint}.

The operator $\wt\FF_\hyp$ is contractive in a suitable
Banach space. We define the following 
weighted norms. To fix notation, we denote by $\|\cdot\|_\infty$
the standard supremum norm. Then define
\begin{equation}\label{def:Hyp:Norms}
\begin{split}
\|h\|_{\hyp, u_1}=&\sup_{\tau\in [0,T_j]}\left|\left(-\wh
C^{(j)}\de\ln(1/\de)+2\nu_{02}f_1(\sigma)\left(x_2^\ast\right)^2 \tau+\wh
C^{(j)}\de\right)^{-1}h(\tau)\right| \\
\|h\|_{\hyp, v_1}=&\,f_1(\sigma)\iii\|h\|_\infty \\
\|h\|_{\hyp, u_2}=&\left(x_2^\ast\right)\iii\|h\|_\infty \\
\|h\|_{\hyp, v_2}=&\left(\left(y_1^0\right)^2x_2^0 T_j\right)\iii\|h\|_\infty
\end{split}
\end{equation}
and the norm
\begin{equation}\label{def:Hyp:FullNorm}
\|(u,v)\|_\ast=\sup_{i=1,2}\left\{\|u_i\|_{\hyp,u_i},\|v_i\|_{\hyp,v_i}\right\}.
\end{equation}
This gives rise to the following Banach space
\[
\YY_\hyp=\left\{(u,v):[0,T]\rightarrow \R^4;  \|(u,v)\|_\ast<\infty\right\}.
\]

The contractivity of $\wt\FF_\hyp$ is a consequence of the following two
auxiliary propositions, whose proofs are given in \cite{GuaKal}.

\begin{proposition}\label{lemma:Hyp:FirstIteration}
Assume \eqref{def:ChoiceEtaHat}, then there exists a constant
$\kk_0>0$ independent of $\sigma$, $\de$ and $j$ such that for $\de$ and
$\sigma$ small enough, the operator $\wt \FF_\hyp$ satisfies
\[
 \|\wt \FF (0)\|_\ast\leq \kk_0.
\]
\end{proposition}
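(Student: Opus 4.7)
The strategy is to evaluate $\wt\FF_\hyp(0)$ component by component and check each against the corresponding weight in \eqref{def:Hyp:Norms}. The crucial observation is that the vector field $R_\hyp$ consists only of resonant monomials of degree at least three, so $R_\hyp(0)=0$ and hence $\FF_\hyp(0)(t)\equiv(x^0,y^0)$. In particular $\FF_{\hyp,v_1}(0)=y_1^0$ and $\FF_{\hyp,u_2}(0)=x_2^0$, and the definition of $\wt\FF_\hyp$ gives $\wt\FF_{\hyp,v_1}(0)=y_1^0$ and $\wt\FF_{\hyp,u_2}(0)=x_2^0$. The bounds on $y_1^0$ and $x_2^0$ inherent in $\wh\NNN_j$ (see \eqref{def:Hyp:ModifiedDomain}) yield $\|\wt\FF_{\hyp,v_1}(0)\|_{\hyp,v_1}\le 2$ and $\|\wt\FF_{\hyp,u_2}(0)\|_{\hyp,u_2}\le 2$ for $\de,\sigma$ small.

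The two nontrivial components are $\wt\FF_{\hyp,u_1}(0)$ and $\wt\FF_{\hyp,v_2}(0)$, where the compositions feed the nonzero values $y_1^0$ and $x_2^0$ back into the integrals. The argument of $R_\hyp$ in the integrand becomes $(0,y_1^0 e^{-\la\tau},x_2^0 e^{\la\tau},0)$. Plugging this into the explicit expression for $R_\hyp^0$ from Lemma \ref{lemma:HypToyModel:NormalForm}, only one monomial survives in each component: for $R_{\hyp,x_1}^0$ the term $2\nu_{02}y_1x_2^2$, and for $R_{\hyp,y_2}^0$ the term $-\nu_{02}y_1^2x_2$. On the trajectory each of these precisely cancels the $e^{\mp\la\tau}$ prefactor, so the integrand is constant. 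Integrating from $0$ to $t$ gives
\begin{equation*}
\wt\FF_{\hyp,u_1}(0)(t)=x_1^0+2\nu_{02}\,y_1^0(x_2^0)^2\,t+E_1(t),\qquad \wt\FF_{\hyp,v_2}(0)(t)=y_2^0-\nu_{02}\,(y_1^0)^2 x_2^0\,t+E_2(t),
\end{equation*}
with errors $E_1,E_2$ coming from the tail $R_\hyp^1=O(x^3y^2)$ and from the $O(1/n)$ remainders in $H_\hyp^{(j)}$; a short computation shows both are subleading, by an extra factor $O(\sigma(\wh C^{(j)}\de)^{1/2})$, relative to the corresponding leading term on the interval $[0,T_j]$.

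To conclude the bound $\|\wt\FF_{\hyp,u_1}(0)\|_{\hyp,u_1}\le \kappa_0$ one compares the expression for $\wt\FF_{\hyp,u_1}(0)$ with its weight: the definition of $\wh\NNN_j$ gives $x_1^0=-\wh C^{(j)}\de\ln(1/\de)+O(\wh C^{(j)}\de K_\sigma)$ and $y_1^0(x_2^0)^2=f_1(\sigma)(x_2^*)^2(1+O(1/\ln(1/\de)))$, so $\wt\FF_{\hyp,u_1}(0)(t)$ matches the weight $-\wh C^{(j)}\de\ln(1/\de)+2\nu_{02}f_1(\sigma)(x_2^*)^2 t+\wh C^{(j)}\de$ up to a bounded multiplicative constant. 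For $\wt\FF_{\hyp,v_2}(0)$ the growing term $\nu_{02}(y_1^0)^2 x_2^0 t$ is exactly of the order of the weight $(y_1^0)^2 x_2^0 T_j$, while the initial piece $y_2^0=O(M_\hyp^{(j)}(\wh C^{(j)}\de)^{1/2})$ is dominated by a factor $\sim 1/\ln(1/\de)$, using $(x_2^*)^2 T_j\sim \wh C^{(j)}\de\ln(1/\de)$ from \eqref{def:ChoiceEtaHat} and $x_2^*\sim(\wh C^{(j)}\de)^{1/2}$.

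The main technical obstacle is verifying that all remainder contributions --- the tail $R_\hyp^1$, the $O(1/n)$ corrections introduced by the straightening of the separatrices, and the spread of initial data inside $\wh\NNN_j$ --- remain genuinely subleading relative to the weights, uniformly in $j$. This is exactly where the choice \eqref{def:ChoiceEtaHat} of $x_2^*$ becomes crucial: it is precisely the condition under which the two leading pieces of $\wt\FF_{\hyp,u_1}(0)$ balance in size as $t$ sweeps $[0,T_j]$, and the definition of $\wh\NNN_j$ calibrates the fluctuations of the initial data to the same scale. The argument then proceeds as in the cubic case treated in \cite{GuaKal}, with the minor modification that the $O(1/n)$ perturbations are absorbed by taking $N$ large enough.
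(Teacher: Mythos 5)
Your computation is essentially the proof that the paper itself does not spell out: for this proposition the paper simply remarks that, after the normal form of Lemma \ref{lemma:HypToyModel:NormalForm}, the vector field \eqref{def:HypHam:Transformed} has the same structure as the corresponding one in \cite{GuaKal} and quotes the proof there, whereas you redo the verification directly. The content is the same: $R_\hyp$ vanishes to third order, so $\FF_\hyp(0)\equiv(x^0,y^0)$ and the only nontrivial components are the two composed ones, which are integrals of $R_\hyp$ along the explicit trajectory $(0,y_1^0e^{-\la\tau},x_2^0e^{\la\tau},0)$; on that trajectory only the monomials $2\nu_{02}y_1x_2^2$ and $-\nu_{02}y_1^2x_2$ survive, the integrands are constant, and the choice \eqref{def:ChoiceEtaHat} of $x_2^\ast$ makes $x_1^0+2\nu_{02}y_1^0(x_2^0)^2\tau$ comparable to the $u_1$-weight and $\nu_{02}(y_1^0)^2x_2^0\,T_j$ comparable to the $v_2$-weight, while $y_2^0$ contributes only an extra $1/\ln(1/\de)$. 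Redoing the argument, as you do, has the merit of making explicit the one point the citation leaves implicit, namely that the new $\OO(1/n)$ corrections and the tail $R^1_\hyp$ either vanish on this trajectory or are subleading. Two small cautions. First, in the $u_1$-comparison the weight in \eqref{def:Hyp:Norms} must be read so that it stays bounded below by $\wh C^{(j)}\de$ on $[0,T_j]$ (as literally printed the linear expression changes sign inside the interval); what your argument actually shows, and what is needed, is $|h(\tau)|\le \kk_0\bigl(\,\bigl|-\wh C^{(j)}\de\ln(1/\de)+2\nu_{02}f_1(\sigma)(x_2^\ast)^2\tau\bigr|+\wh C^{(j)}\de\bigr)$, as in \cite{GuaKal}. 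Second, uniformity in $j$ of the bounds you extract from \eqref{def:Hyp:ModifiedDomain} rests on the constants $M^{(j)}_{\hyp}$ (and the other $M$'s) remaining bounded along the recursion of Definition \ref{def:RecursivelyDefined} — they do, since $M^{(j)}_{\el,\pm}$ increases only by $K\de^{r'}$ per step — and this deserves a sentence, since $x_2^0$ and $y_2^0$ are controlled only through these constants.
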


\begin{proposition}\label{lemma:Hyp:Contractive}
Consider $w, w'\in B(2\kk_0)\subset\YY_\hyp$ and let us assume
\eqref{def:ChoiceEtaHat}, then taking $\de\ll\sigma$, the operator $\wt
\FF_\hyp$ satisfies
\[
 \|\wt \FF_\hyp(w)-\wt \FF_\hyp(w')\|_\ast\leq K_\sigma\left(\wh
C^{(j)}\de\right)^{1/2}\ln^2(1/\de) \|w-w'\|_\ast.
\]
\end{proposition}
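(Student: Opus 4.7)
The plan is to establish the contraction estimate for $\wt\FF_\hyp$ by first controlling the auxiliary operator $\FF_\hyp$ componentwise in the weighted norms \eqref{def:Hyp:Norms}, and then deducing the bound on $\wt\FF_\hyp$ from its definition as a composition. For any $w,w'\in B(2\kk_0)\subset\YY_\hyp$, I would write the difference via the mean value theorem:
\[
\FF_{\hyp,u_i}(w)-\FF_{\hyp,u_i}(w')=\int_0^{T_j} e^{-\la\tau}\bigl[R_{\hyp,x_i}(\Psi(w))-R_{\hyp,x_i}(\Psi(w'))\bigr]d\tau,
\]
and similarly for the $v_i$ components (with the rescaling $\Psi(u,v)=(ue^{\la\tau},ve^{-\la\tau})$), then estimate $\nabla R_\hyp$ monomial by monomial. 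Since $w,w'$ lie in $B(2\kk_0)$, each coordinate is controlled by its own weight ($u_1\sim \wh C^{(j)}\de\ln(1/\de)$, $v_1\sim f_1(\sigma)$, $u_2\sim x_2^\ast$, $v_2\sim (y_1^0)^2 x_2^0 T_j$), so that every monomial in $R_\hyp^0$ and $R_\hyp^1$ can be bounded explicitly. The key integrals $\int_0^{T_j} e^{m\la\tau}d\tau$ evaluate to $O(1)$ for $m<0$, to $T_j=O(\la^{-1}\ln(1/\de))$ for $m=0$, and to $O(e^{m\la T_j}/\la)$ for $m>0$; recall also that $T_j\sim\la^{-1}\ln(1/\de)$ from \eqref{def:FixedTimeSaddle}.

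The second step would be to collect these contributions with the appropriate weighted norms and verify that each component of $\FF_\hyp$ is Lipschitz with constant $K_\sigma(\wh C^{(j)}\de)^{1/2}\ln^2(1/\de)$. Particular care is needed for the $u_1$ component: its weighted norm $|-\wh C^{(j)}\de\ln(1/\de)+2\nu_{02}f_1(\sigma)(x_2^\ast)^2\tau+\wh C^{(j)}\de|^{-1}$ is crafted precisely so that the dominant contribution of the monomial $2\nu_{02}y_1 x_2^2$ in $R_{\hyp,x_1}^0$ is absorbed into the denominator, while the subdominant contributions (the cubic terms $4x_1^2 y_1$ and the cross terms $\nu_{11}x_1x_2 y_2$) and the remainder $R_{\hyp,x_1}^1=O(x^3y^2)$ gain the extra small factor $(\wh C^{(j)}\de)^{1/2}$ from their higher degree. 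A parallel (symmetric) analysis applies to the $v_2$ component. The choice of $x_2^\ast$ in \eqref{def:ChoiceEtaHat} is what makes this cancellation possible.

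Once the contraction bound is established for $\FF_\hyp$, extending it to $\wt\FF_\hyp$ is a matter of the chain rule. For instance,
\[
\wt\FF_{\hyp,u_1}(w)-\wt\FF_{\hyp,u_1}(w')
\]
is expanded by adding and subtracting $\FF_{\hyp,u_1}$ evaluated at intermediate arguments that differ from the two end-points only in the $v_1$ or $u_2$ slot; each telescoped difference is controlled by the Lipschitz bound on $\FF_{\hyp,u_1}$ applied to the already-estimated Lipschitz bounds on $\FF_{\hyp,v_1}$ and $\FF_{\hyp,u_2}$. Proposition \ref{lemma:Hyp:FirstIteration} guarantees that the intermediate arguments in these compositions stay in $B(2\kk_0)$, so the quoted estimates are applicable. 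Multiplying the individual Lipschitz constants together produces at most a factor $(\wh C^{(j)}\de)^{1/2}\ln^2(1/\de)$ with a constant depending only on $\sigma$, as claimed.

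The main technical obstacle is the bookkeeping for the $u_1$ (and symmetrically the $v_2$) component: the denominator in its weighted norm varies in $\tau$, so the Lipschitz estimate cannot be proven by a crude supremum bound but must be carried out pointwise in $\tau$, tracking the cancellation between the leading contribution of the $\nu_{02}y_1 x_2^2$ monomial and the profile in the norm. This is precisely the Shilnikov-type mechanism explained in \cite{GuaKal}; the proof here follows the same template, the only novelty being that all quantities now carry an $O(1/n)$-dependence inherited from \eqref{def:Ham:AfterSeparatrix} which is harmless because it only affects constants and not the order of the estimates.
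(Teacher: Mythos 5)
Your overall template (weighted norms, Shilnikov mechanism, then pass from $\FF_\hyp$ to $\wt\FF_\hyp$) is the right family of ideas, but your second step contains a claim that is both false and self-defeating: you propose to ``verify that each component of $\FF_\hyp$ is Lipschitz with constant $K_\sigma(\wh C^{(j)}\de)^{1/2}\ln^2(1/\de)$'' and then obtain the bound for $\wt\FF_\hyp$ by multiplying Lipschitz constants. If every component of the unmodified operator $\FF_\hyp$ were Lipschitz-small, $\FF_\hyp$ itself would already be a contraction and the Shilnikov modification $\wt\FF_\hyp$ would be pointless; the paper states explicitly (as does \cite{GuaKal}) that $\FF_\hyp$ is \emph{not} contractive, precisely in its $u_1$ and $v_2$ components. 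Concretely, the resonant monomial $2\nu_{02}y_1x_2^2$ contributes to $\FF_{\hyp,u_1}$ the term $\int_0^\tau 2\nu_{02}v_1u_2^2\,ds$ (all exponentials cancel), so a unit-norm variation of $v_1$ alone, i.e.\ $|\delta v_1|\sim f_1(\sigma)$, produces a change of size $\sim 2\nu_{02}f_1(\sigma)(x_2^\ast)^2\tau$, which by \eqref{def:ChoiceEtaHat} is of the same order as the weight profile in \eqref{def:Hyp:Norms} itself; measured in $\|\cdot\|_{\hyp,u_1}$ this gives a partial Lipschitz constant of order at least $\ln(1/\de)$, not $o(1)$. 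The analogous monomial $\nu_{02}y_1^2x_2$ makes the partial Lipschitz constants of $\FF_{\hyp,v_2}$ in the $v_1,u_2$ slots of order one. You also misattribute the role of the choice of $x_2^\ast$ and of the $\tau$-dependent weight: they are designed so that the \emph{inhomogeneous} drift of the non-contractive components is absorbed, i.e.\ so that the first-iteration bound of Proposition \ref{lemma:Hyp:FirstIteration} is finite; they do not make $\FF_{\hyp,u_1}$ Lipschitz-small with respect to arbitrary variations in the ball.

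The actual mechanism, which your ``chain rule'' paragraph only works if stated correctly, is that in $\wt\FF_{\hyp,u_1}(w)=\FF_{\hyp,u_1}(u_1,\FF_{\hyp,v_1}(w),\FF_{\hyp,u_2}(w),v_2)$ the dangerous slots are filled with outputs of the \emph{genuinely contractive} components, so when one estimates $\wt\FF_{\hyp,u_1}(w)-\wt\FF_{\hyp,u_1}(w')$ directly, the difference fed through the term $\int_0^\tau 2\nu_{02}(\cdot)_1(\cdot)_2^2\,ds$ carries the uniformly small factors coming from $\mathrm{Lip}(\FF_{\hyp,v_1})$ and $\mathrm{Lip}(\FF_{\hyp,u_2})$, which compensate the large (order $\ln(1/\de)$ or $\OO(1)$) partial Lipschitz constants of $\FF_{\hyp,u_1}$ and $\FF_{\hyp,v_2}$ in those slots; the direct dependence on the uncomposed slots $u_1,v_2$ must then be checked separately to be small. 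Your proposal, as written, never isolates these large partial constants, so the claimed multiplication of constants does not yield the stated bound. Note finally that the paper does not reprove this proposition at all: after the normal form of Lemma \ref{lemma:HypToyModel:NormalForm} the hyperbolic vector field has the same structure as in the cubic case (the $\OO(1/n)$ terms only perturb the coefficients), and the proof is quoted verbatim from \cite{GuaKal}; any self-contained argument you supply must therefore reproduce the bookkeeping of that paper, including the correct treatment of the non-contractive components described above.
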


These two propositions show that $\wt\FF_\hyp$ is contractive
from $B(2\kk_0)\subset\YY_\hyp$ to itself. Therefore, it has a unique
fixed point in $B(2\kk_0)\subset\YY_\hyp$ which we denote by $w^*$.
This fixed point argument gives precise estimates for the local dynamics of the
Hyperbolic Toy Model \eqref{def:HypToyModel}. We use these estimates in order to
study the behaviour of the full Toy Model \eqref{def:Ham:AfterSeparatrix} in
Section \ref{sec:LocalDynamicsFullToyModel}.

\section{The local dynamics for the toy
model}\label{sec:LocalDynamicsFullToyModel}

We study the dynamics of the local map and we prove Lemma
\ref{lemma:iterative:saddle}. We rely on the previous analysis of the hyperbolic
toy model \eqref{def:HypToyModel}  done in Section \ref{sec:IterativeTheorem}.
In this section, we consider the Hamiltonian \eqref{def:Ham:AfterSeparatrix},
that is, we incorporate the elliptic modes.

Our goal is to study the map $\BB_\loc^j$.  We adapt its study from
\cite{GuaKal}. As in \cite{GuaKal}, the key point of this study
is that  the elliptic modes  remain almost constant through the saddle map,
which implies that they  
do not make much influence on the hyperbolic ones. In comparison to
\cite{GuaKal},  the vector field \eqref{def:VF:AfterSeparatrix} has some extra
terms. Even if they are small, one needs to treat them carefully since the local
map involves a rather long time.

As a first step we perform the change obtained  in Lemma
\ref{lemma:HypToyModel:NormalForm} to the vector field
\eqref{def:VF:AfterSeparatrix}. 

\begin{lemma}\label{lemma:FullModel:NormalForm}
Let $\Psi_\hyp$ be the map defined in Lemma
\ref{lemma:HypToyModel:NormalForm}. Then, if one performs the  change of
coordinates
\begin{equation}\label{def:Full:NFChange}
 (P_1,Q_1,P_2,Q_2,c)=\left(\Psi_\hyp(x_1,y_1,x_2,y_2),c\right),
\end{equation}
to the vector field \eqref{def:VF:AfterSeparatrix}, obtains a vector field of
the form
\[
\begin{split}
\dot z&= Dz+R_{\hyp}(z)+R_{\mix,z}(z,c)\\
\dot c_k&= 2\ii a_nc_k+\ZZZ_{\el,c_k}(c)+R_{\mix,c}(z,c),
\end{split}
\]
where $z$ denotes $z=(z_1,z_2)=(x_1,y_1,x_2,y_2)$,
$D=\mathrm{diag}(\la_n ,-\la_n ,\la_n ,-\la_n)$,  $R_{\hyp}$ has been
given in Lemma \ref{lemma:HypToyModel:NormalForm}, $\ZZZ_{\el,c_k}$ is defined
in
\eqref{def:VF:AfterSeparatrix},
and $R_{\mix,z}$ and $R_{\mix,c_k}$ are defined as
\begin{align}
R_{\mix,x_1}&=A_{x_1}(z)\ol{c_{j-2}}^2+\ol{A_{x_1}(z)}{c_{j-2}}^2-\sqrt{3}\sum_{
k\in\PP_j}|c_k|^2\Psi_{x_1}(z)+\frac{1}{n}D_{x_1}(z,c)\notag\\
R_{\mix,y_1}&=A_{y_1}(z)\ol{c_{j-2}}^2+\ol{A_{y_1}(z)}{c_{j-2}}^2+\sqrt{3}\sum_{
k\in\PP_j}|c_k|^2\Psi_{y_1}(z)+\frac{1}{n}D_{y_1}(z,c)\notag\\
R_{\mix,x_2}&=A_{x_2}(z)\ol{c_{j+2}}^2+\ol{A_{x_2}(z)}{c_{j+2}}^2-\sqrt{3}\sum_{
k\in\PP_j}|c_k|^2\Psi_{x_2}(z)+\frac{1}{n}D_{x_2}(z,c)\notag\\
R_{\mix,y_2}&=A_{y_2}(z)\ol{c_{j+2}}^2+\ol{A_{y_2}(z)}{c_{j+2}}^2+\sqrt{3}\sum_{
k\in\PP_j}|c_k|^2\Psi_{y_2}(z)+\frac{1}{n}D_{y_2}(z,c)\notag\\
 R_{\mix,c_k}&=\ii c_k P(z)+\frac{1}{n}D_{c_k}(z,c)\,\,\,\text{ for }k\neq j\pm
2\notag\\
R_{\mix,c_{j\pm 2}}&=\ii c_{j\pm 2} P(z)-\ii \ol c_{j\pm
2}Q_\pm(z)+\frac{1}{n}D_{c_{j\pm 2}}(z,c)\notag
\end{align}
where $\Psi_{\hyp,z}$ are the functions defined in  Lemma
 \ref{lemma:HypToyModel:NormalForm},  $A_{z}$ satisfy
\[
 A_{x_i}=\OO(x_i, y_i)\,\,\,\text{ and }\,\,\,A_{y_i}=\OO(x_i, y_i),
\]
the functions $D_z$ satisfy
\[
 D_z=\OO\left(\sum_{k\in\PP_j}|c_k|^2(x_i+y_i)\right),\,\,D_{c_k}(z,
c)=\OO\left(\sum_{k\in\PP_j}|c_k|(x_i+y_i)^2\right)
\]
and $P$ and $Q_\pm$ satisfy
\[
P(z)=\OO\left(xy\right),\,\,\,Q_-(z)=\OO\left(x_1,
y_1\right)  \,\,\,\text{ and }\,\,\,Q_+(z)=\OO\left(x_2,y_2\right).
\]
\end{lemma}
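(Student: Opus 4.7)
The proof is a direct substitution. The change $(P,Q,c)\mapsto(z,c)$ with $(P,Q)=\Psi_\hyp(z)$ and $c$ fixed is symplectic (it extends the canonical map $\Psi_\hyp$ by the identity on the elliptic coordinates, preserving $\sum dx_i\wedge dy_i+\frac{\ii}{2}\sum dc_k\wedge d\bar c_k$). Hence the new vector field is Hamiltonian with respect to
$\widetilde H(z,c):=(H_2^{(j)}+H_\hyp^{(j)})(\Psi_\hyp(z)) + H_\el^{(j)}(c) + H_\mix^{(j)}(\Psi_\hyp(z),c)$.
By Lemma~\ref{lemma:HypToyModel:NormalForm} the first summand contributes $Dz + R_\hyp(z)$ to $\dot z$; the second summand is independent of $z$ and contributes $2\ii a_n c_k+\ZZZ_{\el,c_k}(c)$ to $\dot c_k$. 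The whole content of the lemma is therefore a careful inspection of the derivatives of $H_\mix^{(j)}(\Psi_\hyp(z),c)$, done piece by piece following the three-term decomposition in \eqref{def:Ham:AfterSeparatrix}.

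For the hyperbolic components I would compute the contribution to $\dot x_1$ (i.e. the derivative in $y_1$). From the gauge term $-\sqrt{3}\sum_{k\in\PP_j}|c_k|^2(P_1Q_1+P_2Q_2)$ one gets
$-\sqrt{3}\sum|c_k|^2\bigl(\partial_{y_1}\Psi_{\hyp,x_1}\cdot\Psi_{\hyp,y_1}+\Psi_{\hyp,x_1}\cdot\partial_{y_1}\Psi_{\hyp,y_1}\bigr)$; the vanishing estimates on $\wt\Psi_\hyp$ give $\partial_{y_1}\wt\Psi_{\hyp,x_1}=O(\wt\Psi_\hyp)$ and $\partial_{y_1}\Psi_{\hyp,y_1}=1+O(\wt\Psi_\hyp)$, so the leading part is $-\sqrt{3}\sum|c_k|^2\Psi_{\hyp,x_1}(z)$ and the cross-derivative corrections fit the required form $\frac{1}{n}D_{x_1}$. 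From the family term $\frac{2\sqrt{3}}{3}\Re((\omega_0 P_1+\bar\omega_0 Q_1)^2 c_{j-2}^2)$ one obtains $A_{x_1}(z)\ol{c_{j-2}}^2+\ol{A_{x_1}(z)}c_{j-2}^2$ with
$A_{x_1}(z)$ proportional to $(\omega_0\Psi_{\hyp,x_1}+\bar\omega_0\Psi_{\hyp,y_1})\cdot(\omega_0\partial_{y_1}\Psi_{\hyp,x_1}+\bar\omega_0\partial_{y_1}\Psi_{\hyp,y_1})$, which vanishes at $x_1=y_1=0$ by the estimates on $\wt\Psi_\hyp$; hence $A_{x_1}=O(x_1,y_1)$. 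The $O(n^{-1})$ part of $H_\mix^{(j)}$ contributes directly to $\frac{1}{n}D_{x_1}$. The computations for $\dot y_1$, $\dot x_2$, $\dot y_2$ are identical modulo the index swap $1\leftrightarrow 2$, the replacement $j-2\leftrightarrow j+2$, and the sign flips coming from the symplectic form.

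For the elliptic equations, $\dot c_k=2\ii a_n c_k+\ZZZ_{\el,c_k}+2\ii\partial_{\bar c_k}H_\mix^{(j)}(\Psi_\hyp(z),c)$. For $k\notin\{j\pm 2\}$ only the gauge term couples $c_k$ to $z$, yielding $\ii c_k P(z)$ with $P(z)\propto\Psi_{\hyp,x_1}\Psi_{\hyp,y_1}+\Psi_{\hyp,x_2}\Psi_{\hyp,y_2}=O(xy)$ (each product vanishes when either factor does). For $k=j\pm 2$ the corresponding family term adds $-\ii\ol{c}_{j\pm 2}Q_\pm(z)$, with $Q_-(z)$ proportional to $(\bar\omega_0\Psi_{\hyp,x_1}+\omega_0\Psi_{\hyp,y_1})^2$ (vanishing with $(x_1,y_1)$) and symmetrically for $Q_+$. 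The $O(n^{-1})$ part of $H_\mix^{(j)}$ provides the $\frac{1}{n}D_{c_k}$ remainder. The main (and only) difficulty in the argument is notational bookkeeping; no analytic obstacle arises, because $\Psi_\hyp$ is a polynomial near-identity transformation and every required order-of-vanishing estimate on $A_{x_i},\,P,\,Q_\pm,\,D_z,\,D_{c_k}$ is an immediate consequence of the explicit structure of $\wt\Psi_\hyp$ recorded in Lemma~\ref{lemma:HypToyModel:NormalForm}.
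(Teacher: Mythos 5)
Your argument hinges on the claim that the extension of $\Psi_\hyp$ by the identity on the elliptic modes is symplectic, so that the transformed system is the Hamiltonian vector field of $\widetilde H=H^{(j)}\circ(\Psi_\hyp\times\Id)$ and all the mixed terms can be read off as derivatives of $H^{(j)}_{\mix}(\Psi_\hyp(z),c)$. That property is not available: Lemma \ref{lemma:HypToyModel:NormalForm} produces $\Psi_\hyp$ from the Bronstein--Kopanskii finitely smooth ($\CCC^2$) resonant normal form theorem for the \emph{vector field} \eqref{def:HypToyModelVectorField}, and nothing there asserts that $\Psi_\hyp$ preserves $dp_1\wedge dq_1+dp_2\wedge dq_2$; indeed the normal-form field $Dz+R_\hyp$ is never claimed to be Hamiltonian. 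So the identity ``new vector field $=X_{\widetilde H}$'' is unjustified, and the subsequent term-by-term computations (the formula for $A_{x_1}$ as a product of derivatives of $\Psi_\hyp$, the claim that $P\propto\Psi_{\hyp,x_1}\Psi_{\hyp,y_1}+\Psi_{\hyp,x_2}\Psi_{\hyp,y_2}$, etc.) all rest on this missing hypothesis. This is the genuine gap in the proposal.

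The intended (and correct) route is a plain conjugation of the vector field, not of the Hamiltonian: write the $z$-equations as $\dot z=\bigl(D\Psi_\hyp(z)\bigr)^{-1}\,X_{(P,Q)}\bigl(\Psi_\hyp(z),c\bigr)$ and keep the $c_k$-equations of \eqref{def:VF:AfterSeparatrix} with $(P,Q)$ replaced by $\Psi_\hyp(z)$. The part of the $z$-equation obtained by setting $c=0$ is exactly $Dz+R_\hyp(z)$, by the defining property of $\Psi_\hyp$ in Lemma \ref{lemma:HypToyModel:NormalForm}; the remaining part is $\bigl(D\Psi_\hyp(z)\bigr)^{-1}\ZZZ_{\mix,(P,Q)}\bigl(\Psi_\hyp(z),c\bigr)$, and the claimed structure of $R_{\mix,z}$, $R_{\mix,c_k}$ then follows from the explicit form of $H^{(j)}_{\mix}$ together with the near-identity form $\Psi_\hyp=\Id+\wt\Psi_\hyp$, the orders of vanishing of $\wt\Psi_\hyp$, and the invertibility of $D\Psi_\hyp$ near the origin. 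For the elliptic modes no Hamiltonian formalism is needed at all, since $c$ is untouched and the equations are obtained by direct substitution. Your bookkeeping of which compositions produce the $A_z$, $P$, $Q_\pm$ and $\frac1n D$ terms is in the right spirit, but it must be redone for $(D\Psi_\hyp)^{-1}\ZZZ_{\mix}$ rather than for $\nabla\bigl(H_{\mix}\circ\Psi_\hyp\bigr)$; only then does it match the statement without invoking an unproved symplecticity.
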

The proof
of this lemma is straightforward taking into account the form of
the vector field \eqref{def:VF:AfterSeparatrix} and the properties
of $\Psi_\hyp$ given in Lemma \ref{lemma:HypToyModel:NormalForm}.

As happens in \cite{GuaKal},  there is a rather strong interaction between the
hyperbolic and the elliptic modes due to the terms $
R_{\mix,x_i}$ and $ R_{\mix,y_i}$. As explained in \cite{GuaKal}, the importance
of these terms can be seen as follows. The manifold $\{x=0,y=0\}$ is normally
hyperbolic \cite{Fenichel74, Fenichel77, HirschPS77}  for the linear truncation
of the vector field obtained in Lemma \ref{lemma:FullModel:NormalForm} and its
stable and unstable manifolds  are defined as $\{x=0\}$ and $\{y=0\}$. For the
full vector field, the manifold $\{x=0,y=0\}$ is persistent. Moreover it is
still normally hyperbolic thanks to \cite{Fenichel74, Fenichel77, HirschPS77}.
Nevertheless, the associated invariant manifolds deviate from $\{x=0\}$ and
$\{y=0\}$ due to the terms $R_{\mix,x_i}$ and $ R_{\mix,y_i}$. To overcome this
problem, we slightly modify  the change \eqref{def:Full:NFChange} to straighten
these invariant manifolds completely.

\begin{lemma}\label{lemma:StraightenInvManifolds}
There exist a change of coordinates of the form
\begin{equation}\label{def:Change:FullSystem:Hyp}
(P_1,Q_1,P_2,Q_2,c)=\left(\Psi(x_1,y_1,x_2,y_2,c),c\right)=\left(x_1,y_1,x_2,y_2
,c\right)+\left(\wt\Psi (x_1,y_1,x_2,y_2,c),0\right)
\end{equation}
which transforms the vector field \eqref{def:VF:AfterSeparatrix} into a
vector field of the form
\begin{equation}\label{def:VF:Straight}
\begin{split}
\dot z&= Dz+R_{\hyp}(z)+\wt R_{\mix,z}(z,c)\\
\dot c_k&= 2\ii  a_nc_k+\ZZZ_{\el,c_k}(c)+ \wt R_{\mix,c_k}(z,c),
\end{split}
\end{equation}
where  $R_\hyp$ and $\ZZZ_\el$ are the functions defined in
\eqref{def:HypHam:Hyp} and \eqref{def:VF:AfterSeparatrix} respectively, and
\begin{align}
\wt
R_{\mix,x_1}&=B_{x_1}(z,c)\ol{c_{j-2}}^2+\ol{B_{x_1}(z,c)}{c_{j-2}}^2+\sum_{
k\in\PP_j}|c_k|^2C_{x_1}(z,c)+\frac{1}{n}F_{x_1}(z,c)\notag
\\
\wt
R_{\mix,y_1}&=B_{y_1}(z,c)\ol{c_{j-2}}^2+\ol{B_{y_1}(z,c)}{c_{j-2}}^2+\sum_{
k\in\PP_j}|c_k|^2C_{y_1}(z,c)+\frac{1}{n}F_{y_1}(z,c)\notag
\\
\wt
R_{\mix,x_2}&=B_{x_2}(z,c)\ol{c_{j+2}}^2+\ol{B_{x_2}(z,c)}{c_{j+2}}^2+\sum_{
k\in\PP_j}|c_k|^2C_{x_2}(z,c)+\frac{1}{n}F_{x_2}(z,c)\notag
\\
\wt
R_{\mix,y_2}&=B_{y_2}(z,c)\ol{c_{j+2}}^2+\ol{B_{y_2}(z,c)}{c_{j+2}}^2+\sum_{
k\in\PP_j}|c_k|^2C_{y_2}(z,c)+\frac{1}{n}F_{y_2}(z,c)\notag
\\
\wt R_{\mix,c_k}&=\ii c_k \wt P(z,c)+\frac{1}{n}F_{c_k}(z,c)\,\,\,\text{ for
}k\neq j\pm
2\notag\\
\wt R_{\mix,c_{j\pm 2}}&=\ii c_{j\pm 2} \wt P(z,c)-\ii \ol c_{j\pm 2}\wt
Q_\pm(z,c)+\frac{1}{n}F_{j\pm 2}(z,c),\notag
\end{align}
where the functions $B_z$ and $C_z$ satisfy
\begin{align*}
B_{x_1}(z,c)=\OO\left(x_1+y_1x_2z_2\right)&&B_{x_2}(z,
c)=\OO\left(x_2+y_2x_1z_1\right)\\
B_{y_1}(z,c)=\OO\left(y_1+x_1y_2z_2\right)&&B_{y_2}(z,
c)=\OO\left(y_2+x_2y_1z_1\right)\\
C_{x_1}(z,c)=\OO\left(x_1+y_1x_2z_2\right)&&C_{x_2}(z,
c)=\OO\left(x_2+y_2x_1z_1\right)\\
C_{y_1}(z,c)=\OO\left(y_1+x_1y_2z_2\right)&&C_{y_2}(z,
c)=\OO\left(y_2+x_2y_1z_1\right)
\end{align*}
the functions $D_z$ and $D_c$ satisfy
\[
 F_{x_i}=\OO\left(\sum_{k\in\PP_j}|c_k|^2(x_1+x_2)\right),\,\,F_{y_i}
=\OO\left(\sum_{k\in\PP_j}|c_k|^2(y_1+y_2)\right),\,\,F_{c_k}(z,
c)=\OO\left(\sum_{k\in\PP_j}|c_k|(x_i+y_i)^2\right)
\]
and $\wt P$ and $\wt Q_\pm$ satisfy
\[
\wt P(z,c)=\OO\left(xy\right),\,\,\,\wt
Q_-(z,c)=\OO\left(x_1,y_1\right)  \,\,\,\text{ and }\,\,\,\wt
Q_+(z)=\OO\left(x_2,y_2\right).
\]
Moreover, the function $\wt\Psi$ satisfies
\[
\begin{split}
\wt\Psi_{x_1}&=\OO\left(x_1^3,x_1y_1,x_1(x_2^2+y_2^2),
y_1y_2(x_2+y_2),c_{j-2}^2y_1,\sum_{k\in\PP}|c_k|^2y_1y_2^2,
\frac{1}{n}\sum_{k\in\PP_j}|c_k|^2y_i\right)\\
\wt\Psi_{y_1}&=\OO\left(y_1^3,x_1y_1,y_1(x_2^2+y_2^2),
x_1x_2(x_2+y_2),c_{j-2}^2x_1,\sum_{k\in\PP}|c_k|^2x_1x_2^2,
\frac{1}{n}\sum_{k\in\PP_j}|c_k|^2x_i\right)\\
\wt\Psi_{x_2}&=\OO\left(x_2^3,x_2y_2,x_2(x_1^2+y_1^2),
y_1y_2(x_1+y_1),c_{j+2}^2y_1,\sum_{k\in\PP}|c_k|^2y_2y_1^2,
\frac{1}{n}\sum_{k\in\PP_j}|c_k|^2y_i\right)\\
\wt\Psi_{y_2}&=\OO\left(y_2^3,x_2y_2,y_2(x_1^2+y_1^2),
x_1x_2(x_1+y_1),c_{j+2}^2x_1,\sum_{k\in\PP}|c_k|^2x_2x_1^2,
\frac{1}{n}\sum_{k\in\PP_j}|c_k|^2x_i\right).
\end{split}
\]
\end{lemma}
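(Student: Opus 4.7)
The plan is to build $\Psi$ as a composition of $\Psi_\hyp$ from Lemma \ref{lemma:HypToyModel:NormalForm} with an additional straightening change applied to the system of Lemma \ref{lemma:FullModel:NormalForm}. The straightening realigns the local stable and unstable manifolds of the normally hyperbolic invariant subspace $\mathcal{M}_c:=\{x=y=0\}$ with the coordinate planes $\{x=0\}$ and $\{y=0\}$. One checks directly from $A_z=\OO(x,y)$, $R_\hyp=\OO((x,y)^3)$, $\Psi_{\hyp,z}=\OO((x,y)^2)$ and $D_z=\OO(\sum|c_k|^2(x_i+y_i))$ that $\dot x=\dot y=0$ on $\mathcal{M}_c$, so $\mathcal{M}_c$ is invariant; its normal eigenvalues $\pm\la_n$ dominate the tangential purely imaginary eigenvalues $\pm 2\ii a_n$, so the results of \cite{Fenichel74,HirschPS77} furnish $C^k$ local stable and unstable manifolds $W^s,W^u$ of $\mathcal{M}_c$.

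Next, I would represent $W^s$ as a graph $x=h^s(y,c)$ and $W^u$ as $y=h^u(x,c)$ and compute the leading terms of $h^s,h^u$ perturbatively from the invariance equation. For $h^s_{x_1}$, the only source terms in $\dot x_1$ that do not automatically vanish at $x_1=0$ come from three pieces of $R_{\mix,x_1}$ as written in Lemma \ref{lemma:FullModel:NormalForm}: the contribution $A_{x_1}|_{x_1=0}\bar c_{j-2}^2$ of order $y_1\bar c_{j-2}^2$; the contribution $-\sqrt 3\sum|c_k|^2\Psi_{\hyp,x_1}|_{x_1=0}$ of order $\sum|c_k|^2 y_1y_2^2$; and $\frac{1}{n}D_{x_1}|_{x_1=0}$ of order $\frac{1}{n}\sum|c_k|^2 y_i$. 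Solving the cohomological equation $2\la_n h^s_{x_1}=-(\text{source})+\ldots$ at leading order produces $h^s_{x_1}$ with exactly the three new contributions claimed for $\wt\Psi_{x_1}$; the cases of $h^s_{x_2}$ and of $h^u$ are symmetric.

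The straightening change is then defined implicitly by $X=x-h^s(y,c)$, $Y=y-h^u(x,c)$, which the implicit function theorem inverts (since $h^s,h^u$ vanish to the appropriate order) as $x=X+\wt\Psi_x(X,Y,c)$, $y=Y+\wt\Psi_y(X,Y,c)$ with the stated bounds; composing with $\Psi_\hyp$ preserves the $c$-independent polynomial corrections already catalogued in Lemma \ref{lemma:HypToyModel:NormalForm} and adds the new $c$-dependent ones from $h^s,h^u$. By construction $\{X=0\}=W^s$ and $\{Y=0\}=W^u$ are invariant, which forces $\wt R_{\mix,x_i}$ to vanish on $\{x=0\}$ and $\wt R_{\mix,y_i}$ on $\{y=0\}$; this is precisely the factorization encoded in the bounds on $B_z,C_z,F_z$. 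Because $c$ is untouched by the change of variables, the $c$-equations retain their form, with $\wt P$ inheriting the $\OO(xy)$ structure of $P$ and $\wt Q_\pm$ those of $Q_\pm$.

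The main technical obstacle is the refined bookkeeping behind the cross estimates such as $B_{x_1}=\OO(x_1+y_1 x_2 z_2)$, which assert more than just vanishing on $\{x=0\}$. Tracking these requires carefully expanding $\dot X=\dot x-\partial_y h^s\cdot\dot y-\partial_c h^s\cdot\dot c$, substituting the leading expressions for $h^s,h^u$, and identifying which monomials survive once the straightening cancellations are applied and combined with the mixed monomials already generated by $\Psi_\hyp$. This is a mechanical but lengthy case-by-case expansion, structurally analogous to the one performed in \cite{GuaKal}, except that the higher degree of the nonlinearity forces one to keep more monomials at each step.
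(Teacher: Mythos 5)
Your proposal is correct and follows essentially the same route as the paper: compose the hyperbolic normal form change of Lemma \ref{lemma:FullModel:NormalForm} with the Fenichel-type change straightening the stable and unstable manifolds of the normally hyperbolic invariant manifold $\{x=y=0\}$, and read off the estimates by combining Lemmas \ref{lemma:HypToyModel:NormalForm} and \ref{lemma:FullModel:NormalForm} with standard normally hyperbolic invariant manifold theory. Your explicit graph/cohomological-equation computation of the leading terms of $h^s,h^u$ simply makes precise the bookkeeping the paper leaves to the reader.
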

\begin{proof}
It is enough to compose two change of coordinates. The first change is the
change \eqref{def:Change:FullSystem:Hyp} considered in Lemma
\ref{lemma:FullModel:NormalForm}. The second one is the one which straightens
the invariant manifolds of a normally hyperbolic invariant manifold
\cite{Fenichel74, Fenichel77, HirschPS77}. Then, to obtain the required
estimates,
it suffices to combine Lemmas \ref{lemma:HypToyModel:NormalForm}
and \ref{lemma:FullModel:NormalForm} with the standard results about normally
hyperbolic invariant  manifolds.
\end{proof}

The change obtained in Lemma \ref{lemma:StraightenInvManifolds} straightens  the
stable and unstable invariant
manifolds of $\{x=0,y=0\}$. This allows us to perform the detailed study of the
transition map close to the saddle that we need. 
As in \cite{GuaKal},  we define a set $\wh \VV_j$ such that
\begin{equation}\label{Cond:SaddleMap:Inclusion}
 \Upsilon\circ\Xi\left(\VV_j\right)\subset \wh \VV_j,
\end{equation}
where  $\VV_j$ is the set defined in Lemma \ref{lemma:iterative:saddle},
$\Upsilon$ is the inverse of the
coordinate change $\Psi$ obtained in Lemma
\ref{lemma:StraightenInvManifolds} and $\Xi$ is the change of coordinates
defined in \eqref{def:Change:Heteroclinic}. Then, we apply the flow $\wh
\Phi^\tau$
associated to the vector field \eqref{def:VF:Straight} to points in $\wh\VV_j$.
To obtain the inclusion \eqref{Cond:SaddleMap:Inclusion} we define the function
$g_j(p_2,q_2)$ involved in the definition of $\VV_j$.

Define the set $ \wh\VV_j=\DD_1^{1}\times\ldots\times
\DD_j^{j-2}\times\wh\NNN_j\times
\DD_j^{j+2}\times\ldots\times \DD_j^{N}$, where  $\wh\NNN_j$ is the set defined
in \eqref{def:Hyp:ModifiedDomain} and
$\DD_j^{k}$ are defined as
\[
 \begin{split}
  \DD_j^{k}&=\left\{\left|c_{k}\right|\leq
M_{\el,\pm}\de^{(1-r)/2}\right\}\,\,\,\text{ for }k\in\PP_j^\pm\\
 \DD_j^{j\pm 2}&=\left\{\left|c_{j\pm 2}\right|\leq M_{\adj,\pm}\left(\wh
C^{(j)}\de\right)^{1/2}\right\}.
 \end{split}
\]
Define the function 
$g_j(p_2,q_2)$ involved in the definition of
the set $\VV_j$ as
\begin{equation}\label{def:Function_g}
 g_j(p_2,q_2)=p_2+a_p(\sigma) p_2+a_q(\sigma)q_2-x^\ast_2
\end{equation}
where $x_2^\ast$ is the constant defined in \eqref{def:ChoiceEtaHat} and
\begin{equation*}
\begin{split}
a_p(\sigma)&=\pa_{p_2}\wt \Upsilon_{p_2}(0,\sigma,0,0,0)\\
a_q(\sigma)&= \pa_{q_2}\wt \Upsilon_{p_2}(0,\sigma,0,0,0),
\end{split}
\end{equation*}
where  $\Upsilon=\mathrm{Id}+\wt\Upsilon$.

\begin{lemma}\label{lemma:FullModel:TiltedSection} With the above
notations for $\de$ small enough condition \eqref{Cond:SaddleMap:Inclusion} is
satisfied.
\end{lemma}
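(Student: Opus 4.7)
The plan is to verify the defining inclusions of $\wh\VV_j$ coordinate by coordinate under the composition $\Upsilon\circ\Xi$, exploiting the fact that neither $\Xi$ (which acts only on the hyperbolic block) nor $\Upsilon=\Id+\wt\Upsilon$ (whose $c$-component is the identity) mixes hyperbolic and elliptic directions. Consequently, for every $k\in\PP_j^\pm$ the bound $|c_k^{(j)}|\leq M_{\el,\pm}^{(j)}\de^{(1-r)/2}$ is preserved verbatim, and likewise for the adjacent modes $c_{j\pm 2}$. The content of the lemma therefore reduces to checking the four hyperbolic inequalities defining $\wh\NNN_j$.

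For the easier three bounds I would write $(x_i,y_i)$ as a function of $(p_i,q_i,c)$ via
\begin{align*}
x_1 &= p_1-\xi(\sigma)+\wt\Upsilon_{x_1}, & y_1 &= \sigma+\wt\Upsilon_{y_1},\\
x_2 &= p_2+\wt\Upsilon_{x_2}, & y_2 &= q_2-\xi(p_2)+\wt\Upsilon_{y_2},
\end{align*}
and use Lemma~\ref{lemma:StraightenInvManifolds} (which gives the size of $\wt\Psi$ and hence of its inverse $\wt\Upsilon$) together with $|\xi|=\OO(n^{-1})$ and $\wh C^{(j)}=C^{(j)}(1+\OO(\sigma))$. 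Feeding the $\NNN_j^+$-bounds into these expressions gives $|x_1+\wh C^{(j)}\de\ln(1/\de)|\leq \wh C^{(j)}\de\,K_\sigma$, $|y_1-f_1(\sigma)|\leq K_\sigma\wh C^{(j)}\de\ln(1/\de)$ and $|y_2|\leq 2M_\hyp^{(j)}(\wh C^{(j)}\de)^{1/2}$, provided the $\wt\Upsilon$-remainders are absorbed into the implicit constants via the smallness of $C^{(j)}\de$.

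The delicate bound is $|x_2-x_2^\ast|\leq 2M_\hyp^{(j)}(\wh C^{(j)}\de)^{1/2}/\ln(1/\de)$, since the naive triangle inequality only yields the weaker $\OO(M_\hyp^{(j)}(C^{(j)}\de)^{1/2})$; the extra logarithmic gain must come from the definition of $g_j$. Taylor-expanding $\wt\Upsilon_{x_2}$ at $(P_1,Q_1,P_2,Q_2,c)=(0,\sigma,0,0,0)$, noting that none of the monomials listed for $\wt\Psi_{x_2}$ in Lemma~\ref{lemma:StraightenInvManifolds} survive there, one obtains
\begin{equation*}
x_2 = P_2 + a_p(\sigma)P_2 + a_q(\sigma)Q_2 + R_2(P,Q,c),
\end{equation*}
where $R_2$ collects monomials that are at least quadratic in $(P_2,Q_2)$, or at least linear in $P_1$, $Q_1-\sigma$ or $c$. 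Substituting $P_2=p_2$, $Q_2=q_2-\xi(p_2)$ and using $g_j(p_2,q_2)=0$ rewrites the linear part exactly as $x_2^\ast-a_q(\sigma)\xi(p_2)$, so that $x_2-x_2^\ast=-a_q(\sigma)\xi(p_2)+R_2$. One then bounds $|\xi(p_2)|=\OO(n^{-1})$ and estimates each contribution to $R_2$ by means of $|P_1|\lesssim C^{(j)}\de\ln(1/\de)$, $|P_2|,|Q_2|\lesssim M_\hyp^{(j)}(C^{(j)}\de)^{1/2}$, $|c_{j\pm2}|\lesssim (C^{(j)}\de)^{1/2}$ and $|c_k|\lesssim\de^{(1-r)/2}$.

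The main obstacle will be precisely this quantitative estimate on $R_2$: the quadratic $(P_2,Q_2)$ piece is of size $(M_\hyp^{(j)})^2C^{(j)}\de$, and one needs $(C^{(j)}\de)^{1/2}\ln(1/\de)\ll 1$ to beat the target $M_\hyp^{(j)}(\wh C^{(j)}\de)^{1/2}/\ln(1/\de)$. This is however guaranteed by the choice $\de=e^{-\gamma N}$ together with $C^{(j)}\leq\de^{-r}$ from \eqref{def:UpperBoundCs}, since $C^{(j)}\de\leq\de^{1-r}$ is exponentially small in $N$ whereas $M_\hyp^{(j)}$ and $\ln(1/\de)$ grow only polynomially in $N$ along the recursion of Definition~\ref{def:RecursivelyDefined}. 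Assembling these estimates yields \eqref{Cond:SaddleMap:Inclusion} and concludes the proof.
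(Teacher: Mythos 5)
Your overall strategy is the one the paper intends: its own proof is a one-line appeal to Lemmas \ref{lemma:HypToyModel:NormalForm} and \ref{lemma:StraightenInvManifolds}, and your coordinate-by-coordinate verification -- in particular the observation that the constraint $g_j=0$ from \eqref{def:Function_g} is exactly what produces the narrow $x_2$-window in \eqref{def:Hyp:ModifiedDomain} -- is the right way to fill it in. There is, however, a quantitative gap at the very step you single out as delicate. You bound the mismatch term $a_q(\sigma)\xi(p_2)$ (and, implicitly, the $\xi(p_2)$ entering $y_2=q_2-\xi(p_2)+\wt\Upsilon_{y_2}$) by $\sup|\xi|=\OO(n^{-1})$. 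This is not small enough: with $\de=e^{-\ga N}$, $\ga$ large, and $C^{(j)}\le\de^{-r}$ from \eqref{def:UpperBoundCs}, the target half-widths are at most of order $(\wh C^{(j)}\de)^{1/2}\le\de^{(1-r)/2}=e^{-\ga(1-r)N/2}$ (and even smaller by a factor $\ln(1/\de)$ for the $|x_2-x_2^\ast|$ inequality), while $n^{-1}=2^{-(N-1)}=e^{-(N-1)\ln 2}$. As soon as $\ga(1-r)>2\ln 2$ -- i.e.\ precisely in the regime of Theorem \ref{thm:ToyModelOrbit}, where $\ga$ is large -- one has $n^{-1}\gg (\wh C^{(j)}\de)^{1/2}$, so the crude sup-norm bound on $\xi$ is far too large and, as written, both the $|x_2-x_2^\ast|$ and the $|y_2|$ inclusions fail.

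The repair uses the local, not global, size of $\xi$. The heteroclinic of Lemma \ref{lemma:SeparatrixAsGraph} emanates from the origin, and since the quadratic part of \eqref{2genpq} is exactly $2\kappa_n\Im(\omega^2)\,pq$ (the perturbation $\frac1n P$ starts at degree four in $(p,q)$), the unstable manifold is tangent to $\{q=0\}$ at $(0,0)$; hence $\xi(0)=0$ and $|\xi(p_2)|=\OO(p_2^2)$ near the origin (even a Lipschitz bound $|\xi(p_2)|=\OO(|p_2|/n)$ suffices). On $\NNN_j^+$ one has $|p_2|\le M_\hyp^{(j)}\left(C^{(j)}\de\right)^{1/2}$, so $|a_q(\sigma)\xi(p_2)|=\OO\bigl((M_\hyp^{(j)})^2C^{(j)}\de\bigr)$, which does fit inside the required windows by the same estimate $(C^{(j)}\de)^{1/2}\ln(1/\de)\ll1$ that you already invoke for the quadratic remainder $R_2$. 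With this replacement your argument closes; without it, the delicate step does not.
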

\begin{proof}
 It is a straightforward consequence of Lemmas
\ref{lemma:HypToyModel:NormalForm}
and \ref{lemma:StraightenInvManifolds}. 
\end{proof}

After straightening the invariant manifold, next lemma studies the saddle map in
the transformed variables for points belonging to $\VV_j$.

\begin{lemma}\label{lemma:FullModel:SaddleMap}
Let us consider the flow $\wh\Phi_\tau$ associated to \eqref{def:VF:Straight}
and a
point $(z^0,c^0)\in\wh\VV_j$. Then  for $\de$ and $\sigma$ small enough, the
point $(z^f,c^f)=\wh \Phi_{T_j}(z^0,c^0)$,
where $T_j=T_j(x_2^0)$ is the time defined in \eqref{def:FixedTimeSaddle},
satisfies
\[
\begin{split}
 |x_1^f|, |y_1^f|  \qquad  \qquad \leq &\ K_\sigma\left(\wh
C^{(j)}\de\right)^{1/2}\\
 |x_2^f-f_2(\sigma)| \qquad \leq &\ K_\sigma\de^{r'}\\
\left| y_2^f + \frac{f_1(\sigma)}{f_2(\sigma)}
\wh C^{(j)} \de \ln(1/\de) \right|
\leq & \  \frac{f_1(\sigma)}{f_2(\sigma)}\de.
\end{split}
\]
and
\[
\begin{split}
\left| c_k^f-c_k^0 e^{2\ii a_n T_j}\right|\leq &
K_\sigma\de^{(1-r)/2+r'}\,\,\,\text{
for }k\in\PP_j^\pm\\
\left| c_{j\pm 2}^f-c_{j\pm 2}^0 e^{2\ii a_n T_j}\right|\leq & 2M_{\adj,\pm}
\sigma
\left(\wh C^{(j)}\de\right)^{1/2}.
\end{split}
\]
\end{lemma}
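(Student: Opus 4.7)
The plan is to prove this by a contractive fixed point argument in the spirit of the one carried out for the hyperbolic toy model in Section \ref{sec:HyperbolicToyModel}, but now extended to accommodate the elliptic modes. Passing to the variation of constants variables
\[
x_i(\tau)=e^{\la_n\tau}u_i(\tau),\quad y_i(\tau)=e^{-\la_n\tau}v_i(\tau),\quad c_k(\tau)=e^{2\ii a_n\tau}d_k(\tau),
\]
the system \eqref{def:VF:Straight} is recast as an integral system for $(u,v,d)$. Its first four components differ from the ones analyzed in Section \ref{sec:HyperbolicToyModel} only by the addition of the integrals against $\wt R_{\mix,z}$, while the $d_k$ component gives
\[
d_k(\tau)=d_k(0)+\int_0^\tau e^{-2\ii a_n s}\bigl(\ZZZ_{\el,c_k}+\wt R_{\mix,c_k}\bigr)\bigl(z(s),c(s)\bigr)\,ds.
\]
As in Section \ref{sec:HyperbolicToyModel}, we introduce a Shilnikov-type modification of this operator (replacing $u_1$ and $v_2$ by their compositions with the other components) in order to obtain contractivity, and we work in a weighted Banach space that extends \eqref{def:Hyp:Norms}--\eqref{def:Hyp:FullNorm} with the norms
\[
\|d_k\|_{\mathrm{per},\pm}=\de^{-(1-r)/2}\|d_k\|_\infty\;(k\in\PP_j^\pm),\qquad \|d_{j\pm 2}\|_{\mathrm{adj}}=\bigl(\wh C^{(j)}\de\bigr)^{-1/2}\|d_{j\pm 2}\|_\infty.
\]

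The decisive observation, following \cite{GuaKal}, is that the straightening performed in Lemma \ref{lemma:StraightenInvManifolds} makes $\wt R_{\mix,z}$ vanish on $\{z=0\}$ and produces only quadratic-in-$(x,y)$ contributions along the trajectory, so these extra terms behave on the same footing as $R_\hyp^1$ in the iteration and do not spoil the estimates of Propositions \ref{lemma:Hyp:FirstIteration}--\ref{lemma:Hyp:Contractive}. Concretely, after bounding $|c_k|$ on $[0,T_j]$ by its initial size (see below), the size of $\wt R_{\mix,x_1},\wt R_{\mix,y_1}$ is controlled by $(\wh C^{(j)}\de)(x_1+y_1+\ldots)$ plus a contribution from $|c_{j-2}|^2=\OO(\wh C^{(j)}\de)$, and similarly for the second pair: in the $\|\cdot\|_*$ norm these translate into corrections of order $(\wh C^{(j)}\de)^{1/2}\ln^2(1/\de)$, absorbed by the contractivity estimate once $\de\ll\sigma$. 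The specific estimate
\[
y_2^f=-\tfrac{f_1(\sigma)}{f_2(\sigma)}\wh C^{(j)}\de\ln(1/\de)+\OO\!\left(\tfrac{f_1(\sigma)}{f_2(\sigma)}\de\right)
\]
is then obtained exactly as in \cite{GuaKal} from the leading balance $v_2(T_j)\sim y_1^0\cdot\nu_{02}\,(x_2^\ast)^2\,T_j$ provided by the resonant term $2\nu_{02}y_1x_2^2$ in $R_\hyp^0$, together with the defining relation \eqref{def:ChoiceEtaHat} for $x_2^*$.

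For the elliptic block, note that the $\ZZZ_{\el,c_k}+\ii c_k\wt P(z,c)$ part of the vector field is imaginary times $c_k$, hence preserves $|c_k|$; the only ways $d_k$ can drift are through $\frac1n F_{c_k}=\OO(n^{-1}|c_k|(x+y)^2)$ and, for $k=j\pm 2$, through the off-diagonal $-\ii\ol c_{j\pm 2}\wt Q_\pm$. Using $(x,y)(\tau)$ bounds coming from the fixed point, the integrand in the $d_k$ equation is $\OO(\de^{(1-r)/2}\cdot\wh C^{(j)}\de)$ in size for $k\in\PP_j^\pm$, which upon integration over $|\tau|\le T_j\lesssim\ln(1/\de)$ yields the announced bound $|c_k^f-c_k^0e^{2\ii a_n T_j}|\le K_\sigma\de^{(1-r)/2+r'}$; for the adjacent modes, $\wt Q_-=\OO(x_1,y_1)$ gives a contribution bounded by $M_{\adj,-}(\wh C^{(j)}\de)^{1/2}\cdot\sigma$, matching the claim. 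The main technical obstacle is keeping the accumulation of these $c_k$-deviations strictly smaller than the product-like tolerances $m_\el^{(j)}\de^{(1-r)/2}$ and $m_\adj^{(j)}(\wh C^{(j)}\de)^{1/2}$ over a time window of length $\ln(1/\de)$; this is precisely why the tight coupling $r<\ln 2/(2\gamma)$, $r'<\ln 2/\gamma-2r$ of Definition \ref{def:RecursivelyDefined} is required, and why the $\OO(n^{-1})$ corrections are harmless (they contribute at most an extra factor $n^{-1}\ln(1/\de)$ which is exponentially small in $N$). With these inputs the fixed point furnishes $(z^f,c^f)=\wh\Phi_{T_j}(z^0,c^0)$ satisfying all the stated bounds.
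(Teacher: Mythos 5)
Your overall strategy is the same as the paper's: pass to the variation-of-constants variables, write the integral system, apply the Shilnikov-type modification to the $u_1,v_2$ components, and run a contraction in the weighted space extending \eqref{def:Hyp:Norms}--\eqref{def:Hyp:FullNorm} (your elliptic weights differ from the paper's only by the constants $M_{\el,\pm}$, $M_{\adj,\pm}$, which is cosmetic), reading off the $c_k^f$ estimates from the Lipschitz bounds on the elliptic components and using $r<\ln 2/(2\ga)$, $r'<\ln2/\ga-2r$ to absorb the $\OO(n^{-1})$ terms. This is essentially Propositions \ref{lemma:Full:FirstIteration} and \ref{lemma:Full:Contractive}.

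However, your derivation of the middle hyperbolic estimate is wrong as written. You attribute the balance to the monomial $2\nu_{02}y_1x_2^2$ and claim $v_2(T_j)\sim y_1^0\,\nu_{02}(x_2^*)^2T_j$. That monomial sits in the $\dot x_1$ component of $R^0_\hyp$ (Lemma \ref{lemma:HypToyModel:NormalForm}); its role, through the choice \eqref{def:ChoiceEtaHat} of $x_2^*$, is to cancel the initial offset $x_1^0\approx-\wh C^{(j)}\de\ln(1/\de)$ so that $x_1^f$ stays of size $(\wh C^{(j)}\de)^{1/2}$. The drift of $v_2$ comes instead from the resonant term $\nu_{02}\,y_1^2x_2$ in $R^0_{\hyp,y_2}$, which gives $v_2(T_j)\approx -\nu_{02}(y_1^0)^2x_2^0\,T_j$; this is exactly why the $v_2$-weight in \eqref{def:Hyp:Norms} is $\bigl((y_1^0)^2x_2^0T_j\bigr)^{-1}$. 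With your balance, multiplying by $e^{-\la_n T_j}=x_2^0/f_2(\sigma)$ and using \eqref{def:ChoiceEtaHat} would yield $y_2^f=\OO\bigl((\wh C^{(j)}\de)^{3/2}\ln(1/\de)\bigr)$, which cannot produce (indeed contradicts) the claimed leading term $-\tfrac{f_1(\sigma)}{f_2(\sigma)}\wh C^{(j)}\de\ln(1/\de)$; the correct balance, with $y_1^0\approx f_1(\sigma)$ and $x_2^0\approx x_2^*$, does. A smaller imprecision: $\ZZZ_{\el,c_k}$ is not ``imaginary times $c_k$'' --- the neighbour couplings coming from $\Re(c_k^2\ol{c_{k\pm1}}^2)$ contribute terms of the form $\ol c_k\,c_{k\pm1}^2$ that do change $|c_k|$; they are harmless (their Lipschitz contribution is $K_\sigma\de^{1-r}NT_j$, absorbed into $\de^{r'}$), but they must be estimated as in the paper rather than dismissed by a phase argument.
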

The proof of this lemma follows the same lines as the analogous result in
\cite{GuaKal}. It is explained in  Section \ref{sec:ProofLemmaFixedTimeMap}

Now, to complete the proof of Lemma \ref{lemma:iterative:saddle} we need two
final steps. First we undo the change of coordinates
performed in Lemma \ref{lemma:StraightenInvManifolds} to express
the estimates of the saddle map in the original variables. The second step is to
adjust the time so that the image belongs to
the section $\Sigma_{j}^\out$. These two final steps are done in
the next two following lemmas.

They proofs follow the same lines as the proofs of Lemmas 6.5 and 6.6 in
\cite{GuaKal}. One only needs to take into account the extra terms appearing in
the change of coordinates $\Psi$, given in Lemma
\ref{lemma:StraightenInvManifolds}.

\begin{lemma}\label{lemma:FullModel:SaddleMap:Original:FixedTime}
Let us consider the flow $\Phi_\tau$ associated to
\eqref{def:VF:AfterSeparatrix} and a point $(P^0,Q^0,c^0)\in\Xi(\wh\VV_j)$,
where  $\Xi$ the change in \eqref{def:Change:Heteroclinic} and $\VV_j$ is  the
set considered in Theorem \ref{theorem:iterative}.
Then for $\de$ and $\sigma$ small enough, the point
$(P^f,Q^f,c^f)=\Phi_{T_j}(P^0,Q^0,c^0)$, where $T_j$ is the time defined in
\eqref{def:FixedTimeSaddle},  satisfies
\[
\begin{split}
 |P_1^f|, |Q_1^f| \qquad \leq &\ K_\sigma\left(\wh C^{(j)}\de\right)^{1/2}\\
|P_2^f- \sigma| \qquad\leq &\ K_\sigma\de^{r'}\\
| Q_2^f+ \wt
C^{(j)}\de \ln(1/\de)| \leq & \ \wt C^{(j)}\ \de\ K_\sigma.
\end{split}
\]
for certain constant $\wt C^{(j)}$ satisfying $C^{(j)}/2\leq \wt C^{(j)}\leq
2C^{(j)}$  and
\[
\begin{split}
\left| c_k^f-c_k^0 e^{2\ii a_n T_j}\right|\leq &
K_\sigma\de^{(1-r)/2+r'}\,\,\,\text{ for
}k\in\PP^{\pm}_j\\
\left| c_{j\pm 2}^f-c_{j\pm 2}^0 e^{2\ii a_n T_j}\right|\leq & 2M_{\adj,\pm}
\sigma
\left(\wh C^{(j)}\de\right)^{1/2}.
\end{split}
\]
\end{lemma}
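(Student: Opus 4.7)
The plan is to exploit the conjugacy $\Phi_\tau = \Psi \circ \wh\Phi_\tau \circ \Upsilon$ between the flow $\Phi_\tau$ of \eqref{def:VF:AfterSeparatrix} in the $(P,Q,c)$ variables and the flow $\wh\Phi_\tau$ of the straightened system \eqref{def:VF:Straight} in the normal form variables $(x,y,c)$, where $\Psi$ is the change of Lemma \ref{lemma:StraightenInvManifolds} and $\Upsilon=\Psi^{-1}$. Given an initial datum $(P^0,Q^0,c^0)$, I would set $(x^0,y^0,c^0)=\Upsilon(P^0,Q^0,c^0)$; by the inclusion \eqref{Cond:SaddleMap:Inclusion} established in Lemma \ref{lemma:FullModel:TiltedSection} this point lies in $\wh\VV_j$, so Lemma \ref{lemma:FullModel:SaddleMap} applies and produces the sharp estimates on $(x^f,y^f,c^f)=\wh\Phi_{T_j}(x^0,y^0,c^0)$ listed there.

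Applying $\Psi$ then recovers $(P^f,Q^f,c^f)=\Psi(x^f,y^f,c^f)$. Because $\Psi$ acts as the identity on the elliptic block, the estimates for $c_k^f$ and $c_{j\pm 2}^f$ carry over verbatim. For the hyperbolic variables one uses the explicit size catalogue for $\wt\Psi=\Psi-\Id$ in Lemma \ref{lemma:StraightenInvManifolds}: plugging the final-point bounds $|x_1^f|,|y_1^f|\lesssim(\wh C^{(j)}\de)^{1/2}$, $x_2^f=f_2(\sigma)+O(\de^{r'})$, $y_2^f\sim -(f_1(\sigma)/f_2(\sigma))\,\wh C^{(j)}\,\de\ln(1/\de)$, together with the bounds on $|c_k^f|$, into the $O(\cdot)$ expressions for $\wt\Psi_{x_i},\wt\Psi_{y_i}$ shows that every correction is at least a factor $\de^{1/2}$ smaller than the corresponding leading term, so it is harmlessly absorbed into the $K_\sigma$ constant.

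The normalisation $f_2(\sigma)=\Upsilon_{x_2}(0,0,\sigma,0)$ means that $\Psi$ sends the base point $(0,0,f_2(\sigma),0,0)$ to $(0,0,\sigma,0,0)$; a first-order Taylor expansion of $\Psi$ about this base point, combined with $|x_2^f-f_2(\sigma)|\leq K_\sigma\de^{r'}$ and the smallness of the remaining components of $(x^f,y^f,c^f)$, yields $|P_2^f-\sigma|\leq K_\sigma\de^{r'}$. For $Q_2^f$, the identity $Q_2^f=y_2^f+\wt\Psi_{y_2}(x^f,y^f,c^f)$ together with the relation $\wh C^{(j)}=\wt C^{(j)}(1+\wt\Upsilon_{x_1}(0,\sigma,0,0))$ and the multiplicative factor $f_1(\sigma)/f_2(\sigma)$ let us absorb everything into a redefined constant (which I will denote $\wt C^{(j)}$ with the same letter as in the statement) satisfying $C^{(j)}/2\leq\wt C^{(j)}\leq 2C^{(j)}$ once $\sigma$ is small enough.

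The main obstacle, which is technical rather than conceptual, is the systematic bookkeeping of all the $O(\cdot)$-terms contributed by $\wt\Psi$ when they are evaluated at the final point, in order to verify that none of them disrupts the claimed leading behaviour; this is essentially the same verification carried out in Lemma~6.5 of \cite{GuaKal}, with the additional $1/n$-terms in $\wt\Psi$ only improving the bounds since they are systematically smaller by a factor $n^{-1}$.
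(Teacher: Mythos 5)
Your proposal matches the paper's own argument: the paper proves this lemma precisely by writing $(P^f,Q^f,c^f)=\bigl(\Psi(x^f,y^f,c^f),c^f\bigr)$ and transferring the estimates of Lemma \ref{lemma:FullModel:SaddleMap} through the bounds on $\wt\Psi$ from Lemma \ref{lemma:StraightenInvManifolds} (as in Lemmas 6.5 and 6.6 of \cite{GuaKal}), using that $\Psi$ leaves the elliptic modes untouched and redefining the constant to get $\wt C^{(j)}$. One minor caveat: not every correction is a factor $\de^{1/2}$ smaller than the leading term --- e.g.\ the $x_2y_2$-type term in $\wt\Psi_{y_2}$ is only $O(\sigma)$-relative, and the $\frac1n\sum_k|c_k|^2$-terms carry a factor of order $N\de^{1-r}$ whose smallness relative to $\wh C^{(j)}\de\geq\de^{1+r}$ uses $n=2^{N-1}$, $\de=e^{-\ga N}$ and $r<\ln 2/(2\ga)$ rather than the bare $n^{-1}$ --- but, as you indicate, all such contributions are absorbed into $K_\sigma$ and into the redefined $\wt C^{(j)}$, so the argument goes through as in the paper.
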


Once we have obtained good estimates for the  approximate time map in the
original variables, we adjust it to obtain image points belonging to the section
$\Sigma_j^\out$.
\begin{lemma}\label{lemma:Saddle:AdjustingSection}
 Let us consider a point $ \left(P^f,Q^f,c^f\right)\in
\Phi^{T_j}\circ\Xi(\VV_j)$,
 where $\Phi^\tau$ is the flow of \eqref{def:VF:AfterSeparatrix}, $T_j$
 is the time defined in \eqref{def:FixedTimeSaddle}, $\Xi$ the change in
\eqref{def:Change:Heteroclinic} and $\VV_j$ is
 the set considered in Theorem \ref{theorem:iterative}.

Then, there exists a time $T'$, which depends on the point
$\left(P^f,Q^f,c^f\right)$, such that
\[
\left(P^\ast,Q^\ast,c^\ast\right)=
\Phi^{T'}\left(P^f,Q^f,c^f\right)\in\Sigma_{j}^\out.
\]
Moreover, there exists a constant $K_\sigma$ such that
\[
|T'|\leq K_\sigma\de^r
\]
and
\[
\begin{split}
\left|c_k^\ast-c_k^f\right|&\leq K_\sigma \de^{1-r}\,\,\,\text{ for }k\in\PP_j\\
\left|P_1^\ast-P_1^f\right|&\leq K_\sigma
\left(C^{(j)}\de\right)^{1/2}\de^{1-r}\\
\left|Q_1^\ast-Q_1^f\right|&\leq K_\sigma
\left(C^{(j)}\de\right)^{1/2}\de^{1-r}\\
P_2&=\sigma\\
\left|Q_2^\ast-Q_2^f\right|&\leq K_\sigma C^{(j)}\de^{2-r}\ln(1/\de).
\end{split}
\]
\end{lemma}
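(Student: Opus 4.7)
The plan is to use transversality of the flow to the section $\Sigma_j^\out = \{P_2 = \sigma\}$ to extract $T'$ by an implicit-function-theorem argument (equivalently, IVT plus monotonicity), and then to control the remaining coordinates by integrating the components of \eqref{def:VF:AfterSeparatrix} over the very short interval $[0, T']$.

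First I would observe that, by Lemma \ref{lemma:FullModel:SaddleMap:Original:FixedTime}, at the point $(P^f, Q^f, c^f)$ we have $|P_2^f - \sigma| \leq K_\sigma\de^{r'}$, while $P_1^f, Q_1^f$ are of order $(\wh C^{(j)}\de)^{1/2}$, $Q_2^f$ is of order $\wt C^{(j)}\de\ln(1/\de)$, and the elliptic modes are of order $\de^{(1-r)/2}$ or smaller. The $P_2$-equation in \eqref{def:VF:AfterSeparatrix} reads $\dot P_2 = \la_n P_2 + \ZZZ_{\hyp,P_2} + \ZZZ_{\mix,P_2}$; the principal term is $\la_n\sigma$, while the hyperbolic and mixed contributions are at most of order $\sigma^3 + \sigma\de^{1-r}$. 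Thus for $\de,\sigma$ small one has $\dot P_2 \geq \la_n\sigma/2 > 0$ in a whole neighborhood of the trajectory, and the map $\tau \mapsto P_2(\Phi^\tau(P^f,Q^f,c^f))$ is a strictly monotone local diffeomorphism near $\tau = 0$. The intermediate value theorem then produces a unique $T'$ with $\Phi^{T'}(P^f,Q^f,c^f) \in \Sigma_j^\out$ and
\[
|T'| \leq \frac{2|P_2^f - \sigma|}{\la_n\sigma} \leq K_\sigma\de^{r'} \leq K_\sigma\de^r,
\]
where the last inequality uses that $r$ and $r'$ in Theorem \ref{theorem:iterative} may be chosen so that $r \leq r'$.

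With $T'$ in hand I would estimate the variations of the remaining coordinates by integrating their ODEs over $[0, T']$, using the bounds of Lemma \ref{lemma:FullModel:SaddleMap:Original:FixedTime} to control the instantaneous velocities. Concretely, $|\dot c_k| \leq 2a_n|c_k| + O(|c|^3) + O(n^{-1})$ combined with $|c_k^f| \lesssim \de^{(1-r)/2}$ yields $|c_k^\ast - c_k^f| \lesssim K_\sigma\de^{r+(1-r)/2} \leq K_\sigma\de^{1-r}$; $|\dot P_1|, |\dot Q_1| \lesssim \la_n(\wh C^{(j)}\de)^{1/2} + \text{l.o.t.}$ gives $|P_1^\ast - P_1^f|, |Q_1^\ast - Q_1^f| \leq K_\sigma(C^{(j)}\de)^{1/2}\de^{1-r}$; and $|\dot Q_2| \lesssim \la_n|Q_2| + \text{l.o.t.} \lesssim C^{(j)}\de\ln(1/\de)$ produces $|Q_2^\ast - Q_2^f| \leq K_\sigma C^{(j)}\de^{2-r}\ln(1/\de)$. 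The identity $P_2^\ast = \sigma$ is precisely the defining property of $T'$.

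The main obstacle is the careful bookkeeping of the many small terms in \eqref{def:VF:AfterSeparatrix}, most notably the $O(n^{-1})$ corrections and the elliptic–hyperbolic coupling terms that are absent in the cubic case treated in \cite{GuaKal}; one must verify that none of them contributes a correction larger than the claimed bounds. However, since $|T'| \lesssim \de^r$ is much smaller than the full local passage time $T_j \sim \ln(1/\de)$ already handled in Lemma \ref{lemma:FullModel:SaddleMap:Original:FixedTime}, the additional contributions from these perturbative terms are dwarfed by the leading-order estimates above, and the adjustment preserves all the relevant estimates at the modest multiplicative cost displayed.
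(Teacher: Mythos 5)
Your overall strategy---transversality of the flow to $\Sigma_j^\out$ to produce $T'$, followed by velocity-times-time estimates for the remaining coordinates over $[0,T']$---is exactly the route the paper intends: its own proof is a reference to Lemma 6.6 of \cite{GuaKal} together with the remark that the extra $\OO(n^{-1})$ terms must be tracked. The existence of $T'$ via monotonicity of $\tau\mapsto P_2(\Phi^\tau)$ and the bound $|T'|\lesssim |P_2^f-\sigma|/(\la_n\sigma)$ are fine as you set them up.

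The quantitative part, however, does not close. The only distance-to-section information you invoke is $|P_2^f-\sigma|\le K_\sigma\de^{r'}$ from Lemma \ref{lemma:FullModel:SaddleMap:Original:FixedTime}, so your argument yields $|T'|\le K_\sigma\de^{r'}$; the step $\de^{r'}\le \de^{r}$ needs $r\le r'$, which is not among the constraints of Theorem \ref{theorem:iterative} (imposing it adds the restriction $r<\ln 2/(3\ga)$). More seriously, with $|T'|$ only of size $\de^{r}$ (let alone $\de^{r'}$) the velocity-times-time bounds do not give the stated increments: for a peripheral mode $|\dot c_k|\sim 2a_n|c_k|\lesssim \de^{(1-r)/2}$, so you obtain $|c_k^\ast-c_k^f|\lesssim \de^{(1-r)/2+r}=\de^{(1+r)/2}$, and your inequality $\de^{(1+r)/2}\le \de^{1-r}$ is equivalent to $r\ge 1/3$, incompatible with $r<\ln 2/(2\ga)$ for large $\ga$; the discrepancy is of order $\de^{-1/2}$. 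Likewise $|\dot P_1|,|\dot Q_1|\sim(C^{(j)}\de)^{1/2}$ and $|\dot Q_2|\sim C^{(j)}\de\ln(1/\de)$ give only $(C^{(j)}\de)^{1/2}\de^{r}$ and $C^{(j)}\de^{1+r}\ln(1/\de)$, not the claimed $(C^{(j)}\de)^{1/2}\de^{1-r}$ and $C^{(j)}\de^{2-r}\ln(1/\de)$. All the stated increments require $|T'|\lesssim \de^{1-r}$, i.e.\ $|P_2^f-\sigma|\lesssim \sigma\de^{1-r}$, a closeness to the section far stronger than what you extract from Lemma \ref{lemma:FullModel:SaddleMap:Original:FixedTime}; in the cubic setting of \cite{GuaKal} this sharper estimate is exactly what the contraction factor $\OO\bigl((C^{(j)}\de)^{1/2}\ln^2(1/\de)\bigr)$ of the fixed-point argument delivers, and it is the missing ingredient in your write-up. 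Your closing remark that the perturbative terms are ``dwarfed'' because $|T'|\ll T_j$ does not address this: the obstruction comes from the leading-order velocities integrated over $[0,T']$, not from the $\OO(n^{-1})$ corrections.
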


To finish the proof of Lemma \ref{lemma:iterative:saddle}, it  is enough to 
undo the change \eqref{def:Change:Heteroclinic} and to proceed as in
\cite{GuaKal}. Recall that the change \eqref{def:Change:Heteroclinic} only
alters two coordinates.

\subsection{Proof of Lemma
\ref{lemma:FullModel:SaddleMap}}\label{sec:ProofLemmaFixedTimeMap}
It follows the same lines as the proof of Lemma 6.4 in \cite{GuaKal}. We only
need to check that the additional terms are small enough so that the fixed point
argument goes through. We make the  variation of
constants change of coordinates
\begin{equation}\label{def:VariationOfConstants}
 x_i=e^{\la_n \tau}u_i,\,\,y_i=e^{-\la_n \tau}v_i,\,\,c_k=e^{2\ii  a_n \tau}s_k
\end{equation}
to obtain the integral equation
\begin{equation}\label{def:Full:ConstantVariation}
\begin{split}
u_i&=x_i^0+\int_0^{T_j} e^{-\la_n \tau}\left( R_{\hyp,x_i}\left(u e^{\la_n
\tau},v
e^{-\la_n \tau}\right)+\wt R_{\mix,x_i}\left(u e^{\la \tau},v
e^{-\la_n \tau},se^{2\ii  a_n \tau}\right)\right)d\tau\\
v_i&=y_i^0+\int_0^{T_j} e^{\la_n \tau}\left( R_{\hyp,y_i}\left(u e^{\la_n
\tau},v
e^{-\la_n \tau}\right)+\wt R_{\mix,y_i}\left(u e^{\la_n \tau},v
e^{-\la_n \tau},se^{2\ii  a_n \tau}\right)\right)d\tau\\
s_k&=c_k^0+\int_0^{T_j} e^{-2\ii  a_n \tau}\left( \ZZZ_{\el,c_k}\left(se^{2\ii 
a_n \tau}\right)+\wt
R_{\mix,c_k}\left(u e^{\la_n \tau},v e^{-\la_n \tau},se^{2\ii  a_n
\tau}\right)\right)d\tau.
\end{split}
\end{equation}
The terms $ R_{\hyp,z}$ are the ones considered in Section
\ref{sec:HyperbolicToyModel}. So we use the properties of these
functions obtained in that section. We use the  integration time $T_j$
introduced in 
\eqref{def:FixedTimeSaddle}.

We use \eqref{def:Full:ConstantVariation} to set up a fixed point
argument in two steps. First we define $\GG=(\GG_\hyp,\GG_\el)$ as
\[
\begin{split}
 \GG_{\hyp,u_i}(u,v,s)&=x_i^0+\int_0^{T_j}
e^{-\la_n \tau}\left(R_{\hyp,x_i}\left(u e^{\la_n \tau},v
e^{-\la_n \tau}\right)+\wt R_{\mix,x_i}\left(u e^{\la_n \tau},v
e^{-\la_n \tau},se^{2\ii a_n \tau}\right)\right)d\tau\\
&=\FF_{\hyp,u_i}(u,v)+\int_0^{T_j} e^{-\la_n \tau}\wt R_{\mix,x_i}\left(u
e^{\la_n \tau},v e^{-\la_n \tau},se^{2\ii a_n \tau}\right)d\tau\\
 \GG_{\hyp,v_i}(u,v,s)&=
y_i^0+\int_0^{T_j}e^{\la_n \tau}\left(R_{\hyp,y_i}\left(u e^{\la_n \tau},v
e^{-\la_n \tau}\right)+\wt  R_{\mix,x_i}\left(u e^{\la_n \tau},
e^{-\la_n \tau},se^{2\ii a_n \tau}\right)\right)d\tau\\
&= \FF_{\hyp,v_i}(u,v)+\int_0^{T_j} e^{\la_n \tau}\wt R_{\mix,x_i}\left(u
e^{\la_n \tau},v e^{-\la_n \tau},se^{2\ii a_n \tau}\right)d\tau,\\
\end{split}
\]
where  $\FF_\hyp$ is the operator defined in \eqref{def:Hyp:Operator1}, and
\[
 \GG_{\el,c_k}(u,v,s)=c_k^0+\int_0^{T_j} e^{-2\ii a_n \tau}\left(
\ZZZ_{\el,c_k}\left(se^{2\ii a_n \tau}\right)+\wt R_{\mix,c_k}\left(u e^{\la_n
\tau},v
e^{-\la_n \tau},se^{2\ii a_n \tau}\right)\right)d\tau.
\]
We proceed as  in Section
\ref{sec:HyperbolicToyModel}, and we modify it by defining
\[
\begin{split}
\wt
\GG_{\hyp,u_1}(u,v,s)&=\GG_{\hyp,u_1}(u_1,\GG_{\hyp,v_1}(u,v,s),\GG_{\hyp,u_2}(u
,v,s),v_2,s)\\
\wt
\GG_{\hyp,v_1}(u,v,s)&=\GG_{\hyp,v_1}(u,v,s)\\
\wt
\GG_{\hyp,u_2}(u,v,s)&=\GG_{\hyp,u_2}(u,v,s)\\
\wt
\GG_{\hyp,v_2}(u,v,s)&=\GG_{\hyp,v_2}(u_1,\GG_{\hyp,v_1}(u,v,s),\GG_{\hyp,u_2}(u
,v,s),v_2,s)\\
\wt\GG_\el(u,v,s)&=\GG_\el(u,v,s)
\end{split}
\]
which will be contractive. We denote the new operator by
\begin{equation}\label{def:FullSystem:OperatorModified}
 \wt \GG=\left(\wt \GG_{\hyp,u_1},\wt  \GG_{\hyp,u_2},\wt \GG_{\hyp,v_1},\wt
\GG_{\hyp,v_2},\wt \GG_\el\right),
\end{equation}
whose fixed points coincide with those of $\GG$.

We extend the norm defined in \eqref{def:Hyp:Norms}, as in \cite{GuaKal}, by
defining
\[
\begin{split}
\|h\|_{\el,\pm}&=\left(M_{\el,\pm}\de^{(1-r)/2}\right)^{-1}\|h\|_\infty \\
\|h\|_{\adj,\pm}&=M_{\adj,\pm}\iii\left(\wh
C^{(j)}\de\right)^{-1/2}\|h\|_\infty
\end{split}
\]
and
\[
\|(u,v,s)\|_\ast=\sup_{\substack{k\in
\PP_j^\pm\\i=1,2}}\Big\{\|u_i\|_{\hyp,u_i},\|v_i\|_{\hyp,v_i},\|s_k\|_{\el,\pm},
\|s_{j\pm 2}\|_{\adj,\pm}\Big\}
\]
which, abusing notation, is denoted as the norm in \eqref{def:Hyp:FullNorm}. We
also define the Banach space
\[
\YY=\left\{(u,v,s):[0,T]\rightarrow \CC^{N-3}\times\R^4;
\|(u,v,s)\|_\ast<\infty\right\}.
\]
We state the two following propositions, which imply the contractivity of $\wt
\GG$. The proof of the first one is straightforward
taking into account the definition of $\wt\GG$ and Lemma
\ref{lemma:Hyp:FirstIteration}. The proof of the second one is deferred to
end of the section.

\begin{proposition}\label{lemma:Full:FirstIteration}
Let us consider the operator $\wt \GG$ defined in
\eqref{def:FullSystem:OperatorModified}. Then, the components of $\wt\GG(0)$ are
given by
\[
 \begin{split}
  \wt\GG_{\hyp,u_1}(0)&=\wt\FF_{\hyp,u_i}(0)\\
 \wt\GG_{\hyp,v_1}(0)&=y_1^0\\
 \wt\GG_{\hyp,u_2}(0)&=x_2^0\\
 \wt\GG_{\hyp,v_2}(0)&=\wt\FF_{\hyp,v_2}(0)\\
 \wt\GG_{\el,c_k}(0)&=c_k^0.
\end{split}
\]
Thus, there exists a constant $\kk_1>0$  independent of
$\sigma$, $\de$ and $j$ such that the operator $\wt \GG$ satisfies
\[
\left \|\wt \GG (0)\right\|_\ast\leq \kk_1.
\]
\end{proposition}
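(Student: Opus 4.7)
My plan is to proceed in two stages: first establish the identities for $\wt\GG(0)$, then bound each component in the weighted norm.

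For the identities, I will evaluate $\GG$ at the origin by inspecting each integrand. The components $\GG_{\hyp,v_1}$, $\GG_{\hyp,u_2}$, and $\GG_{\el,c_k}$ involve integrals of $R_\hyp$, $\wt R_\mix$, and $\ZZZ_\el$ respectively, all of which vanish identically when $(u,v,s)=0$: $R_\hyp$ has a zero of order at least three at the origin by Lemma \ref{lemma:HypToyModel:NormalForm}, $\wt R_\mix$ vanishes either when $z=0$ or when $c=0$ (each term in Lemma \ref{lemma:StraightenInvManifolds} carries an explicit factor of a coordinate), and $\ZZZ_\el$ vanishes at $c=0$ since $H^{(j)}_\el$ is quartic. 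Hence $\GG_{\hyp,v_1}(0)=y_1^0$, $\GG_{\hyp,u_2}(0)=x_2^0$, and $\GG_{\el,c_k}(0)=c_k^0$. For $\wt\GG_{\hyp,u_1}(0)$, substituting these identities gives $\wt\GG_{\hyp,u_1}(0)=\GG_{\hyp,u_1}(0,y_1^0,x_2^0,0,0)$; but with $s=0$ the $\wt R_\mix$ contribution vanishes (every term carries factors $c_{j\pm 2}$, $\bar c_{j\pm 2}$, or $|c_k|^2$), so the integral reduces to the pure $R_\hyp$ term, which is precisely $\FF_{\hyp,u_1}(0,y_1^0,x_2^0,0)=\wt\FF_{\hyp,u_1}(0)$. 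The analogous computation yields $\wt\GG_{\hyp,v_2}(0)=\wt\FF_{\hyp,v_2}(0)$.

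To bound $\|\wt\GG(0)\|_\ast$, I will bound each weighted component separately. The hyperbolic components $\wt\FF_{\hyp,u_1}(0)$ and $\wt\FF_{\hyp,v_2}(0)$ are controlled by $\kk_0$ directly via Proposition \ref{lemma:Hyp:FirstIteration}. For $\|y_1^0\|_{\hyp,v_1}=f_1(\sigma)^{-1}|y_1^0|$, the product-like structure \eqref{def:Hyp:ModifiedDomain} combined with $\wh C^{(j)}\de\ln(1/\de)=o(f_1(\sigma))$ (since $\de$ is much smaller than $\sigma$) yields $|y_1^0|\leq 2f_1(\sigma)$, hence the norm is bounded by $2$. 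For $\|x_2^0\|_{\hyp,u_2}=(x_2^\ast)^{-1}|x_2^0|$, the definition of $\wh\NNN_j$ gives $|x_2^0-x_2^\ast|\leq 2M_\hyp^{(j)}(\wh C^{(j)}\de)^{1/2}/\ln(1/\de)=o(x_2^\ast)$ using \eqref{def:ChoiceEtaHat}, so the norm is bounded by $2$. Finally, for each elliptic component, the product-like bounds $|c_k^0|\leq M_{\el,\pm}^{(j)}\de^{(1-r)/2}$ and $|c_{j\pm 2}^0|\leq M_{\adj,\pm}^{(j)}(\wh C^{(j)}\de)^{1/2}$ immediately give $\|c_k^0\|_{\el,\pm}\leq 1$ and $\|c_{j\pm 2}^0\|_{\adj,\pm}\leq 1$.

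The crucial point, and the only place where some care is needed, is checking that the resulting constants are genuinely uniform in $\sigma$, $\de$, and $j$: this relies on $f_1(\sigma)$ being bounded away from zero uniformly in $\sigma$ small (which follows from $f_1(\sigma)=\sigma+O(\sigma^3)$), on the calibration $x_2^\ast=\Theta((\wh C^{(j)}\de\ln(1/\de))^{1/2})$ from \eqref{def:ChoiceEtaHat} giving the desired cancellation with the denominators in the weights, and on the comparison $\wh C^{(j)}=\wt C^{(j)}(1+O(\sigma))$ being $j$-independent in the leading order. Collecting all the bounds, we obtain $\|\wt\GG(0)\|_\ast\leq \kk_1$ for some $\kk_1>0$ independent of $\sigma$, $\de$, and $j$, as claimed.
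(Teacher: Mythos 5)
Your proof is correct and follows essentially the same route as the paper, which treats this proposition as straightforward: the stated identities come directly from the definition of $\wt\GG$ (every integrand either vanishes at the origin or, once $s=0$, reduces to the purely hyperbolic one), and the bound then follows from Proposition \ref{lemma:Hyp:FirstIteration} together with the bounds defining $\wh\NNN_j$ and the elliptic disks, exactly as you do. One small quibble: your parenthetical claim that $f_1(\sigma)=\sigma+\OO(\sigma^3)$ is ``bounded away from zero uniformly in $\sigma$'' is not true (and not needed) — the correct and sufficient point, which you also state, is that $\wh C^{(j)}\de\ln(1/\de)\ll f_1(\sigma)$ because $\de$ is taken small relative to the fixed $\sigma$, which already yields $\|y_1^0\|_{\hyp,v_1}\le 2$.
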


\begin{proposition}\label{lemma:Full:Contractive}
Let us consider $w_1, w_2\in B(2\kk_1)\subset\YY$, a constant $r'$
satisfying $0<r'<\ln 2/\ga-2r$ and  $\delta$ as defined in
Theorem \ref{thm:ToyModelOrbit}. Then taking  $\sigma$ small enough
and $N$ big enough such that $0<\de=e^{-\ga N}\ll 1$,   there exist
a constant $K_\sigma>0$ which is independent of $j$ and $N$, but
might depend on $\sigma$, and a constant $K$ independent of $j$, $N$
and $\sigma$, such that the operator $\wt \GG$ satisfies
\[
 \begin{split}
&\left\|\wt\GG_{\hyp,u_i}(u,v,s)-\wt\GG_{\hyp,u_i}(u',v',s')\right\|_{\hyp,u_i,
v_i}\leq \\
&\qquad\qquad\qquad\leq K_\sigma\de^{r'}\left\|(u,v,s)-(u',v',s')\right\|_\ast\\
&\left\|\wt\GG_{\hyp,v_i}(u,v,s)-\wt\GG_{\hyp,v_i}(u',v',s')\right\|_{\hyp,u_i,
v_i}\leq \\
&\qquad\qquad\qquad\leq K_\sigma\de^{r'}\left\|(u,v,s)-(u',v',s')\right\|_\ast\\
&\left\|\wt\GG_{\el,c_k}(u,v,s)-\wt\GG_{\el,c_k}(u',v',s')\right\|_{\el,\pm}\leq
\\
&\qquad\qquad\qquad\leq K_\sigma\de^{r'}
\left\|(u,v,s)-(u',v',s')\right\|_\ast,\,\,\,\,\,\text{ for }k\in\PP_j^\pm\\
&\left\|\wt\GG_{\adj,\pm}(u,v,s)-\wt\GG_{\adj,
\pm}(u',v',s')\right\|_{\adj,\pm}\leq \\
&\qquad\qquad\qquad\leq  K\sigma\left\|(u,v,s)-(u',v',s')\right\|_\ast.
\end{split}
\]
Thus, since $0<\de\ll \sigma$,
\[
\left \|\wt \GG(w_2)-\wt \GG(w_1)\right\|_\ast\leq 2K\sigma\|w_2-w_1\|_\ast
\]
and therefore, for   $\sigma$ small enough, it is contractive.
\end{proposition}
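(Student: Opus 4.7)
The plan is to decompose $\wt\GG(w)-\wt\GG(w')$ along the structural splitting of Lemma \ref{lemma:StraightenInvManifolds}: a pure hyperbolic contribution coming from $R_\hyp$, already controlled by Proposition \ref{lemma:Hyp:Contractive}, plus the coupling contributions arising from the mixed terms $\wt R_{\mix,\cdot}$ and the elliptic self-interaction $\ZZZ_{\el,c_k}$. Throughout we stay in $B(2\kk_1)\subset\YY$, so on $[0,T_j]$ one has $|s_k|\lesssim M_{\el,\pm}\de^{(1-r)/2}$ for $k\in\PP_j^\pm$ and $|s_{j\pm 2}|\lesssim M_{\adj,\pm}(\wh C^{(j)}\de)^{1/2}$, while the hyperbolic coordinates obey the bounds encoded in $\|\cdot\|_{\hyp,\cdot}$.

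The pure hyperbolic part is immediate: since $\wt\GG_{\hyp,u_i}$ (resp.\ $\wt\GG_{\hyp,v_i}$) differs from $\wt\FF_{\hyp,u_i}$ (resp.\ $\wt\FF_{\hyp,v_i}$) only by the insertion of $\wt R_{\mix,\cdot}$, Proposition \ref{lemma:Hyp:Contractive} yields a Lipschitz constant of size $K_\sigma(\wh C^{(j)}\de)^{1/2}\ln^2(1/\de)$. Using $\wh C^{(j)}\le 2C^{(j)}\le 2\de^{-r}$ this is bounded by $K_\sigma\de^{(1-r)/2}\ln^2(1/\de)$; the choice $r'<(\ln 2)/\ga-2r$ with $\de=e^{-\ga N}$ ensures $\de^{(1-r)/2}\ln^2(1/\de)\ll\de^{r'}$ for $N$ large, so this piece is absorbed into the required $K_\sigma\de^{r'}$.

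Next I treat the coupling terms in the hyperbolic and non-adjacent elliptic equations. By Lemma \ref{lemma:StraightenInvManifolds} each $\wt R_{\mix,z}$ is essentially quadratic in $c$: either $B_z(z,c)c_{j\mp 2}^2$ with $|c_{j\mp 2}|^2\lesssim\wh C^{(j)}\de$, or $\sum_k|c_k|^2 C_z(z,c)$ with $|c_k|^2\lesssim\de^{1-r}$, plus an $\OO(1/n)$ remainder. Differentiating in $(u,v,s)$, multiplying by $e^{\pm\la_n\tau}$ and integrating over $T_j=\OO(\ln(1/\de))$ yields a Lipschitz contribution of order $\de^{1-r}\ln(1/\de)\lesssim\de^{r'}$ in the weights $\|\cdot\|_{\hyp,u_i},\|\cdot\|_{\hyp,v_i}$. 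For the non-adjacent elliptic modes $k\in\PP_j^\pm$ the factor $\wt P=\OO(xy)$ satisfies $|xy|\lesssim\wh C^{(j)}\de\ln(1/\de)$ on $\widehat\NNN_j$ by the fixed-point estimates of Section \ref{sec:HyperbolicToyModel}, while $\ZZZ_{\el,c_k}$ is cubic in $c$; both produce Lipschitz contributions of order $\de^{1-r}\ln^2(1/\de)\lesssim\de^{r'}$ relative to the weight $(M_{\el,\pm}\de^{(1-r)/2})\iii\|\cdot\|_\infty$.

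The hard part, and the only place where the bound is $K\sigma$ rather than $K_\sigma\de^{r'}$, is the adjacent-mode equation for $c_{j\pm 2}$: the extra term $-\ii\ol c_{j\pm 2}\wt Q_\pm(z,c)$ with $\wt Q_+=\OO(x_2,y_2)$ (resp.\ $\wt Q_-=\OO(x_1,y_1)$) does not shrink with $\de$, because $x_2$ grows exponentially on $[0,T_j]$ from $\OO((\wh C^{(j)}\de)^{1/2})$ up to $f_2(\sigma)\sim\sigma$ (and $y_2$ stays small, cf.\ Lemma \ref{lemma:FullModel:SaddleMap}). Differentiating the time-integrated expression with respect to $s_{j+2}$ and bounding in absolute value gives
\[
\left|\partial_{s_{j+2}}\wt\GG_{\el,c_{j+2}}\right|\lesssim\int_0^{T_j}(|x_2(\tau)|+|y_2(\tau)|)\,d\tau\lesssim\frac{f_2(\sigma)}{\la_n}\sim\sigma,
\]
and the symmetric computation for $c_{j-2}$ uses $x_1$ which stays of order $\sigma$ on the relevant part of the interval. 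Combined with the range $|s_{j\pm 2}-s'_{j\pm 2}|\le M_{\adj,\pm}(\wh C^{(j)}\de)^{1/2}\|w-w'\|_\ast$ and the adjacent-mode weight $M_{\adj,\pm}\iii(\wh C^{(j)}\de)^{-1/2}$, one recovers a Lipschitz constant of order $\sigma$, independent of $\de$. Summing all components and taking the supremum yields $\|\wt\GG(w)-\wt\GG(w')\|_\ast\le 2K\sigma\|w-w'\|_\ast$, which is contractive once $\sigma$ has been fixed small from the outset; this is the main obstacle, and the reason the parameter $\sigma$ must be chosen independent of $N$ and $\de$ before the whole scheme is set up.
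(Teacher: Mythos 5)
Your overall strategy is the one the paper uses: recycle Proposition \ref{lemma:Hyp:Contractive} for the purely hyperbolic part, estimate the mixed and elliptic couplings term by term over the time $T_j$, and isolate the adjacent-mode coupling $-\ii\,\ol c_{j\pm 2}\wt Q_\pm(z,c)$ as the unique source of a Lipschitz constant of size $K\sigma$ rather than $K_\sigma\de^{r'}$, via $\int_0^{T_j}(|x_2|+|y_2|)\,d\tau\lesssim f_2(\sigma)/\la_n\sim\sigma$. That part is sound and coincides with the paper's argument (which in turn defers the corresponding ``cubic'' estimates to \cite{GuaKal}).

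There is, however, a genuine gap: you never estimate the $\OO(n\iii)$ remainders ($F_{x_i},F_{y_i},F_{c_k}$ and the parts of $\wt R_{\mix}$ collected in $\wt R^1_{\mix}$ in the paper's proof), and these are precisely the terms that are new with respect to \cite{GuaKal} and the reason this proposition needs its hypotheses on $r$ and $r'$. In the weighted norms these terms are \emph{not} automatically small: e.g.\ for the adjacent modes the weight $M_{\adj,\pm}\iii(\wh C^{(j)}\de)^{-1/2}$ together with $\wh C^{(j)}\geq\de^{r}$ produces losses of order $N\de^{-2r}$, so one must check that $n\iii N\de^{-2r}\ln^2(1/\de)\lesssim\de^{r'}$, which only holds because $n=2^{N-1}$, $\de=e^{-\ga N}$ give $n\iii\approx\de^{\ln 2/\ga}$ and because $r<\ln 2/(2\ga)$, $r'<\ln 2/\ga-2r$. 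You invoke the condition $r'<\ln 2/\ga-2r$ only to absorb $\de^{(1-r)/2}\ln^2(1/\de)$ from the hyperbolic contraction, where it is not the binding constraint (any small $r'<(1-r)/2$ would do); its actual role — beating the $\de^{-2r}$, $N$-dependent losses of the $n\iii$ terms — is missing from your argument, so the claimed bounds $K_\sigma\de^{r'}$ (and the $\de$-independence of the adjacent bound $K\sigma$) are not established. A second, smaller slip: for the $c_{j-2}$ mode the $\OO(\sigma)$ contribution of $\wt Q_-=\OO(x_1,y_1)$ comes from $y_1$, which starts at $f_1(\sigma)\sim\sigma$ and decays like $e^{-\la_n\tau}$; the coordinate $x_1$ stays of size $\OO(\wh C^{(j)}\de\ln(1/\de))$ up to $\OO((\wh C^{(j)}\de)^{1/2})$ throughout the passage and is never of order $\sigma$.
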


The previous two propositions show that the operator $\wt\GG$ is contractive.
Let us denote by $(u^*,v^*,s^*)$ its unique fixed point in the ball
$B(2\kk_1)\subset\YY$. Now,  it only remains to obtain
the estimates stated in Lemma \ref{lemma:FullModel:SaddleMap}.
The estimates for the hyperbolic variables are obtained as in \cite{GuaKal}: it
is enough to undo the change of coordinates \eqref{def:VariationOfConstants} and
to recall the definition of the norm \ref{def:Hyp:Norms}. For the elliptic ones
it is enough
to take into account that
\[
\begin{split}
 c_k^f&=c_k(T_j)=s_k(T_j) e^{2\ii a_n T_j}\\
&=\GG_{\el,c_k}(0)(T_j)e^{2\ii a_n
T_j}+\left(\GG_{\el,c_k}(u^*,v^*,s^*)(T_j)-\GG_{\el,c_k}
(0)(T_j)\right)e^{2\ii a_n T_j}\\
&=c_k^0e^{2\ii a_n
T_j}+\left(\GG_{\el,c_k}(u^*,v^*,s^*)(T_j)-\GG_{\el,c_k}(0)(T_j)\right)e^{2\ii
a_n T_j}
\end{split}
\]
and bound the second term using the Lipschitz constant obtained in Proposition
\ref{lemma:Full:Contractive}.

We finish the section by proving Proposition \ref{lemma:Full:Contractive},
which 
completes the proof of Lemma \ref{lemma:FullModel:SaddleMap}.

\begin{proof}[Proof of Proposition \ref{lemma:Full:Contractive}]
As we have done in the proof of Proposition \ref{lemma:Hyp:Contractive},
first, we stablish bounds for any $(u,v,s)\in B(2\kk_1)\subset\YY$ in
the supremmum norm, which will be used to bound the Lipschitz constant of
each component of $\wt\GG$. Indeed, if $(u,v,s)\in B(2\kk_1)\subset\YY$,
it satisfies 
\[
 \begin{split}
 |u_1|&\leq K_\sigma\wh C^{(j)}\de\ln(1/\de)\\
 |v_1|&\leq K\sigma\\
|u_2|&\leq K_\sigma \left(\wh C^{(j)}\de\right)^{1/2}\\
|v_2|&\leq K_\sigma\left(\wh C^{(j)}\de\right)^{1/2}\ln(1/\de),
\end{split}
\]
where $K>0$ is a constant independent of $\sigma$,  and
\[
 \begin{split}
  |s_k|&\leq K_\sigma\de^{(1-r)/2}\,\,\,\,\text{ for }\,\,\,k\in\PP_j^\pm\\
  |s_{j\pm 2}|&\leq K_\sigma\left(\wh C^{(j)}\de\right)^{1/2}\leq
K_\sigma\de^{(1-r)/2}.
 \end{split}
\]
We bound the Lipschitz constant for each component of $\wt\GG_\el$. We split
each component of the operator between the elliptic, hyperbolic and mixed part.
For the elliptic part the additional terms with respect to the toy model in
\cite{GuaKal} are of the same type as the terms in \cite{GuaKal} (plus an extra
$n^{-1}$). Therefore, they can be bounded as done in \cite{GuaKal} to obtain 
\[
\left\|\int_0^{T_j} e^{-2\ii a_n \tau} \left(\ZZZ_{\el,c_k}
\left(s_ke^{2\ii a_n \tau}\right)-\ZZZ_{\el,c_k}  \left(s'e^{2\ii a_n
\tau}\right)\right)
dt\right\|_{\el,\pm}\leq K_\sigma\de^{1-r} N T_j\|(u,v,s)-(u',v',s')\|_\ast.
\]
and
\[
\left\|\int_0^{T_j} e^{-2\ii a_n \tau} \left(\ZZZ_{\el,c_{j\pm 2}}
\left(se^{2\ii a_n \tau}\right)-\ZZZ_{\el,c_{j\pm 2}}  \left(s'e^{2\ii a_n
\tau}\right)\right)
d \tau\right\|_{\adj,\pm}\leq K_\sigma\de^{1-r} N
T_j\|(u,v,s)-(u',v',s')\|_\ast.
\]
Now we bound the mixed terms. We can write them as $\wt R_{\mix,c_k}=\wt
R_{\mix,c_k}^0+\wt R_{\mix,c_k}^1$, where $\wt R_{\mix,c_k}^0$ is the order in
first in $n\iii$, that is, it is the term considered in \cite{GuaKal}, and $\wt
R_{\mix,c_k}^1$ contains the rest. In \cite{GuaKal} it is seen that
\[
\begin{split}
\left\|\int_0^{T_j} e^{-2\ii a_n \tau}\left(\wt R^0_{\mix,c_k}
(ue^{\la_n \tau},ve^{-\la_n \tau},se^{2\ii a_n \tau})-\wt R^0_{\mix,c_k}
(u'e^{\la_n \tau},v'e^{-\la_n \tau},s'e^{2\ii a_n \tau})\right)d
\tau\right\|_{\el,\pm}\\
\quad\quad\quad\quad\quad\quad\quad\quad\quad\leq K_\sigma \wh
C^{(j)}\de\ln^3(1/\de)\|(u,v,s)-(u',v',s')\|_\ast.
\end{split}
\]
for non adjacent modes and 
\[
\begin{split}
\left\|\int_0^{T_j} e^{-2\ii a_n \tau}\left(\wt R^0_{\mix,c_{j\pm2}}
\left(ue^{\la_n \tau},ve^{-\la_n \tau},se^{2\ii a_n \tau}\right)-\wt
R^0_{\mix,c_{j\pm2}}
\left(u'e^{\la_n \tau},v'e^{-\la_n \tau},s'e^{2\ii a_n \tau}\right)\right)d
\tau\right\|_{\adj,-}
\\
\quad\quad\quad\quad\quad\quad\quad\quad\quad\leq
K\sigma\|(u,v,s)-(u',v',s')\|_\ast,
\end{split}
\]
where $K>0$ is a constant independent of $\sigma$.

Now we bound the term  $\wt R^1_{\mix,c_k}$ stated in Lemma
\ref{lemma:StraightenInvManifolds}. We can see that for either for non-adjacent
elliptic modes,
\[
 \begin{split}
  \left\|\wt R^1_{\mix,c_k} \right.&\left.
\left(ue^{\la_n \tau},ve^{-\la_n \tau},se^{2\ii a_n \tau}\right)-\wt
R^1_{\mix,c_k}
\left(u'e^{\la_n \tau},v'e^{-\la_n \tau},s'e^{2\ii a_n
\tau}\right)\right\|_{\el,\pm}\\
\leq &K_\sigma n\iii\sum_{i=1,2}\left(\|u_i-u_i'\|_{\hyp,u_i}+\|v_i-v_i'\|_{
\hyp,v_i}\right)\\
&+K_\sigma n\iii
\sum_{\ell\in\PP_j^\pm}\left\|s_\ell-s_\ell'\right\|_{\el,\pm}\\
&\leq K_\sigma N n\iii\left\|(u,v,s)-(u',v',s')\right\|_\ast.
 \end{split}
\]
For the adjacent modes, recalling the bounds for $C^{(j)}$ in
\eqref{cond:GrowthOfCs}, we have that 
\[
 \begin{split}
  \left\|\wt R^1_{\mix,c_{j\pm 2}} \right.&\left.
\left(ue^{\la_n \tau},ve^{-\la_n \tau},se^{2\ii a_n \tau}\right)-\wt
R^1_{\mix,c_{j\pm 2}}
\left(u'e^{\la_n \tau},v'e^{-\la_n \tau},s'e^{2\ii a_n
\tau}\right)\right\|_{\el,\pm}\\
&\leq K_\sigma N \de^{-2r} n\iii\left\|(u,v,s)-(u',v',s')\right\|_\ast.
 \end{split}
\]
Therefore, using that $\de=e^{-\ga N}$ and \eqref{def:FixedTimeSaddle}, we have
that for $k\in \PP^{\pm}_j$,
\[
\begin{split}
\left\|\int_0^{T_j} e^{-2\ii a_n \tau}\left(\wt R_{\mix,c_k}
(ue^{\la_n \tau},ve^{-\la_n \tau},se^{2\ii a_n \tau})-\wt R_{\mix,c_k}
(u'e^{\la_n \tau},v'e^{-\la_n \tau},s'e^{2\ii a_n \tau})\right)d
\tau\right\|_{\el,\pm}\\
\quad\quad\quad\quad\quad\quad\quad\quad\quad\leq K_\sigma n\iii
\ln^2(1/\de)\|(u,v,s)-(u',v',s')\|_\ast.
\end{split}
\]
and for the adjacent modes
\[
\begin{split}
\left\|\int_0^{T_j} e^{-2\ii a_n \tau}\left(\wt R_{\mix,c_k}
(ue^{\la_n \tau},ve^{-\la_n \tau},se^{2\ii a_n \tau})-\wt R_{\mix,c_k}
(u'e^{\la_n \tau},v'e^{-\la_n \tau},s'e^{2\ii a_n \tau})\right)d
\tau\right\|_{\el,\pm}\\
\quad\quad\quad\quad\quad\quad\quad\quad\quad\leq K_\sigma
n\iii\de^{-2r}\ln^2(1/\de)\|(u,v,s)-(u',v',s')\|_\ast.
\end{split}
\]
So, using the definition the properties of $r$ and $r'$ stated in Lemma
\ref{lemma:iterative:saddle}, we can conclude that either for $k\in\PP_j^\pm$,
\[
\left\|\GG_{\el,c_k}(u,v,s)-\GG_{\el,c_k}(u',v',s')\right\|_{\el,\pm}\leq
K_\sigma\de^{r'}\|(u,v,s)-(u',v',s')\|_\ast.
\]
and for the adjacent modes
\[
\left\|\GG_{\el,c_{j-2}}(u,v,s)-\GG_{\el,c_{j-2}}(u',v',s')\right\|_{\adj,-}\leq
K\sigma\|(u,v,s)-(u',v',s')\|_\ast.
\]
Now we bound the Lipschitz constant for the hyperbolic components of the
operator. Note that we only need to bound the terms involving $\wt R_{\mix,z}$
since the other terms of the operator have been bounded in Proposition
\ref{lemma:Hyp:Contractive}. As for the elliptic modes we split them as  $\wt
R_{\mix,z}= \wt R^0_{\mix,z}+ \wt R^1_{\mix,z}$, where $\wt R^0_{\mix,z}$ is the
term in \cite{GuaKal} and $\wt R^1_{\mix,z}$ is the remainder which contains the
terms of order $\OO(n\iii)$.

 We start with the Lipschitz constants of
$\GG_{\hyp,v_i}$. In \cite{GuaKal} it is shown that 
\[
\begin{split}
& \left\|\int_0^{T_j} e^{\la_n \tau}\left(\wt R^0_{\mix, y_1}
\left(ue^{\la_n \tau},ve^{-\la_n \tau},se^{2\ii a_n \tau}\right)-\wt R^0_{\mix,
y_1}
\left(ue^{\la_n \tau},ve^{-\la_n \tau},se^{2\ii a_n \tau}\right)\right)d
\tau\right\|_{\hyp, v_1}\\
&\qquad\qquad\leq K_\sigma \de^{1-r} \ln^2(1/\de) \|(u,v,s)-(u',v',s')\|_\ast\\
& \left\|\int_0^{T_j} e^{\la_n \tau}\left(\wt R^0_{\mix, y_2}
\left(ue^{\la_n \tau},ve^{-\la_n \tau},se^{2\ii a_n \tau}\right)-\wt R^0_{\mix,
y_2}
\left(ue^{\la_n \tau},ve^{-\la_n \tau},se^{2\ii a_n \tau}\right)\right)d
\tau\right\|_{\hyp, v_2}\\
&\qquad\qquad\leq K_\sigma \de^{1/2-2r} \ln(1/\de) \|(u,v,s)-(u',v',s')\|_\ast.
\end{split}
\]
Now, using the bounds on $F_{y_i}$ given in Lemma
\ref{lemma:StraightenInvManifolds}, the definition of $T_j$ in
\eqref{def:FixedTimeSaddle} and the upper and lower bounds for $C^{(j)}$ in
\eqref{cond:GrowthOfCs}, we bound the $\wt R^1_{\mix, y_i}$ terms as
\[
\begin{split}
& \left|\int_0^{T_j} e^{\la_n \tau}\left(\wt R^1_{\mix, y_i}
(ue^{\la_n \tau},ve^{-\la_n \tau},se^{2\ii a_n \tau})-\wt R^1_{\mix, y_i}
(ue^{\la_n \tau},ve^{-\la_n \tau},se^{2\ii a_n \tau})\right)d \tau\right|\\
&\qquad\qquad\leq K_\sigma  N n\iii \de^{1/2-3r/2} \|(u,v,s)-(u',v',s')\|_\ast.
\end{split}
\]
Therefore, applying norms and using condition on $\de$ from
Theorem \ref{thm:ToyModelOrbit} and the condition on $r'$ in Lemma
\ref{lemma:iterative:saddle}, we obtain
\[
\begin{split}
& \left\|\int_0^{T_j} e^{\la_n \tau}\left(\wt R_{\mix, y_i}
\left(ue^{\la_n \tau},ve^{-\la_n \tau},se^{2\ii a_n \tau}\right)-\wt R_{\mix,
y_i}
\left(ue^{\la_n \tau},ve^{-\la_n \tau},se^{2\ii a_n \tau}\right)\right)d
\tau\right\|_{\hyp, v_1}\\
&\qquad\qquad\leq K_\sigma  N n\iii \de^{1/2-3r/2} \|(u,v,s)-(u',v',s')\|_\ast\\
&\qquad\qquad\leq K_\sigma \de^{r'}\|(u,v,s)-(u',v',s')\|_\ast\\
& \left\|\int_0^{T_j} e^{\la_n \tau}\left(\wt R_{\mix, y_i}
\left(ue^{\la_n \tau},ve^{-\la_n \tau},se^{2\ii a_n \tau}\right)-\wt R_{\mix,
y_i}
\left(ue^{\la_n \tau},ve^{-\la_n \tau},se^{2\ii a_n \tau}\right)\right)d
\tau\right\|_{\hyp, v_2}\\
&\qquad\qquad\leq K_\sigma \de^{-2r}n\iii\|(u,v,s)-(u',v',s')\|_\ast\\
&\qquad\qquad\leq K_\sigma \de^{r'} \|(u,v,s)-(u',v',s')\|_\ast.
\end{split}
\]
Then,  taking into account the results of Lemma \ref{lemma:Hyp:Contractive}, one
can conclude that
\[
\begin{split}
&\left\|\wt\GG_{\hyp,v_1}(u,v,s)-\wt\GG_{\hyp,v_1}(u',v',s')\right\|_{\hyp,v_1}
\leq\\&\qquad\qquad\qquad\leq K_\sigma
\de^{r'}\left\|(u,v,s)-(u',v',s')\right\|_\ast\\
&\left\|\wt\GG_{\hyp,v_2}(u,v,s)-\wt\GG_{\hyp,v_2}(u',v',s')\right\|_{\hyp,v_2}
\leq\\&\qquad\qquad\qquad\leq
K_\sigma\de^{r'}\left\|(u,v,s)-(u',v',s')\right\|_\ast.
\end{split}
\]
Proceeding in the same way, one can obtain that
\[
\begin{split}
&\left\|\wt\GG_{\hyp,u_1}(u,v,s)-\wt\GG_{\hyp,u_1}(u',v',s')\right\|_{\hyp,u_1}
\leq\\
&\qquad\qquad\qquad\leq  K_\sigma
\de^{r'}\left\|(u,v,s)-(u',v',s')\right\|_\ast\\
&\left\|\wt\GG_{\hyp,u_2}(u,v,s)-\wt\GG_{\hyp,u_2}(u',v',s')\right\|_{\hyp,u_2}
\leq \\
&\qquad\qquad\qquad\leq K_\sigma\de^{r'}\left\|(u,v,s)-(u',v',s')\right\|_\ast.
\end{split}
\]
This completes the proof.
\end{proof}

\section{Study of the global map: proof of Lemma
\ref{lemma:iterative:hetero}}\label{sec:ProofHeteroMap}

We devote this section to prove Lemma \ref{lemma:iterative:hetero}. It follows
the same lines as the proof of Lemma 4.8 in \cite{GuaKal}. The main difference
is that now the heteroclinic connection is not straightened in the original
coordinates and therefore we use the coordinates $(P,Q)$, obtained in Lemma
\ref{lemma:SeparatrixAsGraph}, to prove Lemma \ref{lemma:iterative:hetero}. 
Recall that the initial
section $\Sigma^{\out}_{j}$, defined in \eqref{def:Section2Saddle},
is expressed in the variables
adapted to the $j^{th}$ saddle, that is $(p_1^{(j)}, q_1^{(j)},  p_2^{(j)},
q_2^{(j)},
c^{(j)})$, whereas the final section $\Sigma^{\inn}_{j+1}$,
defined in \eqref{def:Section1Saddle}, is expressed in the variables adapted to
the  $(j+1)^{st}$ saddle, that is   $(p_1^{(j+1)}, q_1^{(j+1)},  p_2^{(j+1)},
q_2^{(j+1)},
c^{(j+1)})$. The change of variables between these two system of coordinates is
given in \cite{GuaKal}, and stated in the next lemma. To simplify
notation we define
\[
( p_1, q_1, p_2, q_2, c)=
\left(p_1^{(j)},q_1^{(j)}, p_2^{(j)}, q_2^{(j)}, c^{(j)}\right)
\]  and
\[
\left(\wt p_1,\wt q_1, \wt p_2, \wt q_2, \wt c\right)=
\left(p_1^{(j+1)},q_1^{(j+1)}, p_2^{(j+1)},  q_2^{(j+1)},
c^{(j+1)}\right)
\]
and we
denote by $\Theta^j$ the change of coordinates that relates them, namely
\[
 (\wt p_1,\wt q_1, \wt p_2, \wt q_2, \wt c)=\Theta^j( p_1, q_1, p_2, q_2, c).
\]

\begin{lemma}\label{lemma:ChangeOfSaddle}
The change of coordinates $\Theta^j$ is given by
\begin{align*}
\Theta_{\wt c_k}^j( p_1, q_1, p_2, q_2, c)&=\frac{\ol \omega q_2+\omega p_2}{\wt
r\sqrt{2\Im(\om^2)}}c_k &\text{ for }k\in \PP_{j+1}^\pm \cup \{j+3\}\\
\Theta_{\wt c_{j-1}}^j( p_1, q_1, p_2, q_2, c)&=\frac{\ol \omega q_2+\omega
p_2}{\wt r\Im(\om^2)}\left(\omega q_1+\ol\omega p_1\right)\\
\Theta_{\wt p_1}^j( p_1, q_1, p_2, q_2, c)&= \frac{r}{\wt r}q_2\\
\Theta_{\wt q_1}^j( p_1, q_1, p_2, q_2, c)&= \frac{r}{\wt r}p_2\\
\Theta_{\wt p_2}^j( p_1, q_1, p_2, q_2, c)&= \frac{1}{2}\left(\frac{\Re z}{\Re
\om}+\frac{\Im z}{\Im\om}\right)\\
\Theta_{\wt q_2}^j( p_1, q_1, p_2, q_2, c)&= \frac{1}{2}\left(\frac{\Re z}{\Re
\om}-\frac{\Im z}{\Im\om}\right),
\end{align*}
where  $\omega$ has been defined in \eqref{def:anANDomega} and
\begin{align}
r^2=&1-\sum_{k\neq j-1,j,j+1}
|c_k|^2-\frac{1}{\Im(\om^2)}(p_1^2+q_1^2+2\Re(\om^2)p_1q_1)\notag\\
&-\frac{1}{\Im(\om^2)}(p_2^2+q_2^2+2\Re(\om^2)p_2q_2)\notag\\
\wt r^2=&\frac{1}{\Im(\om^2)}\left(p_2^2+q_2^2+2\Re(\om^2)p_2q_2\right)\notag\\
z=&\frac{c_{j+2}}{\wt r}\left(\ol \omega q_2+\omega p_2\right)\notag.
\end{align}
\end{lemma}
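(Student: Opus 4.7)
The plan is to derive all components of $\Theta^j$ by tracing a single configuration of the Fourier variables $b_k$ through both adapted charts and matching the two representations. Both charts arise from \eqref{def:SaddleAdaptedCoordinates} restricted to the mass level $J=1$, followed by the hyperbolic change \eqref{ctopq} applied to the two modes adjacent to the primary saddle. So all the information needed is already encoded in these two basic ingredients, and the proof reduces to a bookkeeping computation.

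First I would set up the relation at the level of the $b$-variables. Define $\phi := \theta^{(j+1)} - \theta^{(j)}$ and $r := |b_j|$, $\wt r := |b_{j+1}|$. Matching $b_k = c^{(j)}_k e^{\ii\theta^{(j)}} = c^{(j+1)}_k e^{\ii\theta^{(j+1)}}$ immediately yields $c^{(j+1)}_k = c^{(j)}_k e^{-\ii\phi}$ for $k\neq j, j+1$, while $c^{(j)}_{j+1} = \wt r \, e^{\ii\phi}$ and $c^{(j+1)}_j = r \, e^{-\ii\phi}$. The mass constraint gives $r^2 = 1 - \sum_{k\neq j}|c^{(j)}_k|^2$, and applying \eqref{ctopq} to $c^{(j)}_{j\pm 1}$ converts $|c^{(j)}_{j\pm 1}|^2$ into the quadratic form $\Im(\omega^2)^{-1}(p^2 + q^2 + 2\Re(\omega^2) p q)$, yielding the closed expressions for $r^2$ and $\wt r^2$ stated in the lemma.

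The phase factor $e^{-\ii\phi}$ is then extracted by inverting \eqref{ctopq} in the $j$-chart,
\[
e^{-\ii\phi} = \frac{1}{\wt r \sqrt{\Im(\omega^2)}}\bigl(\bar\omega q_2 + \omega p_2\bigr),
\]
which immediately gives the formulas for $\Theta^j_{\wt c_k}$ with $k$ elliptic in both charts by multiplication by $c^{(j)}_k$. For $\Theta^j_{\wt c_{j-1}}$, the mode is hyperbolic in the $j$-chart but elliptic in the $(j+1)$-chart, so one first substitutes \eqref{ctopq} to write $c^{(j)}_{j-1}$ in terms of $(p_1,q_1)$ and then multiplies by $e^{-\ii\phi}$, producing the product structure on the right-hand side of $\Theta^j_{\wt c_{j-1}}$.

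Finally, the hyperbolic components in the $(j+1)$-chart come from inverting \eqref{ctopq} in the new chart. For $\Theta^j_{\wt p_1}, \Theta^j_{\wt q_1}$, I would use the identity $c^{(j+1)}_j = r \, e^{-\ii\phi} = (r/\wt r)\overline{c^{(j)}_{j+1}}$, apply \eqref{ctopq} to both sides, and identify coefficients of $\omega$ and $\bar\omega$ to read off $\wt p_1 = (r/\wt r) q_2$ and $\wt q_1 = (r/\wt r) p_2$. For $\Theta^j_{\wt p_2}, \Theta^j_{\wt q_2}$, set $z := \sqrt{\Im(\omega^2)}\, c^{(j+1)}_{j+2}$, which coincides with the lemma's $z$ after substituting $c^{(j+1)}_{j+2} = c^{(j)}_{j+2} e^{-\ii\phi}$, and solve the complex linear equation $\omega \wt q_2 + \bar\omega \wt p_2 = z$ by taking real and imaginary parts. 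There is no serious obstacle; the only mild subtlety is keeping track of which modes are hyperbolic in each chart and invoking the non-degeneracy $\omega\notin \R\cup \ii\R$ (which holds since $\omega = e^{\ii\pi/3} + O(1/n)$) to invert the relevant $2\times 2$ linear system.
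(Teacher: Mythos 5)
Your approach is correct, and it is essentially the only natural one: the paper itself offers no proof of this lemma (it is quoted from \cite{GuaKal}), and the chart-matching computation you outline — matching $b_k=c_k^{(j)}e^{\ii\theta^{(j)}}=c_k^{(j+1)}e^{\ii\theta^{(j+1)}}$, extracting the relative phase $e^{-\ii\phi}=\overline{c^{(j)}_{j+1}}/\wt r=(\ol\om q_2+\om p_2)/(\wt r\sqrt{\Im(\om^2)})$ from \eqref{ctopq}, identifying $r=|b_j|$, $\wt r=|b_{j+1}|$ with the stated quadratic forms, and then inverting \eqref{ctopq} in the $(j+1)$-chart for the modes $j$ and $j+2$ — is exactly the computation that establishes the formulas, with the bookkeeping of which modes are hyperbolic in which chart handled correctly.

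Two caveats, both concerning the statement rather than your argument, which you should flag explicitly. First, your phase factor is $(\ol\om q_2+\om p_2)/(\wt r\sqrt{\Im(\om^2)})$, without the $\sqrt2$ appearing in the first displayed formula of the lemma; yours is the consistent normalization, since the factor must have modulus one (because $|c^{(j+1)}_k|=|b_k|=|c^{(j)}_k|$ and $|\ol\om q_2+\om p_2|=\wt r\sqrt{\Im(\om^2)}$), it is what the lemma's own formula for $\wt c_{j-1}$ uses, and it is what reappears in Corollary \ref{coro:ChangeOfSaddle}; the $\sqrt2$ is a typo. Second, if you actually carry out your last step and solve $\om\wt q_2+\ol\om\wt p_2=z$ by real and imaginary parts, you obtain $\wt q_2=\tfrac12\left(\Re z/\Re\om+\Im z/\Im\om\right)$ and $\wt p_2=\tfrac12\left(\Re z/\Re\om-\Im z/\Im\om\right)$, i.e.\ the $\pm$ signs attached to $\wt p_2$ and $\wt q_2$ come out interchanged with respect to the statement; that you are using the intended convention \eqref{ctopq} is confirmed by the fact that the same convention reproduces $\wt p_1=(r/\wt r)q_2$, $\wt q_1=(r/\wt r)p_2$ exactly as stated. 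This relabelling is harmless for the sequel (only the sizes of $\wt p_2,\wt q_2$, which are both $\OO(|c_{j+2}|)$, enter Proposition \ref{prop:HeteroMap} and Lemma \ref{lemma:HeteroMapOriginalVars}), but a complete write-up should either correct the statement or point out the discrepancy rather than silently reproduce it.
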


To prove Lemma \ref{lemma:iterative:hetero}, we want to use the system of
coordinates given in Lemma \ref{lemma:SeparatrixAsGraph} for the old variables.
We define
\[
( P_1, Q_1, P_2, Q_2)=
\left(P_1^{(j)},Q_1^{(j)}, P_2^{(j)}, Q_2^{(j)}\right)
\]  
For the new ones, we want to stick with $ (\wt p_1,\wt q_1, \wt p_2, \wt q_2,
\wt c)$ since those are the ones used to state Lemma
\ref{lemma:iterative:hetero}. We denote by $\wt\Theta^j$ the change of
coordinates that relates them, namely
\[
 (\wt p_1,\wt q_1, \wt p_2, \wt q_2, \wt c)=\Theta^j( P_1, Q_1, P_2, Q_2, c).
\]

\begin{corollary}\label{coro:ChangeOfSaddle}
The change of coordinates $\wt\Theta^j$ is given by
\begin{align*}
\wt\Theta_{\wt c_k}^j( P_1, Q_1, P_2, Q_2, c)&=\frac{\ol\omega
(Q_2+\xi(P_2))+\omega P_2}{\wt r\sqrt{\Im(\om^2)}}c_k \qquad\qquad\text{ for
}k\in \PP_{j+1}^\pm \cup \{j+3\}\\
\wt\Theta_{\wt c_{j-1}}^j( P_1, Q_1, P_2, q_2, c)&=\frac{\ol \omega
(Q_2+\xi(P_2))+\omega P_2}{\wt r\Im(\om^2)}\left( \omega Q_1+\ol \omega
(P_1+\xi(Q_1))\right)\\
\wt\Theta_{\wt p_1}^j( P_1, Q_1, P_2, Q_2, c)&= \frac{r}{\wt r}(Q_2+\xi(P_2))\\
\wt\Theta_{\wt q_1}^j( P_1, Q_1, P_2, Q_2, c)&= \frac{r}{\wt r}P_2\\
\wt\Theta_{\wt P_2}^j( P_1, Q_1, P_2, Q_2, c)&= \frac{1}{2}\left(\frac{\Re
z}{\Re \om}+\frac{\Im z}{\Im\om}\right)\\
\wt\Theta_{\wt Q_2}^j( P_1, Q_1, P_2, Q_2, c)&= \frac{1}{2}\left(\frac{\Re
z}{\Re \om}-\frac{\Im z}{\Im\om}\right)\notag,
\end{align*}
where  
\begin{align*}
r^2=&1-\sum_{k\neq j-1,j,j+1}
|c_k|^2-\frac{1}{\Im(\om^2)}
((P_1+\xi(Q_1))^2+Q_1^2+2\Re(\om^2)(P_1+\xi(Q_1))Q_1)\notag\\
&-\frac{1}{\Im(\om^2)}
(P_2^2+(Q_2+\xi(P_2))^2+2\Re(\om^2)P_2(Q_2+\xi(P_2)))\notag\\
\wt
r^2=&\frac{1}{\Im(\om^2)}
\left(p_2^2+(Q_2+\xi(P_2))^2+2\Re(\om^2)P_2(Q_2+\xi(P_2))\right)\notag\\
z=&\frac{c_{j+2}}{\wt r}\left(\ol\omega (Q_2+\xi(P_2))+\omega P_2\right)\notag.
\end{align*}
\end{corollary}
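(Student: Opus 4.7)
The plan is to derive the corollary as a direct consequence of Lemma \ref{lemma:ChangeOfSaddle} by substituting the inverse of the straightening change of coordinates $\Xi$ defined in \eqref{def:Change:Heteroclinic}. Since we are only changing the parametrization of the old saddle variables (the new saddle variables $(\wt p_1,\wt q_1,\wt p_2,\wt q_2,\wt c)$ enter as the image of the map), there is no genuine dynamical content here: only bookkeeping.

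First, I would invert \eqref{def:Change:Heteroclinic} to obtain
\[
p_1 = P_1 + \xi(Q_1), \quad q_1 = Q_1, \quad p_2 = P_2, \quad q_2 = Q_2 + \xi(P_2),
\]
and substitute these expressions componentwise into each of the six formulas of Lemma \ref{lemma:ChangeOfSaddle}. For the components $\wt c_k$ with $k\in\PP_{j+1}^\pm\cup\{j+3\}$, the dependence on the old hyperbolic coordinates is only through $(p_2,q_2)$, so the substitution $p_2\to P_2$, $q_2\to Q_2+\xi(P_2)$ produces exactly the numerator $\bar\omega(Q_2+\xi(P_2))+\omega P_2$ claimed. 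For $\wt c_{j-1}$ the dependence is bilinear in $(p_1,q_1)$ and $(p_2,q_2)$, and the substitution $p_1\to P_1+\xi(Q_1)$ produces the factor $\omega Q_1+\bar\omega(P_1+\xi(Q_1))$. The components $\wt\Theta_{\wt p_1}^j$ and $\wt\Theta_{\wt q_1}^j$ are immediate, while $\wt\Theta_{\wt p_2}^j$ and $\wt\Theta_{\wt q_2}^j$ follow after rewriting $z$ with $q_2$ replaced by $Q_2+\xi(P_2)$.

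Next I would verify that the auxiliary quantities $r^2$ and $\wt r^2$ transform as stated. Recalling that $r$ is defined from the mass constraint $\sum_k|r_k|^2 = 1$ applied to the old symplectic reduction at the $j$-th saddle (see \eqref{def:SaddleAdaptedCoordinates} and the diagonalization change \eqref{ctopq}), it is a sum of $|c_k|^2$ terms plus the quadratic forms in $(p_i,q_i)$ read from \eqref{quad.cpq}. Substituting $p_1=P_1+\xi(Q_1)$, $q_1=Q_1$, $p_2=P_2$, $q_2=Q_2+\xi(P_2)$ into these quadratic forms yields exactly the expressions for $r^2$ and $\wt r^2$ given in the statement of the corollary.

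There is no real obstacle: the whole statement is a coordinate rewriting, and the only thing to check carefully is the algebraic identity for the numerator in the $\wt c_{j-1}$ component, where the asymmetry between $P_1+\xi(Q_1)$ (with $\xi$ applied to $Q_1$) and $Q_2+\xi(P_2)$ (with $\xi$ applied to $P_2$) must be tracked consistently through the product formula of Lemma \ref{lemma:ChangeOfSaddle}. Once this is done, the corollary follows.
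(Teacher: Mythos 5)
Your proposal is correct and coincides with the paper's (implicit) argument: the corollary is stated without proof precisely because it is obtained by substituting the inverse of the straightening change \eqref{def:Change:Heteroclinic}, namely $p_1=P_1+\xi(Q_1)$, $q_1=Q_1$, $p_2=P_2$, $q_2=Q_2+\xi(P_2)$, into the formulas of Lemma \ref{lemma:ChangeOfSaddle}, exactly as you do. The only mismatch you might have flagged (the normalization $\sqrt{2\Im(\om^2)}$ in the lemma versus $\sqrt{\Im(\om^2)}$ in the corollary, and the lowercase $p_2$ in the stated $\wt r^2$) is a typographical inconsistency already present in the paper, not a defect of your substitution argument.
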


Note that in the new variables, we will need to check that the sets we obtain in
the final section are close to the separatrix defined in Lemma
\ref{lemma:SeparatrixAsGraph}. This will be a consequence of the next lemma.
\begin{lemma}\label{lemma:Global:PropertyXi}
The function $\xi$ introduced in Lemma \ref{lemma:SeparatrixAsGraph} satisfies
\[
 \xi(\wt q_1)=\frac{r_0}{\wt r_0}\xi\left(\frac{\wt r_0}{r_0}\wt q_1\right)
\]
where $r_0$ and $\wt r_0$ are defined by the following equations
\begin{align*}
\wt r_0^2=&\frac{1}{\Im(\om^2)}\left(\left(\frac{\wt r_0}{r_0}\wt
q_1\right)^2+\xi^2(\frac{\wt r_0}{r_0})+2\Re(\om^2)\frac{\wt r_0}{r_0}\wt
q_1\xi\left(\frac{\wt r_0}{r_0}\right)\right)\notag\\
r_0^2=&1-\wt r_0^2\notag\\
\end{align*}
\end{lemma}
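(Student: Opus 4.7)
The plan is to parametrize the heteroclinic connection from $\TT_j$ to $\TT_{j+1}$, which lives in the invariant subspace $U_\SS^j$, both in the coordinates adapted to $\TT_j$ and in those adapted to $\TT_{j+1}$, and then compare the two parametrizations to extract a functional equation for $\xi$.

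First, I would identify the heteroclinic in the old coordinates. Applying Lemma \ref{lemma:SeparatrixAsGraph} inside the two-generation reduction and using that in $U_\SS^j$ all other modes vanish, the connection is
\[
\bigl\{(P,Q,c)\ :\ P_1=Q_1=0,\ Q_2=0,\ P_2\in[0,p^*],\ c_k=0\ \text{for all } k\bigr\}.
\]
I then plug this into the formulas of Corollary \ref{coro:ChangeOfSaddle}. The fact that $\xi(0)=0$ (the heteroclinic passes through the hyperbolic fixed point at the origin) forces the second factor in $\wt\Theta^j_{\wt c_{j-1}}$ to vanish, so $\wt c_{j-1}=0$; the vanishing of the old $c_k$'s gives $\wt c_k=0$ for all $k\in\PP_{j+1}^\pm\cup\{j+3\}$; and $z=0$ (since $c_{j+2}=0$) yields $\wt p_2=\wt q_2=0$. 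The only surviving coordinates are
\[
\wt p_1=\frac{r}{\wt r}\,\xi(P_2),\qquad \wt q_1=\frac{r}{\wt r}\,P_2,
\]
with $r,\wt r$ evaluated at $Q_2=0$, $P_1=Q_1=0$ and $c=0$, so that
\[
\wt r^2=\frac{1}{\Im(\om^2)}\bigl(P_2^2+\xi^2(P_2)+2\Re(\om^2)P_2\,\xi(P_2)\bigr),\qquad r^2=1-\wt r^2.
\]

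Second, I would argue using the symmetry property item 4(e) of Definition \ref{acceptable} (the exchange $j\longleftrightarrow j+1$) combined with the remark following Lemma \ref{lemma:SeparatrixAsGraph} that the same heteroclinic, viewed in the new coordinates at $\TT_{j+1}$, is the local stable branch at $\TT_{j+1}$ in the $(\wt p_1,\wt q_1)$-plane; by the involution $(p,q)\leftrightarrow(q,p)$ of the symmetric two-generation Hamiltonian this branch coincides with the graph $\wt p_1=\xi(\wt q_1)$ for $\wt q_1\in[0,p^*]$.

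Equating the two parametrizations: solving $\wt q_1=\tfrac{r}{\wt r}P_2$ for $P_2=\tfrac{\wt r}{r}\wt q_1$ and inserting into $\wt p_1=\tfrac{r}{\wt r}\xi(P_2)=\xi(\wt q_1)$ produces the identity $\xi(\wt q_1)=\tfrac{r}{\wt r}\xi\!\bigl(\tfrac{\wt r}{r}\wt q_1\bigr)$; relabelling $(r,\wt r)=(r_0,\wt r_0)$ and rewriting the defining relations in terms of $\wt q_1$ gives exactly the implicit system in the statement (up to the evident typo in the arguments of $\xi$ and $\xi^2$). The main subtlety is the symmetry step: one must check that the exchange $(b_j,b_{j+1})\leftrightarrow(b_{j+1},b_j)$ swaps the unstable direction $P_2$ at $\TT_j$ with the stable direction $\wt q_1$ at $\TT_{j+1}$, so that the heteroclinic approaches $\TT_{j+1}$ in forward time along the $\wt q_1$ axis; this is precisely what forces the graph description $\wt p_1=\xi(\wt q_1)$ rather than $\wt q_1=\xi(\wt p_1)$.
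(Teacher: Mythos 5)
Your proposal is correct and follows essentially the same route as the paper: express the separatrix $q_2=\xi(p_2)$ of the $j$-th adapted coordinates in the $(j+1)$-st adapted coordinates via the explicit change of saddle (Lemma \ref{lemma:ChangeOfSaddle}/Corollary \ref{coro:ChangeOfSaddle}), note that by the $j$-independence and $j\leftrightarrow j+1$ symmetry the same curve is also the graph $\wt p_1=\xi(\wt q_1)$ at the new saddle, and conclude by uniqueness of the graph parameterization. Your additional checks (vanishing of $\wt c_{j-1}$, $\wt p_2$, $\wt q_2$ via $\xi(0)=0$ and $c=0$, and the stable/unstable swap $\wt p_1\propto q_2$, $\wt q_1\propto p_2$) are implicit in the paper's shorter argument, so no essential difference.
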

\begin{proof}
Note that the separatrix we are traveling close to is defined by $q_2=\xi(p_2)$.
Applying the change obtained in Lemma \ref{lemma:ChangeOfSaddle}, we obtain that
in the new variables it must satisfy
\[
 \wt p_1=\frac{r_0}{\wt r_0}\xi\left(\frac{\wt r_0}{r_0}\wt q_1\right)
\]
where $r_0$ and $\wt r_0$ are just the functions $r$ and $\wt r$ introduced in
Lemma \ref{lemma:ChangeOfSaddle}  evaluated over the separatrix. Moreover, using
that the hyperbolic toy model at each saddle is the same, we know that in the
new variables the separatrix can be parameterized as a graph as $\wt p_1=\xi(\wt
q_1)$. Since the graph parameterization is unique, we obtain the formula stated
in the lemma.
\end{proof}

Now, we express the section $\Sigma_{j+1}^{\inn}$ in the variables $(P_1,Q_1,
P_2,  Q_2, c)$
using the change $\wt\Theta^j$ obtained in Lemma \ref{coro:ChangeOfSaddle}.

\begin{corollary}\label{coro:SectionOldVariables}
Fix $\sigma>0$ and define the set
\[
 \wt \Sigma_{j+1}^{\inn}=\left(\wt\Theta^j\right)^{-1}
\left(\Sigma_{j+1}^{\inn}\cap\WW_{j+1}\right),
\]
where $\Sigma^{\inn}_{j+1}$ is the section defined in
\eqref{def:Section1Saddle} and
\[
 \WW_{j+1}=\left\{|P_1|\leq \eta, |Q_1|\leq \eta,
|Q_2|\leq \eta, |c_k|\leq \eta\,\,\text{for }k\in\PP_j^\pm
\,\text{ and }k=j\pm 2\right\}.
\]
Then, for $\eta>0$ small enough, $ \WW_{j+1}$
can be expressed  as a graph as
\[
P_2=w(P_1,Q_1,  Q_2, c).
\]
Moreover, there exist constants $\kk',\kk''$
independent of $\eta$   satisfying
\[
 0<\kk'<\sqrt{\Im(\om^2)-\sigma^2}<\kk''<1
\]
such that, for any  $(P_1,Q_1,Q_2,c)\in\WW_{j+1}$,
the function $w$ satisfies
\[
\kk'< w(P_1,Q_1,  Q_2, c)<\kk''.
\]
\end{corollary}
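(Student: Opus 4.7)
My plan is to apply the implicit function theorem to the equation $\wt\Theta^j_{\wt q_1}(P_1, Q_1, P_2, Q_2, c) = \sigma$, which by Corollary \ref{coro:ChangeOfSaddle} reads
\[
\frac{r(P_1, Q_1, P_2, Q_2, c)}{\wt r(P_1, Q_1, P_2, Q_2, c)}\, P_2 = \sigma,
\]
with $r, \wt r$ as defined there. First I will locate a base point on the unperturbed heteroclinic, i.e.\ a value $p_0 > 0$ such that $(P_1, Q_1, P_2, Q_2, c) = (0, 0, p_0, 0, 0)$ is a solution; second I will verify that $\partial_{P_2}\wt\Theta^j_{\wt q_1}$ does not vanish at this base point, so that the IFT yields a smooth local graph $P_2 = w(P_1, Q_1, Q_2, c)$; third I will extract the quantitative bounds on $w$ by evaluation and continuity.

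For the first two steps, substitute $P_1 = Q_1 = Q_2 = 0$ and $c = 0$ into $r, \wt r$ and use $\sup|\xi| = \OO(n^{-1})$ from Lemma \ref{lemma:SeparatrixAsGraph}. This gives $\wt r^2 = P_2^2/\Im(\om^2) + \OO(n^{-1})$ and $r^2 = 1 - \wt r^2 + \OO(n^{-1})$, whence
\[
\frac{r}{\wt r}\, P_2 = \sqrt{\Im(\om^2) - P_2^2} + \OO(n^{-1});
\]
since $\Im(\om^2) \sim \sqrt{3}/2$, for $\sigma$ sufficiently small the equation $\sqrt{\Im(\om^2) - P_2^2} = \sigma$ has a unique positive solution $p_0 = \sqrt{\Im(\om^2) - \sigma^2} + \OO(n^{-1}) \in (0, 1)$. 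Differentiating the same expression in $P_2$ at this point,
\[
\partial_{P_2}\bigl(\tfrac{r}{\wt r}\, P_2\bigr)\Big|_{(0,0,p_0,0,0)} = -\frac{p_0}{\sqrt{\Im(\om^2) - p_0^2}} + \OO(n^{-1}) = -\frac{\sqrt{\Im(\om^2) - \sigma^2}}{\sigma} + \OO(n^{-1}),
\]
which is nonzero for $n$ large. The IFT applied to the smooth map $\wt\Theta^j_{\wt q_1}$ then produces, on a neighborhood of $(0,0,0,0)$ of the form $\WW_{j+1}$, a smooth function $w = w(P_1, Q_1, Q_2, c)$ such that $\wt\Theta^j_{\wt q_1}(P_1, Q_1, w, Q_2, c) \equiv \sigma$.

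For the final bounds, by construction $w(0,0,0,0) = p_0 = \sqrt{\Im(\om^2) - \sigma^2} + \OO(n^{-1})$. I then pick constants $0 < \kk' < \sqrt{\Im(\om^2) - \sigma^2} < \kk'' < 1$ (using $\Im(\om^2) < 1$), and by continuity of $w$ shrink $\eta$ so that $|w - p_0| < \min(p_0 - \kk',\, \kk'' - p_0)$ throughout $\WW_{j+1}$; this yields $\kk' < w < \kk''$ as claimed. The only genuine obstacle is the uniform control in $n$ of the non-vanishing of $\partial_{P_2}\wt\Theta^j_{\wt q_1}$, but this follows at once from the uniform bound $\sup|\xi| = \OO(n^{-1})$ and the smoothness of $\wt\Theta^j$ in a fixed neighborhood of the heteroclinic, so the entire argument is uniform in $n$ for $n$ sufficiently large.
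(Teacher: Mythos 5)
Your argument is correct and is essentially the proof the paper has in mind (the corollary is stated without proof as an immediate consequence of the explicit formula $\wt\Theta^j_{\wt q_1}=\frac{r}{\wt r}P_2$ from Corollary \ref{coro:ChangeOfSaddle}): solving $\frac{r}{\wt r}P_2=\sigma$ by the implicit function theorem around the base point $P_2=\sqrt{\Im(\om^2)-\sigma^2}+\OO(n^{-1})$ reproduces exactly the bounds in the statement. The only point to make explicit is that the derivative computation uses $C^1$ (not merely $C^0$) closeness of $\xi$ to $0$ on a compact subinterval containing $p_0$, which does hold uniformly in $n$ by the smooth dependence of the perturbed invariant manifolds, so the argument goes through as you indicate.
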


Once we have defined the section $\wt\Sigma_{j+1}^{\inn}$,
we can define the map
\[
\begin{array}{cccc}
  \wt\BB_\glob^j:&\Xi(\UU_j)\subset\Sigma_j^{\out}&\longrightarrow
&\wt\Sigma_{j+1}^{\inn}\\
&(P_1,Q_1,Q_2,c)&\mapsto&\wt\BB_\glob^j(P_1,Q_1,Q_2,c)
\end{array}
 \]
induced by the flow \eqref{def:VF:AfterSeparatrix}. Thanks to Corollary
\ref{coro:SectionOldVariables}, one can easily deduce
that the time $T_{\wt \BB_\glob^j}=T_{\wt \BB_\glob^j}(Q_1,P_1,P_2,c)$ spent by
the map $\wt\BB_\glob^j$ for any point
$(Q_1,P_1,P_2,c)\in\Xi(\UU_j)\subset\Sigma_j^\out$ is  independent of $\de$, $j$
and $N$. Since the
difference between $\wt\BB_\glob^j$ and $\BB_\glob^j$ is just a change of
coordinates, we have that the time  spent  by $\BB_\glob^j$ is the same
 $T_{\wt \BB_\glob^j}$.

Now we study the behavior of the map $\wt \BB_\glob^j$. 
\begin{proposition}\label{prop:HeteroMap}
Let us consider a parameter set $\wt \II_j$ (as defined in Definition
\ref{definition:ModifiedProductLike}) and a $\wt \II_j$-product-like set
$\UU_j$. Then, there exists a constant $\wt K_\sigma$ independent of $j$,  $N$
and $\de$ and a constant $D^{(j)}$ satisfying
\[
\wt C^{(j)} /\wt K_\sigma\leq D^{(j)}\leq \wt K_\sigma \wt C^{(j)},
\]
such that the set $\wt \BB_\glob^j\circ\Xi(\UU_j)\subset \wt\Sigma_{j+1}^\inn$,
where $\Xi$ is the change defined in \eqref{def:Change:Heteroclinic}, satisfies
the following conditions:
\begin{description}
 \item[\textbf{C1}]
\[
\wt \BB_\glob^j\circ\Xi(\UU_j)\subset
\wh\DD_j^1\times\ldots\times\wh\DD_j^{j-2}\times \SSS_j\times
\wh\DD_j^{j+2}\times\ldots\times\wh\DD_j^{N}
\]
where
\begin{align*}
\wh\DD_j^k&=\left\{\left|c_k\right|\leq  \left(\wt M^{(j)}_{\el,\pm}+\wt
K_\sigma\de^{r'}\right)
\de^{(1-r)/2}\right\} \,\,\text{ for }k\in \PP_j^\pm\\
\wh\DD_j^{j\pm 2}&\subset\left\{\left|c_{j\pm2}\right|\leq
\wt K_\sigma\wt M^{(j)}_{\adj,\pm} \left(\wt C^{(j)}\de\right)^{1/2}\right\},
\end{align*}
and
\[
\begin{split}
\SSS_{j}= \Big\{& (P_1,Q_1,P_2,Q_2)\in \R^4:
|P_1|,|Q_1|\leq \wt K_\sigma\wt M_\hyp^{(j)}\left(\wt C^{(j)}\de\right)^{1/2},\\
&P_2=w(P_1,Q_1,  Q_2, c),  - D^{(j)}\,\de\,\left(\ln(1/\de)-\wt
K_\sigma\right)\leq
Q_2^{(j)}\leq -D^{(j)}\,\de\,\left(\ln(1/\de)+\wt K_\sigma\right) \Big\},
\end{split}
\]
\item[\textbf{C2}] Let us define the projection $\wt
\pi(P,Q,c)=(P_2,Q_2,c_{j+2},\ldots,c_N)$. Then,
\[
\begin{split}
\left[- D^{(j)}\,\de\,(\ln(1/\de)-1/\wt K_\sigma), -
D^{(j)}\,\de\,(\ln(1/\de)+1/\wt K_\sigma)\right]\times  \{P_2=w(P_1,Q_1,  Q_2,
c)\}\times
\DD_{j,-}^{j+2}\times\ldots\times\DD_{j,-}^{N} \\
\qquad\subset\wt\pi\left(\wt \BB_\glob^j\circ\Xi(\UU_j)\right)
\end{split}
\]
where
\begin{align*}
\DD_{j,-}^k&=\left\{\left|c_k^{(j)}\right|\leq  \left(\wt m^{(j)}_{\el}-\wt
K_\sigma \de^{r'}\right)
\de^{(1-r)/2}\right\} \,\,\text{ for }k\in \PP_j^+\\
\DD_{j,-}^{j+ 2}&=\left\{\left|c_{j+2}^{(j)}\right|\leq
\wt m^{(j)}_{\adj} \left(C^{(j)}\de\right)^{1/2}/\wt K_\sigma\right\}.
\end{align*}
\end{description}
\end{proposition}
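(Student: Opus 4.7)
The plan is to follow the strategy of the analogous result in \cite{GuaKal}, but in the straightened hyperbolic coordinates $(P,Q,c)$ introduced by \eqref{def:Change:Heteroclinic}. The key observation is that, in these coordinates, the heteroclinic connection between $\TT_j$ and $\TT_{j+1}$ in the two-generation subsystem coincides with $\{P_1=Q_1=Q_2=0\}$ (see Lemma \ref{lemma:SeparatrixAsGraph}), so we are simply integrating along this straight connection over a time interval that is independent of $\delta$, $j$ and $N$.

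First, I would estimate $T_{\wt \BB_\glob^j}$. A point in $\Xi(\UU_j)$ has $P_2 = \sigma$, $|Q_2| \le O(\wt C^{(j)}\delta\ln(1/\delta))$, and the remaining coordinates small. Using \eqref{def:VF:AfterSeparatrix}, the $P_2$-component follows $\dot P_2 = \lambda_n P_2 + \text{small}$ near the heteroclinic, and the transit time to the section $\wt\Sigma_{j+1}^\inn \approx \{P_2 = w(\cdot) \in (\kk',\kk'')\}$ provided by Corollary \ref{coro:SectionOldVariables} is bounded by some $\wt K_\sigma$ independent of $\delta,N,j$. Second, I would set up the variation-of-constants integral equation for the full flow as in Section \ref{sec:LocalDynamicsFullToyModel}: write $P_i = e^{\lambda_n t}u_i$, $Q_i = e^{-\lambda_n t}v_i$, $c_k = e^{2\ii a_n t}s_k$, and solve by a simple Picard/Gronwall iteration on the finite interval $[0,T_{\wt \BB_\glob^j}]$. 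Because this time is $O(1)$, all perturbative terms $R_{\hyp}$, $H_{\mix}^{(j)}$, $\ZZZ_{\el,c_k}$, and the $O(1/n)$ terms produce only $O(1)$-amplified perturbations of the linear flow, and the estimates can be closed in the weighted supremum norms of Definition \ref{definition:ModifiedProductLike}.

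Third, I would translate these bounds into geometric statements in the target section via the change of saddle $\wt\Theta^j$ of Corollary \ref{coro:ChangeOfSaddle}. The crucial combinatorics are:
\begin{itemize}
\item The old hyperbolic pair $(P_2,Q_2)$ becomes (up to the factor $r/\wt r$, which is of order one on the target section) the new secondary pair $(\wt p_1, \wt q_1)$. Thus the unstable coordinate $Q_2\sim -\wt C^{(j)}\delta\ln(1/\delta)$ is mapped into a small interval centered near $-D^{(j)}\delta\ln(1/\delta)$, where $D^{(j)}= \wt C^{(j)}(r_0/\wt r_0) + O(\text{small})$. Lemma \ref{lemma:Global:PropertyXi} is precisely what guarantees that this transformed interval sits on the graph of $\xi$ characterizing the new incoming section.
\item The old adjacent mode $c_{j+2}$ becomes (modulo a nonzero multiplicative factor involving $\omega$, $P_2$, $Q_2$, $r$, $\wt r$) the new secondary $\wt c_{j-1}$, which after passage through the new saddle's adapted coordinates contributes to the hyperbolic variables $(\wt p_1^{(j+1)},\wt q_1^{(j+1)})$, explaining the growth factor $\wt K_\sigma$ from $\wt M_{\adj,-}^{(j)}$ into $M_\hyp^{(j+1)}$ and $M_{\el,-}^{(j+1)}$.
\item Elliptic modes with $k \in \PP_j^\pm\cup\{j+3\}$ are only multiplied by the bounded factor $(\ol\omega (Q_2+\xi(P_2))+\omega P_2)/(\wt r\sqrt{\Im(\om^2)})$, which is of order one on $\Sigma_{j+1}^\inn$; this explains the additive correction $\wt K_\sigma\delta^{r'}$ to the bounds $\wt M_{\el,\pm}^{(j)}$, $\wt m_{\el}^{(j)}$.
\end{itemize}

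Fourth, the product-like structure \textbf{C1} of the image follows by applying the above componentwise estimates to every point of $\UU_j$, while the covering statement \textbf{C2} requires showing that, restricting $\UU_j$ to the sub-box given by \textbf{C2} in Definition \ref{definition:ModifiedProductLike}, the projection $\wt\pi$ of the image still covers the claimed box. This is a topological degree / continuity argument: the flow for time $T_{\wt\BB_\glob^j}$ composed with the nondegenerate change $\wt\Theta^j$ is a smooth diffeomorphism between neighborhoods of the heteroclinic's endpoints, so non-empty open boxes are mapped onto sets whose projections contain slightly shrunken open boxes (the shrinking producing the constants $\wt K_\sigma$). The main obstacle, as in \cite{GuaKal}, is exactly this covering argument: one must keep track of which coordinates can be freely prescribed on the incoming side and which are slaved to others by the flow, and then invert the action of $\wt\Theta^j$ on the graph $\{P_2 = w(P_1,Q_1,Q_2,c)\}$. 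Since both the flow and $\wt\Theta^j$ are close to their linear parts on the relevant $O(1)$ scales in the $(P_2,Q_2)$ plane and to the identity (up to bounded rotations/dilations) on the elliptic and adjacent modes, the inverse function theorem applied at a reference point on the heteroclinic produces a single constant $\wt K_\sigma$ depending only on $\sigma$, completing the verification of \textbf{C2} and the proposition.
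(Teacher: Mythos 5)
Your overall route is the paper's route: the paper proves this proposition by observing that the global map is a finite-time ($\de$-, $N$- and $j$-independent) flow along the straightened heteroclinic and that the argument of \cite{GuaKal} carries over verbatim to the vector field \eqref{def:VF:AfterSeparatrix}; your steps 1, 2 and 4 (transit-time bound via Corollary \ref{coro:SectionOldVariables}, finite-time Picard/Gronwall, covering of the box in \textbf{C2} by a continuity/IFT argument with a single $\sigma$-dependent distortion constant) are exactly that. However, two points need fixing. First, the parts of your plan involving the change of saddle $\wt\Theta^j$ do not belong to this proposition: Proposition \ref{prop:HeteroMap} is stated entirely in the coordinates adapted to the $j$-th saddle (the target section already being pulled back through $(\wt\Theta^j)^{-1}$ as the graph $P_2=w(P_1,Q_1,Q_2,c)$), and the translation into the $(j+1)$-st variables together with the parameter recursion is the content of the separate Lemma \ref{lemma:HeteroMapOriginalVars}. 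Moreover your bookkeeping of that translation is wrong: by Corollary \ref{coro:ChangeOfSaddle} the old adjacent mode $c_{j+2}$ feeds the new hyperbolic pair $(\wt p_2,\wt q_2)$ through $z$, while the new adjacent mode $\wt c_{j-1}$ comes from the old hyperbolic pair $(P_1,Q_1)$; it is this pairing (not ``$c_{j+2}\to\wt c_{j-1}$'') that produces $M_\hyp^{(j+1)}\sim \wt K\,\wt M_{\adj,+}^{(j)}$ and $M_{\adj,-}^{(j+1)}\sim \wt K\,\wt M_{\hyp}^{(j)}$, while $M_{\el,-}^{(j+1)}\geq \wt K\,\wt M_{\adj,-}^{(j)}$ comes from $c_{j-2}$ becoming peripheral.

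Second, inside the proposition itself, the justification ``all perturbative terms produce only $\OO(1)$-amplified perturbations of the linear flow'' is too coarse for \textbf{C1}--\textbf{C2}. Along the global excursion $P_2$ grows to order one, so the flow is not a perturbation of the linear saddle flow and the local-map substitution $P_i=e^{\la_n t}u_i$ is not the right frame; one must compare with the reference heteroclinic trajectory. More importantly, a bare Gronwall constant would only yield multiplicative bounds $\wt K_\sigma\,\wt M_{\el,\pm}^{(j)}\de^{(1-r)/2}$ for the peripheral modes, whereas the statement requires the additive corrections $(\wt M_{\el,\pm}^{(j)}+\wt K_\sigma\de^{r'})\de^{(1-r)/2}$ and $(\wt m_{\el}^{(j)}-\wt K_\sigma\de^{r'})\de^{(1-r)/2}$; without these, the recursion for $M_{\el}^{(j)}$, $m_{\el}^{(j)}$ in Lemma \ref{lemma:iterative:hetero} would degrade geometrically in $j$ and the scheme would collapse. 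The missing ingredient is the structure of the elliptic equations in \eqref{def:VF:Straight}: for $k\in\PP_j^\pm$ the dominant part is $\ii c_k(2a_n+\wt P)$, a phase rotation preserving $|c_k|$, and the modulus can only change through terms that are at least cubic in the small modes or carry a factor $1/n$, whose time-$\OO(1)$ integrals are bounded by $\wt K_\sigma\de^{r'}\de^{(1-r)/2}$; by contrast, the adjacent modes $c_{j\pm2}$ also feel the non-modulus-preserving terms $-\ii\,\ol c_{j\pm2}\,\wt Q_\pm(z,c)$ with $\wt Q_\pm=\OO(1)$ along the excursion, which is precisely why they only get a multiplicative constant in \textbf{C1} and a $1/\wt K_\sigma$ loss in \textbf{C2}. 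Making this distinction explicit (as in \cite{GuaKal}) is necessary; once it is in place, your degree/inverse-function argument for \textbf{C2} and the two-sided control of the $Q_2$-factor defining $D^{(j)}$ go through as you describe.
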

This proposition is proved for the cubic nonlinear Schr\"odinger equation toy
model in \cite{GuaKal}. One can easily check that the prove is also valid for
the vector field \eqref{def:VF:AfterSeparatrix}. Therefore, the prove in
\cite{GuaKal} also applies to our setting.

Now we complete the proof of Lemma \ref{lemma:iterative:hetero}. We need to show
that the set $\BB_\glob^j(\UU_j)\subset \Sigma_{j+1}^\inn$ satisfies similar
properties to the ones of the set $\wt \BB_\glob^j\circ\Xi(\UU_j)$ and also to
obtain a parameter set $\II_{j+1}$ and $\II_{j+1}$-product like   set
$\VV_j\subset \Sigma_{j+1}^\inn$ which satisfies condition
\eqref{cond:ComposeMaps:Heteromap}. These two last steps are summarized in the
next lemma.

\begin{lemma}\label{lemma:HeteroMapOriginalVars}
Let us consider a parameter set  $\II_{j+1}$ whose constants
satisfy
\[
 \begin{split}
  D^{(j)}/2\leq C^{(j+1)}\leq 2 D^{(j)}\\
0<m_{\hyp}^{(j+1)}\leq \wt m_\hyp^{(j)}
 \end{split}
\]
and
\[
 \begin{split}
M_{\el,-}^{(j+1)}&=\max\left\{\wt M_{\el,-}^{(j)}+\wt K_\sigma\de^{r'},\wt
K_\sigma\wt M_{\adj,-}^{(j)}\right\}\\
M_{\el,+}^{(j+1)}&=\wt M_{\el,+}^{(j)}+\wt K_\sigma\de^{r'}\\
m_{\el}^{(j+1)}&=\wt m_{\el}^{(j)}-\wt K_\sigma\de^{r'}\\
m_{\adj,+}^{(j+1)}&=\wt m_{\el,+}^{(j)}+\wt K_\sigma\de^{r'}\\
M_{\adj,-}^{(j+1)}&=\wt K_\sigma\wt M_{\hyp}^{(j)}\\
m_{\adj}^{(j+1)}&=\wt m_{\el}^{(j)}+\wt K_\sigma\de^{r'}\\
M_{\hyp}^{(j+1)}&=\max\left\{\wt K_\sigma \wt M_{\adj,+}^{(j)},\wt
K_\sigma\right\}.
 \end{split}
\]
Then, the set
\[
 \VV_{j+1}=\BB_\glob^j(\UU_j)\cap\left\{g_j(p_2^{(j+1)},q_2^{(j+1)})=0\right\}
\cap \left\{\left|c_{j+3}^{(j+1)}\right|\leq
M_{\adj,+}^{(j+1)}\left(C^{(j+1)}\de\right)^{1/2}\right\},
\]
where $g_j$ is the function defined in \eqref{def:Function_g}, is a
$\II_{j+1}$-product-like set and satisfies condition
\eqref{cond:ComposeMaps:Heteromap}
\end{lemma}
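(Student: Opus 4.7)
The plan is to deduce Lemma \ref{lemma:HeteroMapOriginalVars} directly from Proposition \ref{prop:HeteroMap} by applying the change of coordinates $\wt\Theta^j$ of Corollary \ref{coro:ChangeOfSaddle} componentwise to the set $\wt\BB_\glob^j\circ\Xi(\UU_j)$ and tracking how the roles of the modes shift between the $j$-th and the $(j{+}1)$-st saddle-adapted frames. Each parameter in $\II_{j+1}$ is then read off from the corresponding estimate of Proposition \ref{prop:HeteroMap} after transferring through $\wt\Theta^j$.

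For the elliptic directions, the explicit formula $\wt\Theta^j_{\wt c_k}=\frac{\ol\omega(Q_2+\xi(P_2))+\omega P_2}{\wt r\sqrt{\Im(\om^2)}}\,c_k$ shows that on the image set, where $P_2=w(P_1,Q_1,Q_2,c)\in(\kk',\kk'')$ by Corollary \ref{coro:SectionOldVariables} and $Q_2=O(\wt C^{(j)}\de\ln(1/\de))$, the multiplicative factor is bounded above and below by positive constants depending only on $\sigma$. The estimates for the old peripheral modes $c_k$ with $k\in\PP_j^\pm$ and the old adjacent modes $c_{j\pm2}$ then transfer to the new $\wt c_k$ with a controlled $\wt K_\sigma\de^{r'}$ slack. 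Two role reassignments occur: the old peripheral $c_{j+3}$ now plays the part of the new adjacent-leading mode for the $(j{+}1)$-st saddle, which is precisely why $\VV_{j+1}$ is defined by intersecting with $\{|c_{j+3}^{(j+1)}|\leq M_{\adj,+}^{(j+1)}(C^{(j+1)}\de)^{1/2}\}$; symmetrically, the old adjacent-trailing mode $c_{j-2}$ is absorbed into the new trailing peripheral set, producing the $\max$ in the formula for $M_{\el,-}^{(j+1)}$. The new adjacent-trailing mode $\wt c_{j-1}$, expressed as a product of the old hyperbolic coordinates $(P_1,Q_1)$ with a bounded factor coming from $(P_2,Q_2)$, inherits size $\wt K_\sigma\wt M_\hyp^{(j)}(\wt C^{(j)}\de)^{1/2}$, and the secondary hyperbolic pair $(\wt p_2,\wt q_2)$, being a linear combination of the old adjacent-leading mode $c_{j+2}$ with $(P_2,Q_2)$, has size $\wt K_\sigma\wt M_{\adj,+}^{(j)}(\wt C^{(j)}\de)^{1/2}$, producing $M_\hyp^{(j+1)}$.

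For the principal hyperbolic direction around the new saddle, the key identity is Lemma \ref{lemma:Global:PropertyXi}, which ensures that on the separatrix ($Q_2=0$) one has $(r_0/\wt r_0)\xi((\wt r_0/r_0)\sigma)=\xi(\sigma)$. Away from the separatrix, on the section $\wt q_1=(r/\wt r)P_2=\sigma$, the formula $\wt p_1=(r/\wt r)(Q_2+\xi(P_2))$ expands as
\[
\wt p_1-\xi(\sigma) \;=\; \frac{r_0}{\wt r_0}\,Q_2 \;+\; O\bigl(\de^{1-r}+(\wt C^{(j)}\de)^{1/2}|Q_2|\bigr).
\]
Since Proposition \ref{prop:HeteroMap} places $Q_2$ in an interval around $-D^{(j)}\de\ln(1/\de)$ of half-width $\wt K_\sigma D^{(j)}\de$, setting $C^{(j+1)}=(r_0/\wt r_0)\,D^{(j)}$ (comparable to $\wt C^{(j)}$ uniformly in $N$, since $r_0/\wt r_0$ is bounded away from $0$ and $\infty$) places $\wt p_1$ in the prescribed interval around $\xi(\sigma)-C^{(j+1)}\de\ln(1/\de)$. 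The constraint $g_j(p_2^{(j+1)},q_2^{(j+1)})=0$ imposed in the definition of $\VV_{j+1}$ selects a codimension-one slice transverse to the $(\wt p_2,\wt q_2)$-plane, because by \eqref{def:Function_g} $g_j$ is the affine function $(1+a_p(\sigma))\wt p_2+a_q(\sigma)\wt q_2-x_2^\ast$ with nonzero gradient, and the slice still carries a product-like structure of the required form. This verifies condition \textbf{C1}; condition \textbf{C2} and the inclusion \eqref{cond:ComposeMaps:Heteromap} follow because $\wt\Theta^j$ is a local diffeomorphism at the points under consideration, so the \textbf{C2} estimate of Proposition \ref{prop:HeteroMap} transfers to a full-dimensional product-like set in the new variables.

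The main obstacle is the uniform control of all the $O(\cdot)$ corrections coming from $\wt\Theta^j$ when expanding $\wt p_1$ around the separatrix: one must combine the implicit relation $P_2=w(P_1,Q_1,Q_2,c)$ of Corollary \ref{coro:SectionOldVariables} with Lemma \ref{lemma:Global:PropertyXi} in such a way as to show both that the interval in $\wt p_1$ has center and width of precisely the prescribed orders $C^{(j+1)}\de\ln(1/\de)$ and $\wt K_\sigma C^{(j+1)}\de$, and that \textbf{C2} (which requires the image to \emph{contain}, not merely to be contained in, a smaller product set) is genuinely preserved. The argument is modeled on the analogous step in the cubic case of \cite{GuaKal}, to which one must add the estimates needed to absorb the $O(n^{-1})$ corrections that distinguish our toy model from the clean cubic normal form.
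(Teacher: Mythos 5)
Your proposal is correct and follows essentially the same route as the paper, whose own proof is just a two-line remark: apply the change of saddle-adapted coordinates of Lemma \ref{lemma:ChangeOfSaddle} (equivalently Corollary \ref{coro:ChangeOfSaddle}) to the conclusion of Proposition \ref{prop:HeteroMap}, and use Lemma \ref{lemma:Global:PropertyXi} together with $|\xi|=\OO(n^{-1})$ to control the primary hyperbolic window. Your write-up simply supplies the componentwise bookkeeping (the role reassignment of the modes $c_{j+3}$, $c_{j-2}$, $c_{j\pm1}$, the factor $r_0/\wt r_0$, and the $g_j=0$ slice) that the paper leaves implicit.
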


\begin{proof}
It is enough to apply the change of coordinates $\Theta^j$ given in Lemma
\ref{lemma:ChangeOfSaddle} and take into account  that  $\xi$ satisfies the
equality given by Lemma \ref{lemma:Global:PropertyXi} and $|\xi|=\OO(n^{-1})$.
\end{proof}

\section{The  approximation argument: proof of  Theorem \ref{thm:approximation}}
 \label{sec:Approximation}

Write the equation  associated to Hamiltonian \eqref{def:HamRotating} as
\begin{equation}  \label{eq:AfterNFInRotating}
- \ii \dot r_n = \EE_n(r)+ \wt\RRR_n(r),
\end{equation}
where $\EE$ is the function defined in \eqref{eq:HRes} and $\wt\RRR$ is the
vector field associated to the Hamiltonian $\RRR'$ defined in
\eqref{def:RemainderInRotating}. We want to study the closeness of the orbit $\mathtt r^
\rrr(t)$ obtained in \eqref{def:SolutionTruncatedSystem}, which is a solution of
$-\ii\dot{\mathtt r}^\rrr=\EE(\mathtt r^\rrr)$, with an orbit $\wt r(t)$ of  equation
\eqref{eq:AfterNFInRotating} which satisfies $\|\wt r(0)-\mathtt r^
\rrr(0)\|_{\ell^1}\leq \rrr^{-5/2}$. Define the function $\xi$ as
\begin{equation}  \label{def:Approx:Error}
 \xi=\wt r-\mathtt r^\rrr,
\end{equation}
which satisfies $\|\xi(0)\|_{\ell^1}\leq \rrr^{-5/2}$. We proceed as in
\cite{GuaKal} and we apply Gronwall-like estimates to bound the $\ell^1$ norm of
$\xi(t)$.

The equation for $\xi$ can be written as
$\dot \xi =\ZZZ^0(t)+\ZZZ^1(t)\xi+\ZZZ^2(\xi,t)$
with
\begin{align}
 \ZZZ^0(t)=&\wt\RRR\left(\mathtt r^\rrr\right)  \label{def:Approx:FirstIteration}\\
\ZZZ^1(t)=&D\EE\left(\mathtt r^\rrr\right)  \label{def:Approx:Linear}\\
\ZZZ^2(\xi,t)=&\EE\left(\mathtt r^\rrr+\xi\right)-\EE\left(\mathtt r^\rrr\right)-D\EE\left(r^
\rrr\right)\xi+
\wt\RRR\left(\mathtt r^\rrr+\xi\right)-\wt\RRR\left(\mathtt r^\rrr\right) 
\label{def:Approx:Higher}
\end{align}
Applying the $\ell^1$ norm to this equation, we obtain
\begin{equation}  \label{eq:Approx:Difference}
 \frac{d}{dt}\|\xi\|_{\ell^1}\leq \left\|\ZZZ^0(t)\right\|_{\ell^1}+\left\|
\ZZZ^1(t)\xi\right\|_{\ell^1}+\left\| \ZZZ^2(\xi,t)\right\|_{\ell^1}.
\end{equation}
The next three lemmas give estimates for each term in the right hand side of
this equation. 
\begin{lemma}  \label{lemma:Approx:BoundFirstIteration}
The function $\ZZZ^0$
 defined in \eqref{def:Approx:FirstIteration}
 satisfies
$ \left\|\ZZZ^0\right\|_{\ell^1}\leq C \rrr^{-(2d+1)} 2^{(2d+1)N}$.
\end{lemma}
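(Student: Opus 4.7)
The plan is to bound $\|\ZZZ^0(t)\|_{\ell^1} = \|\wt\RRR(\mathtt r^\rrr)\|_{\ell^1}$ by a direct application of the normal form estimate together with the explicit size and support of the approximating trajectory $\mathtt r^\rrr$.

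First, I would observe that the rotating-coordinates remainder $\wt\RRR$ defined in \eqref{def:RemainderInRotating} is obtained from the original $\RRR$ by a phase rotation $r_k \mapsto r_k e^{\ii|k|^2 t}$, which is an isometry of $\ell^1$. Hence the vector field bound from Theorem \ref{thm:NormalForm} carries over, giving
\[
\|\wt\RRR(\mathtt r^\rrr)\|_{\ell^1} \lesssim \|\mathtt r^\rrr\|_{\ell^1}^{2d+1}.
\]

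Second, I would estimate $\|\mathtt r^\rrr\|_{\ell^1}$ from the definition \eqref{def:SolutionTruncatedSystem}. Since $\mathtt r^\rrr$ is supported on $\SSS$ with $|\SSS| = N 2^{N-1}$, and since each component satisfies $|\mathtt r^\rrr_k(t)| = \rrr^{-1}|b_j(\tau)| \leq \rrr^{-1}$ (using $|b_j| \leq 1$ from the mass conservation $\sum|b_j|^2 = 1$ exploited in Theorem \ref{thm:ToyModelOrbit}), we obtain
\[
\|\mathtt r^\rrr(t)\|_{\ell^1} \leq N 2^{N-1} \rrr^{-1} \lesssim 2^{N} \rrr^{-1},
\]
uniformly in $t \in [0,T]$.

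Combining the two estimates yields the desired bound
\[
\|\ZZZ^0(t)\|_{\ell^1} \lesssim \left(2^{N} \rrr^{-1}\right)^{2d+1} = 2^{(2d+1)N} \rrr^{-(2d+1)},
\]
where the implied constant depends on $d$ but not on $N$ or $\rrr$. There is no real obstacle here — the lemma is essentially a bookkeeping consequence of Theorem \ref{thm:NormalForm} together with the fact that the toy-model components $b_j$ are $O(1)$ and the scaling \eqref{def:rescaling} contributes the factor $\rrr^{-1}$ per mode, while the support has cardinality $\sim 2^N$. The only subtlety worth mentioning explicitly is that the $\ell^1$ norm is preserved by the rotation to $r$ coordinates, so no additional $t$-dependence enters.
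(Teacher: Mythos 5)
Your first step is fine: the rotation \eqref{def:rotatingcoord} acts diagonally by unimodular factors, so it preserves the $\ell^1$ norm and the bound $\|X_\RRR\|_{\ell^1}\lesssim\|a\|_{\ell^1}^{2d+1}$ of Theorem \ref{thm:NormalForm} indeed transfers to $\wt\RRR$ (and $\mathtt r^\rrr$ lies well inside the domain of validity since $\rrr$ is huge). The gap is in the second step: from $|\SSS|=N2^{N-1}$ and $|\mathtt r^\rrr_k|\leq\rrr^{-1}$ you can only conclude $\|\mathtt r^\rrr(t)\|_{\ell^1}\leq N2^{N-1}\rrr^{-1}$, and your assertion $N2^{N-1}\lesssim 2^N$ is false with a constant independent of $N$ (it fails by a factor $N/2$, and here all constants must be uniform in $N$, since $N\to\infty$ in the final argument). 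Raised to the power $2d+1$, the crude bound $|b_j|\leq 1$ for \emph{all} $j$ only yields $\|\ZZZ^0\|_{\ell^1}\lesssim N^{2d+1}\rrr^{-(2d+1)}2^{(2d+1)N}$, which is strictly weaker than the stated estimate.

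To get the lemma as stated you must use the localization of the toy-model orbit from Theorem \ref{thm:ToyModelOrbit}, exactly as is done in the proof of Lemma \ref{lemma:Approx:BoundLinear}: for each $t\in[0,T]$ only a bounded number of generations (the current one and its neighbours) carry components of size up to $\rrr^{-1}$, while every mode in the remaining generations is bounded by $\rrr^{-1}\de^{\nu}$ with $\de=e^{-\ga N}$. Hence
\[
\|\mathtt r^\rrr(t)\|_{\ell^1}\;\lesssim\;\rrr^{-1}2^{N-1}\bigl(1+N\de^{\nu}\bigr)\;\lesssim\;\rrr^{-1}2^{N},
\]
with a constant independent of $N$, because $N\de^{\nu}=Ne^{-\ga\nu N}\to 0$. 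Plugging this refined bound into your first step gives $\|\ZZZ^0\|_{\ell^1}\lesssim\rrr^{-(2d+1)}2^{(2d+1)N}$ as claimed. (Your weaker bound with the extra $N^{2d+1}$ would in fact still be harmless for the Gronwall argument in Section \ref{sec:Approximation}, since $\rrr=e^{\kk 2^{8dN}N^2}$ swamps any polynomial factor in $N$, but it does not prove the lemma in the form stated, where the constant is uniform in $N$.)
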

The proof of this lemma is analogous to the proof of Lemma B.1 in \cite{GuaKal}.

\begin{lemma}  \label{lemma:Approx:BoundLinear}
The linear operator $\ZZZ^1(t)$
satisfies
$\left\| \ZZZ^1(t)\xi\right\|_{\ell^1}\leq C \rrr^{-(2d-2)}2^{N(2d-2)}
\|\xi\|_{\ell^1}$
\end{lemma}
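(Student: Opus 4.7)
The plan is to compute $D\EE(r)\xi$ explicitly in Fourier coordinates and exploit the Banach algebra structure of $\ell^1$ under convolution, in the spirit of the analogous Lemma B.2 in \cite{GuaKal}.

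First, I would observe from \eqref{eq:HRes} that $\EE_k(r)$ is a polynomial of total degree $2d-1$ in $(r,\bar r)$, homogeneous of degree $d$ in $r$ and of degree $d-1$ in $\bar r$, with the additional constraints $\sum_{i=1}^{2d-1}(-1)^i k_i=k$ and $\sum_{i=1}^{2d-1}(-1)^i|k_i|^2=|k|^2$. Differentiating, the $k$-th component of $\ZZZ^1(t)\xi=D\EE(\mathtt r^\rrr)\xi$ is a sum of $2d-1$ terms, each of the form
\[
2d\!\!\sum_{\substack{k_1,\ldots,k_{2d-1}\in\Z^2\\ \sum_{i}(-1)^ik_i=k\\ \sum_i(-1)^i|k_i|^2=|k|^2}}\!\!\mathtt r^\rrr_{k_{i_1}}\bar{\mathtt r}^\rrr_{k_{i_2}}\cdots\xi_{k_{i_{\ell}}}\,\,\text{(or }\bar\xi_{k_{i_{\ell}}}\text{)}\cdots
\]
namely a product of $2d-2$ factors chosen from $\mathtt r^\rrr$ or $\bar{\mathtt r}^\rrr$ times a single linear factor in $\xi$ or $\bar\xi$, subject to the same linear and quadratic resonance constraints.

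Next, I would drop the quadratic constraint $\sum_i(-1)^i|k_i|^2=|k|^2$ (replacing the exact sum by a sum of absolute values with the remaining linear constraint), and use that $\ell^1(\Z^2)$ is a Banach algebra with respect to convolution (with $\|\bar f\|_{\ell^1}=\|f\|_{\ell^1}$). Each of the $2d-1$ terms is then bounded in $\ell^1$ by $\|\mathtt r^\rrr\|_{\ell^1}^{2d-2}\|\xi\|_{\ell^1}$, so that
\[
\|\ZZZ^1(t)\xi\|_{\ell^1}\leq 2d(2d-1)\,\|\mathtt r^\rrr\|_{\ell^1}^{2d-2}\,\|\xi\|_{\ell^1}.
\]

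Finally, I would estimate $\|\mathtt r^\rrr\|_{\ell^1}$ using the definition \eqref{def:SolutionTruncatedSystem}: since $\mathrm{supp}\,\mathtt r^\rrr\subset\SSS$, $|\SSS_j|=2^{N-1}$, and $\mathtt r^\rrr_k=\rrr^{-1}b_j$ for $k\in\SSS_j$, one has $\|\mathtt r^\rrr\|_{\ell^1}=\rrr^{-1}\,2^{N-1}\sum_{j=1}^N|b_j|\leq \rrr^{-1}\,2^{N-1}\sqrt{N}$ by Cauchy--Schwarz combined with the mass constraint $\sum_j|b_j|^2=1$ from Theorem \ref{thm:ToyModelOrbit}. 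Raising to the power $2d-2$ and absorbing the polynomial factor $N^{d-1}$ into the constant $C$ (it contributes only polynomially in $N$, whereas the final time estimate in Theorem \ref{thm:MainTheorem} is exponential), yields the desired bound $\|\ZZZ^1(t)\xi\|_{\ell^1}\leq C\rrr^{-(2d-2)}2^{N(2d-2)}\|\xi\|_{\ell^1}$. The argument is essentially algebraic/combinatorial and presents no genuine obstacle; the only mild subtlety is recognising that polynomial-in-$N$ losses are harmless for the subsequent Gronwall estimate in \eqref{eq:Approx:Difference}.
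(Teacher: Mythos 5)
Your core mechanism is the same as the paper's: differentiate \eqref{eq:HRes}, drop the quadratic resonance constraint, use the Banach algebra property of $\ell^1$ to get $\|\ZZZ^1(t)\xi\|_{\ell^1}\lesssim \|\mathtt r^\rrr\|_{\ell^1}^{2d-2}\|\xi\|_{\ell^1}$, and then estimate $\|\mathtt r^\rrr\|_{\ell^1}$ from \eqref{def:SolutionTruncatedSystem}. Where you diverge is the last step. The paper does not use Cauchy--Schwarz on the mass constraint; it uses the localization coming from Theorem \ref{thm:ToyModelOrbit}: at every $t\in[0,T]$ only the modes of (at most three) consecutive generations have $|b_j|$ of order one, while all the others satisfy $|b_j|\leq\de^{\nu}$, so that $\sum_j|b_j|\lesssim 1+N\de^{\nu}\lesssim 1$ and hence $\|\mathtt r^\rrr\|_{\ell^1}\lesssim \rrr^{-1}2^{N-1}$ with a constant independent of $N$. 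Your Cauchy--Schwarz bound $\sum_j|b_j|\leq\sqrt N$ only yields $\|\ZZZ^1(t)\xi\|_{\ell^1}\lesssim N^{d-1}\rrr^{-(2d-2)}2^{N(2d-2)}\|\xi\|_{\ell^1}$, which is strictly weaker than the stated lemma, since the constant $C$ there is meant to be independent of $N$ (the whole point of the bookkeeping is to track the $N$-dependence explicitly).

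Your claim that the $N^{d-1}$ loss can be ``absorbed into $C$'' is therefore not literally correct, and the justification that it is harmless is more delicate than you suggest: the factor propagates into the Gronwall exponent, turning $C\rrr^{-(2d-2)}2^{N(2d-2)}T\sim C\,2^{dN}N^2$ into $C\,2^{dN}N^{d+1}$, and with the choice $\rrr=e^{\kk 2^{dN}N^2}$ used at the end of Section \ref{sec:Approximation} (with $\kk$ a fixed large constant) the final inequality $\|\xi(t)\|_{\ell^1}\leq\rrr^{-3/2}$ would require $\kk\gtrsim N^{d-1}$, i.e.\ an $N$-dependent $\kk$. The loss is indeed harmless \emph{in substance}, but only because one may enlarge $\rrr$ (e.g.\ to $e^{\kk 2^{dN}N^{d+1}}$, still far below the value $\rrr=e^{\kk 2^{8dN}N^2}$ of \eqref{def:lambda} used in the proof of Theorem \ref{thm:MainTheorem}, and compatible with the time estimate through \eqref{def:Time:Rescaled}); this requires rerunning the bookkeeping rather than hiding the factor in the constant. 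To prove the lemma as stated, replace Cauchy--Schwarz by the smallness of the inactive modes guaranteed by Theorem \ref{thm:ToyModelOrbit}.
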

\begin{proof}
 Taking into account the definition of $\EE$ in \eqref{eq:HRes}, we have that 
\[
 \left\| \ZZZ^1(t)\xi\right\|_{\ell^1}\leq \|\mathtt r^\rrr
\|_{\ell^1}^{2d-2}\|\xi\|_{\ell^1}
\]
For each $t\in [0,T]$, we have that there exists $j^*$  such that, for any $k\in
\SSS_{j^*}$,   $|\mathtt r^\rrr_{k}|\leq \rrr$. For any other $j$ and $k\in\SSS_{j^*}$,
 $|\mathtt r^\rrr_{k}|\leq \rrr\de^\nu$. Recall that $\mathtt r^\rrr_{k}=0$ for all
$k\not\in\SSS$. Then, since $|\SSS_j|\leq 2^{N-1}$, we have that 
 $\|\mathtt r^\rrr \|_{\ell^1}\lesssim  \rrr^{-1} 2^{N-1}$,
which implies  $\left\| \ZZZ^1(t)\xi\right\|_{\ell^1}\lesssim C
\rrr^{-(2d-2)}2^{N(2d-2)}\|\xi\|_{\ell^1}$.
\end{proof}

To obtain estimates for  $\ZZZ^2(\xi,t)$ defined in  \eqref{def:Approx:Higher},
we apply a bootstrap argument as done in  \cite{CKSTT}.  Assume that for
$0<t<T^*$ we have
\begin{equation}  \label{cond:Bootstrap}
 \|\xi(t)\|_{\ell^1}\leq C \rrr^{-3/2}2^{N}.
\end{equation}
For $t=0$ we know that it is already satisfied since $\|\xi(0)\|_{\ell^1}\leq
\rrr^{-5/2}$. \emph{A posteriori} we will show that the time $T$ in
\eqref{def:Time:Rescaled} satisfies $0<T<T^*$ and therefore
the bootstrap assumption holds.
\begin{lemma}  \label{lemma:Approx:BoundHigher}
Assume that condition \eqref{cond:Bootstrap} is satisfied. Then
the  operator $\ZZZ^2(\xi,t)$
satisfies
\[
 \left\| \ZZZ^2(\xi,t)\right\|_{\ell^1}\leq  C
\rrr^{-(2d-2)-1/2}2^{N(2d-2)}\|\xi\|_{\ell^1}.
\]
\end{lemma}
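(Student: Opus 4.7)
The plan is to decompose $\ZZZ^2$ into the Taylor remainder coming from the resonant vector field $\EE$ and the ``full'' difference coming from the normal-form remainder $\wt\RRR$, then to estimate each piece separately using the Banach algebra structure of $\ell^1$ together with the a-priori bounds already available on $\mathtt r^\rrr$ and $\xi$. Concretely, I would write
\[
\ZZZ^2(\xi,t)=\underbrace{\EE(\mathtt r^\rrr+\xi)-\EE(\mathtt r^\rrr)-D\EE(\mathtt r^\rrr)\xi}_{=:A(\xi,t)}+\underbrace{\wt\RRR(\mathtt r^\rrr+\xi)-\wt\RRR(\mathtt r^\rrr)}_{=:B(\xi,t)}
\]
and treat $A$ and $B$ independently.

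For $A$, the key observation is that $\EE$ defined in \eqref{eq:HRes} is polynomial of degree $2d-1$ in its argument, so the Taylor expansion around $\mathtt r^\rrr$ terminates and has the schematic form $A(\xi,t)=\sum_{j=2}^{2d-1} P_j(\mathtt r^\rrr,\xi)$, where $P_j$ is a $(2d-1)$-multilinear expression containing exactly $j$ copies of $\xi$ and $2d-1-j$ copies of $\mathtt r^\rrr$. Since $\ell^1$ is a Banach algebra under convolution, one gets $\|P_j(\mathtt r^\rrr,\xi)\|_{\ell^1}\lesssim \|\mathtt r^\rrr\|_{\ell^1}^{2d-1-j}\|\xi\|_{\ell^1}^j$. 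Using the bound $\|\mathtt r^\rrr\|_{\ell^1}\lesssim \rrr^{-1}2^N$ from the proof of Lemma \ref{lemma:Approx:BoundLinear} and the bootstrap hypothesis $\|\xi\|_{\ell^1}\leq C\rrr^{-3/2}2^N$, the leading ($j=2$) contribution is
\[
\|P_2\|_{\ell^1}\lesssim (\rrr^{-1}2^N)^{2d-3}(\rrr^{-3/2}2^N)\|\xi\|_{\ell^1}=\rrr^{-(2d-2)-1/2}2^{(2d-2)N}\|\xi\|_{\ell^1},
\]
and each subsequent $j\geq 3$ only brings in extra factors of $\rrr^{-1/2}2^{N/2}$ relative to the previous one; since we will take $\rrr$ as in \eqref{def:lambda} (so that $\rrr^{-1}2^{2N}\ll 1$) these contributions are absorbed into the claimed bound.

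For $B$, I would apply the fundamental theorem of calculus in the direction $\xi$ combined with the vector-field estimate of Theorem \ref{thm:NormalForm}. Writing $B(\xi,t)=\int_0^1 D\wt\RRR(\mathtt r^\rrr+s\xi)\xi\, ds$ and using the homogeneity that $X_\RRR$ has $\ell^1$-bound $\OO(\|a\|_{\ell^1}^{2d+1})$ (so its differential is bounded by $\|a\|_{\ell^1}^{2d}$), one obtains
\[
\|B(\xi,t)\|_{\ell^1}\lesssim (\|\mathtt r^\rrr\|_{\ell^1}+\|\xi\|_{\ell^1})^{2d}\|\xi\|_{\ell^1}\lesssim (\rrr^{-1}2^N)^{2d}\|\xi\|_{\ell^1}=\rrr^{-2d}2^{2dN}\|\xi\|_{\ell^1}.
\]
This is dominated by $\rrr^{-(2d-2)-1/2}2^{(2d-2)N}\|\xi\|_{\ell^1}$ precisely when $\rrr^{-3/2}2^{2N}\lesssim 1$, which is guaranteed by the choice of $\rrr$ in \eqref{def:lambda}.

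The only genuine subtlety is to ensure that the rough $\ell^1$-multilinear estimates, which in principle bleed energy onto all of $\ZZ^2$ rather than just $\SSS$, nevertheless produce factors that remain within the advertised prefactor; this is what forces $\rrr$ to be exponentially large in $N$ as in \eqref{def:lambda}, and why the bootstrap assumption \eqref{cond:Bootstrap} (with the explicit $2^N$ factor) has been tailored to match the $j=2$ term above. Combining the bounds on $A$ and $B$ gives the statement, and since $\ZZZ^2$ is the only potentially quadratic-in-$\xi$ term, this estimate will feed into \eqref{eq:Approx:Difference} together with Lemmas \ref{lemma:Approx:BoundFirstIteration}--\ref{lemma:Approx:BoundLinear} to close the Gronwall argument and verify \emph{a posteriori} that the bootstrap \eqref{cond:Bootstrap} holds on the whole time interval $[0,T]$.
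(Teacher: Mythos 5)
Your proposal is correct and follows essentially the same route as the paper: the same splitting of $\ZZZ^2$ into the Taylor remainder of $\EE$ and the difference of $\wt\RRR$, the same $\ell^1$ Banach-algebra multilinear bounds $\sum_{j\geq 2}\|\mathtt r^\rrr\|_{\ell^1}^{2d-1-j}\|\xi\|_{\ell^1}^{j}$ combined with $\|\mathtt r^\rrr\|_{\ell^1}\lesssim \rrr^{-1}2^{N}$ and the bootstrap assumption, and the same observation that the $\wt\RRR$ contribution of size $\rrr^{-2d}2^{2dN}\|\xi\|_{\ell^1}$ is dominated by the main term for the chosen $\rrr$. Your handling of the $\wt\RRR$ piece via the fundamental theorem of calculus is just a slightly more explicit version of the paper's ``proceeding analogously'', so there is nothing substantive to add.
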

\begin{proof}
The proof of this lemma follows the same lines as the proof of Lemma B.3 in
\cite{GuaKal}. We split $\ZZZ^2$ in \eqref{def:Approx:Higher} as
$\ZZZ^2=\ZZZ^{21}+\ZZZ^{22}$ with
\[
\begin{split}
\ZZZ^{21}(\xi,t)=&\EE\left(\mathtt r^\rrr+\xi\right)-\EE\left(r^
\rrr\right)-D\EE\left(\mathtt r^\rrr\right)\xi\\
\ZZZ^{22}(\xi,t)=&\wt\RRR\left(\mathtt r^\rrr+\xi\right)-\wt\RRR\left(\mathtt r^\rrr\right).
\end{split}
\]
By the definition of $\EE$ in  \eqref{eq:HRes}, we have that
\[
 \|\ZZZ^{21}\|_{\ell^1}\leq C\sum^{2d-1}_{j=2}\|r^
\rrr\|_{\ell^1}^{2d-1-j}\|\xi\|_{\ell^1}^j.
\]
In the proof of Lemma \ref{lemma:Approx:BoundLinear}, we have seen that $\|r^
\rrr\|_{\ell^1}\leq  \rrr^{-1} 2^{N-1}$. Using this estimate and the bootstrap
assumption \eqref{cond:Bootstrap} we obtain
\[
 \|\ZZZ^{21}\|_{\ell^1}\lesssim  \rrr^{-(2d-2)-1/2}2^{N(2d-2)}\|\xi\|_{\ell^1}.
\]
Proceeding analogously one can see that  $\|\ZZZ^{22}\|_{\ell^1}\lesssim
\rrr^{-2d}2^{2Nd}\|\xi\|_{\ell^1}$.
Since we assume that $ \rrr^{-2d}2^{2N}\ll 1$, these two estimates imply the
statement of the lemma.
\end{proof}

We apply the estimates obtained in these three lemmas and the bootstrap
assumption \eqref{cond:Bootstrap}  to equation \eqref{eq:Approx:Difference}. We
obtain
\[
 \frac{d}{dt}\|\xi\|_{\ell^1}\leq C \rrr^{-(2d+1)}2^{N(2d+1)}+C
\rrr^{-(2d-2)}2^{N(2d-2)} \|\xi\|_{\ell^1}.
\]
We apply Gronwall estimates. We take $\|\xi\|_{\ell^1}=\zeta e^{C
\rrr^{-(2d-2)}2^{N(2d-2)}t}$ and therefore
\[
 \dot\zeta\leq  \dot\zeta  e^{C \rrr^{-(2d-2)}2^{N(2d-2)}t}\leq C
\rrr^{-(2d+1)}2^{N(2d+1)}.
\]
Integrating and taking into account the estimates for $T$ in
\eqref{def:Time:Rescaled} and that
$\|\zeta(0)\|_{\ell^1}=\|\xi(0)\|_{\ell^1}\leq C \rrr^{-5/2}$, we have that for
$t\in [0,T]$,
\[
 \|\zeta(t)\|_{\ell^1}\leq \|\zeta(0)\|_{\ell^1}+C \rrr^{-(2d+1)}2^{N(2d+1)}T
\leq C \rrr^{-5/2}+C \rrr^{-3}2^{N(d+3)}N^2\leq  \rrr^{-5/2}.
\]
Then, using again the estimate for $T$ in \eqref{def:Time:Rescaled}, for $t\in
[0,T]$,
\[
  \|\xi(t)\|_{\ell^1}\leq  \rrr^{-5/2}e^{C \rrr^{-(2d-2)}2^{N(2d-2)}T}\leq
\rrr^{-5/2}e^{C2^{dN}N^2}.
\]
Since we have assumed that $\rrr\geq\rrr_0=e^{C 2^{dN}N^2}$, we obtain that 
 $\|\xi(t)\|_{\ell^1}\leq  \rrr^{-3/2}$ for all $t\in [0,T]$. This completes the
proof of Theorem \ref{thm:approximation}.

\bibliography{references}
\bibliographystyle{alpha}
\end{document}